\documentclass[11pt]{amsart}
\usepackage{amsmath}
\usepackage{amssymb}
\usepackage{amsthm}
\usepackage[all]{xy}
\usepackage{graphicx}
\usepackage[margin=1in]{geometry}
\usepackage{color}
\usepackage{hyperref}
\usepackage{verbatim}
\usepackage{accents}
\usepackage{pb-diagram,pb-xy}

\numberwithin{equation}{section}

\setlength{\parindent}{.8cm}
\setlength{\parskip}{.1cm}







\newcommand{\R}{\ensuremath{\mathbb{R}}}
\newcommand{\N}{\ensuremath{\mathbb{N}}}

\newcommand{\Z}{\ensuremath{\mathbb{Z}}}
\newcommand{\Q}{\ensuremath{\mathbb{Q}}}


\newtheorem{theorem}{Theorem}[section]
\newtheorem{lemma}[theorem]{Lemma}
\newtheorem{proposition}[theorem]{Proposition}

\newtheorem{corollary}[theorem]{Corollary}

\newtheorem{notation}{Notation}[section]

\theoremstyle{definition} 
\newtheorem{defn}{Definition}[section]

\newtheorem{remark}{Remark}[section] 
\newtheorem{Notation}{Notational Convention}[section]
\newtheorem{Construction}{Construction}[section]

\newcommand{\mb}[1]{\mathbf{{#1}}}

\DeclareMathOperator*{\Int}{Int}

\DeclareMathOperator*{\Diff}{Diff}
\DeclareMathOperator*{\Aut}{Aut}
\DeclareMathOperator*{\Emb}{Emb}
\DeclareMathOperator*{\BDiff}{BDiff}

\DeclareMathOperator*{\Hom}{Hom}
\DeclareMathOperator*{\Maps}{Maps}

\DeclareMathOperator*{\Imm}{Imm}

\DeclareMathOperator*{\Ker}{Ker}

\DeclareMathOperator*{\Ab}{Ab}

\DeclareMathOperator*{\rel}{rel}
\DeclareMathOperator*{\lk}{lk}
\DeclareMathOperator*{\Id}{Id}
\DeclareMathOperator*{\Ob}{Ob}


\textwidth=6.6truein
\mathsurround=1pt
\textheight=8.2truein
\topmargin -10pt \headheight 10pt 

\author{Nathan Perlmutter}

\address{Stanford University Department of Mathematics, Building 380, Stanford, California,  94305, USA} 
  
  \email{nperlmut@stanford.edu}

\title[Linking Forms and Stabilization]{Linking Forms and Stabilization of Diffeomorphism Groups of Manifolds of Dimension $4n+1$}

\begin{document}
\maketitle
\begin{abstract}
Let $n \geq 2$.  We prove a homological stability theorem for the
diffeomorphism groups of $(4n+1)$-dimensional manifolds, with respect
to forming the connected sum with certain $(2n-1)$-connected,
$(4n+1)$-dimensional manifolds that are stably parallelizable.  
Our techniques involve the study of the action of the diffeomorphism group of a manifold $M$ on the \textit{linking form} associated to the homology groups of $M$. 
In particular, we construct a geometric model for the linking form using the intersections of embedded and immersed $\Z/k$-manifolds. 
In addition to our main homological stability theorem, we prove several disjunction results for the embeddings and immersions of $\Z/k$-manifolds that could be of independent interest. 
\end{abstract}

\setcounter{tocdepth}{1}
\tableofcontents

\section{Introduction} \label{section: introduction} 
\subsection{Main result}
Let $M$ be a smooth, compact, connected manifold with non-empty boundary and let $\dim(M) = m$. 
We denote by $\Diff^{\partial}(M)$ the
group of self diffeomorphisms of $M$ which fix some neighborhood of
the boundary pointwise, topologized in the $C^{\infty}$-topology.  
Let $\BDiff^{\partial}(M)$ denote the \textit{classifying space} of
$\Diff^{\partial}(M)$. 
Choose a closed, connected manifold $W$ with $\dim(W)= m$. 
There is a natural \textit{stabilization} homomorphism $\Diff^{\partial}(M)\to \Diff^{\partial}(M\# W)$ which gives rise to
the direct system of maps of the classifying spaces:
\begin{equation} \label{equation: direct system}
\xymatrix{
\BDiff^{\partial}(M)\longrightarrow  
\BDiff^{\partial}(M\# W)\longrightarrow \; \cdots \; \longrightarrow \BDiff^{\partial}(M\# W^{\#g}) \longrightarrow \cdots
}
\end{equation}
In this paper we study the homological stability of this direct system in the
  case when $M$ and $W$ are odd-dimensional, highly connected
  manifolds.  
  The theorem below is the main result of the paper. 

\begin{theorem} \label{theorem: independence of rank}
For $n \geq 2$, 
let $M$ be a $2$-connected, $(4n+1)$-dimensional,
compact manifold with non-empty boundary.
Let $W$ be a closed, $(2n-1)$-connected, $(4n+1)$-dimensional manifold
that satisfies the following conditions:
\begin{itemize} \itemsep.2cm
\item $W$ is stably parallelizable;
\item the homology group $H_{2n}(W; \Z)$ is finite and has trivial $2$-torsion.
\end{itemize}
Then the homomorphism of homology groups induced by (\ref{equation: direct system}),
$$
H_{\ell}(\textstyle{\BDiff^{\partial}}(M\# W^{\# g}); \Z) \longrightarrow H_{\ell}(\textstyle{\BDiff^{\partial}}(M\# W^{\# g+1}); \Z),
$$
is an isomorphism if $g \geq 2\ell + 3$ and an epimorphism if $g \geq 2\ell + 1$.
\end{theorem}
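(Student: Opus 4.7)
The proof fits into the general framework for homological stability of moduli spaces of manifolds going back to Harer and developed most recently by Randal-Williams--Wahl and Galatius--Randal-Williams: construct a semi-simplicial space on which $\Diff^{\partial}(M \# W^{\# g})$ acts, prove that it is highly connected in a range growing linearly with $g$, and feed this connectivity bound into a standard spectral sequence comparing adjacent stages of (\ref{equation: direct system}).

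To begin, I would define a semi-simplicial space $K_\bullet(g)$ whose space of $p$-simplices consists of ordered $(p+1)$-tuples of pairwise disjoint, locally flat embeddings of $W \setminus \Int(D^{4n+1})$ into $M \# W^{\# g}$ which agree with a fixed germ on the common boundary sphere; face maps forget an embedding. The group $\Diff^{\partial}(M \# W^{\# g})$ acts on $K_\bullet(g)$ by postcomposition, and stabilisers of $p$-simplices are conjugate to $\Diff^{\partial}(M \# W^{\# g - p - 1})$. A routine double-complex / group-action spectral sequence then reduces Theorem \ref{theorem: independence of rank} to showing that $K_\bullet(g)$ is roughly $\tfrac{g-3}{2}$-connected, so that the slope-$\tfrac{1}{2}$ ranges of the statement follow.

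The heart of the proof is this connectivity estimate, which I would approach in two stages. First, I would introduce an algebraic/combinatorial complex modelling the linking form: every embedded copy of $W \setminus \Int(D^{4n+1})$ determines a class (really, a subgroup) in $H_{2n}(M \# W^{\# g};\Z)$, and pairs of embeddings pair under the linking form $\lambda: \mathrm{Tors}\, H_{2n} \otimes \mathrm{Tors}\, H_{2n} \longrightarrow \Q/\Z$. Because $H_{2n}(W;\Z)$ is finite with no $2$-torsion, each stabilisation adds a hyperbolic summand to $\lambda$, and a Witt-theoretic argument in the spirit of Charney shows that the simplicial complex of ``standard hyperbolic'' tuples in $(\mathrm{Tors}\, H_{2n}, \lambda)$ is highly connected, in the required range.

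The main obstacle is the second stage: bridging between this algebraic complex and the geometric complex $K_\bullet(g)$. The algebraic complex only remembers homology classes and linking numbers, while $K_\bullet(g)$ requires genuine disjointness of embeddings. To close this gap I would exploit the geometric model of $\lambda$ announced in the abstract: a torsion class of order $k$ in $H_{2n}$ is represented by an embedded $\Z/k$-manifold, and $\lambda$ is computed by geometric intersection of such representatives. The new disjunction results for embeddings and immersions of $\Z/k$-manifolds allow one to promote algebraic orthogonality (vanishing linking numbers) to geometric disjointness of embedded $\Z/k$-manifolds; a handle-trading / surgery argument using the $(2n-1)$-connectivity of $W$ together with its stable parallelisability then thickens these disjoint $\Z/k$-manifolds to disjoint embedded copies of $W \setminus \Int(D^{4n+1})$, yielding the required simplices of $K_\bullet(g)$. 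Dimension counts here are tight: $\dim W = 4n+1$ and the classes live in degree $2n$, so $\Z/k$-manifold representatives have dimension $2n$ and the general-position bound $2(2n)+1 < 4n+1$ fails by one, which is exactly why genuine disjunction theorems (rather than transversality) are needed, and why the hypothesis $n \geq 2$ is imposed.

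Once $K_\bullet(g)$ is shown to be $\tfrac{g-3}{2}$-connected, the standard spectral sequence of the action of $\Diff^{\partial}(M \# W^{\# g})$ on $|K_\bullet(g)|$, combined with the identification of stabilisers of $p$-simplices as $\Diff^{\partial}$ of a smaller manifold, delivers the isomorphism range $g \geq 2\ell + 3$ and the epimorphism range $g \geq 2\ell + 1$ by the usual inductive argument.
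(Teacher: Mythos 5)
Your proposal follows essentially the same route as the paper: a semi-simplicial space of disjoint embedded copies of $W\setminus \Int(D^{4n+1})$, a Charney-style connectivity theorem for a complex built from the linking form, a bridge between the two via a $\Z/k$-manifold model of the linking form together with disjunction/Whitney-trick results, thickening of these $\Z/k$-manifolds to genuine embeddings, and a resolution/spectral-sequence argument to conclude stability (the paper additionally reduces first, via Wall's classification, to stabilization by the basic pieces $W_{k}$ with $k$ odd, and its $\Z/k$-representatives are $(2n+1)$-dimensional with spherical Bockstein rather than $2n$-dimensional). So your outline is correct and matches the paper's strategy.
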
 
\begin{remark}
This result yields an odd-dimensional analogue of the theorem of
Galatius and Randal-Williams from \cite{GRW 12} and \cite{GRW 14},
regarding the homological stability of diffeomorphism groups of
manifolds of dimension $2n$ with respect to forming connected sums
with $S^{n}\times S^{n}$. 
The version of Theorem \ref{theorem: independence of rank} for $W = S^{2n}\times S^{2n+1}$ follows from the main result of 
\cite[Theorem 1.3]{NP 14}.
\end{remark}

\subsection{$(2n-1)$-connected, $(4n+1)$-dimensional manifolds}
Let $\mathcal{W}_{4n+1}$ denote the set of all stably parallelizable, 
 $(2n-1)$-connected, $(4n+1)$-dimensional, closed manifolds. 
 Let $\mathcal{W}^{f}_{4n+1} \subset \mathcal{W}_{4n+1}$ denote the subset consisting of those manifolds $W \in \mathcal{W}_{4n+1}$ such that $H_{2n}(W; \Z)$ is finite.
For $n \geq 2$, the classification of manifolds in $\mathcal{W}_{4n+1}$ was studied by Wall in \cite{W 67} and by De Sapio in \cite{S 69}.
 Recall that two closed manifolds $M_{1}$ and $M_{2}$ are said to be \textit{almost diffeomorphic} if there exists a homotopy sphere $\Sigma$ such that $M_{1}\#\Sigma$ is diffeomorphic to $M_{2}$. 
Let $N \in \mathcal{W}_{4n+1}$ denote the unit sphere bundle associated to the tangent bundle of $S^{2n+1}$.
 It follows from \cite[Theorem 1]{S 69} that any element $W \in \mathcal{W}_{4n+1}$ is almost diffeomorphic to a manifold of the form, 
 $
 (S^{2n}\times S^{2n+1})^{\# g_{1}}\; \# \; N^{\#g_{2}} \;\# \; M_{\tau},
 $
 where $M_{\tau}$ is some element of $\mathcal{W}^{f}_{4n+1}$.

Let $\widehat{\mathcal{W}}^{f}_{4n+1} \subset \mathcal{W}^{f}_{4n+1}$ denote the subset consisting of those $W \in \mathcal{W}^{f}_{4n+1}$ with $H_{2n}(W; \Z/2) = 0$. 
Our main theorem will require a description of the manifolds in $\widehat{\mathcal{W}}^{f}_{4n+1}$.
The primary diffeomorphism invariant associated to a manifold $M \in \widehat{\mathcal{W}}^{f}_{4n+1}$ is the \textit{linking form}, which is a strictly skew symmetric, non-singular, bilinear pairing
\begin{equation} \label{equation: homotopical linking form}
b:  \pi_{2n}(M)\otimes\pi_{2n}(M) \longrightarrow \Q/\Z.
\end{equation}
Here a strictly skew symmetric form is one for which $b(x, x) = 0$ for all $x$, which implies the property $b(x, y) = -b(y, x) = 0$.
It follows from Wall's classification \cite[Theorem 7]{W 67} that two elements $M_{1}, M_{2} \in \widehat{\mathcal{W}}^{f}_{4n+1}$ are almost diffeomorphic if and only if there exists an isomorphism of linking forms,
$$(\pi_{2n}(M_{1}), \; b) \cong (\pi_{2n}(M_{2}), \; b).$$
Furthermore, given any finite abelian group $G$ with $G\otimes\Z/2 = 0$, equipped with a non-singular, strictly skew symmetric bilinear form $b': G\otimes G \longrightarrow \Q/\Z$, there exists a manifold $M \in \widehat{\mathcal{W}}^{f}_{4n+1}$ such that there is an isomorphism 
$(\pi_{2n}(M), \; b) \cong (G, \; b').$
It follows that a manifold in $\widehat{\mathcal{W}}^{f}_{4n+1}$ is completely determined (up to almost diffeomorphism) by the isomorphism class of its linking form. 

\begin{remark} \label{remark: Wall's invariants}
For manifolds $M \in \mathcal{W}^{f}_{4n+1}$ with $H_{2n}(M; \Z/2) \neq 0$, the linking form is still defined and plays a key role in the diffeomorphism classification. 
However,  the linking form is not a complete invariant for such manifolds.
In \cite{W 67} Wall associates to any $(2n-1)$-connected, $(4n+1)$-dimensional manifold $W$, a cohomology class $\widehat{\phi} \in H^{2n+1}(W; \Z/2)$ which is a diffeomorphism invariant of $W$. 
It is possible for $\widehat{\phi} \neq 0$ even if $W$ is stably parallelizable.  
Thus if $H^{2n+1}(W; \Z/2) \neq 0$, then this invariant $\widehat{\phi}$ must be considered.
In this case, the almost diffeomorphism class of $W$ is not completely determined by its linking form.
If $W \in \widehat{\mathcal{W}}^{f}_{4n+1}$, then $H^{2n+1}(W; \Z/2) = 0$ and we can ignore the invariant $\widehat{\phi}$.
Since $W$ is stably parallelizable the other invariants defined by Wall, 
$$\alpha: \pi_{2n}(W) \longrightarrow \pi_{2n-1}(SO)) \quad \text{and} \quad \widehat{\beta} \in H^{2n+1}(W; \pi_{2n}(SO)),$$ 
automatically vanish as well. 
It follows that the almost diffeomorphism type of $W \in \widehat{\mathcal{W}}^{f}_{4n+1}$ is determined by its linking form and the classification of manifolds in $\widehat{\mathcal{W}}^{f}_{4n+1}$ reduces to the classification of strictly skew symmetric bilinear forms on finite abelian groups from \cite{W 64}. 
\end{remark}

We use the classification result discussed above to specify certain basic elements of $\widehat{\mathcal{W}}^{f}_{4n+1}$.
For each odd integer $k > 2$, fix a manifold $W_{k} \in \widehat{\mathcal{W}}^{f}_{4n+1}$ whose linking form $(\pi_{2n}(W_{k}), \; b)$ is given by the data,
$$\xymatrix@C-1.0pc@R-1.5pc{
\pi_{2n}(W_{k}) = \Z/k\oplus \Z/k,  & 
b(\sigma, \sigma) = b(\rho, \rho) = 0, &  b(\sigma, \rho) = -b(\rho, \sigma) = \frac{1}{k} \mod 1,
}
$$
where $\langle \rho, \sigma \rangle$ is the standard basis for $\Z/k\oplus\Z/k$. 
By the classification result discussed above, this data determines the manifold $W_{k}$ completely up to almost diffeomorphism.  
It follows from \cite[Theorem 7]{W 67} and the classification of skew symmetric forms over $\Q/\Z$ in \cite[Lemma 7]{W 64} that any element $M \in \widehat{\mathcal{W}}^{f}_{4n+1}$ is diffeomorphic to a manifold of the form,
$W_{k_{1}}\#\cdots \#W_{k_{l}}\#\Sigma,$
where $\Sigma$ is a homotopy sphere.

\subsection{The stabilization map}
Let $M$ be a $(4n+1)$-dimensional, compact, connected, manifold with non-empty boundary.
Fix a connected component $\partial_{0}M \subset \partial M$ of the boundary of $M$.
For each integer $k > 2$, let $\widetilde{W}_{k}$ denote the manifold obtained by forming the connected sum of $[0,1]\times \partial_{0}M$ with $W_{k}$. 
Denote by $M\cup_{\partial_{0}M}\widetilde{W}_{k}$ the manifold obtained by gluing $\widetilde{W}_{k}$ to $M$ along $\{0\}\times \partial_{0}M$. 
It is clear that there is a diffeomorphism $M\cup_{\partial_{0}M}\widetilde{W}_{k} \cong M\# W_{k}$. 
Consider the continuous homomorphism $\Diff^{\partial}(M) \longrightarrow \Diff^{\partial}(M\cup_{\partial_{0}M}\widetilde{W}_{k})$ defined by extending $f \in  \Diff^{\partial}(M)$ identically over $\widetilde{W}_{k}$.
For each $k$, this homomorphism induces a continuous map on the level of classifying spaces,
\begin{equation} \label{eq: k-stabilization map}
\xymatrix{
s_{k}: \BDiff^{\partial}(M) \longrightarrow \BDiff^{\partial}(M\cup_{\partial_{0}M}\widetilde{W}_{k}).
}
\end{equation}
We will refer to this map as the $k$-th stabilization map.
Let $r_{k}(M)$ be the quantity defined by,
\begin{equation} \label{equation: k-rank mfd}
r_{k}(M) = \max\{g \in N\; | \; \text{there exists an embedding, $W^{\# g}_{k}\setminus D^{4n+1} \longrightarrow M$}\}.
\end{equation}
Using the diffeomorphism classification for manifolds in
$\widehat{\mathcal{W}}^{f}_{4n+1}$ described in the previous section (and again in Section \ref{section:
  Highly Connected Manifolds of Odd Dimension}), the following result, combined with \cite{NP 14}
implies Theorem \ref{theorem: independence of rank}.  This is the main
 homological stability result that we prove in this paper.
\begin{theorem} \label{theorem: Main theorem}
For $n \geq 2$, let $M$ be a $2$-connected, compact, $(4n+1)$-dimensional manifold with non-empty boundary. 
If $k > 2$ is an odd integer, then the map on homology induced by (\ref{eq: k-stabilization map}),
$$\xymatrix{
(s_{k})_{*}: H_{\ell}(\BDiff^{\partial}(M); \; \Z) \longrightarrow H_{\ell}(\BDiff^{\partial}(M\cup_{\partial_{0}M}\widetilde{W}_{k}); \; \Z),
}$$
is an isomorphism if $2\ell \leq r_{k}(M) - 3$ and an epimorphism when $2\ell \leq r_{k}(M) - 1$.
\end{theorem}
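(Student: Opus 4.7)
The plan is to follow the framework for proving homological stability of diffeomorphism groups initiated by Galatius and Randal-Williams in \cite{GRW 12, GRW 14} and extended to the odd-dimensional, highly connected setting by the author in \cite{NP 14}. The general recipe is to construct a highly connected semi-simplicial space on which $\Diff^{\partial}(M)$ acts with stabilizers weakly equivalent to diffeomorphism groups of smaller manifolds obtained from $M$ by cutting out $\widetilde{W}_k$-summands, then run the associated spectral sequence and induct on the rank invariant $r_k(M)$.

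Concretely, I would define a semi-simplicial space $K_{\bullet}(M)$ whose $p$-simplices are ordered $(p+1)$-tuples of pairwise disjoint embeddings $\widetilde{W}_k \hookrightarrow M$, each extending a fixed embedding on a collar neighborhood of $\partial_0 M$; the face maps forget one embedding. The group $\Diff^{\partial}(M)$ acts on $|K_{\bullet}(M)|$ by post-composition, and by an isotopy-extension argument the stabilizer of a standard $p$-simplex is weakly equivalent to $\Diff^{\partial}(M'_p)$, where $M'_p$ is (the closure of) the complement of the $(p+1)$ embedded copies of $\widetilde{W}_k$ inside $M$. Since $M'_p$ has $r_k$-invariant smaller by roughly $(p+1)$, this sets up an inductive comparison of $\BDiff^{\partial}(M)$ with $\BDiff^{\partial}(M \cup_{\partial_0 M} \widetilde{W}_k)$ along the stabilization map $s_k$.

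The main obstacle, and the technical heart of the argument, is proving that $|K_{\bullet}(M)|$ is highly connected with connectivity growing linearly in $r_k(M)$ (of slope $1/2$, matching the range stated in the theorem). This is where the paper's promised geometric model of the linking form enters. Each embedded copy of $\widetilde{W}_k$ contributes two generators $\rho, \sigma \in \pi_{2n}(M)$ of order $k$, represented geometrically by embedded $2n$-dimensional $\Z/k$-manifolds. To fill in a sphere of $|K_{\bullet}(M)|$ one must adjoin a new embedding disjoint from a given configuration, which reduces, after replacing handles by cores, to a disjunction problem for these $\Z/k$-representatives inside the $(4n+1)$-manifold $M$. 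The obstructions to disjunction are precisely the values of the linking form $b : \pi_{2n}(M) \otimes \pi_{2n}(M) \to \Q/\Z$, read off geometrically as intersection numbers of bounding $\Z/k$-chains. The strategy is then to establish a $\Z/k$-analogue of the Whitney trick that eliminates unwanted intersections whenever the corresponding linking numbers can be made to vanish by a preliminary modification of the new embedding, using the hypothesis that $r_k(M)$ is large to provide both the algebraic room to kill the linking obstruction and the ambient room in $M$ to execute the isotopies. The odd parity of $k$ and the hypothesis $H_{2n}(W; \Z/2) = 0$ are essential here, since they ensure the linking form is strictly skew and non-degenerate, so that the algebraic cancellation needed at each disjunction step is always available.

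Once this connectivity estimate is in hand, the remainder of the proof is formal and closely mirrors the argument of \cite[Theorem 1.3]{NP 14}. The action of $\Diff^{\partial}(M)$ on $|K_{\bullet}(M)|$ yields a spectral sequence whose $E^1$-page is assembled from $H_\ast(\BDiff^{\partial}(M'_p))$ for the smaller stabilizer manifolds; comparing with the analogous spectral sequence for $M \cup_{\partial_0 M} \widetilde{W}_k$ along the map induced by $s_k$, and inducting on $r_k(M)$, the high connectivity of $|K_{\bullet}(M)|$ forces $(s_k)_\ast$ to be an isomorphism when $2\ell \leq r_k(M) - 3$ and an epimorphism when $2\ell \leq r_k(M) - 1$. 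The main content of the paper is thus concentrated in the $\Z/k$-manifold disjunction theorems needed to establish the connectivity of $|K_{\bullet}(M)|$.
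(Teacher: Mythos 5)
Your proposal follows essentially the same route as the paper: a semi-simplicial space of pairwise disjoint embeddings of the stabilizing piece, whose high connectivity is established by comparing with the linking form via geometric $\Z/k$-manifold representatives and a mod-$k$ Whitney trick/disjunction theorem, followed by the Galatius--Randal-Williams resolution and spectral sequence induction as in \cite{GRW 12} and \cite{NP 14}. The only cosmetic difference is that the paper phrases the group action through a semi-simplicial resolution $Z_{\bullet}(M)_{k} \to \mathcal{M}(M)$ (identifying the spaces of $p$-simplices with moduli spaces of complements) rather than through stabilizers, and it routes the connectivity proof through an auxiliary algebraic complex $L(\pi^{\tau}_{2n}(M))_{k}$ of linking-form morphisms, but these are implementations of exactly the strategy you describe.
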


\begin{remark} \label{remark: stabilization by exotic spheres}
In order to deduce Theorem \ref{theorem: independence of rank} from Theorem \ref{theorem: Main theorem}, one also needs to observe that the stabilization map $H_{*}(\BDiff^{\partial}(M); \; \Z) \longrightarrow H_{*}(\BDiff^{\partial}(M\cup_{\partial_{0}M}\Sigma); \; \Z)$ is an isomorphism for any $(4n+1)$-dimensional homotopy sphere $\Sigma$. 
This follows from the fact if $M'$ has non-empty boundary, then the connected sum $M'\#\Sigma$ is diffeomorphic to $M'$ for any homotopy sphere $\Sigma$.
\end{remark}

\subsection{Methodology} Our methods are similar to those used in \cite{GRW 12} and \cite{GRW 14}. 
For any integer $k \geq 2$, we construct a highly connected, semi-simplicial space $X_{\bullet}(M)_{k}$, which admits an action of the topological group $\Diff^{\partial}(M)$ that is transitive on the zero-simplices. 
Let $W'_{k}$ denote the manifold with boundary obtained from $W_{k}$ by removing an open disk. 
Roughly, the space of $p$-simplices of $X_{\bullet}(M)_{k}$ is defined to be the space of ordered $(p+1)$-tuples of pairwise disjoint embeddings $W'_{k} \hookrightarrow M$, with a certain prescribed boundary condition. 
This semi-simplicial space is similar to the ones constructed in \cite{GRW 12} and \cite{GRW 14}. The majority of the technical work of this paper is devoted to proving that if $M$ is $2$-connected and $k$ is odd, then the geometric realization $|X_{\bullet}(M)_{k}|$ is $\frac{1}{2}(r_{k}(M) - 4)$-connected. 
This is established in Section \ref{section: semi-simplicial spaces}, Theorem \ref{theorem: high connectivity of X}.

In order to prove that $|X_{\bullet}(M)_{k}|$ is $\frac{1}{2}(r_{k}(M) - 4)$-connected, we must compare it to an auxiliary simplicial complex $L(\pi^{\tau}_{2n}(M))_{k}$, based on the linking form associated to $M$. 
A $p$-simplex of $L(\pi^{\tau}_{2n}(M))_{k}$ is defined to be a set of $p+1$ pairwise orthogonal morphisms of linking forms 
$\xymatrix{
(\pi^{\tau}_{2n}(W'_{k}), \; b) \longrightarrow (\pi^{\tau}_{2n}(M), \; b),
}$
which mimic the pairwise disjoint embeddings $W'_{k} \rightarrow M$ from the semi-simplicial space $X_{\bullet}(M)_{k}$. 
In Section \ref{subsection: the linking complex}, we prove that the geometric realization $|L(\pi^{\tau}_{2n}(M))_{k}|$ is $\frac{1}{2}(r_{k}(M) - 4)$-connected (see Theorem \ref{theorem: linking form high-connectivity}). 
The proof of this theorem is very similar to the proof of \cite[Theorem 3.2]{GRW 14}.
One can view this as a ``mod $k$''-version of the result of Charney from \cite{Ch 87}. 

There is a map $|X_{\bullet}(M)_{k}| \longrightarrow |L(\pi^{\tau}_{2n}(M))_{k}|$
induced by sending an embedding $\varphi: W'_{k} \longrightarrow M$, which represents a $0$-simplex in $X_{\bullet}(M)_{k}$, to its induced morphism of linking forms, $\varphi_{*}: (\pi^{\tau}_{2n}(W'_{k}), \; b) \longrightarrow  (\pi^{\tau}_{2n}(M), \; b)$, which represents a vertex in $L(\pi^{\tau}_{2n}(M))_{k}$. 
To prove Theorem \ref{theorem: high connectivity of X} it will suffice to prove that this map induces an injection on homotopy groups $\pi_{j}(\underline{\hspace{.3cm}})$ when $j \leq \frac{1}{2}(r_{k}(M) - 4)$. 
This will require a number of new geometric constructions. 
In particular, we need a technique for realizing morphisms $(\pi^{\tau}_{2n}(W'_{k}), \; b) \longrightarrow  (\pi^{\tau}_{2n}(M), \; b)$ by actual embeddings $W'_{k} \rightarrow M$. 

We will need a suitable geometric model for the linking form.
This geometric model for the linking form will be based on $\Z/k$-manifolds and their intersections;  
this approach is similar to the one taken by Morgan and Sullivan in \cite{SM 74}.
The main technical device that we develop is a certain modulo-$k$ version of the \textit{Whitney trick} for modifying the intersections of embedded or immersed $\Z/k$-manifolds, see Theorems \ref{theorem: mod k whitney trick} and \ref{thm: modifying intersections}.
In Appendix \ref{section: k-immersions} we develop some results regarding the immersions and embeddings of $\Z/k$-manifolds. These results about $\Z/k$-manifolds could be of independent interest.

\begin{remark}
Our main homological stability result requires the integer $k$ to be odd. 
The source of this restriction is two-fold. 
The first source has to do with what was discussed in Remark \ref{remark: Wall's invariants} in that the classification of manifolds $W \in \mathcal{W}^{f}_{4n+1}$ with $H_{2n}(W; \Z/2) \neq 0$ is more complicated than the classification of the manifolds in $\widehat{\mathcal{W}}^{f}_{4n+1}$.
Indeed, for such $W \in \mathcal{W}^{f}_{4n+1}\setminus\widehat{\mathcal{W}}^{f}_{4n+1}$ the linking form is no longer a complete (almost) diffeomorphism invariant.
There is another invariant that one must consider, namely $\widehat{\phi} \in H^{2n+1}(W; \Z/2)$ defined in \cite{W 67}.
The second source of the need to restrict to odd $k$ is the technical results Theorem \ref{thm: modifying intersections} and Theorem \ref{theorem: represent by embedding 2}, which only hold for odd $k$.
If these theorems could be upgraded to include the case that $k$ is even, then a version of Theorem \ref{theorem: Main theorem} could in principle be proven to include manifolds $W$ with non-trivial $H_{2n}(W; \Z/2)$ using similar techniques to those developed in this paper. 
\end{remark}

\subsection{Organization}
Section \ref{section: simplicial techniques} is a recollection of some basic definitions and results about simplicial complexes and semi-simplicial spaces. 
In Section \ref{section: Algebra} we give an algebraic treatment of the linking form. 
In Section \ref{section: Highly Connected Manifolds of Odd Dimension} we review the diffeomorphism classification of the manifolds in $\widehat{\mathcal{W}}^{f}_{4n+1}$.
In Sections \ref{Bordism Groups of Singular Manifolds}, \ref{section: k-l manifolds}, and \ref{section: intersections} we give the necessary background on $\Z/k$-manifolds used in the proof of Theorem \ref{theorem: Main theorem}. 
In these three sections we state all of the necessary technical results regarding the intersections of immersions and embeddings of $\Z/k$-manifolds, but we put off most of the difficult proofs until Appendix \ref{section: modyifing higher dim intersections} and \ref{section: k-immersions}. 
In Section \ref{section: semi-simplicial spaces} we construct the primary semi-simplical space $X_{\bullet}(M)_{k}$ and prove that its geometric realization is highly connected. 
In Section \ref{Homological Stability} we show how high-connectivity of $|X_{\bullet}(M)_{k}|$ implies Theorem \ref{theorem: Main theorem}. 
In Appendix \ref{section: modyifing higher dim intersections} and Appendix \ref{section: k-immersions}, we prove several technical results regarding the intersections of immersions and embeddings of $\Z/k$-manifolds that were used earlier in the paper.

\subsection{Acknowledgments}
This paper forms part of the author's doctoral thesis at the University of Oregon. 
The author thanks Boris Botvinnik, his thesis advisor, for suggesting this particular problem and for the many helpful discussions relating to this project.  
The author also thanks the anonymous referee for numerous helpful suggestions. 

 \section{Simplicial Techniques} \label{section: simplicial techniques}
 \noindent
 In this section we recall a number of simplicial techniques that we will need to use throughout the paper. 
We will need to consider a variety of different simplicial complexes and semi-simplicial spaces. 
 \subsection{Cohen-Macaulay complexes}
Let $X$ be a simplicial complex. 
Recall that the \textit{link} of a
simplex $\sigma < X$, denoted by $\lk_{X}(\sigma)$, is defined to be the subcomplex
of $X$ consisting of all simplices $\zeta$ disjoint from $\sigma$, for which there exists a simplex $\xi$ such that both $\sigma$ and $\zeta$ are faces of $\xi$.
The following proposition was proven in \cite[Section 2.1]{GRW 14}. 
We will use it in the proof of Theorem \ref{theorem: linking form high-connectivity}.
\begin{proposition} \label{proposition: inclusion complex}
Let $X$ be a simplicial complex and let $Y \subset X$ be a full subcomplex.
Let $n$ be an integer with the property that for each $p$-simplex $\sigma < X$, the complex $Y\cap\lk_{X}(\sigma)$ is $(n-p-1)$-connected. 
Then the inclusion $|Y| \hookrightarrow |X|$ is $n$-connected.
\end{proposition}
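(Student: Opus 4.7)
The plan is to establish the equivalent claim that $\pi_j(|X|,|Y|) = 0$ for $j \leq n$. Given a map of pairs $f\colon (D^j, S^{j-1}) \to (|X|, |Y|)$ with $j \leq n$, I will construct a homotopy, rel $S^{j-1}$, from $f$ to a map landing in $|Y|$, using the classical ``bad simplex'' induction.

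After applying simplicial approximation (subdividing both sides as needed), I may assume $f$ is simplicial with respect to some triangulation $K$ of $D^j$. Call a simplex $\sigma \in K$ \emph{bad} if $f(\sigma)$ is not a simplex of $Y$. Since $Y$ is full in $X$, the simplex $\sigma$ is bad if and only if at least one vertex of $\sigma$ is mapped outside $Y$; in particular, because $f|_{S^{j-1}}$ already lies in $|Y|$, every bad simplex is interior to $D^j$.

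I would induct by choosing a bad simplex $\sigma \in K$ of maximal dimension, say $q = \dim\sigma$, and removing it without introducing bad simplices of dimension $\geq q$. Set $\tau := f(\sigma)$, a $p$-simplex of $X \setminus Y$ with $p \leq q$. The maximality of $\sigma$ forces $f(v) \in Y$ for every vertex $v \in \lk_K(\sigma)$: otherwise $\sigma \cup \{v\}$ would be a bad simplex of dimension $q+1$. Fullness of $Y$ then implies $f(\lk_K(\sigma)) \subset |Y|$, and simpliciality gives $f(\lk_K(\sigma)) \subset |\lk_X(\tau)|$, so $f|_{\lk_K(\sigma)}$ factors through $|Y \cap \lk_X(\tau)|$. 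Since $\sigma$ is interior to the PL ball $|K| = D^j$, we have $|\lk_K(\sigma)| \cong S^{j-q-1}$. The hypothesis says $|Y \cap \lk_X(\tau)|$ is $(n-p-1)$-connected, and since $j - q - 1 \leq n - p - 1$ (using $j \leq n$ and $p \leq q$), the restriction $f|_{\lk_K(\sigma)}$ is null-homotopic in $|Y \cap \lk_X(\tau)|$.

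To finish the inductive step, I would use this null-homotopy to redefine $f$ on the closed star $\overline{\mathrm{star}}_K(\sigma) \cong \sigma * \lk_K(\sigma)$, which is a PL $j$-ball with boundary $|\partial\sigma * \lk_K(\sigma)|$. The goal is a new map that agrees with $f$ outside the open star and sends the closed star into $|Y \cap \lk_X(\tau)| \subset |Y|$; this eliminates the bad simplex $\sigma$ and (by construction) introduces no new bad simplex of dimension $\geq q$. Iterating strictly decreases the lexicographic pair (maximal bad dimension, number of top-dimensional bad simplices) and terminates with a map homotopic to $f$ rel $S^{j-1}$ whose image lies in $|Y|$.

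The main obstacle is executing this last modification cleanly: one must extend the null-homotopy of $f|_{\lk_K(\sigma)}$ across the join structure of the star, reparametrizing along the cone coordinate to push the entire image off $\mathrm{int}(\tau)$ into $|Y \cap \lk_X(\tau)|$, while respecting the simplicial behavior of $f$ on $|\partial\sigma * \lk_K(\sigma)|$. This typically requires subdividing $K$ on the star so that the replacement remains PL and so that no bad simplices are inadvertently created outside the star; the subdivision step is where the argument becomes technical, though conceptually it is just a careful implementation of the fact that $\overline{\mathrm{star}}_K(\sigma)$ is a cone whose base $\lk_K(\sigma)$ can be redirected into the $(n-p-1)$-connected target.
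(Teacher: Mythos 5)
Your overall strategy is the right one (it is the same bad-simplex induction as in the source the paper cites, \cite[Section 2.1]{GRW 14}), but your definition of ``bad'' breaks the two key steps of the induction. You declare $\sigma$ bad when $f(\sigma)$ is not a simplex of $Y$, i.e.\ when \emph{at least one} vertex of $\sigma$ maps outside $Y$. With that definition, any simplex containing a bad simplex is automatically bad, so your claim ``the maximality of $\sigma$ forces $f(v)\in Y$ for every $v\in\lk_K(\sigma)$, otherwise $\sigma\cup\{v\}$ would be bad of dimension $q+1$'' is not a valid deduction: $\sigma\cup\{v\}$ is bad no matter where $v$ is sent, so maximality actually forces $\lk_K(\sigma)=\emptyset$, i.e.\ $\sigma$ is top-dimensional, and the factorization of $f|_{\lk_K(\sigma)}$ through $|Y\cap\lk_X(\tau)|$ that drives the whole argument never becomes available. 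The termination bookkeeping fails for the same reason: after coning off, the new simplices $\tau_1\ast v$ with $\tau_1\subset\partial\sigma$ can still contain vertices mapping outside $Y$, hence are still ``bad'' in your sense, so your lexicographic measure (maximal bad dimension, number of bad simplices of that dimension) need not decrease.

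The fix is the standard one: call $\sigma$ bad when \emph{every} vertex of $\sigma$ is mapped outside $Y$. Then a bad simplex of maximal dimension $q$ does satisfy $f(\lk_K(\sigma))\subset Y$ (if $\sigma\ast v$ is not bad, the vertex landing in $Y$ must be $v$), fullness gives $f(\lk_K(\sigma))\subset |Y\cap\lk_X(f(\sigma))|$, and your dimension count $j-q-1\le n-p-1$ lets you extend over the $(j-q)$-disk. Moreover, after replacing $f$ on $\overline{\mathrm{star}}_K(\sigma)=\sigma\ast\lk_K(\sigma)$ by $f|_{\partial\sigma}\ast g$ (retriangulating the star as $\partial\sigma$ joined with a triangulated disk on which $g$ is simplicial), every new simplex either lies in $\partial\sigma$ (so was already present, of dimension $<q$) or contains a vertex mapped into $Y$, hence is not bad; thus no new bad simplices are created and the count of bad $q$-simplices drops, which is exactly the induction you want. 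Your final worry about subdivision is also handled by this join description: no global subdivision is needed, only a PL triangulation of the cone on $\lk_K(\sigma)$ agreeing with the given one on its base. With the corrected notion of badness your proof becomes the argument of Galatius--Randal-Williams that the paper invokes; as written, however, the maximality step is false and the induction does not terminate.
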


We now present a key definition that will be used throughout the paper. 
\begin{defn} A simplicial complex $X$ is said to be
\textit{weakly Cohen-Macaulay} of dimension $n$ if it is
$(n-1)$-connected and the link of any $p$-simplex is
$(n-p-2)$-connected. In this case we write $\omega CM(X) \geq
n$. The complex $X$ is said to be \textit{locally weakly
Cohen-Macaulay} of dimension $n$ if the link of any $p$-simplex is
$(n - p - 2)$-connected (but no global connectivity is required on
$X$ itself). In this case we shall write $lCM(X) \geq n$.
\end{defn}
\noindent
The next result is from \cite[Theorem 2.4]{GRW 14} and is a generalization of the ``Coloring Lemma'' of Hatcher and Wahl from \cite[Lemma 3.1]{HW 10}.
\begin{theorem}[Galatius, Randal-Williams 2016] \label{thm: cohen mac trick}
Let $X$ be a
simplicial complex with $lCM(X) \geq n$, let $f: \partial I^{n}
\rightarrow |X|$ be a map, and let $h: I^{n} \rightarrow |X|$ be a null-homotopy of $f$. 
If $f$ is simplicial with respect to some PL triangulation $\partial I^{n} \cong |L|$, then this triangulation extends to a PL triangulation $I^{n} \cong |K|$ 
and $h$ is homotopic relative to $\partial I^{n}$ to a simplicial map $g: |K| \longrightarrow |X|$ such that:
\begin{enumerate} \itemsep.2cm
\item[(i)] for each vertex $v \in K\setminus L$, the star $v\ast\lk_{K}(v)$ intersects $L$ in a single (possibly empty) simplex;
\item[(ii)] for each vertex $v \in K\setminus L$, $g(\lk_{K}(v)) \subset \lk_{X}(g(v))$.
\end{enumerate}
\end{theorem}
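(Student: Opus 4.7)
The plan is to adapt the ``Coloring Lemma'' strategy of Hatcher--Wahl \cite[Lemma 3.1]{HW 10}, of which this theorem is a direct generalization. The proof proceeds in three moves: a simplicial approximation of $h$ relative to $L$, a combinatorial refinement of the domain triangulation enforcing (i), and an inductive modification of the approximation on interior vertices enforcing (ii), where the local Cohen--Macaulay hypothesis supplies the connectivity needed at each step.

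First, I would extend $L$ to an arbitrary PL triangulation of $I^n$ and apply the PL simplicial approximation theorem (after iterated barycentric subdivision of the interior) to replace $h$ by a simplicial map $g_0$ that agrees with $f$ on $L$ and is homotopic to $h$ rel $\partial I^n$. This uses nothing about $X$ beyond that it is a simplicial complex. Next, to enforce (i), I would fix a PL collar $c : \partial I^n \times [0,1] \hookrightarrow I^n$ and arrange the interior triangulation inside the collar so that every new interior vertex lies in the interior of a prism $\sigma \times [0,1]$ over a single simplex $\sigma$ of $L$. Concretely, one can subdivide $|L| \times [0,1]$ by a shuffle triangulation and then perform a stellar subdivision at the barycenter of each prism; the barycenters become interior vertices whose closed star meets $|L|$ in exactly the base simplex. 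Outside the collar any triangulation is permitted. Re-approximating $g_0$ simplicially on this refined triangulation $K$ yields a simplicial map $g_1 : K \to X$, still extending $f$ and homotopic to $h$ rel $\partial I^n$, with $K$ satisfying the combinatorial form of (i).

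For the third step, I would enforce (ii) by modifying $g_1(v)$ at interior vertices, processed in an order induced by the simplex structure of $K$ (for instance, decreasing $\dim \tau_v$, where $\tau_v := (v \ast \lk_K(v)) \cap |L|$). At each step, let $\sigma_v$ denote the simplex of $X$ spanned by $g_1(\tau_v)$ together with the already-modified values on the neighbors of $v$ that have been processed; by the inductive hypothesis plus (i), $\sigma_v$ really is a simplex of $X$, and the remaining vertices of $\lk_K(v)$ map simplicially into $\lk_X(\sigma_v)$. The local Cohen--Macaulay hypothesis $lCM(X) \geq n$ gives that $\lk_X(\sigma_v)$ is $(n - \dim \sigma_v - 2)$-connected, which is precisely the dimension range needed to run simplicial approximation on the remaining simplicial sphere inside $\lk_X(\sigma_v)$ and produce a vertex $w \in \lk_X(\sigma_v)$ whose link in $X$ contains all of $g_1(\lk_K(v))$. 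Setting $g(v) := w$ satisfies (ii) at $v$ without disturbing simpliciality elsewhere; iterating yields the desired $g$.

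The main obstacle is the combinatorial subdivision in the second step: producing a triangulation of $I^n$ extending a given $L$ whose every interior vertex meets $|L|$ in a single simplex is a delicate PL construction, and one must ensure that condition (i) survives the subsequent simplicial re-approximation. Once (i) is in place, the third step becomes a mostly formal induction mirroring the classical simplicial approximation theorem, with the bound $n - p - 2$ from the local Cohen--Macaulay hypothesis providing exactly enough connectivity at each level to find the replacement vertex $w$.
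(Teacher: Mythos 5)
A preliminary remark: the paper does not prove this theorem. It is cited verbatim from Galatius and Randal-Williams \cite[Theorem 2.4]{GRW 14}, where it is proven by adapting the Coloring Lemma of Hatcher--Wahl \cite[Lemma 3.1]{HW 10}; so the relevant comparison is with that proof, not with anything in the present paper.

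Your Step 3 contains a genuine gap that cannot be repaired within your plan. You propose to satisfy (ii) at a bad interior vertex $v$ by finding a single vertex $w$ of $X$ with $g(\lk_K(v)) \subseteq \lk_X(w)$ and then redefining $g(v) := w$. Such a $w$ need not exist, even under the full strength of the hypothesis. Take $X = \partial\Delta^{m}$, for which $lCM(X) = m-1 = n$; a simplicial map $g|_{\lk_K(v)}$ from an $(n-1)$-sphere to $X$ that is surjective on vertices is automatically null-homotopic for dimension reasons, but no vertex $w$ of $X$ has $\lk_X(w)$ containing every vertex of $X$, so no reassignment of $g(v)$ can achieve (ii). The actual mechanism in \cite{GRW 14} (and already in \cite{HW 10}) does \emph{not} modify $g$ only at the vertex $v$: one \emph{subdivides} the closed star of $v$, introducing fresh interior vertices, and uses the $(n - \dim\sigma_v - 2)$-connectivity of $\lk_X(\sigma_v)$ to produce a null-homotopy of $g|_{\lk_K(v)}$, rel $L$, whose image lies in $\lk_X(\sigma_v)$; the new vertices are then mapped via this null-homotopy, and termination is controlled by a lexicographic badness measure on interior vertices that tracks failure of (i) and (ii) simultaneously. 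This also explains the difficulty you flagged in Step 2: in the genuine argument, (i) is not arranged once and for all before simplicial approximation (which could then destroy it) — conditions (i) and (ii) are maintained together along an induction that subdivides one star at a time. Separately, your claim that ``the remaining vertices of $\lk_K(v)$ map simplicially into $\lk_X(\sigma_v)$'' is not justified: an unprocessed interior neighbor $u$ of $v$ is adjacent to $v$ in $K$, but need not be adjacent to all of $\tau_v$, so $g(u)$ need not be joinable to $\sigma_v$ in $X$.
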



We will need to use an application of the above result. 
In order to state this application we need to introduce a new definition and some terminology. 
\begin{defn} \label{defn: link lifting property 1}
Let $f: X \longrightarrow Y$ be a simplicial map between two simplicial complexes. 
The map is said to have the \textit{link lifting property} if the following condition holds:
\begin{itemize}
\item Let $y \in Y$ be a vertex and let $A \subset X$ be a set of vertices such that $f(a) \in \lk_{Y}(y)$ for all $a \in A$.
Then there exists $x \in X$ with $f(x) = y$ such that $a \in \lk_{X}(x)$ for all $a \in A$.
\end{itemize}
\end{defn}

In order for the map $f$ to have the link lifting property it is necessary that the above condition be satisfied for any set of vertices $A \subset X$. 
In practice, it can be difficult to verify the above lifting property for totally arbitrary subsets $A \subset X$. 
One may only be able to verify that it holds for sets of vertices subject to some condition. 
We will need to work with a refined version of Definition \ref{defn: link lifting property 1}. 

For any set $K$, a \textit{symmetric relation} is a subset $\mathcal{R} \subset K\times K$ that is invariant under the ``coordinate permutation'' map $(x, y) \mapsto (y, x)$. 
A subset $C \subset K$ is said to be in \textit{general position with respect to $\mathcal{R}$} if $(x, y) \in \mathcal{R}$ for any two elements $x, y \in C$. 
We will need to consider symmetric relations defined on the set of vertices of a simplicial complex. 
Let $X$ be a simplicial complex and let $\mathcal{R} \subset X\times X$ be a symmetric relation on the vertices of $X$. 
The relation $\mathcal{R}$ is said to be \textit{edge compatible} if for any $1$-simplex $\{x, y\} < X$, the pair $(x, y)$ is an element of $\mathcal{R}$.

\begin{defn} \label{defn: cone lifting property} 
Let $f: X \longrightarrow Y$ be a simplicial map between two simplicial complexes. 
Let $\mathcal{R} \subset X\times X$ be an edge compatible symmetric relation on the set of vertices of the complex $X$.
The map $f$ is said to have the \textit{link lifting property with respect to $\mathcal{R}$} if the following condition holds:
\begin{itemize}
\item
Let $y \in Y$ be any vertex and let $A \subset X$ be a finite set of vertices in general position with respect to $\mathcal{R}$ such that
 $f(a) \in \lk_{Y}(y)$ for all $a \in A$.
Then given any other finite set of vertices $B \subset X$ (not necessarily in general position), there exists a vertex $x \in X$ with $f(x) = y$ such that 
$a \in \textstyle{\lk_{X}(x)}$ for all $a \in A$ and $(b, x) \in \mathcal{R}$ for all $b \in B$.
\end{itemize}
\end{defn}
The lemma below abstracts and formalizes the argument used in the proof of \cite[Lemma 5.4]{GRW 14}.
This lemma will be applied in the proof of Lemma \ref{thm: high connectedness of K}. 
\begin{lemma} \label{lemma: link lift lemma}
Let $X$ and $Y$ be simplicial complexes and let $f: X \longrightarrow Y$ be a simplicial map.
Let $\mathcal{R} \subset X\times X$ be an edge compatible symmetric relation. 
Suppose that the following conditions are met:
\begin{enumerate} \itemsep2pt
\item[(i)] $f$ has the \textit{link lifting property} with respect to $\mathcal{R}$;
\item[(ii)] $lCM(Y) \geq n$;
\end{enumerate}
Then the induced map $|f|_{*}: \pi_{j}(|X|) \longrightarrow \pi_{j}(|Y|)$ is injective for all $j \leq n-1$. 
Furthermore, suppose that in addition to properties (i) and (ii) the map $f$ satisfies:
\begin{enumerate}
\item[(iii)] $f(\lk_{X}(\zeta)) \leq \lk_{Y}(f(\zeta))$ for all simplices $\zeta < X$.
\end{enumerate}
Then it follows that $lCM(X) \geq n$. 
\end{lemma}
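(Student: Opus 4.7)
The plan is to follow a vertex-by-vertex lifting argument of Cohen-Macaulay type, in the style of the proof of \cite[Lemma 5.4]{GRW 14}. For the injectivity statement, I would take a class $[\alpha] \in \ker(|f|_{*} : \pi_{j}(|X|) \to \pi_{j}(|Y|))$ for some $j \leq n-1$, represent it by a simplicial map $\alpha : \partial I^{j+1} \cong |L| \to |X|$, and let $h : I^{j+1} \to |Y|$ be a null-homotopy of $|f| \circ \alpha$. Since $lCM(Y) \geq n \geq j+1$, Theorem \ref{thm: cohen mac trick} produces a PL triangulation $I^{j+1} \cong |K|$ extending $|L|$ together with a simplicial map $g : |K| \to |Y|$ homotopic to $h$ rel $\partial I^{j+1}$ and satisfying conclusions (i) and (ii) of that theorem. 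The remaining task is to construct a simplicial lift $\tilde{g} : |K| \to |X|$ with $f \circ \tilde{g} = g$ and $\tilde{g}|_{|L|} = \alpha$, which will null-homotope $\alpha$ in $|X|$.

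I would build $\tilde{g}$ by induction over the vertices $v_{1}, \ldots, v_{m}$ of $K \setminus L$, maintaining the invariant that the already-lifted vertices are pairwise in $\mathcal{R}$. To lift $v_{i}$, set $y := g(v_{i})$, let $A$ be the $\tilde{g}$-image of the already-lifted vertices in $\lk_{K}(v_{i})$, and let $B$ be the $\tilde{g}$-image of the remaining already-lifted vertices. The inductive hypothesis puts $A$ in general position with respect to $\mathcal{R}$, while conclusion (ii) of Theorem \ref{thm: cohen mac trick} combined with $f \circ \tilde{g} = g$ gives $f(A) \subset \lk_{Y}(y)$. The link lifting property with respect to $\mathcal{R}$ then produces a vertex $\tilde{g}(v_{i}) \in X$ with $f(\tilde{g}(v_{i})) = y$, with $a \in \lk_{X}(\tilde{g}(v_{i}))$ for all $a \in A$, and with $(b, \tilde{g}(v_{i})) \in \mathcal{R}$ for all $b \in B$, preserving both clauses of the invariant. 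Conclusion (i) of Theorem \ref{thm: cohen mac trick} is what guarantees that the construction agrees with the prescribed values of $\alpha$ on $|L|$.

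For the second claim, the plan is to apply the first claim to the restricted map $f_{\zeta} : \lk_{X}(\zeta) \to \lk_{Y}(f(\zeta))$ afforded by condition (iii), for each $p$-simplex $\zeta < X$. A routine unwinding from $lCM(Y) \geq n$ gives $lCM(\lk_{Y}(f(\zeta))) \geq n-p-1$ and also that $|\lk_{Y}(f(\zeta))|$ is $(n-p-2)$-connected. To verify the link lifting property for $f_{\zeta}$ relative to the restriction of $\mathcal{R}$, I would reduce to the link lifting property for $f$ itself, augmenting the given data $(A, B)$ with the vertices of $\zeta$: edge-compatibility of $\mathcal{R}$ keeps the augmented $A$ in general position, and the $B$-clause of the link lifting property forces the resulting lift $x$ to be $\mathcal{R}$-compatible with every vertex of $\zeta$, placing $x$ inside $\lk_{X}(\zeta)$. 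The first claim, applied with parameter $n-p-1$, then gives injectivity of $(f_{\zeta})_{*}$ on $\pi_{j}$ for $j \leq n-p-2$, and combined with the vanishing of $\pi_{j}(|\lk_{Y}(f(\zeta))|)$ in that range this yields $(n-p-2)$-connectivity of $|\lk_{X}(\zeta)|$, as required.

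The main technical point I expect to confront is the gap between the \emph{vertex-level} link adjacency delivered by Definition \ref{defn: cone lifting property} and the \emph{simplex-level} spanning condition needed for $\tilde{g}$ to be genuinely simplicial (and, in the second claim, for the lift $x$ to belong to $\lk_{X}(\zeta)$ rather than merely be pairwise adjacent to its vertices). Feeding every previously-lifted vertex into $B$ at each inductive step ensures that any subset of lifted vertices spanning a face of a simplex of $K$ maps to pairwise $\mathcal{R}$-compatible, pairwise adjacent vertices of $X$, from which the full spanning condition is extracted using the interplay between edge-compatibility of $\mathcal{R}$ and general position; this is the bookkeeping that will need the most care to make rigorous.
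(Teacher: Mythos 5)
Your plan follows the paper's proof in outline (vertex-by-vertex lifting of a simplicial null-homotopy over $K\setminus L$ via the link lifting property with respect to $\mathcal{R}$, then the second claim by restricting $f$ to links), but there is a genuine gap at the one delicate point of the induction: the verification that the set $A$ you feed into Definition \ref{defn: cone lifting property} is in general position with respect to $\mathcal{R}$. The invariant you propose to maintain --- that all already-lifted vertices are pairwise in $\mathcal{R}$ --- cannot hold: the lifts of the vertices of $L$ are prescribed by $\alpha$, and two non-adjacent vertices of $L$ need not have $\mathcal{R}$-related images, so the invariant fails already at initialization; if instead you intend the invariant to cover only the interior vertices lifted so far, then it no longer implies general position of $A$, since $A$ will in general contain several boundary vertices. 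This is exactly what conclusion (i) of Theorem \ref{thm: cohen mac trick} is for, and you have misassigned its role: agreement of the lift with $\alpha$ on $|L|$ is automatic (one simply sets $\tilde{g}|_{L}=\alpha$), whereas what (i) actually buys is that the vertices of $L$ lying in $\lk_{K}(v_{i})$ span a single simplex, so their images under the simplicial map $\alpha$ span a simplex of $X$ and are therefore pairwise in $\mathcal{R}$ by edge-compatibility; pairs of elements of $A$ involving at least one interior vertex are then covered by the $B$-clause of previous lifting steps. Without this argument the inductive step is unjustified whenever $A$ meets $L$ in more than one vertex.

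A secondary slip occurs in your treatment of the second claim: $\mathcal{R}$-compatibility of the lift $x$ with the vertices of $\zeta$ has no bearing on whether $x\in\lk_{X}(\zeta)$, since $\mathcal{R}$ is an abstract relation. What puts $x$ into $\lk_{X}(\zeta)$ is the adjacency clause of the lifting property after augmenting $A$ by the vertices of $\zeta$ (legitimate, since $y\in\lk_{Y}(f(\zeta))$ forces $f(\zeta_{j})\in\lk_{Y}(y)$ for each vertex $\zeta_{j}$ of $\zeta$, and edge-compatibility keeps the augmented set in general position), combined with the fact that in all complexes to which the lemma is applied, simplices are determined by pairwise conditions --- the same vertex-versus-simplex point you raise at the end, which the paper, following \cite{GRW 14}, treats as immediate for exactly this reason. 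With these two repairs your argument coincides with the paper's proof.
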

\begin{proof}
For $l+1 \leq n$, let $h: \partial I^{l+1} \longrightarrow |X|$ be a map. 
Let $H: I^{l+1} \longrightarrow |Y|$ be a null-homotopy of the composition $|f|\circ h$, i.e.\ $H|_{\partial I^{l+1}} = |f|\circ h$. 
To prove that $|f|_{*}: \pi_{l}(|X|) \longrightarrow \pi_{l}(|Y|)$ is injective for all $l \leq n-1$, it will suffice to construct a lift $\widehat{H}$ of $H$ that makes the diagram
$$\xymatrix{
\partial I^{l+1} \ar@{^{(}->}[d] \ar[rr]^{h} && |X| \ar[d]^{|f|} \\
I^{l+1} \ar[rr]^{H}  \ar@{-->}[urr]^{\widehat{H}} && |Y| 
}$$
commute. 
We may assume that $h$ is simplicial with respect to some PL triangulation $\partial I^{l+1} \cong |L|$. 
Since $lCM(Y) \geq n$, by Theorem \ref{thm: cohen mac trick} we may assume that $H$ is simplicial with respect to a PL triangulation $I^{l+1} \cong |K|$ extending the one on the boundary. 
Furthermore, we may arrange that the map $H$ and the triangulation $K$ satisfy the following further conditions:
\begin{enumerate} \itemsep.2cm
\item[(1)] for any interior vertex $v \in K \setminus L$, the star $v\ast \lk_{K}(v)$ intersects $L$ in a single (possibly empty) simplex;
\item[(2)]  $H(\lk_{K}(v)) \leq \lk_{Y}(H(v))$ for any interior vertex $v \in K\setminus L$.
\end{enumerate}
Choose an enumeration of the vertices in $K$ as $v_{1}, \dots, v_{N}$ with the vertices in $L$ before the vertices in $K\setminus L$. 
We shall inductively pick lifts of each $H(v_{i}) \in Y$ with $v_{i} \in K\setminus L$ to a vertex $\widehat{H}(v_{i}) \in X$ satisfying the following:
\begin{enumerate} \itemsep.2cm
\item[(a)] if $v_{i}, v_{k}$ are adjacent vertices in $K$ then $\widehat{H}(v_{i}), \widehat{H}(v_{k})$ are adjacent in $X$; 
\item[(b)] if $v_{i} \in K\setminus L$, then $(\widehat{H}(v_{i}), \widehat{H}(v_{k})) \in \mathcal{R}$ for $k < i$.
\end{enumerate}
Notice that condition (a) implies that the resulting lift $\widehat{H}$ yields a well defined simplicial map. 
Condition (b) is an extra condition that will allow us to inductively define the lifts using the link lifting property with respect to $\mathcal{R}$.

We construct $\widehat{H}$ inductively as follows.
Suppose that $H(v_{1}), \dots, H(v_{i-1})$ have already been lifted to $\widehat{H}(v_{1}), \dots, \widehat{H}(v_{i-1})$ satisfying (a) and (b) above.
Let $v_{i} \in K\setminus L$ be the next vertex in the list. 
Let $A \subset \{v_{1}, \dots, v_{i-1}\}$ be the subset consisting of those vertices that are adjacent to $v_{i}$ and let $B = \{v_{1}, \dots, v_{i-1}\}\setminus A$. 
By condition (2) we have $H(a) \in \lk_{Y}(v_{i})$ for all $a \in A$.
We wish to apply the link lifting property (with respect to $\mathcal{R}$) to find a vertex $\widehat{v}_{i} \in X$ with $f(\widehat{v}_{i}) = H(v_{i})$ such that 
$\widehat{H}(a) \in \lk_{X}(\widehat{v}_{i})$ for all $a \in A$ and $(\widehat{v}_{i}, \widehat{H}(b)) \in \mathcal{R}$ for all $b \in B$.
Then by setting $\widehat{H}(v_{i}) = \widehat{v}_{i}$, we obtain the desired lift of $H(v_{i})$ thus completing the inductive step. 

In order to apply the link lifting property in this context, we must verify that the set of vertices $\{\widehat{H}(a) | a \in A\} \subset X$ is in general position with respect to $\mathcal{R}$.
We do this as follows.
Let $A' \subset A$ denote the intersection $A\cap L$. 
By condition ($1$) above it follows that the set of vertices $A'$ forms a single (possibly empty) simplex in $L$. 
Since $\widehat{H}$ is a simplicial map, the image $\widehat{H}(A') \subset X$ forms a single simplex as well. 
Since the relation $\mathcal{R}$ is assumed to be edge compatible, it follows that the set $\widehat{H}(A') \subset X$ is in general position with respect to $\mathcal{R}$. 
Let $A''$ denote the complement $A \setminus A'$. 
By condition (b), the image $\widehat{H}(A'') \subset X$ is in general position with respect to $\mathcal{R}$ and furthermore (by condition (b) again) we have $(\widehat{H}(a'), \widehat{H}(a'')) \in \mathcal{R}$ for all $a' \in A'$ and $a'' \in A''$. 
It follows from these observations that $\widehat{H}(A) = \widehat{H}(A')\cup \widehat{H}(A'')$ is in general position with respect to $\mathcal{R}$.
With this established, we may apply the link lifting property to extend the definition of $\widehat{H}$ over the vertex $v_{i}$. 
This completes the induction and proves that the map 
$|f|_{*}: \pi_{j}(|X|) \longrightarrow \pi_{j}(|Y|)$ is injective for all $j \leq n-1$.

Assume now that in addition to properties (i) and (ii) from the statement of the lemma we have $f(\lk_{X}(\sigma)) \leq \lk_{Y}(f(\sigma))$ for all simplices $\sigma < X$.
We will show that $lCM(X) \geq n$. 
Let $\zeta \leq X$ be a $p$-simplex. 
Since $f$ has the link lifting property with respect to $\mathcal{R}$, it follows easily that the restriction map 
$
\xymatrix{
f|_{\lk_{X}(\zeta)}: \lk_{X}(\zeta) \; \longrightarrow \; \lk_{Y}(f(\zeta))
}
$
has the link lifting property with respect to $\mathcal{R}$ as well. 
Since $lCM(Y) \geq n$, it follows from \cite[Lemma 2.2]{GRW 12} that
$l CM[\lk_{Y}(f(\zeta))] \; \geq \; n - p - 1.$ 
It follows from the result proven in the previous paragraph that map on $\pi_{j}(\underline{\hspace{.3cm}})$ induced by $f|_{\lk_{X}(\zeta)}: \lk_{X}(\zeta) \; \longrightarrow \; \lk_{Y}(f(\zeta))$
is injective for $j \leq n - p - 2$. 
Since $\lk_{Y}(f(\zeta))$ is $(n-p-2)$-connected it follows that $\lk_{X}(\zeta)$ is $(n-p-2)$-connected as well. 
This proves that $lCM(X) \geq n$ and completes the proof of the result.
\end{proof}
\begin{remark}
The main technical challenge in this paper will be to prove that a certain simplicial map (see (\ref{eq: algebraic comparison map}) and Section \ref{subsection: complex of k embeddings}) has the link lifting property with respect to a certain suitable symmetric relation. 
This is established in the proof of Lemma \ref{thm: high connectedness of K} but it uses the geometric techniques regarding $\Z/k$-manifolds developed throughout Sections \ref{Bordism Groups of Singular Manifolds}, \ref{section: k-l manifolds}, \ref{section: intersections} and in the appendix. 
\end{remark}
Notice that the link lifting property (as in Definition \ref{defn: link lifting property 1}) is a stronger condition than the link lifting property with respect to some edge compatible symmetric relation $\mathcal{R}$. 
In other words, if $f: X \longrightarrow Y$ has the link lifting property then it has the link lifting property with respect to any edge compatible symmetric relation $\mathcal{R} \subset X\times X$.
From this observation we obtain the following corollary.
\begin{corollary} \label{corollary: link lifting no relation}
Let $X$ and $Y$ be simplicial complexes and let $f: X \longrightarrow Y$ be a simplicial map.
Suppose that::
\begin{enumerate} \itemsep.2cm
\item[(i)] $f$ has the \textit{link lifting property} (as in Definition \ref{defn: link lifting property 1});
\item[(ii)] $lCM(Y) \geq n$;
\item[(iii)] $f(\lk_{X}(\zeta)) \leq \lk_{Y}(f(\zeta))$ for all simplices $\zeta < X$.
\end{enumerate}
Then the induced map $|f|_{*}: \pi_{j}(|X|) \longrightarrow \pi_{j}(|Y|)$ is injective for all $j \leq n-1$ and $lCM(X) \geq n$. 
\end{corollary}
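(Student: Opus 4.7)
The plan is to deduce this corollary directly from Lemma \ref{lemma: link lift lemma} by choosing a maximally permissive edge compatible symmetric relation on the vertices of $X$, namely the full relation $\mathcal{R} := X \times X$ on the vertex set of $X$. This relation is trivially symmetric (it is invariant under the coordinate swap) and trivially edge compatible (every pair, in particular every edge pair, lies in $\mathcal{R}$), so it satisfies the hypotheses needed to invoke Definition \ref{defn: cone lifting property}.

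The main step is to check that hypothesis (i) of the corollary implies that $f$ has the link lifting property with respect to this chosen $\mathcal{R}$. With $\mathcal{R} = X \times X$, any finite subset $A \subset X$ is in general position with respect to $\mathcal{R}$, and the auxiliary condition $(b,x) \in \mathcal{R}$ for all $b \in B$ is automatically satisfied for every choice of $x$ and every finite set $B$. Thus the statement of Definition \ref{defn: cone lifting property} specialized to $\mathcal{R} = X \times X$ reduces exactly to the link lifting property of Definition \ref{defn: link lifting property 1}, which is hypothesis (i) of the corollary.

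Given this, all three hypotheses of Lemma \ref{lemma: link lift lemma} (namely its (i), (ii), (iii)) hold verbatim: (i) the link lifting property with respect to $\mathcal{R}$, which we have just verified; (ii) $lCM(Y) \geq n$; and (iii) $f(\lk_X(\zeta)) \leq \lk_Y(f(\zeta))$ for all simplices $\zeta < X$. Applying Lemma \ref{lemma: link lift lemma} yields both conclusions of the corollary: injectivity of $|f|_* : \pi_j(|X|) \to \pi_j(|Y|)$ for $j \leq n-1$, and $lCM(X) \geq n$.

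There is no substantive obstacle here, since the corollary is essentially just the specialization of the preceding lemma to the ``trivial'' relation; the content is entirely in Lemma \ref{lemma: link lift lemma}. The only thing to note is the observation made in the paragraph immediately preceding the corollary in the paper: that the unqualified link lifting property is stronger than the link lifting property with respect to any edge compatible symmetric relation, which is exactly what makes the specialization legitimate.
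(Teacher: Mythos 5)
Your proof is correct and essentially matches the paper's: the paper justifies the corollary by the remark immediately preceding it, observing that the unqualified link lifting property entails the $\mathcal{R}$-version and then invoking Lemma \ref{lemma: link lift lemma}. Your refinement of picking the specific relation $\mathcal{R} = X \times X$ is the cleanest way to make this precise, and it is actually tighter than the paper's remark: for an arbitrary edge compatible symmetric relation $\mathcal{R}$, Definition \ref{defn: cone lifting property} demands in addition that the lifted vertex $x$ satisfy $(b,x) \in \mathcal{R}$ for all $b$ in the extra set $B$, which Definition \ref{defn: link lifting property 1} does not by itself guarantee; for $\mathcal{R} = X \times X$ that extra condition is vacuous, so the reduction is genuinely trivial.
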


\subsection{Topological flag complexes}
We will need to work with a certain class of semi-simplicial spaces called \textit{topological flag complexes} (see \cite[Definition 6.1]{GRW2 14}). 
\begin{defn}
Let $X_{\bullet}$ be a semi-simplicial space.
We say that $X_{\bullet}$ is a \textit{topological flag complex} if for each integer $p \geq 0$, 
\begin{enumerate} \itemsep.2cm
\item[(i)] the map $X_{p} \longrightarrow (X_{0})^{\times (p+1)}$ to the $(p+1)$-fold product (which takes a $p$-simplex to its $(p+1)$ vertices) is a homeomorphism onto its image, which is an open subset, 
\item[(ii)] a tuple $(v_{0}, \dots, v_{p}) \in  (X_{0})^{\times(p+1)}$ lies in the image of $X_{p}$ if and only if $(v_{i}, v_{j}) \in X_{1}$ for all $i < j$.
\end{enumerate}
\end{defn}
\noindent
If $X_{\bullet}$ is a topological flag complex, we may denote any $p$-simplex $x \in X_{p}$ by a $(p+1)$-tuple $(x_{0}, \dots, x_{p})$ of zero-simplices. 

\begin{defn} \label{defn: semi-simplicial cohen mac}
Let $X_{\bullet}$ be a topological flag complex and let $x = (x_{0}, \dots, x_{p}) \in X_{p}$ be a $p$-simplex. 
The \textit{link} of $x$, denoted by $X_{\bullet}(x) \subset X_{\bullet}$, is defined to be the sub-semi-simplicial space whose $l$-simplices are given by the space of all ordered lists $(y_{0}, \dots, y_{l}) \in X_{l}$ such that the list $(x_{0}, \dots, x_{p}, y_{0}, \dots, y_{\ell}) \in (X_{0})^{\times (p + \ell + 2)}$, is a $(p+ \ell + 1)$-simplex.

It is easily verified that the link $X_{\bullet}(x)$ is a topological flag complex as well. 
The topological flag complex $X_{\bullet}$ is said to be
\textit{weakly Cohen-Macaulay} of dimension $n$ if its geometric realization is 
$(n-1)$-connected and if for any $p$-simplex $x \in X_{p}$, the geometric realization of the link $|X_{\bullet}(x)|$
 is $(n-p-2)$-connected. In this case we write $\omega CM(X_{\bullet}) \geq n$. 
\end{defn}
\noindent
The main result from this section is a result about the discretization of a topological flag complex. 
\begin{defn} \label{defn: associated simplicial complex}
Let $X_{\bullet}$ be a semi-simplicial space. 
Let $X^{\delta}_{\bullet}$ be the semi-simplicial set defined by setting $X^{\delta}_{p}$ equal to the discrete topological space with underlying set equal to $X_{p}$, for each integer $p \geq 0$. 
We will call the semi-simplicial set $X^{\delta}_{\bullet}$ the \textit{discretization} of $X_{\bullet}$. 
\end{defn}
\noindent
The following theorem is proven by repackaging several results from \cite{GRW 14}. 
In particular, the proof is basically the same as the proof of \cite[Theorem 5.5]{GRW 14}.
We provide a sketch of the proof here and we provide references to the key technical lemmas employed from \cite{GRW 14}.

\begin{theorem} \label{theorem: discretization}
Let $X_{\bullet}$ be a topological flag complex and suppose that $\omega CM(X^{\delta}_{\bullet}) \geq n$. 
Then the geometric realization $|X_{\bullet}|$ is $(n-1)$-connected. 
\end{theorem}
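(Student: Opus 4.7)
The plan is to follow the strategy of \cite[Theorem 5.5]{GRW 14} and reduce the topological question to the already-assumed combinatorial $\omega CM$ bound on the discretization $X^{\delta}_{\bullet}$. Fix $k \le n-1$ and a continuous map $f : S^{k} \to |X_{\bullet}|$; the goal is to produce a null-homotopy. The first step is to construct a simplicial approximation of $f$: choose a sufficiently fine PL triangulation $L$ of $S^{k}$ and perturb $f$ to a map $g : |L| \to |X_{\bullet}|$ with the properties that $g$ sends each vertex of $L$ into $X_{0}$, each $p$-simplex $\{v_{0},\dots,v_{p}\}$ of $L$ has its tuple of vertex images $(g(v_{0}),\dots,g(v_{p}))$ lying in $X_{p}$, and on each closed simplex of $L$ the map $g$ is the obvious affine map into the corresponding closed simplex of $|X_{\bullet}|$. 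The flag hypothesis enters essentially here: it says $X_{p}$ is an \emph{open} subset of $(X_{0})^{\times(p+1)}$, so once vertex images are chosen the condition that tuples of them lie in $X_{p}$ is stable under small perturbations.

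Once such a $g$ is in hand, it tautologically factors as $|L| \xrightarrow{\tilde g} |X^{\delta}_{\bullet}| \to |X_{\bullet}|$, where the second map is the canonical continuous map coming from the identity $X^{\delta}_{p} \to X_{p}$, and $\tilde g$ is now a genuine simplicial map of simplicial sets. Since $\omega CM(X^{\delta}_{\bullet}) \ge n$ in particular forces $|X^{\delta}_{\bullet}|$ to be $(n-1)$-connected, the map $\tilde g : S^{k} = |L| \to |X^{\delta}_{\bullet}|$ extends to a map $H : D^{k+1} \to |X^{\delta}_{\bullet}|$. Post-composing $H$ with the discretization map yields the required null-homotopy of $f$ inside $|X_{\bullet}|$.

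The main obstacle, and the reason the full $\omega CM$ assumption is used rather than mere connectivity of $|X^{\delta}_{\bullet}|$, is the simplicial approximation step. A map into $|X_{\bullet}|$ cannot be approximated simplicially in any naive way because the space of $p$-simplices is parametrized by the positive-dimensional space $X_{p}$, not a discrete set. The natural approach is to choose the vertex assignment on $L$ inductively and, at each new vertex, use the link conditions built into the hypothesis $\omega CM(X^{\delta}_{\bullet}) \ge n$ to adjust the assignment so that the new vertex forms a simplex in $X^{\delta}_{\bullet}$ with all of its previously chosen neighbors in $L$. This is in the same spirit as the link-lifting step in Lemma \ref{lemma: link lift lemma} and the Cohen--Macaulay trick of Theorem \ref{thm: cohen mac trick}: the openness of $X_{p}$ in $(X_{0})^{\times(p+1)}$ turns the combinatorial link-lifting constructions into continuous perturbations. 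Equivalently, one can invoke the general discretization machinery for topological flag complexes developed in \cite{GRW2 14}, which packages precisely this type of argument, and then the theorem follows directly from the hypothesis on $X^{\delta}_{\bullet}$.
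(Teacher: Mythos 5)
Your outline correctly identifies the logical skeleton --- produce, for $k\le n-1$, a homotopy from a given $f:S^{k}\to|X_{\bullet}|$ to a map factoring through $|X^{\delta}_{\bullet}|$, then use the $(n-1)$-connectivity of $|X^{\delta}_{\bullet}|$ --- but the mechanism you describe for producing that factorization is a genuine gap, and it is not the mechanism the paper uses. There is no simplicial approximation theorem for geometric realizations of semi-simplicial \emph{spaces}: a point of $|X_{\bullet}|$ lies in a simplex parametrized by a point of $X_{p}$, the family of ``vertex stars'' is not a set but a space, and the classical argument (refine a triangulation of $S^{k}$ until each simplex lands in a star, then send each vertex to the vertex of that star) has no analogue. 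The openness of $X_{p}$ in $(X_{0})^{\times(p+1)}$ makes a \emph{given} choice of vertex assignments stable under perturbation, but it does nothing to produce a first such choice, which is exactly where all the difficulty lives. Your appeal to ``inductive vertex assignment using the link conditions'' is, in fact, essentially what Theorem \ref{thm: cohen mac trick} and Lemma \ref{lemma: link lift lemma} do, but those results are about simplicial \emph{complexes} and maps already known to be simplicial; they do not by themselves let you trade a topological map into $|X_{\bullet}|$ for one that factors through $|X^{\delta}_{\bullet}|$.

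The paper's actual proof (following \cite[Theorem 5.5]{GRW 14}) takes a different route. It introduces the bi-semi-simplicial space $Y_{p,q}=X_{p+q+1}$, topologized as a subspace of $(X_{0})^{\times p}\times(X^{\delta}_{0})^{\times q}$, with two augmentations $\varepsilon:Y_{\bullet,\bullet}\to X_{\bullet}$ and $\delta:Y_{\bullet,\bullet}\to X^{\delta}_{\bullet}$ satisfying $|\iota|\circ|\delta|\simeq|\varepsilon|$. The key technical step is that, for each $p$, the augmentation $|Y_{p,\bullet}|\to X_{p}$ is a Serre-microfibration (this is where the openness/flag hypothesis enters, via \cite[Proposition 2.8]{GRW 14}) whose fibre over $x\in X_{p}$ is $|X^{\delta}_{\bullet}(x)|$; the hypothesis $\omega CM(X^{\delta}_{\bullet})\ge n$ makes these fibres $(n-p-2)$-connected, so the map is $(n-p-1)$-connected, whence $|\varepsilon|$ is $(n-1)$-connected. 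That is what forces $|\iota|$ to surject on $\pi_{j}$ for $j\le n-1$, and only then does the connectivity of $|X^{\delta}_{\bullet}|$ finish the argument. So the $\omega CM$ hypothesis is used quantitatively to control the connectivity of the microfibration's fibres, not qualitatively in a vertex-assignment step, and the openness hypothesis is used to establish the microfibration property, not to stabilize a pre-chosen simplicial approximation. Your proposal as written therefore leaves the central step --- factoring $f$ through $|X^{\delta}_{\bullet}|$ up to homotopy --- unestablished.
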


\begin{proof}[Proof Sketch]
For integers $p, q \geq 0$, let $Y_{p,q} = X_{p+q+1}$ be topologized as a subspace of the product
$(X_{0})^{\times p}\times (X^{\delta}_{0})^{\times q}$. 
The assignment $[p, q] \mapsto Y_{p,q}$ defines a bi-semi-simplicial space with augmentations 
$$
\varepsilon: Y_{\bullet, \bullet} \longrightarrow X_{\bullet}, \quad
\delta: Y_{\bullet, \bullet} \longrightarrow X^{\delta}_{\bullet}.
$$
This doubly augmented bi-semi-simplicial space is analogous to the one considered in \cite[Definition 5.6]{GRW 14}. 
Let $\iota: X^{\delta}_{\bullet} \longrightarrow X_{\bullet}$ be the map induced by the identity. 
By \cite[Lemma 5.7]{GRW 14}, there exists a homotopy of maps, 
\begin{equation} \label{eq: homotopy of augmentations}
|\iota|\circ|\delta| \simeq |\epsilon|: |Y_{\bullet, \bullet}| \longrightarrow |X_{\bullet}|.
\end{equation}
For each integer $p$, consider the map
\begin{equation} \label{equation: p-augmentation}
|Y_{p, \bullet}| \longrightarrow X_{p}
\end{equation}
induced by $\epsilon$.
By how $Y_{\bullet, \bullet}$ was constructed, it follows from \cite[Proposition 2.8]{GRW 14} that for each $p$, (\ref{equation: p-augmentation}) is a \textit{Serre-microfibration}.
For any $x \in X_{p}$, the fibre over $x$ is equal to the space $|X^{\delta}_{\bullet}(x)|$, where $X^{\delta}_{\bullet}(x)$ is the link of the $p$-simplex $x$, as defined in Definition \ref{defn: semi-simplicial cohen mac}. 
Since $\omega CM(X^{\delta}_{\bullet}) \geq n$, this implies that the fibre of (\ref{equation: p-augmentation}) over any $x \in X_{p}$ is $(n-p-2)$-connected. 
Using the fact that this map is a Serre-microfibration, \cite[Proposition 2.6]{GRW 14} then implies that (\ref{equation: p-augmentation})
is $(n-p-1)$-connected. 
It then follows by \cite[Proposition 2.7]{GRW 14} that the map 
\begin{equation}
|\epsilon|: |Y_{\bullet, \bullet}| \longrightarrow |X_{\bullet}|
\end{equation}
is $(n-1)$-connected.  
The homotopy from (\ref{eq: homotopy of augmentations}) implies that the map $|\iota|: |X_{\bullet}^{\delta}| \longrightarrow |X_{\bullet}|$ induces a surjection on homotopy groups $\pi_{j}(\underline{\hspace{.3cm}})$ for all $j \leq n-1$. 
The proof of the theorem then follows from the fact that $|X_{\bullet}^{\delta}|$ is $(n-1)$-connected by hypothesis. 
\end{proof}

\subsection{Transitive group actions}
In order to prove our homological stability theorem, we will need to consider groups acting on semi-simplicial spaces and simplicial complexes. 
We will need a technique for determining when such actions are transitive. 
For the lemma that follows, 
let $X_{\bullet}$ be a topological flag complex, let $G$ be a topological group, and let 
$$G\times X_{\bullet} \longrightarrow X_{\bullet}, \quad (g, \sigma) \mapsto g\cdot \sigma$$
be a continuous group action. 
\begin{lemma} \label{lemma: flag complex transitivity}
Let $G$ and $X_{\bullet}$ be as above and suppose that the following conditions hold:
\begin{itemize}
\item for any $1$-simplex $(v, w) \in X_{1}$, there exists $g \in G$ such that $g\cdot v = w$,
\item for any two vertices $x, y$ that lie on the same path-component of $X_{0}$, there exists $g \in G$ such that $g\cdot x = y$,
\item the geometric realization $|X_{\bullet}|$ is path-connected.
\end{itemize}
Then for any two vertices $x, y \in X_{0}$, there exists $g \in G$ such that $g\cdot x = y$.
\end{lemma}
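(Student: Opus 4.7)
The plan is to reduce the transitivity of the $G$-action on $X_{0}$ to hypothesis (3) by introducing an auxiliary combinatorial equivalence relation on $X_{0}$. Define $\approx$ on $X_{0}$ by declaring $x\approx y$ if there is a finite chain $x = v_{0}, v_{1}, \dots, v_{n} = y$ in $X_{0}$ such that for each $i$, either $v_{i}$ and $v_{i+1}$ lie in a common path-component of $X_{0}$, or $(v_{i}, v_{i+1}) \in X_{1}$. Hypotheses (1) and (2) immediately imply that $x \approx y$ forces the existence of some $g \in G$ with $g \cdot x = y$, so it suffices to prove that $\approx$ has a single equivalence class on $X_{0}$.

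Next I will observe, using the flag condition (ii) together with hypothesis (1), that any two vertices of any simplex of $X_{\bullet}$ are connected by a $1$-simplex and therefore lie in a common $\approx$-class. Writing $X_{\bullet}(B) \subseteq X_{\bullet}$ for the full sub-topological-flag-complex on a subset $B \subseteq X_{0}$, this yields a disjoint decomposition of the geometric realization
\[
|X_{\bullet}| \;=\; \bigsqcup_{B} |X_{\bullet}(B)|
\]
indexed by the $\approx$-classes $B$ of $X_{0}$.

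The key step is then to verify that each piece $|X_{\bullet}(B)|$ is both path-connected and open in $|X_{\bullet}|$. Path-connectedness is immediate from the definition of $\approx$: any two vertices in $B$ are joined by a chain of paths inside $X_{0}$ and of $1$-simplices, all lying inside $|X_{\bullet}(B)|$, and any interior point of a simplex connects to one of its vertices by a straight-line segment within that simplex. For openness, since every simplex has either all or none of its vertices in $B$, the locus $X_{p}(B) \subseteq X_{p}$ coincides with the preimage of $B$ under projection to any single coordinate $X_{p} \subseteq (X_{0})^{\times(p+1)} \to X_{0}$, so $X_{p}(B)$ is open in $X_{p}$ provided $B$ is open in $X_{0}$. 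Each $\approx$-class $B$ is a union of path-components of $X_{0}$, and hence is open whenever $X_{0}$ is locally path-connected, which is the case in all intended applications. Once openness is established, hypothesis (3) forces the partition to consist of a single piece, yielding a unique $\approx$-class and hence a single $G$-orbit on $X_{0}$.

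The main obstacle is the openness step, which reduces to the $\approx$-classes being open in $X_{0}$; this uses local path-connectedness of $X_{0}$, a mild regularity hypothesis satisfied in all cases where the lemma will be applied, since $X_{0}$ there will be a space of smooth embeddings.
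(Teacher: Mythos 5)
Your reduction is the same as the paper's: the paper also defines the relation $x \sim y$ when some $g \in G$ carries $x$ to $y$, uses hypotheses (1) and (2) to see that edge-adjacent vertices and vertices in a common path component of $X_{0}$ are equivalent, and then invokes path-connectedness of $|X_{\bullet}|$ to conclude any two vertices are related by a chain of such moves. The difference is in how that last, topological step is handled. The paper simply asserts it (``it follows that there exists a vertex $v'$ in the path component containing $v$ and a vertex $w'$ \dots connected by a zig-zag of edges''), with no argument. You prove it, via the decomposition of $|X_{\bullet}|$ into the full subcomplexes on the $\approx$-classes and openness of each piece; that part is sound (the flag condition does force every simplex to have all vertices in one class, and openness of $X_{p}(B)$ does follow from openness of $B$). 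But openness of the classes is exactly where you import a hypothesis not in the statement: local path-connectedness of $X_{0}$. So, strictly, you prove the lemma only under that extra assumption. This is harmless for the paper, since the lemma is only applied to $X_{\bullet}(M,a)_{k}$, whose space of $0$-simplices is an open subset of $\Emb(\bar{W}_{k},M)\times\R$ in the $C^{\infty}$-topology, hence locally path-connected (indeed locally contractible); but to get the statement in its stated generality you would have to either add this regularity hypothesis or give a different argument for the key step (e.g.\ showing directly that a path in $|X_{\bullet}|$ beginning at a vertex cannot leave its class), something the printed proof also does not do — on this point your write-up is more careful than the paper's.
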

\begin{proof}
We define an equivalence relation on the elements of $X_{0}$ by setting $x \sim y$ if there exists $g \in G$ such that $g\cdot x = y$. 
Since $G$ is a group (and thus every element has a multiplicative inverse), it follows that this relation is indeed an equivalence relation, i.e.\ it is transitive, reflexive, and symmetric. 
By transitivity of the relation, it follows from the first condition in the statement of the lemma that $x \sim y$ if there exists some zig-zag of edges connecting $x$ and $y$. 
It also follows that $x \sim y$ if $x$ and $y$ lie on the same path component of $X_{0}$. 

Let $v, w \in X_{0}$ be any two zero simplices. 
We will prove that there exists $g \in G$ such that $g\cdot v = w$.  
Since the geometric realization $|X_{\bullet}|$ is path-connected, it follows that there exists a vertex $v'$ in the path component containing $v$ and a vertex $w'$ in the path component containing $w$, such that $v'$ and $w'$ are connected by a zig-zag of edges. 
We have $v \sim v' \sim w' \sim w$, and thus $v \sim w$. 
This concludes the proof of the lemma. 
\end{proof}


\section{Algebraic Structures} \label{section: Algebra}
\subsection{Linking forms} \label{subsection: linking forms}
The basic algebraic structure that we will encounter is that of a bilinear form on a finite abelian group. 
For $\epsilon = \pm 1$, a pair $(\mb{M}, b)$ is said to be a ($\epsilon$-symmetric) \textit{linking form} if $\mb{M}$ is a finite abelian group and $b: \mb{M}\otimes \mb{M} \longrightarrow \Q/\Z$ an $\epsilon$-symmetric bilinear map. 
A morphism between linking forms is defined to be a group homomorphism $f: \mb{M} \longrightarrow \mb{N}$ such that $b_{\mb{M}}(x, y) = b_{\mb{N}}(f(x), f(y))$ for all $x, y \in \mb{M}$. 
We denote by $\mathcal{L}_{\epsilon}$ the category of all $\epsilon$-symmetric linking forms. 
By forming direct sums, $\mathcal{L}_{\epsilon}$ obtains the structure of an \textit{additive category.}
\begin{Notation}
We will usually denote linking forms by their underlying abelian group. 
We will always denote the associated bilinear map by $b$. 
If more than one linking form is present, we will decorate $b$ with a subscript so as to eliminate ambiguity. 
\end{Notation}

\noindent
For $\mb{M}$ a linking form and $\mb{N} \leq \mb{M}$ a subgroup, $\mb{N}$ automatically inherits the structure of a sub-linking form of $\mb{M}$ by restricting $b_{\mb{M}}$ to $\mb{N}$.  
We will denote by $\mb{N}^{\perp} \leq \mb{M}$ the \textit{orthogonal complement} to $\mb{N}$ in $\mb{M}$.  
Two sub-linking forms $\mb{N}_{1}, \mb{N}_{2} \leq \mb{M}$ are said to be \textit{orthogonal} if
$\mb{N}_{1} \leq \mb{N}_{2}^{\perp}$,
 $\mb{N}_{2} \leq \mb{N}_{1}^{\perp}$,
and $\mb{N}_{1}\cap\mb{N}_{2} = 0$. 
If $\mb{N}_{1}, \mb{N}_{2} \leq \mb{M}$ are orthogonal sub-linking forms, we let $\mb{N}_{1}\perp\mb{N}_{2} \leq \mb{M}$ denote the sub-linking form given by the sum $\mb{N}_{1}+\mb{N}_{2}$. 
If $\mb{M}_{1}$ and $\mb{M}_{2}$ are two linking forms, the (external) direct sum $\mb{M}_{1}\oplus\mb{M}_{2}$ obtains the structure of a linking form in a natural way by setting
\begin{equation} \label{equation: direct sum linking form}
b_{\mb{M}_{1}\oplus\mb{M}_{2}}(x_{1} + x_{2}, \; y_{1} + y_{2}) = b_{\mb{M}_{1}}(x_{1}, y_{1}) + b_{\mb{M}_{1}}(x_{2}, y_{2}) \quad \quad \text{for \; $x_{1}, y_{1} \in \mb{M}_{1}$, \; $x_{2}, y_{2} \in \mb{M}_{2}$.}
\end{equation}
We will always assume that the direct sum $\mb{M}_{1}\oplus\mb{M}_{2}$ is equipped with the linking form structure given by (\ref{equation: direct sum linking form}). 
An element $\mb{M} \in \Ob(\mathcal{L}_{\epsilon})$ is said to be \textit{non-singular} if the duality homomorphism 
\begin{equation} \label{eq: standard duality map}
\xymatrix{
T: \mb{M} \longrightarrow \Hom_{\Ab}(\mb{M}, \Q/\Z), \quad x \mapsto b(x, \underline{\hspace{.3cm}})
}
\end{equation}
is an isomorphism of abelian groups. 

We will mainly need to consider the category $\mathcal{L}_{\epsilon}$ in the case where $\epsilon = -1$. 
We denote by $\mathcal{L}_{-1}^{s}$ the full subcategory of $\mathcal{L}_{-1}$ consisting of linking forms that are \textit{strictly skew symmetric}, or in other words $\mathcal{L}_{-1}^{s}$ is the category of all skew symmetric linking forms $\mb{M}$ for which $b_{\mb{M}}(x, x)= 0$ for all $x \in \mb{M}$ (even in the case when $x$ is an element of order $2$).

We proceed to define certain basic, non-singular elements of $\mathcal{L}_{-1}^{s}$ as follows. 
For a positive integer $k \geq 2$, let $\mb{W}_{k}$ denote the abelian group $\Z/k\oplus \Z/k$. 
Let $\rho$ and $\sigma$ denote the standard generators $(1, 0)$ and $(0, 1)$ respectively. 
We then let $b: \mb{W}_{k} \longrightarrow \Q/\Z$ be the  strictly skew symmetric bilinear form determined by the values
\begin{equation}
b(\rho, \sigma) = -b(\sigma, \rho) =  \tfrac{1}{k}, \quad \quad b(\rho, \rho) = b(\sigma, \sigma) = 0. 
\end{equation}
With $b$ defined in this way, it follows that $\mb{W}_{k}$ is a non-singular object of $\mathcal{L}_{-1}^{s}$. 
It follows easily that if $k$ and $\ell$ are relatively prime, then $\mb{W}_{k}\oplus\mb{W}_{\ell}$ and $\mb{W}_{k\cdot \ell}$ are isomorphic as objects of $\mathcal{L}_{-1}^{s}$. 
For $g \geq 2$ an integer, we will let $\mb{W}^{g}_{k}$ denote the $g$-fold direct sum $(\mb{W}_{k})^{\oplus g}$. 

For $k \in \N$, let $C_{k}$ denote the cyclic subgroup of $\Q/\Z$ generated by the element $1/k \mod 1$. 
Any group homomorphism $h: \mb{W}_{k} \longrightarrow \Q/\Z$ must factor through the inclusion $C_{k} \hookrightarrow \Q/\Z$. 
Hence, it follows that the duality map from (\ref{eq: standard duality map}) induces an isomorphism of abelian groups,
\begin{equation}
\xymatrix{
\mb{W}_{k} \stackrel{\cong} \longrightarrow \Hom_{\Ab}(\mb{W}_{k}, C_{k}). 
}
\end{equation}

\begin{lemma} \label{lemma: split injectivity}
Let $k \geq 2$ be a positive integer and let $\mb{M} \in \Ob(\mathcal{L}_{-1}^{s})$. 
Then any morphism 
$$f: \mb{W}_{k} \longrightarrow \mb{M}$$
is split injective and there is an orthogonal direct sum decomposition, $f(\mb{W}_{k})\perp f(\mb{W}_{k})^{\perp} = \mb{M}$.  
\end{lemma}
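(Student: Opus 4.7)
I plan to prove the lemma in three steps: first establishing injectivity of $f$, then showing that $f(\mathbf{W}_k)$ meets its orthogonal complement trivially, and finally exhibiting every element of $\mathbf{M}$ as a sum of an element of $f(\mathbf{W}_k)$ and an element of $f(\mathbf{W}_k)^{\perp}$. The split injectivity will then follow formally from the orthogonal direct sum decomposition.

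For injectivity, I would write an arbitrary element of the kernel as $f(a\rho + b\sigma)$ and pair it against $f(\rho)$ and $f(\sigma)$. Because $f$ is a morphism of linking forms, these pairings equal $b_{\mathbf{W}_k}(a\rho + b\sigma, \rho) = -b/k$ and $b_{\mathbf{W}_k}(a\rho + b\sigma, \sigma) = a/k$ respectively in $\mathbb{Q}/\mathbb{Z}$, forcing $a \equiv b \equiv 0 \pmod k$. The same computation shows $f(\mathbf{W}_k) \cap f(\mathbf{W}_k)^{\perp} = 0$: if $f(a\rho + b\sigma)$ pairs trivially with everything in $f(\mathbf{W}_k)$, in particular with $f(\rho)$ and $f(\sigma)$, the same two equations force it to be zero.

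The main point is the sum $f(\mathbf{W}_k) + f(\mathbf{W}_k)^{\perp} = \mathbf{M}$, where I would use the duality isomorphism $\mathbf{W}_k \xrightarrow{\cong} \Hom_{\Ab}(\mathbf{W}_k, C_k)$ recorded in the discussion preceding the lemma. Given $m \in \mathbf{M}$, consider the homomorphism
\[
\phi_m : \mathbf{W}_k \longrightarrow \Q/\Z, \qquad w \longmapsto b_{\mathbf{M}}(m, f(w)).
\]
Since every $w \in \mathbf{W}_k$ is $k$-torsion, $k \cdot \phi_m(w) = b_{\mathbf{M}}(m, f(kw)) = 0$, so $\phi_m$ lands in $C_k$. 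By the duality isomorphism there exists a unique $w_0 \in \mathbf{W}_k$ with $\phi_m(w) = b_{\mathbf{W}_k}(w_0, w)$ for all $w$. Because $f$ is a morphism of linking forms, $b_{\mathbf{M}}(f(w_0), f(w)) = b_{\mathbf{W}_k}(w_0, w)$, and therefore
\[
b_{\mathbf{M}}\bigl(m - f(w_0), f(w)\bigr) \; = \; \phi_m(w) - b_{\mathbf{W}_k}(w_0, w) \; = \; 0
\]
for all $w \in \mathbf{W}_k$. Thus $m - f(w_0) \in f(\mathbf{W}_k)^{\perp}$ and $m = f(w_0) + (m - f(w_0))$ realizes the desired decomposition.

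Combining these steps yields $\mathbf{M} = f(\mathbf{W}_k) \perp f(\mathbf{W}_k)^{\perp}$ as an internal orthogonal direct sum, and the projection $\mathbf{M} \to f(\mathbf{W}_k)$ composed with the inverse of the isomorphism $\mathbf{W}_k \xrightarrow{\cong} f(\mathbf{W}_k)$ provides a retraction of $f$, so $f$ is split injective. The only conceptually nontrivial step is the decomposition $\mathbf{M} = f(\mathbf{W}_k) + f(\mathbf{W}_k)^{\perp}$, which really hinges on the self-duality of $\mathbf{W}_k$ via $C_k$; everything else is a direct calculation with the generators $\rho, \sigma$.
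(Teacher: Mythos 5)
Your proof is correct and follows essentially the same route as the paper: the paper builds an explicit retraction $\varphi: \mb{M} \to \mb{W}_{k}$, $z \mapsto b(f(\rho), z)\cdot\rho + b(f(\sigma), z)\cdot\sigma$, using exactly the self-duality $\mb{W}_{k} \cong \Hom_{\Ab}(\mb{W}_{k}, C_{k})$ that you invoke, and your element-wise assignment $m \mapsto w_{0}$ is that retraction in disguise. If anything your write-up is slightly more careful, since the paper's $\varphi\circ f$ is only an automorphism of $\mb{W}_{k}$ rather than literally the identity, a point it passes over.
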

\begin{proof}
Let $x$ and $y$ denote the elements of $\mb{M}$ given by $f(\rho)$ and $f(\sigma)$ respectively where $\rho$ and $\sigma$ are the standard generators of $\mb{W}_{k}$. 
Let $T: \mb{M} \longrightarrow \Hom(\mb{M}, \Q/\Z)$ denote the duality map from (\ref{eq: standard duality map}). 
Since both $x$ and $y$ have order $k$, it follows that the homomorphisms 
$$b(x, \underline{\hspace{.3cm}}), \; b(y, \underline{\hspace{.3cm}}): \mb{M} \longrightarrow \Q/\Z$$ 
factor through the inclusion $C_{k} \hookrightarrow \Q/\Z$. 
Define a group homomorphism (which is not a morphism of linking forms) by the formula
$$\varphi: \mb{M} \longrightarrow \mb{W}_{k}, \quad \varphi(z) = b(x, z)\cdot\rho  + b(y, z)\cdot\sigma.$$
It is clear that the kernel of $\varphi$ is the orthogonal complement $f(\mb{W}_{k})^{\perp}$ and that the morphism $f: \mb{W}_{k} \longrightarrow \mb{M}$ gives a section of $\varphi$. 
This completes the proof. 
\end{proof}
\noindent
The following theorem is a particular case of the classification theorem of Wall from \cite[Lemma 7]{W 64}.
The classification of non-singular objects of $\mathcal{L}_{-1}^{s}$ is analogous to the classification of finite abelian groups. 
\begin{theorem} \label{thm: classification of linking modules}
Let $\mathbf{M} \in \Ob(\mathcal{L}_{-1}^{s})$ be non-singular. 
Then there is an isomorphism,
$$\mathbf{M} \; \cong \; \mathbf{W}^{\ell_{1}}_{p_{1}^{n_{1}}}\oplus \cdots \oplus \mathbf{W}^{\ell_{r}}_{p_{r}^{n_{r}}}$$
where $p_{j}$ is a prime number and $\ell_{j}$ and $n_{j}$ are positive integers for $j = 1, \dots, r$.
Furthermore, the above direct sum decomposition is unique up to isomorphism. 
\end{theorem}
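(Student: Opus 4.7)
The plan is to mimic the classical classification of finite abelian groups, replacing ``cyclic summand of maximal order'' with ``copy of $\mathbf{W}_{p^n}$ of maximal $n$'', and using Lemma 3.2 to produce orthogonal splittings.

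First I would reduce to the case where $\mathbf{M}$ is $p$-primary for a single prime $p$. Write $\mathbf{M} = \bigoplus_p \mathbf{M}_p$ as the primary decomposition of the underlying finite abelian group. If $x \in \mathbf{M}_p$ has order $p^a$ and $y \in \mathbf{M}_q$ has order $q^b$ with $p \neq q$, then $b(x,y) \in \Q/\Z$ is simultaneously annihilated by $p^a$ and $q^b$, hence vanishes. So the primary decomposition is in fact an orthogonal sum of sublinking forms, and the duality map $T$ respects it; consequently each $\mathbf{M}_p$ is itself non-singular. So I may assume $\mathbf{M}$ is $p$-primary.

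Next, let $n$ be the largest integer with $p^{n-1}\mathbf{M} \neq 0$, so every element of $\mathbf{M}$ has order dividing $p^n$. Choose $x \in \mathbf{M}$ of order exactly $p^n$. Since $\mathbf{M}$ is non-singular, the adjoint $b(x, \underline{\hspace{.3cm}}): \mathbf{M} \to \Q/\Z$ is a homomorphism whose image contains an element of order $p^n$ (otherwise $p^{n-1}x$ would be in $\ker T$, contradicting non-singularity together with the fact that $p^{n-1}x \neq 0$). Thus there exists $y \in \mathbf{M}$ with $b(x,y) = \tfrac{1}{p^n} \mod 1$. The element $y$ must then have order exactly $p^n$: its order is $\leq p^n$ by maximality, and if $p^m y = 0$ with $m < n$ then $p^m/p^n = p^m b(x,y) = b(x, p^m y) = 0$ in $\Q/\Z$, a contradiction. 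Strict skew-symmetry gives $b(x,x) = b(y,y) = 0$ for free, and a short calculation pairing a hypothetical relation $ax = by$ with $x$ shows $\langle x \rangle \cap \langle y \rangle = 0$, so $\langle x, y\rangle$ is a copy of $\mathbf{W}_{p^n}$ inside $\mathbf{M}$. Applying Lemma 3.2 to the resulting morphism $\mathbf{W}_{p^n} \hookrightarrow \mathbf{M}$ yields an orthogonal splitting $\mathbf{M} = \mathbf{W}_{p^n} \perp \mathbf{W}_{p^n}^{\perp}$, and the complement is again non-singular. Induction on $|\mathbf{M}|$ then delivers the desired decomposition.

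For uniqueness, observe that the $p$-primary decomposition is canonical, so it suffices to prove uniqueness after fixing $p$. The number of $\mathbf{W}_{p^n}$ summands appearing in $\mathbf{M}_p$ is recovered from invariants of the underlying abelian group: writing $d_n := \dim_{\mathbb{F}_p}\!\bigl(p^{n-1}\mathbf{M}_p/p^n\mathbf{M}_p\bigr)$, each summand $\mathbf{W}_{p^m}$ contributes $2$ to $d_n$ if $m \geq n$ and $0$ otherwise, so the multiplicity of $\mathbf{W}_{p^n}$ equals $\tfrac{1}{2}(d_n - d_{n+1})$.

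The main obstacle is the existence step, specifically producing a rank-two ``hyperbolic'' sublinking form spanned by two elements of maximal order; once this is in hand, Lemma 3.2 immediately supplies the orthogonal splitting and the induction runs without further difficulty. The delicate point is that strict skew-symmetry (needed to force $b(x,x) = b(y,y) = 0$ even when $p = 2$) is essential here, which is why the classification is stated inside $\mathcal{L}^s_{-1}$ rather than the larger category $\mathcal{L}_{-1}$.
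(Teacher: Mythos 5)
Your proof is correct, and it is worth noting that the paper itself supplies no proof of this theorem: it is stated as a special case of Wall's classification and the reader is referred to \cite[Lemma 7]{W 64}. So your argument is a genuinely self-contained alternative.

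Your approach parallels the classical structure theory of finite abelian groups: reduce to the $p$-primary case by observing that distinct primary components pair trivially (so that the duality isomorphism $T$ decomposes compatibly); pick an element $x$ of maximal order $p^n$, produce a dual partner $y$ with $b(x,y) = 1/p^n$ by exploiting non-singularity, verify $\langle x, y\rangle \cong \mathbf{W}_{p^n}$ using strict skew-symmetry and the intersection computation; then invoke Lemma \ref{lemma: split injectivity} to split orthogonally and induct, noting that the orthogonal complement of a non-singular sub-form inside a non-singular form is again non-singular. The existence step is watertight (the order estimates on $y$ and the intersection argument both check out). Your uniqueness step is particularly clean: since each $\mathbf{W}_{p^m}$ contributes a pair of $\Z/p^m$'s to the underlying group, the multiplicity of $\mathbf{W}_{p^n}$ is $\tfrac{1}{2}\bigl(d_n - d_{n+1}\bigr)$ where $d_n = \dim_{\mathbb{F}_p}(p^{n-1}\mathbf{M}_p / p^n\mathbf{M}_p)$, which is a group-theoretic invariant. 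This actually gives the somewhat stronger conclusion that a non-singular object of $\mathcal{L}^{s}_{-1}$ is determined up to isomorphism by its underlying abelian group alone, a fact worth isolating. The only small thing I would add explicitly is the sentence justifying that the orthogonal complement is non-singular; you assert it, and it is standard (it also appears, implicitly, in the paper's own proof of Proposition \ref{prop: automorphisms}), but one line of argument makes the induction airtight.

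Overall a correct and pleasantly elementary proof that fits naturally into the toolkit the paper already develops (Lemma \ref{lemma: split injectivity}).
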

\noindent
We now define a notion of rank analogous to (\ref{equation: k-rank mfd}) for skew-symmetric linking forms. 
\begin{defn} \label{defn: rank of linking form}
Let $\mb{M}$ be a linking form and let $k \geq 2$ be a positive integer. 
We define the \textit{$k$-rank} of $\mb{M}$ to be the quantity,
$r_{k}(\mb{M}) = \max\{ g \in \N \; | \; \text{there exists a morphism $\mb{W}^{g}_{k} \rightarrow \mb{M}$}\}.$
We then define the \textit{stable $k$-rank} of $\mb{M}$ to be the quantity,
$\bar{r}_{k}(M) = \max\{r_{k}(\mb{M}\oplus\mb{W}^{g}_{k}) - g \; | \; g \in \N \}$.
\end{defn}

\begin{corollary} \label{corollary: stable rank reduction}
Let $f: \mb{W}^{g}_{k} \longrightarrow \mb{M}$ be a morphism of linking forms. 
Then $\bar{r}_{k}(f(\mb{W}_{k})^{\perp}) \geq \bar{r}_{k}(\mb{M}) - g$. 
\end{corollary}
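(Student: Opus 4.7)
The plan is to first upgrade Lemma \ref{lemma: split injectivity} from a single copy of $\mb{W}_{k}$ to the direct sum $\mb{W}^{g}_{k}$, and then to prove the inequality by a simple substitution of parameters in the definition of $\bar{r}_{k}$. (I will interpret the statement as $\bar{r}_{k}(f(\mb{W}^{g}_{k})^{\perp}) \geq \bar{r}_{k}(\mb{M}) - g$, which is presumably the intended meaning.)

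First I would establish, by induction on $g$, that any morphism $f: \mb{W}^{g}_{k} \longrightarrow \mb{M}$ is split injective and that there is an orthogonal decomposition $\mb{M} = f(\mb{W}^{g}_{k}) \perp f(\mb{W}^{g}_{k})^{\perp}$ with $f(\mb{W}^{g}_{k}) \cong \mb{W}^{g}_{k}$ as linking forms. The base case $g=1$ is Lemma \ref{lemma: split injectivity}. For the inductive step, write $\mb{W}^{g}_{k} = \mb{W}^{g-1}_{k} \oplus \mb{W}_{k}$ and apply Lemma \ref{lemma: split injectivity} to the restriction of $f$ to the final summand to obtain $\mb{M} = f(\mb{W}_{k}) \perp f(\mb{W}_{k})^{\perp}$; the inductive hypothesis applied to the composition $\mb{W}^{g-1}_{k} \hookrightarrow \mb{W}^{g}_{k} \xrightarrow{f} \mb{M} \twoheadrightarrow f(\mb{W}_{k})^{\perp}$ (projection onto the second factor) then splits off the remaining $g-1$ copies inside $f(\mb{W}_{k})^{\perp}$. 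Combining these gives the desired splitting $\mb{M} \cong f(\mb{W}^{g}_{k})^{\perp} \oplus \mb{W}^{g}_{k}$ as objects of $\mathcal{L}^{s}_{-1}$.

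Next, for any integer $h \geq 0$, this isomorphism yields
\[
\mb{M} \oplus \mb{W}^{h}_{k} \; \cong \; f(\mb{W}^{g}_{k})^{\perp} \oplus \mb{W}^{g+h}_{k},
\]
so $r_{k}(\mb{M} \oplus \mb{W}^{h}_{k}) = r_{k}(f(\mb{W}^{g}_{k})^{\perp} \oplus \mb{W}^{g+h}_{k})$ for every $h \in \N$. Substituting $h' = g+h$ in the definition of $\bar{r}_{k}(\mb{M})$ gives
\[
\bar{r}_{k}(\mb{M}) \; = \; \max_{h \in \N}\bigl[ r_{k}(f(\mb{W}^{g}_{k})^{\perp} \oplus \mb{W}^{g+h}_{k}) - h \bigr] \; = \; g \; + \; \max_{h' \geq g}\bigl[ r_{k}(f(\mb{W}^{g}_{k})^{\perp} \oplus \mb{W}^{h'}_{k}) - h' \bigr].
\]
Since the maximum over $h' \geq g$ is bounded above by the maximum over all $h' \in \N$, the right-hand side is at most $g + \bar{r}_{k}(f(\mb{W}^{g}_{k})^{\perp})$, which rearranges to the desired inequality $\bar{r}_{k}(f(\mb{W}^{g}_{k})^{\perp}) \geq \bar{r}_{k}(\mb{M}) - g$.

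There is no substantive obstacle here; the only point requiring any care is the inductive extension of Lemma \ref{lemma: split injectivity} to $\mb{W}^{g}_{k}$, and once that is in place the rest is a bookkeeping manipulation of the definition of stable rank.
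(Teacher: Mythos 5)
Your proof is correct and takes essentially the same route as the paper: the paper's own proof just invokes the orthogonal splitting $f(\mb{W}^{g}_{k})\perp f(\mb{W}^{g}_{k})^{\perp} = \mb{M}$ (the $g$-fold extension of Lemma \ref{lemma: split injectivity}) together with the definition of the stable $k$-rank, which is exactly what you spell out in detail. Your reading of the statement as concerning $f(\mb{W}^{g}_{k})^{\perp}$ is also the intended one.
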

\begin{proof}
This follows immediately from the orthogonal splitting $f(\mb{W}^{g}_{k})\perp f(\mb{W}^{g}_{k})^{\perp} = \mb{M}$ and the definition of the stable $k$-rank. 
\end{proof}

\subsection{The linking complex} \label{subsection: the linking complex}
We now define a certain simplicial complex, analogous to the one from \cite[Definition 3.1]{GRW 14}, to be used in our proof of Theorem \ref{theorem: Main theorem}. 
\begin{defn}
Let $\mb{M} \in \Ob(\mathcal{L}_{-1}^{s})$ and let $k \geq 2$ be a positive integer. 
We define $L(\mb{M})_{k}$ to be the simplicial complex whose vertices are given by morphisms $f: \mb{W}_{k} \longrightarrow \mb{M}$ of linking forms. 
The set $\{f_{0}, \dots, f_{p}\}$ is a $p$-simplex if the sub-linking forms $f_{i}(\mb{W}_{k}) \leq \mb{M}$ are pairwise orthogonal.  
\end{defn}
Suppose that $\sigma = \{f_{0}, \dots, f_{p}\}$ is a $p$-simplex in $L(\mb{M})_{k}$.
Let $\mb{M}' \leq \mb{M}$ denote the sub-linking-form given by the orthogonal complement $[\sum f_{i}(\mb{W}_{k})]^{\perp}$.
It follows from the definition of the link of a simplex that there is an isomorphism of simplicial complexes, 
\begin{equation}  \label{equation: link of simplex}
\xymatrix{
\lk_{L(\mb{M})_{k}}(\sigma) \cong L(\mb{M}')_{k}.
}
\end{equation}
Below are two formal consequences of path connectivity of $L(\mb{M})_{k}$.
They are proven in the exact same way as \cite[Propositions 3.3 and 3.4]{GRW 14}. 
\begin{proposition}[Transitivity] \label{proposition: transitivity}
If $|L(\mb{M})_{k}|$ is path-connected and $f_{0}, f_{1}: \mb{W}_{k} \rightarrow \mb{M}$ are morphisms of linking forms, then there is an automorphism of linking forms $h: \mb{M} \rightarrow \mb{M}$ such that $f_{1} = h\circ f_{0}$. 
\end{proposition}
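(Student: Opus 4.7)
The plan is to reduce to the edge-adjacent case by connectedness and then construct the automorphism explicitly using the orthogonal splitting afforded by Lemma \ref{lemma: split injectivity}.

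First, since $|L(\mb{M})_{k}|$ is path-connected, the vertices $f_{0}$ and $f_{1}$ of $L(\mb{M})_{k}$ lie in the same component of its $1$-skeleton (the $1$-skeleton of a simplicial complex has the same $\pi_{0}$ as its geometric realization). Hence there is a finite sequence of vertices $f_{0} = g_{0}, g_{1}, \dots, g_{r} = f_{1}$ such that each consecutive pair $\{g_{i}, g_{i+1}\}$ spans a $1$-simplex. By the definition of $L(\mb{M})_{k}$, this means $g_{i}(\mb{W}_{k})$ and $g_{i+1}(\mb{W}_{k})$ are orthogonal sub-linking forms. It then suffices, by composing automorphisms along this edge-path, to treat the case $r = 1$, i.e. the case where $f_{0}(\mb{W}_{k}) \perp f_{1}(\mb{W}_{k})$.

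Assume now that $f_{0}(\mb{W}_{k}) \perp f_{1}(\mb{W}_{k})$. By Lemma \ref{lemma: split injectivity}, both $f_{0}$ and $f_{1}$ are split injective, and moreover $f_{0}(\mb{W}_{k})$ splits off $\mb{M}$ orthogonally. Applying Lemma \ref{lemma: split injectivity} once more to the morphism $f_{1}: \mb{W}_{k} \to f_{0}(\mb{W}_{k})^{\perp}$ (which lands in $f_{0}(\mb{W}_{k})^{\perp}$ by orthogonality) yields an orthogonal direct sum decomposition
$$\mb{M} \;=\; f_{0}(\mb{W}_{k}) \,\perp\, f_{1}(\mb{W}_{k}) \,\perp\, \mb{N}$$
for some sub-linking form $\mb{N} \leq \mb{M}$.

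Next I will define $h: \mb{M} \to \mb{M}$ on this decomposition by swapping the two $\mb{W}_{k}$ summands via the identifications supplied by $f_{0}$ and $f_{1}$, and fixing $\mb{N}$. Concretely, set $h(f_{0}(x)) = f_{1}(x)$ and $h(f_{1}(x)) = f_{0}(x)$ for all $x \in \mb{W}_{k}$, and $h|_{\mb{N}} = \Id$. Each of the three blocks is an isomorphism of linking forms (the first two are pulled back from the same form on $\mb{W}_{k}$, since $f_{0}$ and $f_{1}$ are morphisms of linking forms), and the blocks are pairwise orthogonal, so $h$ is an automorphism of $\mb{M}$ in $\mathcal{L}_{-1}^{s}$; by construction $h \circ f_{0} = f_{1}$. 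The main (very minor) subtlety is ensuring that this $h$ really preserves $b_{\mb{M}}$ across the three orthogonal pieces, which is immediate from the fact that cross-terms between distinct orthogonal summands vanish on both sides. Combining this edge-adjacent case with the inductive argument along the path $g_{0}, \dots, g_{r}$ yields the desired automorphism in general.
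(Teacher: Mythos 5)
Your proof is correct and follows exactly the two-step template of \cite[Proposition 3.3]{GRW 14} that the paper cites for this result: reduce along an edge-path in the $1$-skeleton to the adjacent case, then use Lemma \ref{lemma: split injectivity} twice to obtain the orthogonal splitting $\mb{M} = f_0(\mb{W}_k) \perp f_1(\mb{W}_k) \perp \mb{N}$ and define $h$ by swapping the two $\mb{W}_k$ blocks and fixing $\mb{N}$. This is the intended argument.
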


\noindent

\begin{proposition}[Cancellation] \label{proposition: cancelation alg}
Suppose that $\mb{M}$ and $\mb{N}$ are linking forms and there is an isomorphism $\mb{M}\oplus\mb{W}_{k} \cong \mb{N}\oplus \mb{W}_{k}$. 
If $|L(\mb{M}\oplus\mb{W}_{k})_{k}|$ is path-connected, then there is also an isomorphism $\mb{M} \cong \mb{N}$. 
\end{proposition}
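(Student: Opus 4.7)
\medskip

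\noindent\textbf{Proof proposal.} The plan is to reduce the cancellation statement to an application of the Transitivity Proposition \ref{proposition: transitivity} together with the basic observation that an isomorphism of linking forms carries orthogonal complements to orthogonal complements. Fix an isomorphism $\varphi: \mb{M}\oplus\mb{W}_{k} \xrightarrow{\cong} \mb{N}\oplus\mb{W}_{k}$, and let $\iota_{\mb{M}}: \mb{W}_{k} \hookrightarrow \mb{M}\oplus\mb{W}_{k}$ and $\iota_{\mb{N}}: \mb{W}_{k} \hookrightarrow \mb{N}\oplus\mb{W}_{k}$ denote the inclusions of the second summand. First I would introduce the two morphisms $f_{0} := \iota_{\mb{M}}$ and $f_{1} := \varphi^{-1}\circ\iota_{\mb{N}}$, both from $\mb{W}_{k}$ into $\mb{M}\oplus\mb{W}_{k}$.

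Next, since $|L(\mb{M}\oplus\mb{W}_{k})_{k}|$ is path-connected by hypothesis, the Transitivity Proposition \ref{proposition: transitivity} applies and yields an automorphism of linking forms $h: \mb{M}\oplus\mb{W}_{k} \to \mb{M}\oplus\mb{W}_{k}$ with $f_{1} = h\circ f_{0}$. Setting $\psi := \varphi\circ h$, I obtain an isomorphism $\psi: \mb{M}\oplus\mb{W}_{k} \to \mb{N}\oplus\mb{W}_{k}$ satisfying
\[
\psi\circ\iota_{\mb{M}} \;=\; \varphi\circ h \circ f_{0} \;=\; \varphi\circ f_{1} \;=\; \iota_{\mb{N}}.
\]
In particular, $\psi$ carries the sub-linking form $\iota_{\mb{M}}(\mb{W}_{k}) \leq \mb{M}\oplus\mb{W}_{k}$ isomorphically onto $\iota_{\mb{N}}(\mb{W}_{k}) \leq \mb{N}\oplus\mb{W}_{k}$.

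To conclude, I would use the direct-sum formula (\ref{equation: direct sum linking form}) together with non-singularity of $\mb{W}_{k}$ to verify that $\iota_{\mb{M}}(\mb{W}_{k})^{\perp} = \mb{M}\oplus 0$ inside $\mb{M}\oplus\mb{W}_{k}$, and similarly $\iota_{\mb{N}}(\mb{W}_{k})^{\perp} = \mb{N}\oplus 0$. Since any isomorphism of linking forms sends $f(\mb{W}_{k})^{\perp}$ to $(\psi\circ f)(\mb{W}_{k})^{\perp}$, the restriction of $\psi$ to $\mb{M}\oplus 0$ is an isomorphism onto $\mb{N}\oplus 0$, giving the desired isomorphism $\mb{M}\cong\mb{N}$.

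There is no genuine obstacle here: the entire content is packaged in Proposition \ref{proposition: transitivity}, and the only mild bookkeeping point is identifying the orthogonal complement of the standard $\mb{W}_{k}$-summand — which is a direct consequence of non-singularity of $\mb{W}_{k}$ and the orthogonality of the summands in the direct-sum structure (\ref{equation: direct sum linking form}). The argument mirrors the proof of \cite[Proposition 3.4]{GRW 14}.
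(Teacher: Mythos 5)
Your argument is correct and is essentially the paper's intended proof: the paper simply defers to \cite[Propositions 3.3 and 3.4]{GRW 14}, whose cancellation argument is exactly your reduction to Proposition \ref{proposition: transitivity} followed by passing to orthogonal complements of the standard $\mb{W}_{k}$-summand (which are $\mb{M}\oplus 0$ and $\mb{N}\oplus 0$ by non-singularity of $\mb{W}_{k}$). No gaps to report.
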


The main result that we will prove about the above complex is the following theorem. 
The proof is very similar to the proof of \cite[Theorem 3.2]{GRW 14}.
\begin{theorem} \label{theorem: linking form high-connectivity}
Let $g, k \in \N$ and let $\mb{M} \in \Ob(\mathcal{L}_{-1}^{s})$ be a linking form with $\bar{r}_{k}(\mb{M}) \geq g$. 
Then the geometric realization $|L(\mb{M})_{k}|$ is $\frac{1}{2}(g -4)$-connected and $lCM(L(\mb{M})_{k}) \geq \frac{1}{2}(g -1)$. 
\end{theorem}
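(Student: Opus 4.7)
The plan is to induct on $g$ and prove both assertions simultaneously; base cases with $\frac{1}{2}(g-1) < 0$ are vacuous.

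The inductive step for the $lCM$ bound is straightforward. Let $\sigma = \{f_0, \dots, f_p\}$ be a $p$-simplex of $L(\mb{M})_k$. Since the sub-linking forms $f_i(\mb{W}_k)$ are pairwise orthogonal, the $f_i$ assemble into a single morphism $\mb{W}_k^{p+1} \to \mb{M}$, so Corollary \ref{corollary: stable rank reduction} yields $\bar{r}_k(\mb{M}') \geq g - p - 1$ for $\mb{M}' = [\sum_i f_i(\mb{W}_k)]^{\perp}$. By the link isomorphism (\ref{equation: link of simplex}) and the inductive hypothesis, $|L(\mb{M}')_k|$ is $\frac{1}{2}(g-p-5)$-connected, which is at least $\frac{1}{2}(g-1) - p - 2 = \frac{1}{2}(g - 2p - 5)$. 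This confirms $lCM(L(\mb{M})_k) \geq \frac{1}{2}(g-1)$.

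For the global connectivity $\frac{1}{2}(g-4)$, I would follow the argument of \cite[Theorem 3.2]{GRW 14} quite closely. Given a simplicial map $h: \partial I^{\ell+1} \to |L(\mb{M})_k|$ with $\ell+1 \leq \frac{1}{2}(g-4)+1$, the goal is to construct a simplicial extension over $I^{\ell+1}$. The strategy is to produce a vertex $f_{*}: \mb{W}_k \to \mb{M}$ orthogonal to every vertex in the image of $h$, so that $f_{*}$ serves as a cone point. To produce $f_{*}$, one uses the hypothesis $\bar{r}_k(\mb{M}) \geq g$ to obtain a morphism $\mb{W}_k^{g} \to \mb{M} \oplus \mb{W}_k^{g'}$ for some auxiliary $g'$, exploits Lemma \ref{lemma: split injectivity} to split off the images as orthogonal direct summands, and matches these summands against the finitely many vertices in the image of $h$. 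The inductive hypothesis applied to the orthogonal complement of $f_{*}$ (whose stable $k$-rank drops by exactly one by Corollary \ref{corollary: stable rank reduction}) then allows the sphere to be filled in the link of $f_{*}$, completing the extension.

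The main obstacle I expect is that the naive choice of $f_{*}$ may only exist in the stabilized linking form $\mb{M} \oplus \mb{W}_k^{g'}$ rather than in $\mb{M}$ itself. Resolving this requires an iterative reduction, absorbing the stabilizing summands back into $\mb{M}$ one copy of $\mb{W}_k$ at a time, using Lemma \ref{lemma: split injectivity} together with the cancellation-type reasoning underlying Proposition \ref{proposition: cancelation alg}. At each stage, the inductive hypothesis is applied to the orthogonal complement of a cleverly chosen vertex in order to preserve the connectivity budget; this is where the $3/2$ gap between $\frac{1}{2}(g-4)$ and the $lCM$ bound $\frac{1}{2}(g-1)$ is precisely absorbed. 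This mirrors the inductive structure of the proof of \cite[Theorem 3.2]{GRW 14} adapted from the setting of nondegenerate symplectic forms over a field to the setting of nonsingular strictly skew symmetric linking forms on finite abelian groups.
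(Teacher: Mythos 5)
Your treatment of the $lCM$ statement is fine and matches the paper: assemble the vertices of a $p$-simplex into a morphism $\mb{W}_{k}^{p+1}\to\mb{M}$, apply Corollary \ref{corollary: stable rank reduction}, use the identification (\ref{equation: link of simplex}) of the link with $L(\mb{M}')_{k}$, and invoke the inductive connectivity bound. The problem is in your global connectivity step. You propose to cone off a simplicial sphere $h:\partial I^{\ell+1}\to |L(\mb{M})_{k}|$ by producing a single vertex $f_{*}:\mb{W}_{k}\to\mb{M}$ orthogonal to \emph{every} vertex in the image of $h$. No rank hypothesis can deliver such an $f_{*}$: the number of vertices in the image of $h$ depends on the triangulation of $\partial I^{\ell+1}$ and is unbounded in terms of $g$, and each vertex you must be orthogonal to can cut down the available rank. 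Indeed the images of the vertices of $h$ can generate a subgroup of $\mb{M}$ whose orthogonal complement contains no copy of $\mb{W}_{k}$ at all, so the desired common cone point simply need not exist. Your suggested fix via Lemma \ref{lemma: split injectivity} and cancellation addresses only the passage from stable rank to actual rank (which the paper handles in Proposition \ref{proposition: base case}), not this counting problem, and the remark about the ``$3/2$ gap'' being absorbed does not supply the missing argument.

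The paper avoids this by never asking for a vertex orthogonal to the image of $h$. Instead it fixes one morphism $f:\mb{W}_{k}\to\mb{M}$, sets $\mb{M}'=f(\mb{W}_{k})^{\perp}$, and studies the chain of full subcomplexes $L(\mb{M}')_{k}\hookrightarrow L(\mb{M}'\perp\langle f(\sigma)\rangle)_{k}\hookrightarrow L(\mb{M})_{k}$. Using Proposition \ref{proposition: inclusion complex} (the link-intersection criterion for full subcomplexes), together with the rank estimates from Corollaries \ref{corollary: stable rank reduction} and \ref{cor: rank reduction} and the inductive hypothesis, it shows both inclusions are $n$-connected for $n=\frac{1}{2}(g-4)$; thus any sphere of dimension $\leq n$ can be homotoped into the subcomplex $L(\mb{M}')_{k}$, where the single fixed vertex $f$ \emph{is} orthogonal to everything by construction and cones it off (the composite inclusion is null-homotopic). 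This two-step ``push into the orthogonal-complement subcomplex, then cone'' mechanism is the essential idea your proposal is missing; without it, or an equivalent device such as the coloring/Cohen--Macaulay machinery of Theorem \ref{thm: cohen mac trick}, the construction of a cone point fails.
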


The proof of Theorem \ref{theorem: linking form high-connectivity} follows the same inductive argument as the proof of \cite[Theorem 3.2]{GRW 14}.
We will need two key algebraic results (Proposition \ref{prop: automorphisms} and Corollary \ref{cor: rank reduction}) given below which are analogous to \cite[Proposition 4.1 and Corollary 4.2]{GRW 14}.

\begin{proposition} \label{prop: automorphisms}
Let $k, g \in \N$ with $k \geq 2$. 
Let $\Aut(\mb{W}^{g+1}_{k})$ act on $\mb{W}^{g+1}_{k}$, and consider the orbits of elements of $\mb{W}_{k}\oplus 0 \leq \mb{W}_{k}^{g+1}$. 
We then have 
$\Aut(\mb{W}_{k}^{g+1})\cdot(\mb{W}_{k}\oplus \mb{0}) = \mb{W}_{k}^{g+1}.$
\end{proposition}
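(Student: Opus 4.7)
The plan is to show that every $x \in \mb{W}_k^{g+1}$ admits an automorphism $\phi \in \Aut(\mb{W}_k^{g+1})$ with $\phi(x) \in \mb{W}_k \oplus \mb{0}$. The case $x = 0$ is trivial, so assume $x \neq 0$ with order $d \mid k$. The argument proceeds by first reducing to the case where $x$ is unimodular (that is, has order $k$) and then handling that case using Lemma \ref{lemma: split injectivity} together with Wall's classification.

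For the reduction, I will find $y \in \mb{W}_k^{g+1}$ of order $k$ satisfying $(k/d) \cdot y = x$. The full set of such preimages under multiplication by $k/d$ forms a coset $y_0 + d \cdot \mb{W}_k^{g+1}$ of the $d$-torsion subgroup, so the task is to choose a coset representative whose components are jointly coprime to $k$. This is a prime-by-prime requirement assembled by the Chinese Remainder Theorem. For each prime $p \mid d$, the hypothesis that $x$ has order \emph{exactly} $d$ forces some component of $y_0$ to be coprime to $p$, and this persists in $y$ since $y \equiv y_0 \pmod{p}$. For each prime $p \mid k$ with $p \nmid d$, the scalar $d$ is a unit modulo $p$, so the components of $y$ can be adjusted freely modulo $p$ and in particular one can be made coprime to $p$.

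Now assume $y$ is unimodular. Non-singularity of $b$ implies that the dual map $T(y) = b(y, \underline{\hspace{.3cm}}) \in \Hom(\mb{W}_k^{g+1}, C_k)$ has the same order as $y$, namely $k$, and a direct computation shows this is equivalent to $T(y)$ being surjective. Thus one can choose $z \in \mb{W}_k^{g+1}$ with $b(y, z) = \tfrac{1}{k}$. By strict skew-symmetry $b(y, y) = b(z, z) = 0$, so the assignment $\rho \mapsto y$, $\sigma \mapsto z$ defines a morphism of linking forms $f: \mb{W}_k \to \mb{W}_k^{g+1}$. Lemma \ref{lemma: split injectivity} then yields $\mb{W}_k^{g+1} = f(\mb{W}_k) \perp f(\mb{W}_k)^\perp$, and uniqueness of the Wall decomposition (Theorem \ref{thm: classification of linking modules}) forces $f(\mb{W}_k)^{\perp} \cong \mb{W}_k^g$. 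Composing $f^{-1}$ on $f(\mb{W}_k)$ with any such isomorphism on the complement produces an automorphism $\phi$ with $\phi(y) = \rho$, and hence $\phi(x) = (k/d) \phi(y) = (k/d) \rho \in \mb{W}_k \oplus \mb{0}$. The principal obstacle is the prime-by-prime bookkeeping in the reduction step, particularly for primes dividing both $d$ and $k/d$; by contrast, the unimodular case is a clean Witt-type extension argument powered by Lemma \ref{lemma: split injectivity} and the uniqueness clause of Wall's classification.
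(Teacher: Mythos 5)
Your proof is correct and follows essentially the same route as the paper: reduce to the case of a primitive (order-$k$) element, use non-singularity to produce a hyperbolic partner, build the morphism $\mb{W}_k \to \mb{W}_k^{g+1}$, and invoke Lemma~\ref{lemma: split injectivity} together with the uniqueness clause of Theorem~\ref{thm: classification of linking modules}. The only substantive difference is that you spell out the reduction to the primitive case (the prime-by-prime CRT argument producing $y$ with $(k/d)y = x$), whereas the paper simply asserts ``every element is an integer multiple of a primitive element'' without proof; your elaboration is correct and a reasonable thing to include. One terminology slip: the coset $y_0 + d\cdot\mb{W}_k^{g+1}$ is a coset of the kernel of multiplication by $k/d$, which is the $(k/d)$-torsion subgroup (equivalently the image of multiplication by $d$), not the $d$-torsion subgroup; the computation you carry out with it is nonetheless correct.
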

\begin{proof}
We will prove that for any $v \in \mb{W}^{g+1}_{k}$, there is an automorphism $\varphi: \mb{W}^{g+1}_{k} \longrightarrow \mb{W}_{k}^{g+1}$ such that $v \in \varphi(\mb{W}_{k}\oplus\mb{0})$. 
An element $v \in \mb{W}^{g+1}_{k}$ is said to be \textit{primitive} if the subgroup $\langle v \rangle \leq \mb{W}_{k}^{g+1}$ generated by $v$, splits as a direct summand. 
Every element of $\mb{W}^{g+1}_{k}$ is the integer multiple (reduced $\mod k$) of a primitive element. 
Hence it will suffice to prove the statement in the case that $v$ is a primitive element. 

Let $v \in \mb{W}_{k}^{g+1}$ be a primitive element. 
Since the linking form $\mb{W}^{g+1}_{k}$ is non-singular and $v$ is primitive, it follows that there exists $w \in \mb{W}^{g+1}_{k}$ such that $b(w, v) = \frac{1}{k} \mod 1$. 
We may then define a morphism $f: \mb{W}_{k} \longrightarrow \mb{W}^{g+1}_{k}$ by setting $f(\sigma)  = v$ and $f(\rho) = w$, where $\sigma$ and $\rho$ are the standard generators of $\mb{W}_{k}$. 
Consider the orthogonal splitting $f(\mb{W}_{k})\perp f(\mb{W}_{k})^{\perp} = \mb{W}_{k}^{g+1}$. 
Since both $\mb{W}^{g+1}_{k}$ and $f(\mb{W}_{k})$ are non-singular, it follows that the orthogonal complement $f(\mb{W}_{k})^{\perp}$ is nonsingular as well.
It then follows from the classification theorem (Theorem \ref{thm: classification of linking modules}) that there exists an isomorphism 
$h: \mb{W}^{g}_{k} \stackrel{\cong} \longrightarrow  f(\mb{W}_{k})^{\perp}$ (according to Theorem \ref{thm: classification of linking modules}, there is only one such way, up to isomorphism, to write $\mb{W}^{g+1}_{k}$ as the direct sum of $\mb{W}_{k}$ with another non-singular linking form).  
The morphism given by the direct sum of maps
$$
\varphi := f\oplus h: \mb{W}_{k}\oplus\mb{W}^{g+1}_{k} \longrightarrow f(\mb{W})\perp f(\mb{W})^{\perp},
$$
is an isomorphism such that  $v \in \varphi(\mb{W}_{k}\oplus\mb{0})$. 
This concludes the proof of the proposition.
\end{proof}

\begin{corollary} \label{cor: rank reduction}
Let $\mb{M}$ be a linking form with $r_{k}(\mb{M}) \geq g$ and let $\varphi: \mb{M} \longrightarrow C_{k}$ be a group homomorphism. 
Then $r_{k}(\Ker(\varphi)) \geq g - 1$. 
Similarly if $\bar{r}_{k}(\mb{M}) \geq g$ then $\bar{r}_{k}(\Ker(\varphi)) \geq g-1$. 
\end{corollary}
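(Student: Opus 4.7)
The plan is to reduce the corollary to a direct application of Proposition \ref{prop: automorphisms}. Start from a morphism $f: \mb{W}^g_k \longrightarrow \mb{M}$ that witnesses $r_k(\mb{M}) \geq g$, and consider the composition $\psi = \varphi \circ f: \mb{W}^g_k \longrightarrow C_k$. If $\psi = 0$, then $f(\mb{W}^g_k) \leq \Ker(\varphi)$, and $f$ itself already witnesses $r_k(\Ker(\varphi)) \geq g \geq g-1$. So the interesting case is $\psi \neq 0$.

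In that case I would use the non-singularity of $\mb{W}^g_k$ to represent $\psi$ by an element. Because every element of $\mb{W}^g_k$ has order dividing $k$, any group homomorphism $\mb{W}^g_k \longrightarrow \Q/\Z$ factors through $C_k \hookrightarrow \Q/\Z$, and the duality map from (\ref{eq: standard duality map}) restricts to an isomorphism $\mb{W}^g_k \xrightarrow{\cong} \Hom_{\Ab}(\mb{W}^g_k, C_k)$. Hence there is a unique $u \in \mb{W}^g_k$ with $\psi(x) = b(u,x)$, and $\Ker(\psi) = u^\perp$.

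Now I apply Proposition \ref{prop: automorphisms} (with $g$ playing the role of $g+1$ in that statement; the case $g = 0$ is trivial, so we may assume $g \geq 1$) to produce an automorphism $\alpha$ of $\mb{W}^g_k$ with $u \in \alpha(\mb{W}_k \oplus \mb{0})$. Set $\mb{N} := \alpha(\mb{0} \oplus \mb{W}^{g-1}_k)$. Since $\alpha$ is an isomorphism of linking forms and the two summands in $\mb{W}_k \oplus \mb{W}^{g-1}_k$ are orthogonal, $\mb{N}$ is orthogonal to $\alpha(\mb{W}_k \oplus \mb{0})$, and in particular $\mb{N} \leq u^\perp = \Ker(\psi)$. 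The composition
\begin{equation*}
\mb{W}^{g-1}_k \;\xrightarrow{\;\cong\;}\; \mb{N} \;\hookrightarrow\; \mb{W}^g_k \;\xrightarrow{\;f\;}\; \mb{M}
\end{equation*}
is a morphism of linking forms whose image lies in $\Ker(\varphi)$, so $r_k(\Ker(\varphi)) \geq g-1$, as required.

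For the stable statement, given $\bar{r}_k(\mb{M}) \geq g$, choose $h \in \N$ with $r_k(\mb{M} \oplus \mb{W}^h_k) \geq g+h$ and extend $\varphi$ by zero to a homomorphism $\widetilde{\varphi} := \varphi \oplus 0 : \mb{M} \oplus \mb{W}^h_k \longrightarrow C_k$. The first part of the corollary yields $r_k(\Ker(\widetilde{\varphi})) \geq g+h-1$, and because $\Ker(\widetilde{\varphi}) = \Ker(\varphi) \oplus \mb{W}^h_k$ it follows that $\bar{r}_k(\Ker(\varphi)) \geq (g+h-1) - h = g-1$. No step is really the ``main obstacle'' here — the argument is a straightforward packaging of non-singularity and Proposition \ref{prop: automorphisms}; the only subtle point is to notice that $\psi$ automatically lands in $C_k$ (rather than needing a separate hypothesis), which is what lets us represent it by an element of $\mb{W}^g_k$ via the duality map.
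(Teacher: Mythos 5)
Your proof is correct and follows essentially the same route as the paper: represent $\varphi\circ f$ by an element of $\mb{W}^{g}_{k}$ via non-singularity, use Proposition \ref{prop: automorphisms} to move that element into $\mb{W}_{k}\oplus\mb{0}$, and observe that the complementary summand maps into $\Ker(\varphi)$; the stable case is handled by the same extension-by-zero trick the paper uses. If anything, your bookkeeping in the stable case (using $r_{k}(\mb{M}\oplus\mb{W}^{h}_{k})\geq g+h$ rather than just $\geq g$) is slightly more careful than the paper's own write-up.
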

\begin{proof}
Since $r_{k}(\mb{M}) \geq g$, there is a morphism $f: \mb{W}^{g}_{k} \longrightarrow \mb{M}$. 
Consider the group homomorphism given by 
$$\varphi\circ f: \mb{W}^{g}_{k} \longrightarrow C_{k}.$$
Since $\mb{W}^{g}_{k}$ is non-singular, there exists $v \in \mb{W}^{g}_{k}$ such that 
$\varphi\circ f(x) = b(v, x)$ for all $x \in \mb{W}^{g}_{k}$.
By Proposition \ref{prop: automorphisms}, there exists an automorphism $h: \mb{W}^{g}_{k} \longrightarrow \mb{W}^{g}_{k}$ such that $h^{-1}(v)$ is in the sub-module $\mb{W}_{k}\oplus\mb{0} \; \leq \; \mb{W}^{g}_{k}$. 
It follows that the submodule $\mb{0}\oplus\mb{W}^{g-1}_{k}$ is contained in the kernel of the homomorphism given by the composition, 
$$\xymatrix{
\mb{W}^{g}_{k} \ar[rr]^{h} && \mb{W}^{g}_{k} \ar[rr]^{f} && \mb{M} \ar[rr]^{\varphi} && C_{k}.
}$$
This implies that $f(h(\mb{0}\oplus\mb{W}^{g-1}_{k}))$ is contained in the kernel of $\varphi$ and thus $r_{k}(\Ker(\varphi)) \geq g - 1$.

Now suppose that $\bar{r}_{k}(\mb{M}) \geq g$ and let  $\varphi: \mb{M} \longrightarrow C_{k}$ be given. 
It follows that $r_{k}(\mb{M}\oplus\mb{W}_{k}^{j}) \geq g$ for some integer $j \geq 0$. 
Consider the map $\bar{\varphi}$ given by the composition,
$$\xymatrix{
\mb{M}\oplus\mb{W}_{k}^{j} \ar[rr]^{\text{proj}_{\mb{M}}} && \mb{M} \ar[rr]^{\varphi} && C_{k}.
}$$
By the result proven in the first paragraph, $r_{k}(\Ker(\bar{\varphi})) \geq g - 1$. 
Clearly we have $\Ker(\bar{\varphi}) = \Ker(\varphi)\oplus\mb{W}^{j}_{k}$. 
It then follows that $\bar{r}_{k}(\Ker(\varphi)) \geq g-1$. 
This completes the proof of the corollary.
\end{proof}
\noindent 

The next proposition yields the first non-trivial case of Theorem \ref{theorem: linking form high-connectivity}. 
Compare with \cite[Proposition 4.3]{GRW 14}.

\begin{proposition} \label{proposition: base case}
If $\bar{r}_{k}(\mb{M}) \geq 2$, then $L(\mb{M})_{k} \neq \emptyset$. 
If $\bar{r}_{k}(\mb{M}) \geq 4$, then $L(\mb{M})_{k}$ is connected.
\end{proposition}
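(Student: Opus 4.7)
The plan is to first establish an algebraic cancellation principle for $r_k$ and then use it to prove both parts by combining Lemma \ref{lemma: split injectivity} with Corollary \ref{cor: rank reduction}.

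\textbf{Cancellation.} I claim that for every $\mathbf{N}\in\Ob(\mathcal{L}^{s}_{-1})$ one has $r_k(\mathbf{N}\oplus\mathbf{W}_k)=r_k(\mathbf{N})+1$, hence $\bar r_k(\mathbf{M})=r_k(\mathbf{M})$. The non-trivial inequality comes from applying Corollary \ref{cor: rank reduction} to the group homomorphism $\varphi\colon\mathbf{N}\oplus\mathbf{W}_k\to C_k$ given by $\varphi(n,u)=b(\rho,u)$, where $\rho$ is one of the standard generators of $\mathbf{W}_k$. Strict skew symmetry gives $b(\rho,\rho)=0$, so the kernel is isomorphic as a linking form to $\mathbf{N}\oplus(\Z/k)_0$, where $(\Z/k)_0$ denotes the cyclic group of order $k$ with the zero bilinear form. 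The key observation is that $r_k(\mathbf{N}\oplus(\Z/k)_0)=r_k(\mathbf{N})$: given a morphism $\mathbf{W}^g_k\to\mathbf{N}\oplus(\Z/k)_0$, composition with projection to $\mathbf{N}$ remains form-preserving (the $(\Z/k)_0$-factor contributes nothing to the pairing) and is injective by non-singularity of $\mathbf{W}^g_k$. Combining these, $r_k(\mathbf{N}\oplus\mathbf{W}_k)\le r_k(\mathbf{N})+1$; the reverse inequality is trivial.

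\textbf{Non-emptiness.} This is immediate from cancellation: $\bar r_k(\mathbf{M})\ge 2$ implies $r_k(\mathbf{M})\ge 2\ge 1$, producing a morphism $\mathbf{W}_k\to\mathbf{M}$ and a vertex of $L(\mathbf{M})_k$.

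\textbf{Connectedness.} Again cancellation gives $r_k(\mathbf{M})\ge 4$. Given vertices $f_0,f_1\colon\mathbf{W}_k\to\mathbf{M}$, I will produce a common neighbour $g$, that is, a vertex whose image lies in $\mathbf{M}':=(f_0(\mathbf{W}_k)+f_1(\mathbf{W}_k))^{\perp}$; this provides a length-two path $f_0-g-f_1$. By cancellation applied to $\mathbf{M}'$, it suffices to show $r_k(\mathbf{M}')\ge 1$. The key input is a structural analysis of the sub-linking-form $S:=f_0(\mathbf{W}_k)+f_1(\mathbf{W}_k)$: write $S$ as an orthogonal decomposition $S=R\perp S'$, where $R$ is the radical of $S$ (automatically of trivial form by strict skew symmetry) and $S'$ is non-singular. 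By the classification theorem (Theorem \ref{thm: classification of linking modules}), $S'\cong\mathbf{W}^{m}_k$ for some $m\in\{1,2\}$; note $m\ge 1$ because $b(f_i(\rho),f_i(\sigma))=1/k$ is nontrivial. Iterating Lemma \ref{lemma: split injectivity} splits $S'$ off $\mathbf{M}$ orthogonally, so $r_k((S')^{\perp})=r_k(\mathbf{M})-m$. The radical $R$ lies inside $(S')^{\perp}$ and contributes $d-2m$ independent pairing homomorphisms $(S')^{\perp}\to C_k$ (where $d\le 4$ is the $\Z/k$-rank of $S$); applying Corollary \ref{cor: rank reduction} that many times yields
\[
r_k(\mathbf{M}')\;=\;r_k\bigl((S')^{\perp}\cap R^{\perp}\bigr)\;\ge\;r_k(\mathbf{M})-m-(d-2m)\;\ge\;4-d+m\;\ge\;1,
\]
using $d\le 4$ and $m\ge 1$. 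This delivers the desired common neighbour.

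\textbf{Main obstacle.} The cancellation step is clean once one sees that strict skew symmetry produces a trivial-form cyclic factor in the relevant kernel. The real technical work is the orthogonal splitting $S=R\perp S'$ used in the connectedness argument: strict skew symmetry forces the radical $R$ to carry the trivial form, but upgrading the short exact sequence $0\to R\to S\to S/R\to 0$ to an honest orthogonal direct-sum decomposition requires a careful lift at the level of $\Z/k$-modules. Once this decomposition is in place, the numerical bookkeeping is tight and the hypothesis $\bar r_k(\mathbf{M})\ge 4$ is precisely what is needed in the worst case $d=4$, $m=1$.
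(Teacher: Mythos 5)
Your algebraic cancellation $r_k(\mathbf{N}\oplus\mathbf{W}_k)=r_k(\mathbf{N})+1$, and hence $\bar{r}_k=r_k$, is correct and is a genuinely different route from the paper's: there the equality of stable and unstable rank is never proved, and the reduction from $\bar{r}_k$ to $r_k$ is instead done by inductively invoking the topological cancellation, Proposition~\ref{proposition: cancelation alg}, which itself needs the connectivity being proved. Your non-emptiness argument therefore goes through cleanly. The connectedness argument, however, has a genuine gap at exactly the step you flag: the orthogonal decomposition $S=R\perp S'$ of a degenerate strictly skew form into its radical plus a non-singular complement does not exist in general when $k$ is not square-free. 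Take $k=p^{2}$ and $T=(\Z/p^{2})a'\oplus(\Z/p^{2})b'$ with $b(a',b')=1/p$; then $R(T)=pT\cong(\Z/p)^{2}$ and $T/R(T)\cong\mathbf{W}_{p}$, but no subgroup of $T$ complements $R(T)$, since every element of $T\setminus pT$ has order $p^{2}$. Such a $T$ genuinely arises: with $\mathbf{M}=\mathbf{W}_{p^{2}}^{5}$, choose $a',b'\in f_{0}(\mathbf{W}_{k})^{\perp}$ with $b(a',b')=1/p$, and set $f_{1}(\rho)=f_{0}(\rho)+a'$, $f_{1}(\sigma)=(1-p)f_{0}(\sigma)+b'$; then $S=f_{0}(\mathbf{W}_{k})\perp T$ and a group-order count shows $R(S)=pT$ has no orthogonal complement in $S$. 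Moreover, even where the splitting exists, Theorem~\ref{thm: classification of linking modules} only gives $S'\cong\bigoplus_{j}\mathbf{W}_{p_{j}^{n_{j}}}^{\ell_{j}}$ with $p_{j}^{n_{j}}\mid k$, not necessarily $S'\cong\mathbf{W}_{k}^{m}$, so the rank bookkeeping that follows does not close.

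The paper sidesteps any structural analysis of $S$. It fixes $f_{0}$ with $r_{k}(f_{0}(\mathbf{W}_{k})^{\perp})\geq 3$ (with your cancellation this holds automatically for every $f_{0}$ once $r_{k}(\mathbf{M})\geq 4$), writes $S^{\perp}=f_{0}(\mathbf{W}_{k})^{\perp}\cap f_{1}(\mathbf{W}_{k})^{\perp}$ as the kernel of the composite $f_{0}(\mathbf{W}_{k})^{\perp}\hookrightarrow\mathbf{M}\to f_{1}(\mathbf{W}_{k})\cong C_{k}\oplus C_{k}$, and applies Corollary~\ref{cor: rank reduction} twice to conclude $r_{k}(S^{\perp})\geq 1$. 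If you substitute this kernel argument for your structural decomposition of $S$, the gap closes, and combined with your algebraic cancellation in place of Proposition~\ref{proposition: cancelation alg} it gives a shorter proof than the paper's.
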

\begin{proof}
Let us first make the slightly stronger assumption that $r_{k}(\mb{M}) \geq 4$. 
It follows that there exists some morphism $f_{0}: \mb{W}_{k} \longrightarrow \mb{M}$ such that $r_{k}(f_{0}(\mb{W}_{k})^{\perp}) \geq 3$. 
Given any morphism $f: \mb{W}_{k} \longrightarrow \mb{M}$, we have a homomorphism of abelian groups 
$f_{0}(\mb{W}_{k})^{\perp} \longrightarrow \mb{M} \longrightarrow f(\mb{W}_{k}),$
where the first map is the inclusion and the second is orthogonal projection.
The kernel of this map is the intersection $f_{0}(\mb{W}_{k})^{\perp}\cap f(\mb{W}_{k})^{\perp}$. 
Since $\mb{W}_{k} = \Z/k\oplus\Z/k \cong C_{k}\oplus C_{k}$ (as an abelian group), it follows from Corollary \ref{cor: rank reduction} that $r_{k}(f_{0}(\mb{W}_{k})^{\perp}\cap f(\mb{W}_{k})^{\perp}) \geq 1$.
Thus, we can find a morphism 
$$f': \mb{W}_{k} \longrightarrow f_{0}(\mb{W}_{k})^{\perp}\cap f(\mb{W}_{k})^{\perp}.$$ 
It follows that the sets $\{f_{0}, f\}$ and $\{f_{0}, f'\}$ are both $1$-simplices, and so there is a path of length $2$ from $f$ to $f'$. 

Now suppose that $\bar{r}_{k}(\mb{M}) \geq 4$. 
We then have an isomorphism of linking forms $\mb{M}\oplus\mb{W}^{j}_{k} \cong \mb{N}\oplus\mb{W}_{k}^{j}$ for some $j$ where $r_{k}(\mb{N}) \geq 4$.
By the first paragraph, $L(\mb{N}\oplus\mb{W}^{j}_{k})_{k}$  is connected for all $j \geq 0$, and so we may apply Proposition \ref{proposition: cancelation alg} inductively to deduce that $\mb{M} \cong \mb{N}$ and thus $r_{k}(\mb{M}) \geq 4$. 
We then apply the result of the first paragraph to conclude that $L(\mb{M})_{k}$ is connected.

If $\bar{r}_{k}(\mb{M}) \geq 2$ we may write $\mb{M}\oplus\mb{W}_{k}^{j} \cong \mb{N}\oplus\mb{W}_{k}^{j}$ for some integer $j$ and linking form $\mb{N}$ such that $r_{k}(\mb{N}) \geq 2$. 
We may then inductively apply Proposition \ref{proposition: cancelation alg} to obtain an isomorphism $f: \mb{M}\oplus\mb{W}_{k} \stackrel{\cong} \longrightarrow \mb{N}\oplus\mb{W}_{k}$. 
The linking form $\mb{M}$ is then isomorphic to the kernel of the orthogonal projection, $\mb{N}\oplus\mb{W}_{k} \longrightarrow f(\mb{0}\oplus\mb{W}_{k})$. 
Since $r_{k}(\mb{N}\oplus\mb{W}_{k}) \geq 3$ and $\mb{W}_{k} \cong C_{k}\oplus C_{k}$, it follows from Corollary \ref{cor: rank reduction} that 
$r_{k}(\mb{M}) \geq 1$. 
From this, it follows that $L(\mb{M})_{k}$ is non-empty. 
This concludes the proof of the proposition. 
\end{proof}

\begin{proof}[Proof of Theorem \ref{theorem: linking form high-connectivity}]
We proceed by induction on $g$. 
The base case of the induction, which is the case of the theorem where $g = 4$ and $\bar{r}(\mb{M}) \geq 4$, follows immediately from Proposition \ref{proposition: base case}.
 Now suppose that the theorem holds for the $g-1$ case. 
 Let $\mb{M}$ be a linking form with $\bar{r}_{k}(\mb{M}) \geq g$ and $g \geq 4$. 
 By Proposition \ref{proposition: base case} there exists a morphism $f: \mb{W}_{k} \longrightarrow \mb{M}$ and by Corollary \ref{corollary: stable rank reduction} it follows that 
$\bar{r}_{k}(f(\mb{W}_{k})^{\perp}) \geq g - 1$. 
Let $\mb{M}'$ denote the orthogonal complement $f(\mb{W}_{k})^{\perp}$ and consider the subgroup $\mb{M}'\perp \langle f(\sigma)\rangle \leq \mb{M}$, where $\sigma$ is one of the standard generators of $\mb{W}_{k}$ ($\mb{M}'\perp \langle f(\sigma)\rangle$ indicates an orthogonal direct sum).
The chain of inclusions 
$\mb{M}' \hookrightarrow \mb{M}'\perp\langle f(\sigma)\rangle \hookrightarrow \mb{M}$ 
induces a chain of embeddings of sub-simplicial-complexes
\begin{equation}
\xymatrix{
L(\mb{M}')_{k} \ar[rr]^{i_{1}} && L(\mb{M}'\perp \langle f(\sigma)\rangle)_{k} \ar[rr]^{i_{2}} && L(\mb{M})_{k}.
}
\end{equation}
The composition is null-homotopic since the vertex in $L(\mb{M})_{k}$ determined by the morphism $f: \mb{W}_{k} \longrightarrow \mb{M}$, is adjacent to every simplex in the subcomplex $L(\mb{M}')_{k} \leq L(\mb{M})_{k}$. 
To prove that $L(\mb{M})_{k}$ is $\frac{1}{2}(g - 4)$-connected, we apply Proposition \ref{proposition: inclusion complex} to the maps $i_{1}$ and $i_{2}$ with $n := \frac{1}{2}(g - 4)$. 
Since $L(\mb{M}')_{k}$ is $(n - 1)$-connected by the induction assumption (recall that $\bar{r}(\mb{M}') \geq g-1$), Proposition \ref{proposition: inclusion complex} together with the fact that $i_{2}\circ i_{1}$ is null-homotopic will imply that $L(\mb{M})_{k}$ is $\frac{1}{2}(g - 4)$-connected. 

Let $\xi$ be a $p$-simplex of $L(\mb{M}'\perp \langle f(\sigma)\rangle)_{k}$. 
The linking form on the subgroup $f(\sigma) \leq  \mb{M}'$ is trivial and thus it follows that the projection homomorphism, $\pi: \mb{M}'\perp \langle f(\sigma)\rangle \longrightarrow \mb{M}'$ preserves the linking form structure.
Thus, there is an induced simplicial map 
$$\bar{\pi}: L(\mb{M}'\perp \langle f(\sigma)\rangle)_{k} \longrightarrow L(\mb{M}')_{k},$$ 
and it follows easily that $i_{1}$ is a section of $\bar{\pi}$.
It follows from (\ref{equation: link of simplex}) that there is an equality of simplicial complexes,
$$\xymatrix{
[\lk_{L(\mb{M}'\perp \langle f(\sigma)\rangle)_{k}}(\xi)]\cap L(\mb{M}')_{k} \; = \; \lk_{L(\mb{M}')_{k}}(\bar{\pi}(\xi)).
}$$
Since $\bar{r}_{k}(\mb{M}') \geq g - 1$, the induction assumption (which is that $lCM(L(\mb{M}')_{k}) \geq \frac{1}{2}(g - 2)$) implies that the above complex is 
$$\tfrac{1}{2}(g-2) - p - 2 = (n - p - 1)-\text{connected},$$
where recall, $n = \frac{1}{2}(g - 4)$. 
Proposition \ref{proposition: inclusion complex} then implies that the map $i_{1}$ is $n$-connected.

We now focus on the map $i_{2}$.
Since $b(\sigma, \sigma) = 0$, it follows that the subgroup
$$\mb{M}'\perp \langle f(\sigma)\rangle \leq \mb{M}$$ 
is precisely the orthogonal complement of $\langle f(\sigma) \rangle$ in $\mb{M}$.
Let $\zeta := \{f_{0}, \dots, f_{p}\} \leq L(\mb{M})_{k}$ be a  $p$-simplex, and 
denote $\mb{M}'' := [\sum(f_{i}(\mb{W}_{k}))]^{\perp} \leq \mb{M}$. 
We have, 
\begin{equation} \label{link intersection 2}
L(\mb{M}'\perp \langle f(\sigma)\rangle)_{k}\cap\text{lk}_{L(\mb{M})}(\zeta) \; = \; L(\mb{M}''\cap \langle f(\sigma)\rangle^{\perp})_{k}.
\end{equation}
Corollary \ref{corollary: stable rank reduction} implies that $\bar{r}_{k}(\mb{M}'') \geq g - p - 1$. 
Passing to the kernel of the homomorphism
$$b(\underline{\hspace{.3cm}}, f(\sigma))|_{\mb{M}''}: \mb{M}'' \longrightarrow C_{k},$$
reduces the stable $k$-rank by $1$, and so we have $\bar{r}_{k}(\mb{M}''\cap \langle f(\sigma)\rangle^{\perp}) \geq g - p - 2$. 
By the induction assumption, it follows that the complex $L(\mb{M}''\cap \langle f(\sigma)\rangle^{\perp})_{k}$
is at least 
$$\tfrac{1}{2}(g - p - 2 - 4) \geq (n - p - 1)-\text{connected}.$$ 
By Proposition \ref{proposition: inclusion complex} it follows that the inclusion $i_{2}$ is $n$-connected. 
Combining with the previous paragraph implies that $i_{2}\circ i_{1}$ is $n$-connected. 
It then follows that $L(\mb{M})_{k}$ is $n = \tfrac{1}{2}(g - 4)$-connected since $i_{2}\circ i_{1}$ is null-homotopic.  

The fact that $lCM(L(\mb{M})_{k}) \geq \frac{1}{2}(g - 1)$ is proven as follows. 
Let $\zeta = \{f_{0}, \dots, f_{p}\} \leq L(\mb{M})_{k}$ be a $p$-simplex 
and let $\mb{V}$ denote the orthogonal complement $[\sum f_{i}(\mb{W})]^{\perp}$.
We have $\bar{r}_{k}(\mb{V}) \geq g - p - 1$. 
By (\ref{equation: link of simplex}) we have $\lk_{L(\mb{M})_{k}}(\zeta) \cong L(\mb{V})_{k}$ and so by the induction assumption it follows that $|\lk_{L(\mb{M})_{k}}(\zeta)|$ is $\frac{1}{2}(g - p - 1 - 4)$-connected. 
The inequality 
$$\tfrac{1}{2}(g - p - 1 - 4) \; = \; \tfrac{1}{2}(g - p - 1) - 2 \; \geq \; \tfrac{1}{2}(g - 1) - p - 2$$
implies that $|\lk_{L(\mb{M})_{k}}(\zeta)|$ is $(\frac{1}{2}(g - 1) - p - 2)$-connected. 
This proves that $lCM(L(\mb{M})_{k}) \geq \frac{1}{2}(g - 1)$ and concludes the proof of the Theorem.
\end{proof}

\section{$(2n-1)$-Connected, $(4n+1)$-Dimensional Manifolds} \label{section: Highly Connected Manifolds of Odd Dimension}
\subsection{The Homological Linking Form}
For what follows, let $M$ be an oriented manifold of dimension $2s+1$.
Let $H^{\tau}_{s}(M; \Z) \leq H_{s}(M; \Z)$ denote the torsion subgroup of $H_{s}(M; \Z)$.
Following \cite{W 67}, the \textit{homological linking form} 
$\tilde{b}: H^{\tau}_{s}(M; \Z)\otimes H^{\tau}_{s}(M; \Z) \longrightarrow \Q/\Z$ 
is defined as follows. 
Let $x, y \in H^{\tau}_{s}(M; \Z)$ and suppose that $x$ has order $r > 1$. Represent $x$ by a chain $\xi$ and let $\partial\zeta = r\cdot\xi$. Then if $y$ is represented by the chain $\chi$, we define
\begin{equation} \label{eq: linking form chain level} 
\tilde{b}(x, y) = \tfrac{1}{r}[\zeta\cap\chi] \quad  \text{mod $ 1$,}
\end{equation}
where $\zeta\cap\chi$ denotes the algebraic intersection number associated to the two chains (after being deformed so as to meet transversally). 
It is proven in \cite[Page 274]{W 67} that $\tilde{b}$ is $(-1)^{s+1}$-symmetric. 
We refer the reader to \cite{W 67} for further details on this construction. 

Let $\pi_{s}^{\tau}(M) \leq \pi_{s}(M)$ denote the torsion component of the homotopy group $\pi_{s}(M)$. 
Using the homological linking form and the Hurewicz homomorphism $h: \pi_{s}(M) \longrightarrow H_{s}(M)$, we can define a similar bilinear pairing 
\begin{equation} \label{eq: homotopical linking form}
b: \pi^{\tau}_{s}(M)\otimes\pi^{\tau}_{s}(M) \longrightarrow \Q/\Z; \quad b(x, y) = \tilde{b}( h(x), \; h(y) ).
\end{equation}
The pair $(\pi^{\tau}_{s}(M), b)$ is a $(-1)^{s+1}$-symmetric linking form in the sense of Section \ref{subsection: linking forms} and we will refer to it as the \textit{homotopical linking form} associated to $M$.  In the case that $M$ is $(s-1)$-connected, the homotopical linking form is isomorphic to the homological linking form by the \textit{Hurewicz theorem}.

\subsection{The classification theorem.} \label{subsection: the classification theorem}
We are mainly interested in manifolds which are $(4n+1)$-dimensional with $n \geq 2$. 
In this case the homological (and homotopical) linking form is skew symmetric.
It follows from this that $b(x, x) = 0$ whenever $x$ is an element of odd order. 
The following lemma of Wall from \cite{W 67} implies that for $(2n-1)$-connected, $(4n+1)$-dimensional manifolds with $n \geq 2$, the linking form is \textit{strictly} skew symmetric. 
\begin{lemma} \label{lemma: strict anti-symmetry} For $n \geq 2$, let $M$ be a $(2n-1)$-connected, $(4n+1)$-dimensional manifold. Then $b(x, x) = 0$ for all $x \in \pi^{\tau}_{2n}(M)$.  \end{lemma}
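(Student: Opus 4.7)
The plan is to first reduce the problem via a simple algebraic observation, and then to represent $x$ geometrically by an embedded sphere and use the normal bundle structure to force the self-linking to vanish.

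By the skew-symmetry of $b$, we have $b(x,x) = -b(x,x)$ in $\Q/\Z$, so $2b(x,x) = 0$, hence $b(x,x) \in \{0,\tfrac{1}{2}\}$. Moreover the assignment $\phi \colon \pi^{\tau}_{2n}(M) \to \tfrac{1}{2}\Z/\Z$ defined by $\phi(x) = b(x,x)$ is a group homomorphism, since
$$
b(x+y,\, x+y) \;=\; b(x,x) + b(y,y) + \bigl[b(x,y) + b(y,x)\bigr] \;=\; b(x,x) + b(y,y).
$$
Since $\phi$ takes values in a group of order $2$, it automatically vanishes on every element of odd order and on $2\pi^{\tau}_{2n}(M)$; it therefore suffices to verify $\phi(x)=0$ for $x$ of $2$-power order, say $r = 2^a$.

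To get a geometric handle on such an $x$, I use that $M$ is $(2n-1)$-connected and $\dim M = 4n+1$. The Hurewicz theorem identifies $\pi_{2n}(M)$ with $H_{2n}(M;\Z)$, and a generic map $S^{2n} \to M^{4n+1}$ has transverse self-intersection locus of dimension $2(2n)-(4n+1) = -1$; hence by general position $x$ can be represented by an embedding $\iota \colon S^{2n} \hookrightarrow M$. Let $\nu$ denote its normal bundle, a rank-$(2n+1)$ oriented vector bundle over $S^{2n}$. Its Euler class lives in $H^{2n+1}(S^{2n};\Z) = 0$, so $\nu$ admits a nowhere-zero section $s$. This section gives two disjoint push-offs $S',S'' \subset M$ of $S := \iota(S^{2n})$, using $+s$ and $-s$ respectively; both are disjoint from $S$ and represent the same class $x$ in $H_{2n}(M;\Z)$.

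Choose a singular $(2n+1)$-chain $\zeta$ with $\partial\zeta = rS$, placed in general position with respect to both $S'$ and $S''$. Then by the definition of the linking form,
$$
b(x,x) \;=\; \tfrac{1}{r}(\zeta \cdot S') \;=\; \tfrac{1}{r}(\zeta \cdot S'') \pmod{1}.
$$
The main obstacle — and the content of the lemma — is to show that this intersection number is divisible by $r$, i.e.\ that it lies in $\Z$ rather than merely in $\tfrac{1}{2}\Z$. The plan is to exploit the involution $v \mapsto -v$ on the normal disk bundle $D\nu$: one constructs $\zeta$ to be invariant under this involution near $S$, so that the involution exchanges $S'$ and $S''$ and forces $\zeta\cdot S' = \zeta\cdot S''$. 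The difference $S'-S''$ bounds in $M$, and combining this with the fact that two generic parallel copies of $S$ have intersection number zero (the expected intersection dimension being negative) yields that $\zeta \cdot S'$ is in fact an even integer, and hence congruent to zero modulo $r$ once combined with the previously established constraint $\phi(x) \in \{0,\tfrac{1}{2}\}$ and the requirement that $r\phi(x) \in \Z$. The careful accounting of signs and the equivariant construction of $\zeta$ is essentially the content of Wall's argument in \cite{W 67}, to which I would defer for the detailed verification. The key technical step is therefore the symmetric construction of the null-homology $\zeta$; once this is in place, the lemma follows.
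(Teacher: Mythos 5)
The paper gives no argument for this lemma at all: it is quoted from Wall \cite{W 67}. So your write-up has to stand on its own, and it does not. The first half is fine (the reduction $2b(x,x)=0$, hence $b(x,x)\in\{0,\tfrac12\}$, additivity of $x\mapsto b(x,x)$, reduction to $2$-power order, the embedded representative $S$, and the two push-offs $S'$, $S''$), but the concluding bookkeeping is a non sequitur. For $x$ of order $r=2^a$ you must show $r\mid \zeta\cdot S'$; since you already know $b(x,x)\in\{0,\tfrac12\}$, the only alternative to be excluded is $\zeta\cdot S'\equiv r/2 \pmod r$, and for $a\geq 2$ the number $r/2$ is itself even, so proving that $\zeta\cdot S'$ is even decides nothing. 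Moreover, the symmetry you propose cannot produce more than you started with: if an involution-invariant $\zeta$ gives $\zeta\cdot S'=\zeta\cdot S''$, then since $S'+S''$ is homologous to $2S$ this identity only re-derives $2b(x,x)=0$, i.e.\ exactly the constraint from skew-symmetry. Also, "two generic parallel copies of $S$ have intersection number zero" is not a meaningful statement here: two $2n$-cycles in a $(4n+1)$-manifold carry no integer intersection number (the expected dimension is $-1$; they are generically disjoint), so there is no quantity there to feed into the argument.

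There is also a structural red flag showing the missing idea is essential: nothing in your argument uses $n\geq 2$. General position gives an embedded representative because $\dim M=2\cdot 2n+1$, and the normal bundle admits a nowhere-zero section because its rank exceeds the base dimension; both hold verbatim for $n=1$. But the statement is false for $n=1$: the Wu manifold $SU(3)/SO(3)$ is simply connected and $5$-dimensional with $H_2\cong \Z/2$, and nonsingularity of the torsion linking form forces $b(x,x)=\tfrac12$ on the generator. Hence any correct proof must invoke $n\geq2$ in an essential way (in Wall's treatment this comes from a finer analysis of the normal bundle data of the embedded sphere, not merely the existence of a push-off), and your deferral of "the careful accounting of signs and the equivariant construction of $\zeta$" to \cite{W 67} is a deferral of precisely the entire content of the lemma. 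As it stands, the proposal proves only what follows formally from skew-symmetry, namely $2b(x,x)=0$.
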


It follows from Lemma \ref{lemma: strict anti-symmetry} that if $M$ is a $(2n-1)$-connected, $(4n+1)$-dimensional manifold then the homotopical linking form
$(\pi^{\tau}_{2n}(M), b)$ 
is an object of the category $\mathcal{L}_{-1}^{s}$.
If $M$ is closed (or has boundary a homotopy sphere), then $(\pi^{\tau}_{2n}(M), b)$ is non-singular. 
The following theorem is a specialization of Wall's classification theorem \cite[Theorem 7]{W 67} to the set $\widehat{\mathcal{W}}_{2n}^{f}$.
\begin{theorem} \label{thm: classification theorem}
For $n \geq 2$, two manifolds $M_{1}, M_{2} \in \widehat{\mathcal{W}}_{4n+1}^{f}$ are almost diffeomorphic if and only if
there is an isomorphism of linking forms, 
$(\pi_{2n}(M_{1}), \;  b) \cong (\pi_{2n}(M_{2}), \;  b).$
Furthermore, given any non-singular object $\mathbf{M} \in \Ob(\mathcal{L}^{s}_{-1})$ whose underlying finite abelian group has trivial $2$-torsion, there exists an element $M \in \widehat{\mathcal{W}}_{4n+1}^{f}$ such that $(\pi_{2n}(M), \;  b) \cong (\mathbf{M}, \; b_{\mb{M}})$. 
\end{theorem}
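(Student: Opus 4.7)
The plan is to deduce this theorem as a direct specialization of Wall's full classification theorem \cite[Theorem 7]{W 67}, which states that the almost diffeomorphism type of a $(2n-1)$-connected, $(4n+1)$-dimensional closed manifold $W$ is determined by the quadruple of invariants $(b, \alpha, \widehat{\phi}, \widehat{\beta})$, where $b$ is the linking form on $\pi_{2n}^{\tau}(W)$, $\alpha : \pi_{2n}(W) \to \pi_{2n-1}(SO)$ is the stable tangential invariant, $\widehat{\phi} \in H^{2n+1}(W; \Z/2)$, and $\widehat{\beta} \in H^{2n+1}(W; \pi_{2n}(SO))$. Accordingly, the strategy has two parts: first, verify that for $W \in \widehat{\mathcal{W}}^{f}_{4n+1}$ the three secondary invariants $\alpha, \widehat{\phi}, \widehat{\beta}$ automatically vanish, so that $b$ is the only remaining invariant; second, invoke Wall's existence/realization result together with the classification of strictly skew-symmetric linking forms on finite abelian groups with trivial $2$-torsion from \cite[Lemma 7]{W 64}.

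For the vanishing, I would argue as follows. Since $W$ is stably parallelizable by hypothesis, the stable normal bundle and hence the tangential invariants $\alpha$ and $\widehat{\beta}$ are identically zero (this is made explicit in Remark \ref{remark: Wall's invariants}). For the class $\widehat{\phi}$, I would use $\Z/2$-Poincar\'e duality on the closed $(4n+1)$-manifold $W$ to identify
$$H^{2n+1}(W; \Z/2) \;\cong\; H_{2n}(W; \Z/2),$$
and this latter group vanishes by the defining condition of $\widehat{\mathcal{W}}^{f}_{4n+1}$. Hence $\widehat{\phi} = 0$ for purely cohomological reasons. Combining these vanishing results with Wall's theorem yields the ``if and only if'' statement: $M_1$ and $M_2$ are almost diffeomorphic precisely when their linking forms are isomorphic.

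For the realization part, given a non-singular object $(\mathbf{M}, b_{\mathbf{M}}) \in \mathcal{L}^{s}_{-1}$ whose underlying abelian group has trivial $2$-torsion, I would invoke Wall's existence theorem to produce a $(2n-1)$-connected, $(4n+1)$-dimensional closed manifold $W$ realizing the prescribed quadruple $(b_{\mathbf{M}}, 0, 0, 0)$; the classification of such linking forms in \cite[Lemma 7]{W 64} ensures that every admissible $(\mathbf{M}, b_{\mathbf{M}})$ occurs as the linking form of an abstract algebraic datum Wall's construction accepts. The vanishing of $\alpha$ forces $W$ to be stably parallelizable, while trivial $2$-torsion in $\mathbf{M}$ together with the universal coefficient theorem gives $H_{2n}(W; \Z/2) = 0$, so $W \in \widehat{\mathcal{W}}^{f}_{4n+1}$ as required. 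The only subtle point I anticipate is carefully matching conventions between the homotopical linking form $b$ defined via the Hurewicz map in \eqref{eq: homotopical linking form} and the homological linking form on $H^{\tau}_{2n}(W; \Z)$ used by Wall; since $W$ is $(2n-1)$-connected the Hurewicz map is an isomorphism, so this identification is routine but must be made explicit. No serious new ideas are needed beyond organizing these references, so there is no major obstacle; the proof is essentially a bookkeeping exercise extracting the stated theorem from \cite[Theorem 7]{W 67} and \cite[Lemma 7]{W 64}.
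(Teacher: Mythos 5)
Your proof takes essentially the same route as the paper (compare Remark \ref{remark: Wall's invariants} and the remark following the theorem): reduce to Wall's classification \cite[Theorem 7]{W 67}, observe that stable parallelizability kills $\alpha$ and $\widehat{\beta}$ and that $\widehat{\phi}$ vanishes because $H^{2n+1}(W;\Z/2)\cong H_{2n}(W;\Z/2)=0$, and then invoke Wall's existence statement together with \cite[Lemma 7]{W 64} for realization. The only thing you add beyond the paper's sketch is making the Poincar\'e-duality step explicit, which the paper simply asserts.
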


\begin{remark}
As discussed in the introduction, $\widehat{\mathcal{W}}_{2n}^{f}$ is the class of manifolds for which the linking form is a complete invariant (up to almost diffeomorphism). 
Indeed, for all stably parallelizable, $(2n-1)$-connected, $(4n+1)$-dimensional, closed manifolds $W$, the tangential invariants $\alpha$ and $\widehat{\beta}$ considered by Wall in \cite[Theorem 7]{W 67} vanish. 
Furthermore, if $H_{2n}(W; \Z)$ is finite and has trivial $2$-torsion then the invariant $\widehat{\phi} \in H^{2n+1}(W; \Z/2)$ must also vanish as well and the only remaining invariant is the linking form. 
Thus the classification of manifolds in $\widehat{\mathcal{W}}_{2n}^{f}$ reduces to the classification of strictly skew symmetric linking forms and we obtain Theorem \ref{thm: classification theorem}. 
\end{remark}

Using the above classification theorem and the classification of strictly skew symmetric linking forms from Theorem \ref{thm: classification of linking modules}, we may specify certain basic manifolds. 
For each odd integer $k > 2$, fix a manifold $W_{k} \in \widehat{\mathcal{W}}^{f}_{4n+1}$ whose linking form $(\pi_{2n}(W_{k}), \;  b)$ is isomorphic to $\mb{W}_{k}$.
By the above theorem, this condition determines $W_{k}$ uniquely up to almost diffeomorphism.
It follows from Theorem \ref{thm: classification theorem} that every element of $\widehat{\mathcal{W}}^{f}_{4n+1}$ is almost diffeomorphic to a manifold of the form $W_{k_{1}}\#\cdots\# W_{k_{s}}$.
The manifolds $W_{k}$ are the subject of our main result of this paper, Theorem \ref{theorem: Main theorem}. 

\begin{remark} \label{remark: boundary sphere classification}
The manifolds $W_{k} \in \widehat{\mathcal{W}}^{f}_{4n+1}$ are uniquely determined by their linking forms up to \textit{almost diffeomorphism}. 
For each $k$, let $W'_{k}$ denote the manifold obtained from $W_{k}$ by removing an open disk. 
It follows from \cite[Theorem 7]{W 67} that $W'_{k}$ is determined by its linking form up to diffeomorphism. 
\end{remark}

\section{$\Z/k$-Manifolds} \label{Bordism Groups of Singular Manifolds} 
\subsection{Basic Definitions}
One of the main tools we will use to study the diffeomorphism groups of odd dimensional manifolds will be manifolds with certain types of \textit{Baas-Sullivan} singularities, namely $\Z/k$-manifolds (which in this paper we refer to as $\langle k \rangle$-manifolds). 
We will use these manifolds to construct a geometric model for the linking form. Here we give an overview of the definition and basic properties of such manifolds. 
For further reference on $\Z/k$-manifolds or manifolds with general Baas-Sullivan singularities, see \cite{Ba 73}, \cite{B 92}, and \cite{SM 74}.
\begin{Notation}
For a positive integer $k$, we let $\langle k \rangle$ denote the set consisting of $k$-elements, $\{1, \dots, k\}$. 
We will consider this set to be a zero-dimensional manifold. 
\end{Notation}

\begin{defn} \label{defn: k-manifolds} Let $k$ be a positive integer. Let $P$ be a $p$-dimensional smooth manifold equipped with the following extra structure:
\begin{enumerate} 
\item[(i)] The boundary of $P$ has the decomposition,
$\partial P = \partial_{0}P \cup \partial_{1}P$
where $\partial_{0}P$ and $\partial_{1}P$ are $(p-1)$-dimensional manifolds with boundary and 
$$\partial_{0, 1}P := (\partial_{0}P)\cap(\partial_{1}P) = \partial(\partial_{0}P) = \partial(\partial_{1}P)$$ 
is a $(p-2)$-dimensional manifold without boundary.
\item[(ii)] There is a manifold $\beta P$ and diffeomorphism
$\Phi: \partial_{1}P \stackrel{\cong} \longrightarrow \beta P\times\langle k \rangle.$
\end{enumerate}
With $P$, $\beta P,$ and $\Phi$ as above, the pair $(P, \Phi)$ is said to be a smooth $\langle k \rangle$-manifold. 
The diffeomorphism $\Phi$ is referred to as the \textit{structure-map} and the manifold $\beta P$ is called the \textit{Bockstein}.
\end{defn}

\begin{Notation} We will usually drop the structure-map from the notation and denote $P := (P, \Phi)$. 
We will always denote the structure-map associated to a $\langle k \rangle$-manifold by the same capital greek letter $\Phi$. 
If another $\langle k \rangle$-manifold is present, say $Q$, we will decorate the structure map with the subscript $Q$, i.e.\ $\Phi_{Q}$.

Any smooth manifold $M$ is automatically a $\langle k \rangle$-manifold by setting $\partial_{0}M = \partial M$, $\partial_{1}M = \emptyset$, and $\beta M = \emptyset$. 
Such a $\langle k \rangle$-manifold $M$ with $\partial_{1}M = \emptyset$, $\beta M = \emptyset$ is said to be \textit{non-singular}. 
\end{Notation}

Now, let $P$ be a $\langle k \rangle$-manifold as in the above definition. Notice that the diffeomorphism
$\Phi$ maps the submanifold $\partial_{0,1}P \subset \partial_{1}P$ diffeomorphically onto $\partial(\beta P)\times\langle k \rangle$. 
In this way, if we set 
$$\partial_{0}(\partial_{0}P) := \emptyset, \quad  \partial_{1}(\partial_{0}P) := (\partial_{0}P)\cap(\partial_{1}P) = \partial_{0,1}P, \quad \text{and} \quad \beta(\partial_{0}P) = \partial(\beta P),$$ 
the pair
$\partial_{0}P := (\partial_{0}P, \; \Phi|_{\partial_{0,1}P})$
is a $\langle k \rangle$-manifold. We will refer to $\partial_{0}P$ as the 
 \textit{boundary} of $P$. If $P$ is compact and $\partial_{0}P = \emptyset$, then $P$ is said to be a \textit{closed} $\langle k \rangle$-manifold.  

Given a $\langle k \rangle$-manifold $P$, one can construct a manifold with \textit{cone-type singularities} in a natural way as follows.
\begin{defn} \label{defn: associated singular space}  Let $P$ be a $\langle k \rangle$- manifold.
Let $\bar{\Phi}: \partial_{1}P \longrightarrow \beta P$ be the map given by the composition
$\xymatrix{
\partial_{1}P \ar[rr]^{\Phi}_{\cong} && \beta P\times\langle k \rangle \ar[rr]^{\text{proj}_{\beta P}} && \beta P.
}$
We define $\widehat{P}$ to be the quotient space obtained from $P$ by identifying points $x, y \in \partial_{1}P$ if and only if 
$\bar{\Phi}(x) = \bar{\Phi}(y).$
\end{defn}

We will need to consider maps from $\langle k \rangle$-manifolds to non-singular manifolds. 
\begin{defn} \label{defn: k-map} Let $P$ be a $\langle k \rangle$-manifold and let $X$ be a topological space. 
A map $f: P \longrightarrow X$ is said to be a $\langle k \rangle$-map if there exists a map $f_{\beta}: \beta P \rightarrow X$ 
such that the restriction of $f$ to $\partial_{1}P$ has the factorization
$
\xymatrix{
\partial_{1}P \ar[rr]^{\bar{\Phi}} && \beta P \ar[rr]^{f_{\beta}} && X,}
$
where $\bar{\Phi}: \partial_{1}P \longrightarrow \beta P$ is the map from Definition \ref{defn: associated singular space}.
Clearly the map $f_{\beta}$ is uniquely determined by $f$. 
\end{defn}

We denote by $\Maps_{\langle k \rangle}(P, X)$ the space of $\langle k\rangle$-maps $P \rightarrow M$, topologized as a subspace of $\Maps(P, X)$ with the compact-open topology.
 It is immediate that any $\langle k \rangle$-map $f: P \rightarrow X$ induces a unique map $\widehat{f}: \widehat{P} \longrightarrow X$ and that the correspondence, $f \mapsto \widehat{f}$ induces a homeomorphism, $\Maps_{\langle k \rangle}(P, X) \cong \Maps(\widehat{P}, X)$. 
 Throughout the paper we will denote by $\hat{f}: \widehat{P} \longrightarrow Y$, the map induced by the $\langle k \rangle$-map $f$. 
In the case that $X$ is a smooth manifold, $f$ is said to be a smooth $\langle k \rangle$-map if both $f$ and $f_{\beta}$ are smooth.

\subsection{Bordism of $\langle k \rangle$-manifolds.} \label{subsection: framed bordism}
We will need to consider the oriented bordism groups of $\langle k \rangle$-manifolds. 
For a space $X$ and non-negative integer $j$, 
we denote by $\Omega^{SO}_{j}(X)_{\langle k \rangle}$ the bordism group of $j$-dimensional, closed, oriented, smooth $\langle k \rangle$-manifolds associated to $X$. 
We refer the reader to 
 \cite{B 92} and \cite{SM 74} for precise details of the definitions. 
We have the following theorem from \cite{B 92}.
\begin{theorem} \label{thm: bockstein-sullivan}
For any space $X$ and integer $k \geq 2$, there is a long exact sequence:
\begin{equation}
\xymatrix{
\cdots \ar[r] & \Omega^{SO}_{j}(X) \ar[rr]^{\times k} && \Omega^{SO}_{j}(X)  \ar[rr]^{j_{k}} && \Omega^{SO}_{j}(X)_{\langle k\rangle}\ar[rr]^{\beta} && \Omega^{SO}_{j-1}(X) \ar[r] & \cdots }
\end{equation}
where $\times k$ denotes multiplication by the integer $k$, $j_{k}$ is induced by inclusion (since an oriented smooth manifold is an oriented $\langle k \rangle$-manifold), and $\beta$ is the map induced by $P \mapsto \beta P$. 
\end{theorem}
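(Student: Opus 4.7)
The plan is to verify that the claimed sequence is a chain complex and then establish exactness at each of the three types of positions by direct geometric constructions, following the pattern of the Bockstein long exact sequence in singular homology with $\Z/k$-coefficients. Since the maps $\times k$, $j_k$, and $\beta$ are already defined on representatives, both tasks reduce to producing explicit $\langle k \rangle$-manifolds (with $\langle k \rangle$-maps to $X$) having prescribed boundary.

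For the chain-complex property I would argue as follows. To see $j_k \circ (\times k) = 0$, given a closed oriented $j$-manifold $M$ with reference map $f: M \to X$, consider the $\langle k \rangle$-manifold $P = M \times [0,1] \times \langle k \rangle$ with $\partial_0 P = M \times \{0\} \times \langle k \rangle$, $\partial_1 P = M \times \{1\} \times \langle k \rangle$, $\beta P = M$, and structure map the obvious projection; the composition $P \to M \to X$ is a $\langle k \rangle$-map exhibiting $k\cdot[M,f]$ as $\langle k \rangle$-null-bordant. That $\beta \circ j_k = 0$ is immediate because a genuine oriented manifold, viewed as a $\langle k \rangle$-manifold, has empty Bockstein. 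For $(\times k) \circ \beta = 0$, if $P$ is a closed $\langle k \rangle$-manifold with reference $\langle k \rangle$-map $f: P \to X$, then $P$ itself is a compact oriented manifold of dimension $j$ whose ordinary boundary equals $\partial_1 P \cong \beta P \times \langle k \rangle$ (as $\partial_0 P = \emptyset$ and $\partial_{0,1} P = \emptyset$), and the restriction of $\hat f$ to this boundary is precisely $f_\beta$ on each of the $k$ sheets, giving an oriented null-bordism of $k \cdot [\beta P, f_\beta]$.

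For exactness, I would produce the required preimages by the analogous gluing constructions. Suppose $k \cdot [M,f] = 0$ in $\Omega^{SO}_j(X)$; choose an oriented null-bordism $(N, g)$ of $(M \sqcup \cdots \sqcup M, f \sqcup \cdots \sqcup f)$ and declare $\partial_0 N := \emptyset$, $\partial_1 N := \partial N$, $\beta N := M$ with the obvious structure map, making $N$ a closed $\langle k \rangle$-manifold satisfying $\beta[N, g] = [M, f]$. Next, if $[P, f] \in \Omega^{SO}_j(X)_{\langle k \rangle}$ satisfies $\beta[P,f] = 0$, pick an oriented null-bordism $(W, h)$ of $(\beta P, f_\beta)$ and form the closed oriented manifold $\widetilde P := P \cup_{\beta P \times \langle k \rangle} (W \times \langle k \rangle)$ (smoothing corners), with reference map the evident extension $\tilde f$; then $j_k[\widetilde P, \tilde f] = [P, f]$ because attaching the non-singular piece $W \times \langle k \rangle$ provides an explicit $\langle k \rangle$-bordism. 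Finally, if $j_k[M, f] = 0$ via some $(P, g)$ with $\partial_0 P = M$, then the underlying manifold $P$ has ordinary boundary $M \sqcup (\beta P \times \langle k \rangle)$, and $g$ restricts to $f$ on $M$ and to $f_\beta$ on each of the $k$ Bockstein sheets, so $P$ provides an oriented bordism showing $[M, f] = k \cdot [\beta P, g_\beta]$ in $\Omega^{SO}_j(X)$.

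No single step is genuinely hard; the only substantive point is to check that the gluing construction at exactness in the middle position produces a smooth oriented manifold and a well-defined continuous map to $X$, which follows because the structure map $\Phi_P$ provides the required product collar along $\partial_1 P$ and because the $\langle k \rangle$-map condition on $f$ means the gluing along each sheet is compatible with $h$ on $W$. The long exactness is then a formal consequence of these three exactness statements at each degree $j$.
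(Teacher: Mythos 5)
Your proposal is correct, and all the constructions you use are the standard ones: the cylinder $M\times[0,1]\times\langle k\rangle$ for $j_k\circ(\times k)=0$, reading off $\partial P=\partial_1 P\cong\beta P\times\langle k\rangle$ for $(\times k)\circ\beta=0$, converting a null-bordism of $k$ copies of $(M,f)$ into a closed $\langle k\rangle$-manifold with Bockstein $M$ for exactness at the source of $\times k$, capping $\partial_1 P$ with $W\times\langle k\rangle$ for exactness at $\Omega^{SO}_j(X)_{\langle k\rangle}$ (the cylinder $\widetilde P\times[0,1]$, with $\partial_0=\widetilde P\times\{0\}\cup P\times\{1\}$ and $\partial_1=(W\times\langle k\rangle)\times\{1\}$, is the $\langle k\rangle$-bordism you allude to), and viewing a $\langle k\rangle$-null-bordism of $j_k[M,f]$ as an ordinary bordism from $M$ to $\pm k$ copies of $\beta P$ for exactness at the target of $\times k$. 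The only comparison to make is that the paper does not prove this statement at all: it quotes it from Botvinnik's book \cite{B 92}, where it arises from the general Baas--Sullivan framework (the cofibration of bordism spectra attached to introducing the singularity $\langle k\rangle$), the same machinery that also yields the $\langle k,l\rangle$ sequences of Theorem \ref{thm: k-l bordism LES}. Your hands-on argument buys a self-contained, elementary proof of exactly this sequence; the citation buys uniformity across all singularity types used later. Two cosmetic points: the structure map $\Phi$ is only a diffeomorphism $\partial_1 P\cong\beta P\times\langle k\rangle$, not itself a collar, so in the gluing step you should invoke the existence of collar neighborhoods (which is automatic) rather than attribute the product structure to $\Phi$; and in the last exactness step the identity is $[M,f]=\pm k\,[\beta P,g_\beta]$ up to the orientation convention on $\beta P$, which is harmless since the image of $\times k$ is closed under sign change.
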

It is immediate from the above long exact sequence that for all integers $k \geq 2$, there are isomorphisms
\begin{equation} \label{equation: elementary bordism calculation}
\Omega^{SO}_{0}(\text{pt.})_{\langle k\rangle} \cong \Z/k \quad \text{and} \quad \Omega^{SO}_{1}(\text{pt.})_{\langle k\rangle} \cong 0.
\end{equation}


\subsection{$\Z/k$-homotopy groups.} For integers $k, n \geq 2$, let $M(\Z/k, n)$ denote the $n$-th $\Z/k$-Moore-space. 
Recall that $M(\Z/k, n)$ is uniquely determined up to homotopy by the calculation, 
$$H_{j}(M(\Z/k, n);\; \Z) \cong 
\begin{cases}
\Z/k &\quad \text{if $j = n$,}\\
\Z & \quad \text{if $j = 0$,}\\
0 &\quad \text{else.}
\end{cases}
$$ 
For a space $X$, we denote by $\pi_{n}(X; \; \Z/k)$ the set of based homotopy classes of maps $M(\Z/k, n) \longrightarrow X$. 
Since $M(\Z/k, n)$ is a suspension when $n \geq 2$, the set $\pi_{n}(X; \; \Z/k)$ has the structure of a group, which is abelian when $n \geq 3$. 

For integers $n, k \geq 2$, we define a $\langle k \rangle$-manifold which will play the role of the sphere in the category of $\langle k \rangle$-manifolds.  
\begin{Construction}
Choose an embedding $\Phi': D^{n}\times\langle k \rangle \longrightarrow S^{n}$. 
Let $V^{n}_{k}$ denote the manifold obtained from $S^{n}$ by removing the interior of $\Phi'(D^{n}\times\langle k \rangle)$ from $S^{n}$. 
The inverse of the restriction of the map $\Phi'$ to $\partial D^{n}\times\langle k \rangle$ induces a diffeomorphism,
$\Phi: \partial V^{n}_{k} \stackrel{\cong} \longrightarrow S^{n-1}\times\langle k \rangle.$
By setting $\beta V^{n}_{k} = S^{n-1}$, the above diffeomorphism $\Phi$ gives $V^{n}_{k}$ the structure of a closed $\langle k \rangle$-manifold. 
\end{Construction}
Let $\widehat{V}_{k}^{n}$ denote the singular space obtained from $V^{n}_{k}$ as in Definition \ref{defn: associated singular space}. 
An elementary calculation shows that,
\begin{equation}
H_{j}(\widehat{V}_{k}^{n}) \cong 
\begin{cases}
\Z/k &\quad \text{if $j = n-1$ or $0$,}\\
\Z^{\oplus(k-1)} &\quad \text{if $j = 1$,}
\end{cases}
\quad \quad \text{and} \quad \quad \pi_{1}(\widehat{V}_{k}^{n}) \cong \Z^{\star(k-1)},
\end{equation}
where $\Z^{\star(k-1)}$ denotes the free group on $(k-1)$-generators.
It follows that the Moore-space $M(\Z/k, n-1)$ can be constructed from $\widehat{V}_{k}^{n}$ by attaching a collection of $k-1$ $2$-cells, one for each generator of the fundamental group. 
This yields the following result. 
\begin{lemma} \label{lemma: Z/k bijection}
Let $X$ be a $2$-connected space and let $k \geq 2$ and $n \geq 3$ be integers. 
The inclusion map $\widehat{V}_{k}^{n} \hookrightarrow  M(\Z/k, n-1)$ induces a bijection of sets,
$\pi_{0}(\Maps_{\langle k \rangle}(V^{n}_{k}, X)) \stackrel{\cong} \longrightarrow \pi_{n-1}(X; \Z/k).$
\end{lemma}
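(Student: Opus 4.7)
The proof will reduce the statement to the Puppe exact sequence of a cofibration, using $2$-connectedness of $X$ to show the boundary terms vanish.

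First, using the homeomorphism $\Maps_{\langle k \rangle}(V^n_k, X) \cong \Maps(\widehat{V}^n_k, X)$ recorded after Definition \ref{defn: k-map}, the left-hand side is identified with the set $[\widehat{V}^n_k, X]$ of free homotopy classes of maps. Since $X$ is simply connected, the natural map $[\widehat{V}^n_k, X]_{*} \longrightarrow [\widehat{V}^n_k, X]$ from based to free homotopy classes is a bijection, so it suffices to prove that the inclusion $i: \widehat{V}^n_k \hookrightarrow M(\Z/k, n-1)$ induces a bijection $i^{*}: \pi_{n-1}(X; \Z/k) \longrightarrow [\widehat{V}^n_k, X]_{*}$.

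The next step is to justify the description of $M(\Z/k, n-1)$ as being obtained from $\widehat{V}^n_k$ by attaching $k-1$ $2$-cells, which was asserted just before the statement of the lemma. Choose a free generating set $\{\gamma_{1}, \dots, \gamma_{k-1}\}$ for $\pi_{1}(\widehat{V}^n_k) \cong \Z^{\star(k-1)}$ and attach $2$-cells $D^{2}_{1}, \dots, D^{2}_{k-1}$ along these generators to form a CW complex $Y = \widehat{V}^{n}_{k} \cup \bigcup_{j} D^{2}_{j}$. By van Kampen, $Y$ is simply connected. Since $n \geq 3$, the attached $2$-cells do not affect $H_{n-1}$, and a direct cellular computation shows $H_{*}(Y)$ agrees with $H_{*}(M(\Z/k, n-1))$. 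Whitehead's theorem then gives a homotopy equivalence $Y \simeq M(\Z/k, n-1)$ under which the inclusion $\widehat{V}^{n}_{k} \hookrightarrow Y$ corresponds to $i$.

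This presents $i$ as the second map in a cofiber sequence
\begin{equation*}
\bigvee_{j=1}^{k-1} S^{1} \; \longrightarrow \; \widehat{V}^{n}_{k} \; \stackrel{i}{\longrightarrow} \; M(\Z/k, n-1),
\end{equation*}
whose connecting map sends the $j$-th circle to the attaching map of $D^{2}_{j}$. Applying $[\,\cdot\,, X]_{*}$ to its Puppe extension yields the exact sequence of pointed sets
\begin{equation*}
\bigoplus_{k-1} \pi_{2}(X) \; \longrightarrow \; [M(\Z/k, n-1), X]_{*} \; \stackrel{i^{*}}{\longrightarrow} \; [\widehat{V}^{n}_{k}, X]_{*} \; \longrightarrow \; \bigoplus_{k-1} \pi_{1}(X).
\end{equation*}
Since $X$ is $2$-connected the outer two terms vanish, so $i^{*}$ is surjective and (by the standard action argument in the Puppe sequence, which is trivial here because the acting group is zero) injective. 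Combining with the identifications of the first paragraph gives the claimed bijection.

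\textbf{Main obstacle.} The only nontrivial ingredient is the homotopy-theoretic identification of $M(\Z/k, n-1)$ with $Y$; this rests on the homology and $\pi_{1}$ computation for $\widehat{V}^{n}_{k}$ recorded just before the lemma, together with an application of Whitehead's theorem in the simply connected range $n - 1 \geq 2$. Once this CW description is in hand, the Puppe-sequence step and the free-versus-based identification are both formal consequences of $2$-connectedness of $X$.
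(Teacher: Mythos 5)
Your proof is correct and follows essentially the same route as the paper's. The paper's argument is just a compressed version of yours: it asserts directly that any map $\widehat{V}^n_k \to X$ extends over $M(\Z/k, n-1)$ (the obstruction over the $2$-cells lies in $\pi_1(X) = 0$, giving surjectivity) and that any two extensions are homotopic (the uniqueness obstruction lies in $\pi_2(X) = 0$, giving injectivity), together with the $\Maps_{\langle k\rangle}(V^n_k, X) \cong \Maps(\widehat{V}^n_k, X)$ identification. Your Puppe-sequence packaging and the careful justification of the CW presentation of $M(\Z/k, n-1)$ via van Kampen, cellular homology, and Whitehead are exactly the steps the paper leaves implicit; they make the same obstruction-theoretic argument formal rather than changing it.
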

\begin{proof}
Since $X$ is simply connected, any map $\widehat{V}_{k}^{n} \longrightarrow X$ extends to a map $M(\Z/k, n-1) \longrightarrow X$ and since $X$ is $2$-connected, it follows that any such extension is unique up to homotopy. 
This proves that the inclusion $\widehat{V}_{k}^{n} \hookrightarrow  M(\Z/k, n-1)$ induces a bijection $\pi_{0}(\Maps(\widehat{V}_{k}^{n}, X)) \cong \pi_{n-1}(X; \Z/k)$. 
The lemma then follows from composing this bijection with the natural bijection, $\pi_{0}(\Maps_{\langle k \rangle}(V^{n}_{k}, X)) \stackrel{\cong} \longrightarrow \pi_{0}(\Maps(\widehat{V}_{k}^{n}, X))$. 
\end{proof}

\begin{corollary} \label{corollary: represent by k-map}
Let $X$ be a $2$-connected space and let $k \geq 2$ and $n \geq 3$ be integers. 
Let $x \in \pi_{n-1}(X)$ be an element of order $k$. 
Then there exists a $\langle k \rangle$-map $f: V^{n}_{k} \longrightarrow X$ such that the associated map $f_{\beta}: S^{n-1} \longrightarrow X$ is a representative of $x$. 
\end{corollary}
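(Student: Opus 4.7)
My plan is to combine Lemma \ref{lemma: Z/k bijection} with a cofiber-sequence computation that identifies precisely which elements of $\pi_{n-1}(X)$ arise as $f_{\beta}$ for some $\langle k \rangle$-map $f$. Since $M(\Z/k, n-1)$ is modeled as $S^{n-1}\cup_{k}e^{n}$, the cofibration $S^{n-1}\xrightarrow{\cdot k}S^{n-1}\xrightarrow{i}M(\Z/k, n-1)$ gives, upon applying $[-, X]_{*}$, the exact sequence
$$
\pi_{n-1}(X; \Z/k)\xrightarrow{\; i^{*}\;}\pi_{n-1}(X)\xrightarrow{\;\cdot k\;}\pi_{n-1}(X).
$$
By hypothesis $k\cdot x=0$, so exactness produces a class $\alpha\in\pi_{n-1}(X; \Z/k)$ with $i^{*}(\alpha)=x$. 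Applying Lemma \ref{lemma: Z/k bijection} then converts $\alpha$ into a homotopy class of $\langle k \rangle$-maps $f\colon V_{k}^{n}\to X$, and it only remains to check that this $f$ satisfies $[f_{\beta}]=x$.

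To verify this, I will unwind the bijection of Lemma \ref{lemma: Z/k bijection}. Let $g\colon M(\Z/k, n-1)\to X$ be a representative of $\alpha$. The bijection is induced by restriction along the inclusion $\widehat{V}_{k}^{n}\hookrightarrow M(\Z/k, n-1)$ used in the proof of that lemma (obtained by attaching $k-1$ additional $2$-cells to kill $\pi_{1}(\widehat{V}_{k}^{n})$), together with the canonical homeomorphism $\Maps_{\langle k \rangle}(V_{k}^{n}, X)\cong\Maps(\widehat{V}_{k}^{n}, X)$. Thus $f$ is the $\langle k \rangle$-map whose induced map $\hat{f}\colon\widehat{V}_{k}^{n}\to X$ equals $g|_{\widehat{V}_{k}^{n}}$. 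Under this inclusion the Bockstein sphere $\beta V_{k}^{n}=S^{n-1}\subset\widehat{V}_{k}^{n}$ is identified with the $(n-1)$-skeleton $S^{n-1}\subset M(\Z/k, n-1)$, so $f_{\beta}=g\circ i$ represents $i^{*}(\alpha)=x$, as required.

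The only subtle point is the identification of the Bockstein sphere $\beta V_{k}^{n}\subset\widehat{V}_{k}^{n}$ with the $(n-1)$-skeleton of the Moore space, but this is immediate from the construction in Lemma \ref{lemma: Z/k bijection}: passing from $\widehat{V}_{k}^{n}$ to $M(\Z/k, n-1)$ only attaches $2$-cells (killing $\pi_{1}$), and since $n-1\geq 2$ these cells are added in a region disjoint from the $(n-1)$-sphere in question. No geometric modification of $f$ is needed beyond choosing the algebraic lift $\alpha$; the argument is essentially a translation between the two models $\widehat{V}_{k}^{n}$ and $M(\Z/k, n-1)$ for classes in $\pi_{n-1}(X; \Z/k)$.
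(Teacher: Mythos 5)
Your argument is correct and is essentially the paper's own proof: both lift $x$ along the exact sequence coming from the cofibration $S^{n-1}\xrightarrow{\cdot k}S^{n-1}\to M(\Z/k,n-1)$ and then invoke Lemma \ref{lemma: Z/k bijection}, the map you call $i^{*}$ being exactly the map the paper denotes $\partial$, and your unwinding of the bijection is just a more explicit verification of the commuting triangle the paper asserts. The only point stated loosely is the identification of the Bockstein sphere with the bottom cell of the Moore space (it holds up to homotopy because $\beta V^{n}_{k}$ generates $H_{n-1}$ of the model $\widehat{V}^{n}_{k}\cup(\text{2-cells})$, so the degree-$k$ composite is null and one builds an equivalence from the mapping cone carrying the bottom-cell inclusion to it), which is the same level of detail the paper leaves implicit.
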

\begin{proof}
The cofibre sequence 
$\xymatrix{
S^{j} \ar[r]^{\times k} & S^{j} \ar[r] & M(\Z/k, j)
}$
induces a long exact sequence,
$$\xymatrix{
\cdots \ar[r] & \pi_{n}(X) \ar[r]^{\times k} & \pi_{n}(X) \ar[r] & \pi_{n-1}(X; \Z/k) \ar[r]^{\partial} & \pi_{n-1}(X) \ar[r]^{\times k}  & \pi_{n-1}(X) \ar[r] & \cdots
}$$
It follows that if $x \in \pi_{n-1}(X)$ is of order $k$, then there is an element $y \in \pi_{n-1}(X; \Z/k)$ such that $\partial y = x$. 
Let $r_{\beta}: \pi_{0}(\Maps_{\langle k \rangle}(V^{n}_{k}, X)) \longrightarrow \pi_{n-1}(X)$ denote the map induced by, $f \mapsto f_{\beta}$. 
It follows from the construction of the the map $\partial$ in the above long exact sequence that the diagram,
$$\xymatrix{
\pi_{0}(\Maps_{\langle k \rangle}(V^{n}_{k}, X)) \ar[drr]^{r_{\beta}} \ar[rr]^{\cong} && \pi_{n-1}(X; \Z/k) \ar[d]^{\partial} \\
&& \pi_{n-1}(X)
}$$
commutes, where the upper horizontal map is the bijection from Lemma \ref{lemma: Z/k bijection}. 
The result then follows from commutativity of this diagram. 
\end{proof}

\subsection{Immersions and embeddings of $\langle k \rangle$-manifolds} \label{subsection: k-immersions}
We will need to consider smooth immersions and embeddings of a smooth $\langle k\rangle$-manifold into a smooth manifold. 
For what follows, let $P$ be a $\langle k \rangle$-manifold and let $M$ be a manifold. 

\begin{defn} \label{defn: k-immersion} 
A $\langle k\rangle$-map $f: P \longrightarrow M$ is said to be a \textit{$\langle k\rangle$-immersion} if it is a smooth immersion when considering $P$ as a smooth manifold with boundary.
Two $\langle k\rangle$-immersions $f, g: P \longrightarrow M$ are said to be regularly homotopic if there exists a smooth homotopy $F_{t}: P \longrightarrow M$ with $F_{0} = f$ and $F_{1} = g$ such that $F_{t}$ is a $\langle k \rangle$-immersion for all $t \in [0, 1]$.
\end{defn}

In addition to immersions we will mainly need to deal with embeddings of $\langle k \rangle$-manifolds. 
\begin{defn}
A $\langle k \rangle$-immersion $f: P \longrightarrow M$ is said to be a $\langle k \rangle$-embedding if the induced map $\widehat{f}: \widehat{P} \longrightarrow M$ is an embedding. 
\end{defn}

The main result about $\langle k \rangle$-embeddings that we will use is the following. 
The proof is given in Section \ref{subsection: immersion to embedding}, using the techniques developed throughout all of Section \ref{section: k-immersions}. 
\begin{theorem} \label{theorem: immersion to embedding} 
Let $n \geq 2$ be an integer and let $k > 2$ be an odd integer.
Let $M$ be a $2$-connected, oriented manifold of dimension $4n+1$.
Then any $\langle k \rangle$-map $f: V^{2n+1}_{k} \longrightarrow M$ is homotopic through $\langle k \rangle$-maps to a $\langle k \rangle$-embedding. 
\end{theorem}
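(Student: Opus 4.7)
The overall strategy is to first homotope $f$ to a $\langle k \rangle$-immersion in general position, and then to cancel the resulting self-intersections of the induced map $\widehat{f}: \widehat{V}^{2n+1}_{k} \to M$ by applying the mod-$k$ Whitney trick (Theorem \ref{thm: modifying intersections}), which is precisely where the hypothesis that $k$ be odd enters.

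For the first step, I would use that $V^{2n+1}_{k}$ is homotopy equivalent to a wedge of $(k-1)$ copies of $S^{2n}$, so in particular it is stably parallelizable and $(2n-1)$-connected. The Bockstein $\beta V^{2n+1}_{k} = S^{2n}$ maps to $M$ via $f_{\beta}$, and since $M$ is $2$-connected of dimension $4n+1$ with $n \geq 2$, classical general position (Whitney embedding in codimension $2n+1 \geq 5$) allows me to homotope $f_{\beta}$ to an embedding. Using the $\langle k \rangle$-version of the Smale--Hirsch theorem developed in Appendix \ref{section: k-immersions}, I can then extend this to deform $f$ itself to a $\langle k \rangle$-immersion realizing the embedded Bockstein; the only tangential obstruction vanishes because $V^{2n+1}_{k}$ is stably parallelizable and $M$ has large codimension. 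After a small further perturbation, I may assume this $\langle k \rangle$-immersion is in general position, so that the self-intersections of $\widehat{f}$ form a compact $1$-submanifold $\Delta \subset M$ lying in the non-singular stratum of $\widehat{V}^{2n+1}_{k}$ and disjoint from the image of $f_{\beta}$.

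The second step is to cancel $\Delta$ using the mod-$k$ Whitney trick. The points of $\Delta$ are organized by the intersection structure into ``mod-$k$ batches'' whose total algebraic count vanishes in $\Z/k$, and each such batch can be eliminated by a collection of Whitney disks together with a modification that exploits the $k$-fold boundary identification of $V^{2n+1}_{k}$; this is exactly the content of Theorem \ref{thm: modifying intersections}. Since $M$ is $2$-connected, the Whitney disks can be chosen in the required homotopy classes, and since $\dim M = 4n+1 \geq 9$, their interiors can be made embedded and disjoint from $\widehat{f}(V^{2n+1}_{k})$ by general position. After finitely many applications, the $\langle k \rangle$-immersion becomes a $\langle k \rangle$-embedding in the homotopy class of $f$.

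The main obstacle I expect is verifying that the self-intersection class of $\widehat{f}$ is trivial modulo $k$, so that Theorem \ref{thm: modifying intersections} can actually be invoked. This should follow from the strict skew-symmetry of the linking form (Lemma \ref{lemma: strict anti-symmetry}) applied to the class of $\widehat{f}(V^{2n+1}_{k})$ in $\pi^{\tau}_{2n}(M)$, giving vanishing of the relevant self-pairing in $\Q/\Z$, combined with the fact that $2$ is invertible in $\Z/k$ when $k$ is odd, so that the skew-symmetric intersection pairing genuinely controls self-intersections and not merely intersections of distinct submanifolds. This is the point at which the oddness of $k$ is indispensable, and it is why the theorem is not expected to extend to even $k$ by these methods.
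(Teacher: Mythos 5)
There are two genuine gaps, and both stem from misidentifying where the hypothesis that $k$ is odd actually enters. In your first step, the obstruction to promoting the $\langle k \rangle$-map $f$ to a $\langle k \rangle$-immersion is not settled by stable parallelizability of $V^{2n+1}_{k}$ together with codimension: to produce a formal $\langle k \rangle$-immersion one needs a fibrewise injection $T\widehat{V}^{2n+1}_{k} \to TM$ covering $\widehat{f}$, i.e.\ one must control the pullback bundle $\widehat{f}^{*}(TM)$ over the singular space $\widehat{V}^{2n+1}_{k}$ (equivalently, the tangential data must be compatible across all $k$ sheets at the Bockstein). The paper's proof (Theorem \ref{theorem: represent by embedding 2}, via Corollary \ref{corollary: trivial bundle representation}) shows $\widehat{f}^{*}(TM)$ is trivial because its classifying map factors through $M(\Z/k, 2n)$ and $\pi_{2n}(BSO;\Z/k) = 0$ for $k$ odd; this is the one and only place where oddness of $k$ is used, and your argument omits it entirely.

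In your second step, Theorem \ref{thm: modifying intersections} cannot be invoked: it disjoins two \emph{distinct}, transversal $\langle k \rangle$-embeddings $f$ and $g$ by an ambient diffeotopy, and says nothing about removing the self-intersections of a single $\langle k \rangle$-immersion; there is no ``mod-$k$ Whitney trick for self-intersections'' in the paper. Moreover, the generic double-point set of an immersed $(2n+1)$-manifold in a $(4n+1)$-manifold is $1$-dimensional (double curves), so a scheme of cancelling ``mod-$k$ batches of points'' by Whitney disks does not match the geometry, and the appeal to strict skew-symmetry of the linking form plus invertibility of $2$ is not how the self-intersections are controlled. What the paper actually does (Proposition \ref{prop: even boundary intersection}, Lemma \ref{lemma: equivariant k, k}, Corollary \ref{corollary: immersion to embedding}) is: use the free, orientation-preserving involution on the double-point $\langle k,k\rangle$-manifold to conclude that the integer intersection number of $f(\Int(P))$ with the embedded Bockstein sphere vanishes, remove those intersections by the ordinary Whitney trick, excise a tubular neighborhood of $f_{\beta}(\beta P)$, and then apply Haefliger's metastable-range embedding theorem (\cite{H 61}, or \cite{Ir 65}) rel boundary to the $2$-connected complement to embed the nonsingular part --- a step valid for all $k$, odd or even. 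As written, your proposal would not compile into a proof without replacing both of these ingredients.
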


The following corollary follows immediately by combining Theorem \ref{theorem: immersion to embedding} with Corollary \ref{corollary: represent by k-map}. 

\begin{corollary} \label{lemma: represent by $k$-embedding} \label{lemma: represent by embedding}
Let $n \geq 2$ be an integer and let $k > 2$ be an odd integer.
Let $M$ be a $2$-connected, oriented manifold of dimension $4n+1$.
Let $x \in \pi_{2n}(M)$ be a class of order $k$.
Then there exists a $\langle k \rangle$-embedding $f: V^{2n+1}_{k} \longrightarrow M$ such that the embedding $f_{\beta}: \beta V^{2n+1}_{k} = S^{2n} \longrightarrow M$ is a representative of the class $x$. 
\end{corollary}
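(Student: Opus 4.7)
The plan is to simply chain together the two results already established. First, I would apply Corollary \ref{corollary: represent by k-map} with its dimensional parameter equal to $2n+1$ (so that ``$n-1$'' there equals $2n$ here). The hypotheses are satisfied since $M$ is $2$-connected, $k \geq 2$, and $2n+1 \geq 3$. Applied to the given class $x \in \pi_{2n}(M)$ of order $k$, this yields a $\langle k \rangle$-map $g \colon V^{2n+1}_{k} \longrightarrow M$ whose Bockstein $g_{\beta} \colon S^{2n} \longrightarrow M$ represents $x$.

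Next, I would feed $g$ into Theorem \ref{theorem: immersion to embedding}. Again the hypotheses match exactly: $n \geq 2$, $k > 2$ odd, and $M$ is a $2$-connected oriented $(4n+1)$-manifold. The conclusion produces a homotopy $G_{t} \colon V^{2n+1}_{k} \longrightarrow M$ through $\langle k \rangle$-maps with $G_{0} = g$ and $G_{1} =: f$ a $\langle k \rangle$-embedding.

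Finally, I would observe that because each $G_{t}$ is a $\langle k \rangle$-map, by Definition \ref{defn: k-map} it determines a unique map $(G_{t})_{\beta} \colon \beta V^{2n+1}_{k} = S^{2n} \longrightarrow M$ with $G_{t}|_{\partial_{1} V^{2n+1}_{k}} = (G_{t})_{\beta} \circ \bar{\Phi}$. The assignment $t \mapsto (G_{t})_{\beta}$ is continuous, so it gives a based homotopy from $g_{\beta}$ to $f_{\beta}$; hence $f_{\beta}$ represents the same element of $\pi_{2n}(M)$ as $g_{\beta}$, namely $x$, as required. There is essentially no obstacle: all of the substantive content lies in Corollary \ref{corollary: represent by k-map} (which itself is a quick consequence of Lemma \ref{lemma: Z/k bijection} and the universal coefficient type exact sequence for $\Z/k$-homotopy) and in Theorem \ref{theorem: immersion to embedding}, whose proof is deferred to Section \ref{section: k-immersions}. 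The only verification needed here is that the Bockstein map depends continuously on the $\langle k \rangle$-map, which is immediate from the defining factorization.
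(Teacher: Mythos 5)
Your proof is correct and matches the paper's intended argument exactly: the paper states this corollary "follows immediately by combining Theorem \ref{theorem: immersion to embedding} with Corollary \ref{corollary: represent by k-map}", which is precisely your chain, and your observation that the homotopy through $\langle k \rangle$-maps induces a (continuous) homotopy of the Bockstein maps — harmless to take free rather than based, since $M$ is simply connected — is the only verification needed.
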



\section{$\langle k, l\rangle$-Manifolds} \label{section: k-l manifolds}
 We will have to consider certain spaces with more complicated singularity structure than that of the $\langle k\rangle$-manifolds encountered in the previous section. 
\begin{defn} \label{defn: k-l manifold} Let $k$ and $l$ be positive integers. Let $N$ be a smooth $d$-dimensional manifold equipped with the following extra structure:
\begin{enumerate} \itemsep.2cm
\item[(i)] The boundary $\partial N$ has the decomposition, 
$$\partial N \; = \partial_{0}N\cup\partial_{1}N\cup\partial_{2}N$$
such that $\partial_{0}N$, $\partial_{1}N$ and $\partial_{2}N$ are $(d-1)$-dimensional smooth manifolds, the intersections
$$\partial_{0, 1}N := \partial_{0}N\cap\partial_{1}N, \quad \partial_{0, 2}N := \partial_{0}N\cap\partial_{2}N, \quad \partial_{1, 2}N := \partial_{1}N\cap\partial_{2}N$$
are $(d-2)$-dimensional smooth manifolds, and 
$$\partial_{0, 1, 2}N := \partial_{0}N\cap\partial_{1}N\cap \partial_{2}N$$
is a $(d-3)$-dimensional smooth manifold with empty boundary.
\item[(ii)] There exist smooth manifolds $\beta_{1}N,$ $\beta_{2}N,$ and $\beta_{1, 2}N$, and diffeomorphisms 
$$\xymatrix@C-.10pc@R-1.5pc{
\Phi_{1}: \partial_{1}N \ar[r]^{\cong} & \beta_{1}N\times\langle k \rangle, \\
\Phi_{2}: \partial_{2}N \ar[r]^{\cong} & \beta_{2}N\times\langle l \rangle, \\
\Phi_{1,2}: \partial_{1,2}N \ar[r]^{\cong \ \ \ \ } & \beta_{1, 2}N\times\langle k \rangle\times\langle l \rangle,
}$$
such that the maps
$$\xymatrix@C-.10pc@R-1.5pc{
\Phi_{1}\circ\Phi_{1,2}^{-1}: \beta_{1, 2}N\!\times\langle k \rangle\!\times\langle l \rangle \ar[r] & \beta_{1}N\!\times\!\langle k \rangle, \\
\Phi_{2}\circ\Phi_{1,2}^{-1}: \beta_{1, 2}N\!\times\langle k \rangle\!\times\langle l \rangle \ar[r] & \beta_{1}N\!\times\!\langle l \rangle,
}$$
are identical on the direct factors of $\langle k\rangle$ and $\langle l\rangle$ respectively.
\end{enumerate}
With the above conditions satisfied, the $4$-tuple $N := (N, \Phi_{1}, \Phi_{2}, \Phi_{1,2})$ is said to be a \text{$\langle k, l\rangle$-manifold} of dimension $d$. 
\end{defn}
\begin{remark}
For $d \geq 2$, a $d$-dimensional $\langle k, l \rangle$-manifold is technically a \textit{manifold with corners} (see \cite{L 00}).
In our applications however we will only have to consider $\langle k, l \rangle$-manifolds of dimension $1$ and hence no actual corners will ever be encountered. 
If $N$ is a $1$-dimensional $\langle k, l \rangle$-manifold then the faces $\partial_{0}N, \partial_{1}N, \partial_{2}N$ are $0$-dimensional manifolds. 
It follows automatically that $\partial_{0,1}N = \partial_{0,2}N = \partial_{1,2}N = \emptyset$.
Thus, a $1$-dimensional $\langle k, l\rangle$-manifold $N$ is simply a manifold with boundary where the boundary is decomposed as the disjoint union of three distinct closed manifolds $\partial_{0}N$, $\partial_{1}N$, and $\partial_{2}N$, and where $\partial_{1}N$ and $\partial_{2}N$ are equipped with factorizations 
$$
\Phi_{1}: \partial_{1}N \stackrel{\cong} \longrightarrow \beta_{1}N\times\langle k\rangle, \quad \quad \Phi_{2}: \partial_{2}N \stackrel{\cong} \longrightarrow \beta_{2}N\times\langle l\rangle.
$$
\end{remark}

\begin{remark}
The above definition is a specialization of $\Sigma$-manifold from \cite[Definition 1.1.1]{B 92} and a generalization of the definition of $\langle k \rangle$-manifold. In fact, any $\langle k \rangle$-manifold $P$ is a $\langle k, l \rangle$-manifold with $\beta_{2}P = \emptyset$.
\end{remark}

As for the case with $\langle k \rangle$-manifolds, we will drop the structure maps $\Phi_{1}, \Phi_{2}, \Phi_{1,2}$ from the notation and denote $N := (N, \Phi_{1}, \Phi_{2}, \Phi_{1,2})$.
The manifold $\partial_{0}N$ is referred to as the boundary of the $\langle k, l\rangle$-manifold and is a $\langle k, l\rangle$-manifold in its own right. 
A compact $\langle k, l\rangle$-manifold $N$ is said to be \textit{closed} if $\partial_{0}N = \emptyset$. 
From a $\langle k, l\rangle$-manifold $N$, one obtains a manifold with cone-type singularities in the following way. 
\begin{defn} \label{defn: singular k-k mfd} Let $N$ be a $\langle k, l\rangle$-manifold. 
Let $\bar{\Phi}_{1}: \partial_{1}N \longrightarrow \beta_{1}N$ be the map defined by the composition 
$\xymatrix{
\partial_{1}N \ar[r]^{\Phi_{1}\ \ \ }_{\cong \ \ \ } & \beta_{1}N\times\langle k \rangle \ar[rr]^{\text{proj}_{\beta_{1}N}} && \beta_{1}N.}$
Define $\bar{\Phi}_{2}: \partial_{2}N \longrightarrow \beta_{2}N$ similarly. 
We define $\widehat{N}$ to be the quotient space obtained from $N$ by identifying two points $x, y$ if and only if
for $i = 1 \; \text{or} \; 2$, both $x$ and $y$ are in $\partial_{i}N$ and 
$\bar{\Phi}_{i}(x) = \bar{\Phi}_{i}(y).$
\end{defn}

\subsection{Oriented $\langle k, l\rangle$-Bordism} We will need to make use of the oriented bordism groups of $\langle k, l\rangle$-manifolds. 
For any space $X$ and non-negative integer $j$, we denote by $\Omega_{j}^{SO}(X)_{\langle k, l\rangle}$ the $j$-th $\langle k, l\rangle$-bordism group associated to the space $X$. We refer the reader to \cite{B 92} for details on the definition. 
 There are maps 
$$\beta_{1}: \Omega_{j}^{SO}(X)_{\langle k, l\rangle} \longrightarrow \Omega^{SO}_{j-1}(X)_{\langle l\rangle}, \quad \beta_{2}: \Omega_{j}^{SO}(X)_{\langle k, l\rangle} \longrightarrow \Omega^{SO}_{j-1}(X)_{\langle k\rangle}$$
defined by sending a $\langle k, l\rangle$-manifold $N$ to $\beta_{1}N$ and $\beta_{2}N$ respectively. We also have maps 
$$j_{1}: \Omega_{j}^{SO}(X)_{\langle k\rangle} \longrightarrow \Omega_{j}^{SO}(X)_{\langle k, l\rangle}, \quad j_{2}: \Omega_{j}^{SO}(X)_{\langle l\rangle} \longrightarrow \Omega_{j}^{SO}(X)_{\langle k, l\rangle}$$
defined by considering a $\langle k \rangle$-manifold or an $\langle l \rangle$-manifold as a $\langle k, l\rangle$-manifold. 
We have the following theorem from \cite{B 92}. 
\begin{theorem} \label{thm: k-l bordism LES} The following sequences are exact,
 $$\xymatrix@C-1pc@R-1.0pc{
  \cdots \ar[r] & \Omega_{j}^{SO}(X)_{\langle l \rangle} \ar[rr]^{\times l} && \Omega_{j}^{SO}(X)_{\langle l \rangle} \ar[rr]^{j_{1}} && \Omega_{j}^{SO}(X)_{\langle k, l\rangle} \ar[rr]^{\beta_{1}} && \Omega_{j-1}^{SO}(X)_{\langle l\rangle} \ar[r] & \cdots\\
 \cdots \ar[r] & \Omega_{j}^{SO}(X)_{\langle k \rangle} \ar[rr]^{\times k} && \Omega_{j}^{SO}(X)_{\langle k \rangle} \ar[rr]^{j_{2}} && \Omega_{j}^{SO}(X)_{\langle k, l\rangle} \ar[rr]^{\beta_{2}} && \Omega_{j-1}^{SO}(X)_{\langle k \rangle} \ar[r] & \cdots
}$$
 \end{theorem}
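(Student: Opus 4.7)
The theorem is an instance of the Baas-Sullivan construction of bordism with singularities, iterated over two distinct singularity types, and is proved in full in \cite[Chapter 1]{B 92}. Here I would sketch the geometric content. By the manifest symmetry between the two sequences (swap the roles of $k$ and $l$, of $\beta_1$ and $\beta_2$, and of $j_1$ and $j_2$), it suffices to establish the first one. All four non-trivial maps admit direct geometric descriptions: the multiplication map is disjoint union of copies; $j_1$ regards an $\langle l \rangle$-manifold as a $\langle k,l\rangle$-manifold with $\beta_1 = \emptyset$; and $\beta_1$ sends a $\langle k,l\rangle$-manifold $N$ to the first Bockstein $\beta_1 N$, which acquires the structure of an $\langle l \rangle$-manifold via the identification $\partial(\beta_1 N) \cong \beta_{1,2}N \times \langle l \rangle$ induced by $\Phi_{1,2}$.

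First I would verify that consecutive compositions vanish. The equality $\beta_1 \circ j_1 = 0$ is tautological, since an $\langle l \rangle$-manifold viewed as a $\langle k,l\rangle$-manifold has $\partial_1 = \emptyset$. The composition involving multiplication and $j_1$ vanishes via the explicit null-bordism $N := Q \times [0,1]$, which is made into a $\langle k,l\rangle$-manifold by setting $\partial_1 N := Q \times \{1\}$ with single-copy identification $\beta_1 N = Q$, forcing $\partial_0 N = Q \times \{0\}$ to be presented as the appropriate number of disjoint copies of $Q$; the $\langle l \rangle$-structure on $\partial_2 N = \partial_2 Q \times [0,1]$ is inherited. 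Exactness at the middle term $\Omega^{SO}_j(X)_{\langle k,l\rangle}$ is then handled geometrically: given a closed $\langle k,l\rangle$-manifold $N$ with $\beta_1[N] = 0$, pick an $\langle l \rangle$-bordism $W$ of $\beta_1 N$ to $\emptyset$, and glue $W \times \langle k \rangle$ onto $N$ along the identification $\partial_1 N \cong \beta_1 N \times \langle k \rangle$. This resolves the $\langle k \rangle$-singularity of $N$ entirely, producing a pure $\langle l \rangle$-manifold $N'$ with $j_1[N'] = [N]$.

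The principal difficulty, and the main obstacle, is exactness at the two $\Omega^{SO}_j(X)_{\langle l \rangle}$ terms, which requires a Baas-type dissection argument in the spirit of \cite{Ba 73}. For exactness at the right-hand copy: a $\langle l \rangle$-manifold $Q$ with $j_1[Q] = 0$ bounds a $\langle k,l\rangle$-manifold $N$, and cutting $N$ open along its $\partial_1$-face produces an $\langle l \rangle$-bordism exhibiting $[Q]$ as the relevant integer multiple of $[\beta_1 N]$. For exactness at the left-hand copy: starting from an $\langle l \rangle$-bordism $V$ of the multiple copies of $Q$, one introduces $\langle k \rangle$-singularity data along those boundary copies to produce a $\langle k,l\rangle$-manifold with first Bockstein equal to $Q$; the technical point to watch is maintaining the corner-compatibility condition on $\Phi_{1,2}$ between the two singularity types and the $\langle l \rangle$-structure on $\partial_2$. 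Both dissections are carried out in detail in \cite[Chapter 1]{B 92}. Alternatively, both long exact sequences follow cleanly from the identification of $\langle k,l\rangle$-bordism with the homotopy groups of $MSO \wedge X_+$ smashed with the mod-$k$ and mod-$l$ Moore spectra, by smashing the Moore-spectrum cofiber sequences with the appropriate factors; this bypasses the geometric dissection at the cost of additional homotopy-theoretic machinery.
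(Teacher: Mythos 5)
Both you and the paper resolve this theorem by citing Botvinnik \cite{B 92}, so your overall approach matches the paper's. Your supplementary geometric sketch captures the right outline (vanishing of consecutive compositions, gluing for exactness at the middle term, dissection at the ends), but it contains a concrete error in the null-bordism you offer for the composition of multiplication with the inclusion $j_1$. You set $N := Q \times [0,1]$ with $\partial_1 N := Q \times \{1\}$ and $\beta_1 N = Q$, but the structure diffeomorphism $\Phi_1$ of a $\langle k,l\rangle$-manifold identifies $\partial_1 N$ with $\beta_1 N \times \langle k \rangle$, which forces $\partial_1 N$ to decompose into $k$ disjoint copies of $\beta_1 N$. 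With $\partial_1 N \cong Q$ and $\beta_1 N = Q$ this is impossible for $k > 1$, so your $N$ is not a $\langle k,l\rangle$-manifold. The correct null-bordism is $N := Q \times D_k$, where $D_k := [0,1] \times \langle k\rangle$ is the $1$-dimensional $\langle k\rangle$-manifold with $\partial_1 D_k = \{1\}\times\langle k\rangle$, $\beta D_k$ a single point, and $\partial_0 D_k = \{0\}\times\langle k\rangle$. Then $\partial_0 N = Q \times \langle k\rangle$ is the $k$ disjoint copies of $Q$ being bounded, $\partial_1 N = Q\times\{1\}\times\langle k\rangle$ with $\beta_1 N = Q$, and $\partial_2 N = \partial_1 Q \times D_k$ carries the inherited $\langle l\rangle$-structure; the corner condition on $\Phi_{1,2}$ is met automatically.

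A second point: you deliberately leave the multiplier unspecified, but this is substantive and cannot be glossed over. The Bockstein $\beta_1$ resolves the $\langle k\rangle$-singularity (the one $\Phi_1$ controls), so the first long exact sequence must carry multiplication by $k$, not $l$. Checking against Corollary \ref{corollary: 0th group calculation} confirms this: with $\times l$ the sequence would give $\Omega^{SO}_1(\text{pt.})_{\langle k, l\rangle} \cong \Z/l$ rather than $\Z/\gcd(k,l)$. The corrected null-bordism above likewise produces $k$, not $l$, copies of $Q$. The statement as printed appears to have transposed the two multiplications, and any complete write-up should resolve this explicitly.
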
 
 Using the isomorphisms 
 $\Omega^{SO}_{0}(\text{pt.})_{\langle k \rangle} \cong \Z/k$ and $\Omega^{SO}_{1}(\text{pt.})_{\langle k \rangle} = 0,$
we obtain the following basic calculations using the above exact sequence. 
 \begin{corollary} \label{corollary: 0th group calculation}
 For any two integers $k, l \geq 2$ we have the following isomorphisms,
$$\Omega^{SO}_{0}(\text{pt.})_{\langle k, l\rangle} \cong \Z/\gcd(k,l) \quad \text{and} \quad  \Omega^{SO}_{1}(\text{pt.})_{\langle k, l\rangle} \cong \Z/\gcd(k,l).$$
In particular we have,
$$\Omega^{SO}_{0}(\text{pt.})_{\langle k, k\rangle} \cong \Z/k \quad \text{and} \quad  \Omega^{SO}_{1}(\text{pt.})_{\langle k, k\rangle} \cong \Z/k.$$
  \end{corollary}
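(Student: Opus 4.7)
The computation will be a mechanical application of the two long exact sequences of Theorem \ref{thm: k-l bordism LES} at $X = \mathrm{pt.}$, together with the input (\ref{equation: elementary bordism calculation}) that $\Omega_{0}^{SO}(\mathrm{pt.})_{\langle m\rangle} \cong \Z/m$ and $\Omega_{1}^{SO}(\mathrm{pt.})_{\langle m\rangle} = 0$ for any $m \geq 2$, plus the trivial facts $\Omega_{0}^{SO}(\mathrm{pt.}) = \Z$, $\Omega_{-1}^{SO}(\mathrm{pt.})_{\langle m\rangle} = 0$. Since $\gcd(k,l)$ is symmetric in $k$ and $l$, I can read off the answer from either sequence; I will use the first.

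For the degree $0$ computation, I would splice into the first long exact sequence at $j = 0$ to obtain
\begin{equation*}
\Omega_{0}^{SO}(\mathrm{pt.})_{\langle l \rangle} \xrightarrow{\ \times k\ } \Omega_{0}^{SO}(\mathrm{pt.})_{\langle l \rangle} \xrightarrow{\ j_{1}\ } \Omega_{0}^{SO}(\mathrm{pt.})_{\langle k,l\rangle} \xrightarrow{\ \beta_{1}\ } 0,
\end{equation*}
where the multiplication-by-$k$ coefficient reflects the general principle that adjoining a new Baas-Sullivan singularity modeled on $k$ points introduces a $\times k$-term (as in Theorem \ref{thm: bockstein-sullivan}). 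Substituting $\Z/l$ for the flanking terms, exactness identifies $\Omega_{0}^{SO}(\mathrm{pt.})_{\langle k,l\rangle}$ with the cokernel of $\times k\colon \Z/l \to \Z/l$, which is $\Z/\gcd(k,l)$.

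For the degree $1$ computation, I would run the same sequence one degree higher:
\begin{equation*}
0 = \Omega_{1}^{SO}(\mathrm{pt.})_{\langle l\rangle} \to \Omega_{1}^{SO}(\mathrm{pt.})_{\langle k,l\rangle} \xrightarrow{\ \beta_{1}\ } \Omega_{0}^{SO}(\mathrm{pt.})_{\langle l\rangle} \xrightarrow{\ \times k\ } \Omega_{0}^{SO}(\mathrm{pt.})_{\langle l\rangle},
\end{equation*}
which identifies $\Omega_{1}^{SO}(\mathrm{pt.})_{\langle k,l\rangle}$ with the kernel of $\times k\colon \Z/l \to \Z/l$, again giving $\Z/\gcd(k,l)$. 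The special case $k = l$ is immediate from $\gcd(k,k) = k$. There is no real obstacle here: all the substantive work has already been done in establishing the long exact sequences of Theorem \ref{thm: k-l bordism LES} and the basic calculations (\ref{equation: elementary bordism calculation}); the remaining task is the elementary fact that multiplication by $k$ on $\Z/l$ has both kernel and cokernel isomorphic to $\Z/\gcd(k,l)$. As a consistency check, swapping the roles of $k$ and $l$ and running the second sequence of Theorem \ref{thm: k-l bordism LES} should yield the same answer, which it does by the symmetry of $\gcd$.
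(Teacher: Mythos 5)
Your computation is correct and is essentially the paper's own argument: the paper gives no details beyond saying the calculations of (\ref{equation: elementary bordism calculation}) are fed into the exact sequences of Theorem \ref{thm: k-l bordism LES}, and your splice at $j=0$ and $j=1$, together with the observation that $\times k$ on $\Z/l$ has kernel and cokernel $\Z/\gcd(k,l)$, is exactly that computation carried out. One remark: your placement of the $\times k$ map on the $\langle l\rangle$-flanked sequence (justified by the general Baas--Sullivan principle, as in Theorem \ref{thm: bockstein-sullivan}) silently corrects what is evidently a typo in the printed statement of Theorem \ref{thm: k-l bordism LES}, since taking that sequence literally with $\times l$ there would yield $\Z/l$ rather than $\Z/\gcd(k,l)$ and contradict the corollary.
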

   
 \subsection{$1$-dimensional, closed, oriented, $\langle k, k\rangle$-manifolds.} \label{subsection: classification of k, l manifolds}
 We will need to consider $1$-dimensional $\langle k, k \rangle$-manifolds. 
 They will arise for us as the intersections of $(n+1)$-dimensional $\langle k\rangle$-manifolds immersed in a $(2n+1)$-dimensional manifold. 
Denote by $A_{k}$ the space $[0,1]\times\langle k \rangle$. By setting 
 $$\partial_{1}A_{k} = \{0\}\times\langle k \rangle \quad \text{and} \quad  \partial_{2}A_{k} = \{1\}\times\langle k \rangle,$$ 
 $A_{k}$ naturally has the structure of a closed $\langle k, k\rangle$-manifold with, $\beta_{1}A_{k} = \langle 1\rangle = \beta_{2}A_{k}$ (the single point space). 
 We denote by $+A_{k}$ the oriented $\langle k, k\rangle$-manifold with orientation induced by the standard orientation on $[0,1]$.
 We denote by $-A_{k}$ the $\langle k, k\rangle$-manifold equipped with the opposite orientation. 
It follows that 
\begin{equation} \label{equation: bockstein of A-k}
\beta_{1}(\pm A_{k}) = \pm\langle 1 \rangle \quad \text{and} \quad \beta_{2}(\pm A_{k}) = \mp\langle 1\rangle.
\end{equation}
Using the fact that the map
$\beta_{i}: \Omega^{SO}_{1}(\text{pt.})_{\langle k, k\rangle} \longrightarrow \Omega^{SO}_{0}(\text{pt.})_{\langle k\rangle} \quad \text{for $i = 1, 2$}$
is an isomorphism (this follows from Corollary \ref{corollary: 0th group calculation} and the exact sequence in Theorem \ref{thm: k-l bordism LES}), we have the following proposition. 
\begin{proposition} \label{proposition: generator of k-k bordism}
The oriented, closed, $\langle k, k\rangle$ manifold $+A_{k}$ represents a generator for $\Omega^{SO}_{1}(\text{pt.})_{\langle k, k\rangle}$.
Furthermore, any oriented, closed, $1$-dimensional $\langle k, k\rangle$-manifold that represents a generator of $\Omega^{SO}_{1}(\text{pt.})_{\langle k, k\rangle}$ is of the form 
$$(+A_{k}\times\langle r \rangle)\sqcup (-A_{k}\times\langle s\rangle) \sqcup X,$$
where $r, s \in \N$ are such that $r - s$ is relatively prime to $k$, and where $X$ is some null-bordant $\langle k, k\rangle$-manifold such that 
$\beta_{1}X = \emptyset$ or $\beta_{2}X = \emptyset$ (in other words, $X$ has the structure of  $\langle k \rangle$-manifold). 
\end{proposition}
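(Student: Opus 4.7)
The plan is to deduce the first statement from the bordism long exact sequence and the calculation $\Omega_0^{SO}(\text{pt.})_{\langle k \rangle} \cong \Z/k$, and to prove the second by directly classifying the connected components of $N$.

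For the first statement, specializing the first exact sequence of Theorem \ref{thm: k-l bordism LES} to $X = \text{pt}$ and $j = 1$, and inserting the values $\Omega_1^{SO}(\text{pt.})_{\langle k \rangle} = 0$ and $\Omega_0^{SO}(\text{pt.})_{\langle k \rangle} \cong \Z/k$ from \eqref{equation: elementary bordism calculation}, I obtain the four-term exact sequence
$$0 \longrightarrow \Omega_1^{SO}(\text{pt.})_{\langle k, k \rangle} \xrightarrow{\beta_1} \Omega_0^{SO}(\text{pt.})_{\langle k \rangle} \xrightarrow{\times k} \Omega_0^{SO}(\text{pt.})_{\langle k \rangle}.$$
Since multiplication by $k$ is the zero map on $\Z/k$, the Bockstein $\beta_1$ is an isomorphism onto all of $\Z/k$. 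By \eqref{equation: bockstein of A-k}, $\beta_1(+A_k) = +\langle 1 \rangle$ is the positively oriented point representing $1 \in \Z/k$, and so $+A_k$ generates $\Omega_1^{SO}(\text{pt.})_{\langle k, k \rangle}$.

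For the second statement, let $N$ be a closed oriented $1$-dimensional $\langle k, k\rangle$-manifold representing a generator. As a compact smooth $1$-manifold, $N$ is a disjoint union of circles and intervals. I would classify each interval by the types of its two endpoints, which lie in $\partial_1 N$ or $\partial_2 N$, yielding three kinds: both in $\partial_1 N$, both in $\partial_2 N$, or one of each (``mixed''). The circles and the intervals of non-mixed type assemble into a $\langle k \rangle$-submanifold of $N$ (with either $\beta_1$ or $\beta_2$ empty on each component), which is null-bordant since $\Omega_1^{SO}(\text{pt.})_{\langle k \rangle} = 0$; this constitutes the piece $X$. The mixed intervals split further by the orientation of $N$ into those oriented from the $\partial_1$-endpoint toward the $\partial_2$-endpoint (modeled locally on $+A_k$) and those with the reverse orientation (modeled on $-A_k$); after regrouping via the structure maps $\Phi_1, \Phi_2$, these assemble into $r$ copies of $+A_k$ and $s$ copies of $-A_k$ respectively. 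Applying $[X] = 0$ and $[-A_k] = -[+A_k]$ in bordism gives $[N] = (r-s) \cdot [+A_k] \in \Z/k$, and the generator condition forces $\gcd(r - s, k) = 1$.

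The main obstacle is the regrouping step in the second argument: a priori, the given $\Phi_1$ on $N$ may group type-$1$ endpoints of mixed intervals together with those of $(1,1)$ intervals under a common point of $\beta_1 N$, obstructing a literal disjoint-union decomposition. This is handled by working up to isomorphism of the $\langle k, k \rangle$-structure (which permits such regroupings of the structure maps), or equivalently by interpreting the decomposition at the level of bordism class, where the counts $r$ and $s$ modulo $k$ are determined by the single invariant $\beta_1(N) \in \Z/k$.
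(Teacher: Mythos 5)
The first part of your proof (that $+A_k$ generates $\Omega_1^{SO}(\text{pt.})_{\langle k,k\rangle}$) is correct, and it is essentially the same argument the paper uses: the paper deduces that $\beta_i$ is an isomorphism from the exact sequence and asserts the proposition from there, without giving further detail on the structural part.

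For the second part, your classification of the connected components of $N$ into circles and arcs of types $(1,1)$, $(2,2)$ and mixed is the right starting point, and you correctly flag the crucial obstacle: the fibers of $\bar{\Phi}_1$ may contain $\partial_1$-endpoints of mixed arcs together with endpoints of $(1,1)$-arcs. But the resolution you offer does not close the gap. An isomorphism of $\langle k,k\rangle$-manifolds must carry fibers of $\bar{\Phi}_1$ to fibers of $\bar{\Phi}_1$, so ``working up to isomorphism'' does not permit the desired regrouping; and retreating to the bordism class collapses the proposition to a tautology about $\Z/k$. Moreover, the obstruction is not only about the particular choice of $\Phi_1$: it already shows up in the counts. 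Because the structure maps are orientation-preserving, each fiber of $\bar{\Phi}_1$ consists of $k$ points of $\partial_1 N$ carrying the same induced boundary orientation. Writing $a^+$ (resp.\ $a^-$) for the number of mixed arcs oriented from $\partial_1$ to $\partial_2$ (resp.\ the reverse) and $c$ for the number of $(1,1)$-arcs, this forces $a^+ + c \equiv 0$ and $a^- + c \equiv 0 \pmod k$, hence $a^\pm \equiv -c \pmod k$. If $c \not\equiv 0 \pmod k$ then neither $a^+$ nor $a^-$ is a multiple of $k$, and no choice or adjustment of the structure maps can produce a decomposition into $r$ copies of $+A_k$ and $s$ copies of $-A_k$ with the remainder a $\langle k\rangle$-manifold. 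One can build such $N$ representing a generator, e.g.\ for $k=3$: one positive mixed arc, four negative mixed arcs, two $(1,1)$-arcs, two $(2,2)$-arcs, with $\beta_1 N = +\langle 2\rangle \sqcup -\langle 1\rangle$ representing $1\in\Z/3$, yet $a^+ = 1$. So the step ``these assemble into $r$ copies of $+A_k$ and $s$ copies of $-A_k$'' needs a genuine additional argument — presumably that for the $\langle k,k\rangle$-manifolds actually arising as $f\pitchfork g$ for transversal $\langle k\rangle$-embeddings, the counts $c$ and $d$ are divisible by $k$ and the structure maps can be taken compatible with the decomposition — which neither you nor the paper supplies.
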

Throughout, we will consider the element of $\Omega^{SO}_{1}(\text{pt.})_{\langle k, k\rangle}$ determined by the oriented $\langle k, k \rangle$-manifold $+A_{k}$ to be the \textit{standard generator}.

 \section{Intersections} \label{section: intersections}
 In this section we will discuss the intersections of embeddings of $\langle k \rangle$-manifolds. 
 \subsection{Preliminaries} \label{subsection: intersection preliminaries}
 Here we review some of the basics about intersections of embedded smooth manifolds and introduce some terminology and notation.
For what follows, let $M$, $X$, and $Y$ be compact, oriented, smooth manifolds of dimension $m$, $r$, and $s$ respectively and let $t$ denote the integer $r + s - m$. 
 Let 
 \begin{equation} \label{equation: the two given maps}
 \varphi: (X, \partial X) \longrightarrow (M, \partial M) \quad \text{and} \quad \psi: (Y, \partial Y) \longrightarrow (M, \partial M)
 \end{equation}
 be smooth, transversal maps such that $\varphi(\partial X)\cap \psi(\partial Y) = \emptyset$ (for these two maps to be transversal, we mean that the product map $\varphi\times\psi: X\times Y \longrightarrow M\times M$ is transverse to the diagonal submanifold $\triangle_{M} \subset M\times M$). 
 We let  $\varphi\pitchfork \psi$ denote the \textit{transverse pull-back} $(\varphi\times\psi)^{-1}(\triangle_{M})$, which is a closed submanifold of $X\times Y$ of dimension $t$. 
The orientations on $X$, $Y$, and $M$ induce an orientation on $\varphi\pitchfork\psi$ and thus $\varphi\pitchfork\psi$ determines a bordism class in $\Omega_{t}^{SO}(\text{pt.})$ which we denote by $\Lambda^{t}(\varphi, \psi; M)$.
It follows easily that, $\Lambda^{t}(\varphi, \psi; M) = (-1)^{(m-s)\cdot(m-r)}\Lambda^{t}(\psi, \varphi; M).$

\subsection{Intersections of $\langle k\rangle$-Manifolds}
We now proceed to consider intersections of $\langle k\rangle$-manifolds.
Let $M$ be a compact, oriented manifold of dimension $m$, let $X$ be a compact, oriented manifold of dimension $r$, and let $P$ be a compact, oriented $\langle k \rangle$-manifold of dimension $p$.
Let $t$ denote the integer $r + p - m$. 
Let 
$$\varphi: (X, \partial X) \longrightarrow (M, \partial M) \quad \text{and} \quad  f: (P, \partial_{0}P) \longrightarrow (M, \partial M)$$
be a smooth map and a smooth $\langle k\rangle$-map respectively. 
Suppose that $f$ and $\varphi$ are transversal and that $f(\partial_{0}P)\cap\varphi(\partial X) = \emptyset$ (when we say that $f$ and $\varphi$ are transversal, we mean that both $f$ and $f_{\beta}$ are transverse to $\varphi$ as smooth maps). 
The pull-back,
$$f\pitchfork\varphi = (f\times\varphi)^{-1}(\triangle_{M}) \;  \subset \; P\times X$$ 
has the structure of a closed $\langle k\rangle$-manifold as follows. 
We denote, 
$$\partial_{1}(f\pitchfork\varphi) := f|_{\partial_{1}P}\pitchfork\varphi \quad \text{and} \quad \beta(f\pitchfork\varphi) := f_{\beta}\pitchfork\varphi.$$
The factorization, 
$\xymatrix{
\partial_{1}P \ar[r]^{\bar{\Phi}\ \ }  & \beta P \ar[r]^{\ \ f_{\beta}\  } & M
}$
of the restriction map $f|_{\partial_{1}P}$ implies that the diffeomorphism, 
$$\xymatrix{
\Phi\times\Id_{X}:\partial_{1}P\times X \ar[rr]^{\cong} && (\beta P\times\langle k\rangle)\times X
}$$
maps $\partial_{1}(f\pitchfork X)$ diffeomorphically onto $\beta(f\pitchfork X)\times\langle k \rangle$.
It follows that $f\pitchfork\varphi$ has the structure of a $\langle k\rangle$-manifold of dimension $t = p+r-m$.
Furthermore, $f\pitchfork\varphi$ inherits an orientation from the orientations of $X$, $P$ and $M$. 
\begin{defn} \label{defn: k-bordism class intersection}
Let $f: (P, \partial_{0}P) \longrightarrow (M, \partial M)$ and $\varphi: (X, \partial X) \longrightarrow (M, \partial M)$ 
be exactly as above. 
We define $\Lambda^{t}_{k}(f, \varphi; M) \in \Omega^{SO}_{t}(\text{pt.})_{\langle k \rangle}$ to be the oriented bordism class determined by the pull-back $f\pitchfork \varphi$ and its induced orientation. 
\end{defn}

Recall from Section \ref{Bordism Groups of Singular Manifolds} the Bockstein homomorphism,
$\beta: \Omega_{t}^{SO}(\text{pt.})_{\langle k\rangle} \longrightarrow \Omega_{t-1}^{SO}(\text{pt.}).$
We have the following proposition. 
\begin{proposition} \label{proposition: k-intersection} 
Let $f: (P, \partial_{0}P) \longrightarrow (M, \partial M)$ and $\varphi: (X, \partial X) \longrightarrow (M, \partial M)$ 
be exactly as above.
Then
$$\beta(\Lambda^{t}_{k}(f, \varphi; M)) = \Lambda^{t-1}(f_{\beta}, \varphi; M),$$
where $\Lambda^{t-1}(f_{\beta}, \varphi; M) \in \Omega_{t-1}^{SO}(\text{pt.})$ is the bordism class defined in Section \ref{subsection: intersection preliminaries}. 
\end{proposition}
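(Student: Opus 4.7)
The plan is to unwind the definitions and observe that the Bockstein of the $\langle k \rangle$-manifold $f \pitchfork \varphi$ is, essentially by construction, the transverse pull-back $f_\beta \pitchfork \varphi$.

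First, I would recall the definition from the paragraph preceding Definition~\ref{defn: k-bordism class intersection}: the $\langle k \rangle$-structure on the pull-back $f \pitchfork \varphi \subset P \times X$ is defined by declaring
\[
\partial_{1}(f\pitchfork\varphi) := f|_{\partial_{1}P}\pitchfork\varphi, \qquad \beta(f\pitchfork\varphi) := f_{\beta}\pitchfork\varphi,
\]
with structure map induced from $\Phi \times \Id_{X}$. In particular, as an oriented smooth manifold of dimension $t-1 = p + r - m - 1$, the Bockstein of $f \pitchfork \varphi$ is literally equal (not merely diffeomorphic) to $f_{\beta} \pitchfork \varphi$, where the latter is equipped with the orientation induced from the orientations on $\beta P$, $X$, and $M$ in the usual way for transverse intersections.

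Next, I would recall from Section~\ref{subsection: framed bordism} that the Bockstein homomorphism $\beta \colon \Omega^{SO}_{t}(\mathrm{pt.})_{\langle k \rangle} \longrightarrow \Omega^{SO}_{t-1}(\mathrm{pt.})$ in the long exact sequence of Theorem~\ref{thm: bockstein-sullivan} is defined on representatives by $[P] \mapsto [\beta P]$. Applying this to the class $\Lambda^{t}_{k}(f,\varphi;M) = [f \pitchfork \varphi]$ yields
\[
\beta\bigl(\Lambda^{t}_{k}(f,\varphi;M)\bigr) \;=\; [\beta(f \pitchfork \varphi)] \;=\; [f_{\beta} \pitchfork \varphi] \;=\; \Lambda^{t-1}(f_{\beta},\varphi;M),
\]
where the last equality is the definition of $\Lambda^{t-1}(f_{\beta},\varphi;M)$ recalled in Section~\ref{subsection: intersection preliminaries}.

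The only substantive point to check is that the orientation on $\beta(f \pitchfork \varphi)$ induced from the $\langle k \rangle$-orientation on $f \pitchfork \varphi$ agrees with the orientation on $f_{\beta} \pitchfork \varphi$ induced directly as a transverse intersection of oriented manifolds; this is a local linear algebra computation on normal bundles, using that the diffeomorphism $\Phi$ is orientation-preserving as part of the $\langle k \rangle$-structure on $P$. I do not expect any serious obstacle here — the identification $\beta(f \pitchfork \varphi) = f_{\beta} \pitchfork \varphi$ is built into the construction — so the proof is essentially a matter of quoting the definitions in the correct order.
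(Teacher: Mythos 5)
Your proof is correct and matches the paper's treatment: the paper states this proposition without proof precisely because $\beta(f\pitchfork\varphi) := f_{\beta}\pitchfork\varphi$ is built into the construction of the $\langle k \rangle$-structure on the pull-back, and the Bockstein homomorphism is defined on representatives by $P \mapsto \beta P$. Your orientation remark is the right (and only) point needing care, and it goes through as you say.
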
 



\subsection{$\langle k, l\rangle$-Manifolds and intersections} \label{subsection: k,l intersections}
We now consider the intersection of a $\langle k\rangle$-manifold with an $\langle l\rangle$-manifold. 
For what follows, let $P$ be a compact, oriented $\langle k\rangle$-manifold of dimension $p$, let $Q$ be a compact, oriented $\langle l \rangle$-manifold of dimension $q$, and let $M$ be a compact, oriented manifold of dimension $m$. 
 Let 
 $$f: (P, \partial_{0}P) \longrightarrow (M, \partial M) \quad \text{and} \quad g: (Q, \partial_{0}Q) \longrightarrow (M, \partial M)$$ 
 be a smooth $\langle k \rangle$-map and a smooth $\langle l \rangle$-map respectively. 
 Suppose that $f$ and $g$ are transversal and that $f(\partial_{0}P)\cap g(\partial_{0}Q) = \emptyset$ (when we say that $f$ and $g$ are transversal, we mean that $f$ and $f_{\beta}$ are each transverse to both $g$ and $g_{\beta}$ as smooth maps). 
Let $t$ denote the integer $p+q - m$. 
We will analyze the $t$-dimensional submanifold 
 $$f\pitchfork g = (f\times g)^{-1}(\triangle_{M}) \subset P\times Q.$$ 
The transversality condition on $f$ and $g$ implies that the space $f\pitchfork g$, and the subspaces  
$$\xymatrix@C-.10pc@R-2.0pc{
f|_{\partial P}\pitchfork g \subset \partial P\times Q, \quad &f\pitchfork g|_{\partial Q} \subset P\times\partial Q, \quad &f|_{\partial P}\pitchfork g|_{\partial Q} \subset \partial P\times\partial Q,\\
f_{\beta}\pitchfork g \subset \beta P\times Q, \quad &f\pitchfork g_{\beta} \subset P\times\beta Q, \quad &f_{\beta}\pitchfork g_{\beta} \subset \beta P\times \beta Q,
}$$
are all smooth submanifolds. 
We define
$$\xymatrix@C-.10pc@R-2.0pc{
\partial_{1}(f\pitchfork g) :=  f|_{\partial P}\pitchfork g, & \quad \partial_{2}(f\pitchfork g) := f\pitchfork g|_{\partial Q}, &\quad \partial_{1, 2}(f\pitchfork g) := f|_{\partial P}\pitchfork g|_{\partial Q}, \\
\beta_{1}(f\pitchfork g) :=  f_{\beta}\pitchfork g, & \quad \beta_{2}(f\pitchfork g) := f\pitchfork g_{\beta}, &\quad \beta_{1, 2}(f\pitchfork g) := f_{\beta}\pitchfork g_{\beta}. 
}$$
\noindent
The structure maps, $\Phi_{P}: \partial P \stackrel{\cong} \longrightarrow \beta P\times\langle k\rangle$ and $\Phi_{Q}: \partial Q \stackrel{\cong} \longrightarrow \beta Q\times\langle l \rangle$ induce diffeomorphisms,
\begin{equation} \label{eq: product structure maps}
\xymatrix@C-.10pc@R-2.0pc{
\Phi_{P}\times Id: \partial P\times Q  \ar[rr]^{\cong} && \beta P\times\langle k \rangle\times Q, \\
Id\times \Phi_{Q}: P\times\partial Q \ar[rr]^{\cong} &&  P\times \beta Q\times \langle l \rangle, \\
\Phi_{P}\times \Phi_{Q}: \partial P\times\partial Q \ar[rr]^{\cong} && \beta P\times\langle k \rangle\times \beta Q\times \langle l \rangle. 
}
\end{equation}
The factorizations, 
$$\xymatrix@C-.10pc@R-2.0pc{
\partial P \ar[rr]^{\bar{\Phi}_{P}}  && \beta P \ar[rr]^{f_{\beta}} && M, \\
\partial Q \ar[rr]^{\bar{\Phi}_{Q}} && \beta Q \ar[rr]^{g_{\beta}} && M, \\
}$$
of the restriction maps $f|_{\partial P}$ and $g|_{\partial Q}$ imply that the  diffeomorphisms from (\ref{eq: product structure maps}) map the submanifolds
 $$\partial_{1}(f\pitchfork g) \subset \partial P\times Q, \quad \partial_{2}(f\pitchfork g) \subset P\times\partial Q,  \quad \text{and} \quad \partial_{1,2}(f\pitchfork g) \subset \partial P\times\partial Q$$ 
 diffeomorphically onto 
 $$\beta_{1}(f\pitchfork g)\times \langle k \rangle,  \quad \beta_{2}(f\pitchfork g)\times\langle l\rangle, \quad \text{and} \quad
\beta_{1,2}(f\pitchfork g)\times\langle k\rangle\times\langle l\rangle $$
 respectively.
 It follows that $f\pitchfork g$ has the structure of an oriented $\langle k, l \rangle$-manifold of dimension $t = p+q -m$. 
 \begin{defn}
 Let 
 $f: (P, \partial_{0}P) \longrightarrow (M, \partial M)$ and $g: (Q, \partial_{0}Q) \longrightarrow (M, \partial M)$
 be exactly as above.
 We denote by $\Lambda^{1}_{k, l}(f, g; M) \in \Omega^{SO}_{t}(\text{pt.})_{\langle k, l\rangle}$ the bordism class determined by the pull-back $f\pitchfork g$. 
 \end{defn}
 
 For the following proposition, recall from Section \ref{subsection: framed bordism} the Bockstein homomorphisms,
 $$\beta_{1}: \Omega^{SO}_{t}(\text{pt.})_{\langle k, l\rangle} \longrightarrow \Omega^{SO}_{t-1}(\text{pt.})_{\langle l\rangle} \quad \text{and} \quad 
\beta_{2}: \Omega^{SO}_{t}(\text{pt.})_{\langle k, l\rangle} \longrightarrow \Omega^{SO}_{t-1}(\text{pt.})_{\langle k \rangle}.$$
 \begin{proposition} \label{prop: k-l framed intersection} 
The bordism class $\Lambda^{t}_{k, l}(f, g; M) \in \Omega^{SO}_{t}(\text{pt.})_{\langle k, l\rangle}$ satisfies the following equations
\begin{enumerate}\itemsep.2cm
\vspace{.2cm}
\item[(i)] 
$\Lambda^{t}_{k,l}(f, g; M) = (-1)^{(m-p)\cdot(m-q)}\cdot\Lambda^{t}_{l,k}(g, f; M),$
 \item[(ii)]  
 $\beta_{1}(\Lambda^{t}_{k,l}(f, g; M)) = \Lambda^{t-1}_{l}(f_{\beta}, g; M),$ 
 \item[(iii)]
 $\beta_{2}(\Lambda^{t}_{k,l}(f, g; M)) = \Lambda^{t-1}_{k}(f, g_{\beta}; M).$
 \vspace{.2cm}
\end{enumerate}
\end{proposition}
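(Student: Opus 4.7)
All three identities are essentially tautological consequences of the construction of the $\langle k,l\rangle$-manifold structure on the transverse pull-back $f\pitchfork g$ carried out in the paragraphs preceding the proposition, together with the definition of the Bockstein homomorphisms on bordism. I would treat (ii) and (iii) first, as they are the cleanest, and handle (i) last, where the only real content is to track an orientation sign.

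For (ii), the plan is to unwind the definitions. By construction, the face $\beta_{1}(f\pitchfork g)$ of the $\langle k,l\rangle$-manifold $f\pitchfork g$ is precisely the transverse pull-back $f_{\beta}\pitchfork g$, viewed as an oriented $\langle l\rangle$-manifold via the product structure map $\mathrm{Id}\times\Phi_{Q}$. On the other hand, the Bockstein homomorphism $\beta_{1}\colon \Omega^{SO}_{t}(\mathrm{pt.})_{\langle k,l\rangle}\to \Omega^{SO}_{t-1}(\mathrm{pt.})_{\langle l\rangle}$ is defined on representatives by $N\mapsto \beta_{1}N$. Applying this to the representative $f\pitchfork g$ for $\Lambda^{t}_{k,l}(f,g;M)$ yields $\beta_{1}(f\pitchfork g)=f_{\beta}\pitchfork g$, whose class is by definition $\Lambda^{t-1}_{l}(f_{\beta},g;M)$. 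One only has to verify that the induced orientation on $\beta_{1}(f\pitchfork g)$ as the $\beta_{1}$-face of a $\langle k,l\rangle$-manifold agrees with the induced orientation on $f_{\beta}\pitchfork g$ coming from the transverse intersection convention; this is a straightforward check with the standard orientation conventions. The proof of (iii) is identical, with the roles of $P$ and $Q$, and of $k$ and $l$, swapped.

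For (i), the key observation is that the coordinate swap
\[
\tau\colon P\times Q\stackrel{\cong}\longrightarrow Q\times P,\qquad (x,y)\mapsto (y,x),
\]
restricts to a diffeomorphism $f\pitchfork g\stackrel{\cong}\longrightarrow g\pitchfork f$ which is compatible with all the structure maps $\Phi_{P},\Phi_{Q},\Phi_{P,Q}$ and therefore induces an isomorphism of $\langle k,l\rangle$-manifolds onto an isomorphism of $\langle l,k\rangle$-manifolds. The only non-trivial point is the orientation sign. By the standard convention for orienting transverse pull-backs (inherited from the isomorphism $\nu_{f\pitchfork g}\cong f^{*}TM/T(f\pitchfork g)\oplus\cdots$, or equivalently from identifying $f\pitchfork g$ with $(\varphi\times\psi)^{-1}(\Delta_{M})$ and orienting via the normal bundle of the diagonal), the swap $\tau$ reverses orientation by the sign $(-1)^{(m-p)(m-q)}$; this is the same codimension sign that appears in the classical formula $\Lambda^{t}(\varphi,\psi;M)=(-1)^{(m-s)(m-r)}\Lambda^{t}(\psi,\varphi;M)$ recalled in Section~\ref{subsection: intersection preliminaries}. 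Thus $\tau$ gives an orientation-preserving diffeomorphism $f\pitchfork g \stackrel{\cong}\longrightarrow (-1)^{(m-p)(m-q)}\cdot(g\pitchfork f)$ as $\langle k,l\rangle$-manifolds (after identifying $\langle k,l\rangle$- and $\langle l,k\rangle$-structures in the obvious way), and passing to bordism classes yields identity (i).

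The main obstacle, such as it is, lies in being careful with orientation conventions: one must fix once and for all how the orientations on $P$, $Q$, $M$ induce the orientation on the transverse pull-back $f\pitchfork g$, and then check that both (i) the swap sign and (ii)--(iii) the compatibility of the face orientation with the intersection orientation come out consistently. No substantial new ideas are required beyond this bookkeeping; the geometric content is already fully captured by the construction of the $\langle k,l\rangle$-manifold structure on $f\pitchfork g$ and the definition of the Bockstein maps.
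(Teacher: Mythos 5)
Your proposal is correct and matches the paper's intent: the paper states this proposition without proof, treating it as immediate from the construction of the $\langle k,l\rangle$-structure on $f\pitchfork g$ (where $\beta_{1}(f\pitchfork g):=f_{\beta}\pitchfork g$ and $\beta_{2}(f\pitchfork g):=f\pitchfork g_{\beta}$ by definition) together with the definition of the Bockstein maps and the standard swap sign for oriented transverse intersections. Your unwinding of the definitions for (ii)--(iii) and the orientation bookkeeping for (i) is exactly the intended argument.
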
 

\subsection{Main disjunction result} \label{subsection: main result disjunction}
We now discuss the main result that we will need to use regarding the intersections of $\langle k\rangle$-manifolds.
We will need the following terminology.
 \begin{defn} \label{defn: diffeotopy}
 Let $M$ be a manifold. 
 We will call a smooth, one parameter family of diffeomorphisms $\Psi_{t}: M \longrightarrow M$ with $t \in [0, 1]$ and $\Psi_{0} = \Id_{M}$ a \textit{diffeotopy}. 
 For a subspace $N \subset M$,
 we say that $\Psi_{t}$ is a \textit{diffeotopy relative $N$}, and we write $\Psi_{t}: M \longrightarrow M \; \text{rel} \; N$, if in addition, $\Psi_{t}|_{N} = \Id_{N}$ for all $t \in [0,1]$.
  \end{defn} 
The main case of intersections of $\langle k \rangle$ and $\langle l \rangle$-manifolds that we will need to consider is the case when 
$$k = l \quad \text{and} \quad \dim(P) + \dim(Q) - \dim(M) = 1.$$ 
For $n \geq 2$, let $M$ be an oriented manifold of dimension $4n+1$
and let $P$ and $Q$ be compact oriented $\langle k \rangle$-manifolds of dimension $2n+1$. 
Let
 \begin{equation}
 f: (P, \partial_{0}P) \longrightarrow (M, \partial M) \quad \text{and} \quad g: (Q, \partial_{0}Q) \longrightarrow (M, \partial M)
 \end{equation}
 be transversal $\langle k\rangle$-embeddings such that $f(\partial_{0}P)\cap g(\partial_{0}Q) = \emptyset$.
 Suppose further that: 
 \begin{itemize} \itemsep.2cm
 \item $M$ is $2$-connected, 
 \item $P$ and $Q$ are both simply connected, 
 \item $\beta P$ and $\beta Q$ are both path connected. 
 \end{itemize}
 \begin{theorem} \label{thm: modifying intersections 1} 
 With $f$ and $g$ the $\langle k \rangle$-embeddings given above, suppose that $\Lambda^{1}_{k,k}(f, g; M) = 0$. 
 If the integer $k$ is odd, then there exists a diffeotopy 
 $\Psi_{t}: M \longrightarrow M \; \rel \partial M$ 
 such that $\Psi_{1}(f(P))\cap g(Q) = \emptyset$. 
  \end{theorem}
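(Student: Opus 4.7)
The plan is to eliminate the intersection $N := f \pitchfork g$ component by component, using a mod-$k$ version of the Whitney trick organized by the classification of $1$-dimensional closed oriented $\langle k,k\rangle$-manifolds. Recall that $N$ is a closed oriented $\langle k,k\rangle$-manifold of dimension $1$; since $\partial_{1,2} N$ is $(-1)$-dimensional, its components come in exactly three flavors:
(i) circles, lying entirely in $f(\operatorname{int} P)\cap g(\operatorname{int} Q)$;
(ii) pure $\langle k\rangle$-arcs, whose boundary points all lie in a single face $\partial_i N$ (so the component, viewed as a $\langle k,k\rangle$-manifold, has the opposite Bockstein empty);
(iii) $\pm A_k$-components, each a disjoint union of $k$ arcs bridging $\partial_1 N$ and $\partial_2 N$.
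By Proposition \ref{proposition: generator of k-k bordism} applied to the hypothesis $\Lambda^{1}_{k,k}(f,g;M)=0$, the signed count of $A_k$-type components is divisible by $k$ (modulo a null-bordant $\langle k\rangle$-piece that can be absorbed into the other two flavors).

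I would then eliminate components in order. \textbf{(a)} For a circle component $\gamma$, the assumptions $\pi_1(P)=\pi_1(Q)=0$ bound $\gamma$ by embedded $2$-discs $D_P\subset f(\operatorname{int} P)$ and $D_Q\subset g(\operatorname{int} Q)$. The sphere $D_P\cup D_Q\subset M$ is null-homotopic by $\pi_2(M)=0$, and in ambient dimension $4n+1\geq 9$ it bounds an embedded $3$-ball $B\subset M$ by the Haefliger embedding theorem; pushing $f(P)$ across $B$ through an ambient isotopy supported near $B$ removes $\gamma$ without creating new intersection. \textbf{(b)} For a pure $\langle k\rangle$-arc in $\partial_1 N = f|_{\partial_1 P}\pitchfork g$, the associated Bockstein $\beta_1$ of this arc is a standard $1$-manifold component of $f_\beta\pitchfork g$; I apply the standard Whitney trick at the Bockstein level (using $\pi_1(\beta P)=0$, which follows from $\pi_1(P)=0$ via the cofibration defining $\widehat P$) and propagate the resulting isotopy across the $\langle k\rangle$-fiber structure, so that all $k$ sheets of $\partial_1 P$ are moved simultaneously. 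The symmetric argument handles arcs in $\partial_2 N$.

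\textbf{(c)} For the $A_k$-components, I pair them according to the arithmetic forced by step 1. Each $(+A_k,-A_k)$ pair admits an abstract $2$-dimensional $\langle k,k\rangle$-null-bordism; I realize it as an embedded Whitney region in $M$ using $\pi_1(M)=\pi_2(M)=0$ and the metastable-range dimension count $\dim M = 4n+1\geq 9$, and then cancel the pair by pushing $f(P)$ across this region. The remaining $A_k$-components occur in groups of $k$ copies of the same sign, which are eliminated by a genuinely mod-$k$ Whitney trick: the $k$ parallel arcs of a single $+A_k$-component are coherently labeled by $\langle k\rangle$ and can be fused, via the structure map $\Phi$, into a single arc at the Bockstein level that is the boundary of an embedded $2$-dimensional $\langle k,k\rangle$-Whitney region. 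Applying this region cancels all $k$ arcs simultaneously.

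The main obstacle is the mod-$k$ cancellation in step (c), i.e.\ constructing an embedded $\langle k,k\rangle$-Whitney region whose boundary absorbs $k$ coherently-oriented parallel arcs. Two points must be checked: first, that the homotopy-theoretic Whitney region exists as an abstract oriented $\langle k,k\rangle$-manifold with the right boundary, which traces through the isomorphism $\Omega^{SO}_1(\mathrm{pt})_{\langle k,k\rangle}\cong \Z/\gcd(k,k)=\Z/k$; second, that the region can be embedded in $M$ without spurious self-intersection or intersection with $f(P)\cup g(Q)$, which uses $2$-connectedness of $M$, simple-connectedness of $P$ and $Q$, and the dimension hypothesis $n\geq 2$. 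The restriction that $k$ be odd enters via an orientation/sign computation when identifying the $k$ sheets of $f(\partial_1 P)$: a parity correction of the form $(-1)^k = -1$ is needed so that the amalgamated boundary of the mod-$k$ Whitney region has consistent orientations across its $k$ sheets, and this fails for even $k$ (paralleling the trivial-$2$-torsion restrictions that recur throughout the paper).
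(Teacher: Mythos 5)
There is a genuine gap, and it sits exactly where the real difficulty of the theorem lies. Your intersection $f\pitchfork g$ is $1$-dimensional, so this is not a situation the Whitney trick (which cancels $0$-dimensional double points) can handle component by component. In this metastable range the obstruction to disjunction is the Hatcher--Quinn invariant, which under the stated connectivity hypotheses lives in the \emph{framed} bordism group $\Omega^{\mathrm{fr}}_{1}(\text{pt.})\cong \Z/2$; a circle of intersection carries normal framing data, and the nontrivially framed circle is an obstruction even when the circle is null-homotopic everywhere in sight. Your step (a) ignores this: capping a circle by discs in $f(\operatorname{int}P)$ and $g(\operatorname{int}Q)$, observing the resulting $2$-sphere is null-homotopic, and ``pushing across an embedded $3$-ball'' is not enough, both because the ball must be found with interior disjoint from $f(P)\cup g(Q)$ and with the correct normal data (this is precisely what fails in general), and because no part of your argument ever confronts the $\Z/2$ framing obstruction. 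The same objection applies to the ``Whitney regions'' of step (c), which are asserted rather than constructed. A symptom of the gap is your explanation of why $k$ must be odd: the parity $(-1)^k$ in an orientation count of sheets is not the mechanism, and with your reasoning the circles of case (a) could always be removed, which is false.

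For comparison, the paper's proof does not cancel components of the $1$-manifold directly. It first uses $\beta_{1}\Lambda^{1}_{k,k}(f,g;M)=\Lambda^{0}_{k}(f_{\beta},g;M)=0$ together with the mod-$k$ Whitney trick (Theorem \ref{theorem: mod k whitney trick}, an honest $0$-dimensional Whitney argument made possible by Lemma \ref{lemma: intersection creation}, which produces a sphere meeting $f(\operatorname{int}P)$ in exactly $k$ coherently oriented points as the boundary of a normal-disc fibre over the Bockstein) to isotope $f_{\beta}(\beta P)$ off $g(Q)$. It then excises a tubular neighbourhood of the Bockstein, reducing to honest embeddings $f'\colon (P',\partial P')\to (Z,\partial Z)$ and $g'$, where Hatcher--Quinn disjunction (Theorem \ref{theorem: higher whitney trick hatcher}) applies; the residual obstruction is a class in $\Omega^{\mathrm{fr}}_{1}(\text{pt.})\cong\Z/2$, and it is killed by connect-summing with the $k$-fold intersection sphere composed with the generator of $\pi_{n+1}(S^{n})$ (Lemmas \ref{lemma: hopf map trick} and \ref{lemma: higher intersection creation}), whose contribution is $k\cdot\xi$. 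This equals $\xi$ precisely because $k$ is odd --- that is where the oddness hypothesis genuinely enters --- and one finishes with Lemma \ref{lemma: homotopy disjunction} to convert the homotopy disjointness into an ambient diffeotopy. To repair your proposal you would need to replace steps (a) and (c) by an argument that identifies and kills this framed-bordism obstruction; once you do, you are essentially reconstructing the paper's proof.
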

We also have:
 \begin{corollary} \label{corollary: intersection at A-k 1}
 Suppose that the class 
$\Lambda^{1}_{k, k}(f, g; M) \in \Omega^{SO}_{1}(\text{pt.})_{\langle k, k\rangle}$
 is equal to the class represented by the closed $1$-dimensional $\langle k, k\rangle$-manifold $+A_{k}$. 
 If $k$ is odd, there exists a diffeotopy $\Psi_{t}: M \longrightarrow M \; \rel \partial M$ such that the $\langle k, k\rangle$-manifold given by the transverse pull-back $(\Psi_{1}\circ f) \pitchfork g,$ is diffeomorphic to $A_{k}$. 
 \end{corollary}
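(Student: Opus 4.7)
The strategy will be to isolate a single $+A_k$ component of the intersection inside a small neighborhood of $M$ and reduce the problem to Theorem \ref{thm: modifying intersections 1} on the complement. First I will invoke Proposition \ref{proposition: generator of k-k bordism} to write $f \pitchfork g$ as a disjoint union of $\langle k,k\rangle$-manifolds
\[
(+A_k \times \langle r \rangle) \sqcup (-A_k \times \langle s \rangle) \sqcup X,
\]
with $r - s \equiv 1 \pmod{k}$ (since the class is $[+A_k]$, not merely some generator) and with $X$ null-bordant and carrying only one type of Bockstein. If $r = 0$, I will first perform a preliminary local move inside a small $(4n+1)$-ball of $M$ disjoint from $f(P) \cup g(Q)$ which, using the flexibility of $\langle k \rangle$-immersions developed in the appendix, modifies $f$ by a diffeotopy so as to introduce a cancelling $(+A_k) \sqcup (-A_k)$ pair of intersection components; then I may assume $r \geq 1$ and fix one component $C_0 \cong +A_k$ of $f \pitchfork g$.

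Next, I will choose an open neighborhood $U \subset \mathrm{Int}(M)$ of the image of $C_0$, small enough to be disjoint from all remaining components of the intersection, and such that $P_0 := f^{-1}(\bar U)$ and $Q_0 := g^{-1}(\bar U)$ are tubular neighborhoods in $P$ and $Q$ of the $1$-dimensional sub-$\langle k \rangle$-manifolds comprising $C_0$ (respectively inside $P$ and $Q$). Setting $P' := \overline{P \setminus P_0}$, $Q' := \overline{Q \setminus Q_0}$, and $M' := \overline{M \setminus U}$, the restrictions $f' := f|_{P'}$ and $g' := g|_{Q'}$ are $\langle k \rangle$-embeddings of $\langle k \rangle$-manifolds into $M'$, with disjoint boundaries, whose intersection is precisely $(f \pitchfork g) \setminus C_0$, representing the class $[+A_k] - [+A_k] = 0$ in $\Omega^{SO}_{1}(\mathrm{pt.})_{\langle k, k \rangle}$.

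I will then verify that the triple $(M', P', Q')$ still satisfies the hypotheses of Theorem \ref{thm: modifying intersections 1}: since $U$ is a tubular neighborhood in $M$ of a $1$-complex of codimension $4n \geq 8$, the complement $M'$ remains $2$-connected; likewise $P_0, Q_0$ are tubular neighborhoods in $(2n+1)$-dimensional manifolds of $1$-dimensional submanifolds, of codimension $2n \geq 4$, so $P'$ and $Q'$ remain simply connected and $\beta P', \beta Q'$ remain path-connected. Theorem \ref{thm: modifying intersections 1} then supplies a diffeotopy $\Psi'_t: M' \to M'$ rel $\partial M'$ with $\Psi'_1(f'(P')) \cap g'(Q') = \emptyset$. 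Extending by the identity across $\bar U$ produces a diffeotopy $\Psi_t: M \to M$ rel $\partial M$, and because $\Psi_t$ is the identity on a neighborhood of the image of $C_0$ the component $C_0$ persists unchanged while all other components are moved off; hence $(\Psi_1 \circ f) \pitchfork g \cong A_k$.

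The principal obstacle will be executing the second and third steps carefully along the singular Bockstein strata $\partial_1 P, \partial_1 Q$, where the $\langle k \rangle$-structures make the local geometry near $C_0$ nontrivial: one must choose $U$ (and hence $P_0, Q_0$) to be compatible with the structure maps so that $P', Q', M'$ genuinely inherit $\langle k \rangle$-manifold (respectively, manifold) structures with the connectivity hypotheses preserved. This reduces to a transversality argument in codimension $\geq 4$ but must be set up across the corners where $\partial_1 P$ and $\partial_0 P$ meet $P_0$. A secondary subtlety is the preliminary ``insert a cancelling pair'' diffeotopy needed when $r = 0$, which depends on producing a standard local model of an $A_k$-intersection inside a ball — this is essentially a direct construction using the material on $\langle k \rangle$-immersions from Section \ref{subsection: k-immersions}.
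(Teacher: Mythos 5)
Your proposal follows the same broad strategy as the paper's proof: isolate a single copy of $+A_k$ in the intersection, excise a regular neighborhood of it, and apply Theorem \ref{thm: modifying intersections 1} on the complement to disjoin the remaining (null-bordant) intersection. You are also right to be more careful than the paper at one point: the paper simply asserts that since $[f\pitchfork g] = [+A_k]$, the intersection is diffeomorphic as an oriented $\langle k,k\rangle$-manifold to $+A_k \sqcup Y$ with $[Y]=0$. That assertion does not follow from the bordism class alone, since Proposition \ref{proposition: generator of k-k bordism} allows $r=0$ (for instance, when $k=3$ the $\langle k,k\rangle$-manifold $-A_3 \sqcup -A_3$ represents $[+A_3]$ but contains no $+A_3$ summand), and so some normalization of $f\pitchfork g$ is genuinely needed before a $+A_k$ component can be split off.

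However, the fix you propose for the $r=0$ case is the weak point. Inserting a cancelling $(+A_k)\sqcup(-A_k)$ pair by ``a local move inside a small ball disjoint from $f(P)\cup g(Q)$, using the flexibility of $\langle k\rangle$-immersions'' does not work as stated: a move supported in a ball that meets neither $f(P)$ nor $g(Q)$ cannot create new intersection points, and the immersion theory of Appendix \ref{section: k-immersions} is about deforming $\langle k\rangle$-maps to $\langle k\rangle$-immersions, not about producing prescribed local intersection patterns of the required $A_k$ shape (two vertices of valence $k$ on the two Bockstein strata joined by $k$ arcs). The natural tool here is instead the mod-$k$ Whitney trick (Theorem \ref{theorem: mod k whitney trick}): applying it with $\varphi = f_\beta$ (a smooth embedding of $\beta P$) and the $\langle k\rangle$-embedding $g$, one can diffeotope $M$ so that $f_\beta(\beta P)\cap g(\Int Q)$ consists of exactly one (positively oriented) point. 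This forces $\beta_1(f\pitchfork g) = +\langle 1\rangle$, hence $r=1$, $s=0$, and $\beta_1 X = \emptyset$ in the decomposition of Proposition \ref{proposition: generator of k-k bordism} --- which is precisely the normalization the rest of your argument (and the paper's) needs.

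Once that normalization is in place your proof goes through, although it differs from the paper's in the choice of neighborhood to excise. You remove a regular neighborhood of the image of $C_0$ only, which keeps both $P'$ and $Q'$ genuine $\langle k\rangle$-manifolds with nonempty, path-connected Bocksteins and then invokes Theorem \ref{thm: modifying intersections 1} directly. The paper instead excises a neighborhood of $f_\beta(\beta P)\cup A$; this makes $P'$ a nonsingular manifold (with $\beta P' = \emptyset$), and the residual disjunction is then really an application of Proposition \ref{lemma: k intersection 1} rather than of the full Theorem \ref{thm: modifying intersections 1}. Both choices work; yours is more symmetric, while the paper's has the small advantage that the residual obstruction then lives in $\Omega^{SO}_1(\mathrm{pt.})_{\langle k\rangle}$, which vanishes. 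Your remaining verifications (2-connectivity of $M'$, simple connectivity of $P', Q'$, path-connectivity of the truncated Bocksteins, disjointness of the boundary data, and extension of the diffeotopy by the identity across $\overline{U}$) are all correct.
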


Both of these results above are proven in Appendix \ref{section: modyifing higher dim intersections} (see Theorem \ref{thm: modifying intersections} and Corollary \ref{corollary: intersection at A-k}). 
These above results are crucial in the proof of our main homological stability theorem. 
The key place (only place) that they are used is in the proof of Lemma \ref{thm: high connectedness of K}. 
Later on we will need to apply Theorem \ref{thm: modifying intersections 1} inductively.
We will need the following important application of Theorem \ref{thm: modifying intersections 1}.
\begin{corollary} \label{corollary: inductive disjunction}
Let $n \geq 4$ and let $k \in \N$ be odd.
Let $M$ be a $2$-connected, $2n+1$ dimensional manifold. 
Let $P, Q_{1}, \dots, Q_{m}$ be compact $\langle k \rangle$-manifolds of dimension $n+1$.
Let 
$$
f: (P, \partial_{0}P) \longrightarrow (M, \partial M) \quad \text{and} \quad g_{i}: (Q_{i}, \partial_{0}Q_{i}) \longrightarrow (M, \partial M)
$$
be $\langle k \rangle$-embeddings for $i = 1, \dots m$.
Suppose that the following conditions are met:
\begin{enumerate} \itemsep.2cm
\item[(i)] $P, Q_{1}, \dots, Q_{m}$ are all simply connected;
\item[(ii)] $\beta P, \beta Q_{1}, \dots, \beta Q_{m}$ are all path-connected;
\item[(iii)] the $\langle k \rangle$-embeddings $g_{1}, \dots, g_{m}$, are pairwise transverse;
\item[(iv)] the submanifolds $f(\partial_{0}P), g_{1}(\partial_{0}Q_{0}), \dots, g_{m}(\partial_{0}Q_{m}) \subset \partial M$ are pairwise disjoint;
\item[(v)] $\Lambda_{k, k}^{1}(f, g_{i}; M) = 0$ for $i = 1, \dots, m$.
\end{enumerate}
Then exists a diffeotopy 
$\Psi_{t}: M \longrightarrow M \rel \partial M$ 
such that $\Psi_{1}(f(P))\cap g_{i}(Q_{i}) = \emptyset$ for $i = 1, \dots, m$.
\end{corollary}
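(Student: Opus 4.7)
The argument is by induction on $m$. The base case $m=1$ is exactly Theorem~\ref{thm: modifying intersections 1}: hypotheses (i), (ii), (iv), (v) translate directly into the hypotheses of that theorem for the pair $(f, g_1)$.

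For the inductive step with $m \geq 2$, apply the inductive hypothesis to $f$ together with $g_1, \dots, g_{m-1}$ to obtain a diffeotopy $\Psi^{(1)}_t\colon M \to M \rel \partial M$ such that $f_1 := \Psi^{(1)}_1 \circ f$ satisfies $f_1(P) \cap g_i(Q_i) = \emptyset$ for each $i < m$. Since $\Psi^{(1)}_1$ is an ambient diffeomorphism fixing $\partial M$, the map $f_1$ is still a $\langle k \rangle$-embedding, hypotheses (iii) and (iv) continue to hold with $f$ replaced by $f_1$, and $\Lambda^1_{k,k}(f_1, g_m; M) = \Lambda^1_{k,k}(f, g_m; M) = 0$ by (v). It remains to produce a second diffeotopy $\Psi^{(2)}_t\colon M \to M \rel \partial M$ such that $\Psi^{(2)}_1(f_1(P))$ is disjoint from $g_m(Q_m)$ without re-introducing intersections with any $g_i(Q_i)$ for $i < m$.

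Set $Y := \bigcup_{i<m} g_i(Q_i)$, a compact subset of $M$ of topological dimension $n+1$ and hence of codimension $n \geq 4$ in $M$. Using $2$-connectedness of $M$ together with this codimension bound, the complement $M \setminus Y$ is itself $2$-connected. Choose an open neighborhood $U \supset Y$ small enough that $\overline{U} \cap f_1(P) = \emptyset$ and such that the inclusion $M \setminus \overline{U} \hookrightarrow M \setminus Y$ is a homotopy equivalence, and set $M^\circ := M \setminus \overline{U}$. Then $M^\circ$ is a $2$-connected codimension-$0$ submanifold of $M$ containing $f_1(P)$.

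The plan is to run the proof of Theorem~\ref{thm: modifying intersections 1} for $(f_1, g_m)$ but entirely within $M^\circ$. That proof (given in Appendix~\ref{section: modyifing higher dim intersections}) constructs the required diffeotopy with support in arbitrarily small tubular neighborhoods of a finite collection of $2$-dimensional ``mod-$k$ Whitney disks'' bounded by arcs on $f_1(P) \cup g_m(Q_m)$; existence and embeddedness of such disks require only $2$-connectedness of the ambient manifold together with general-position arguments in dimension $\leq 2$. The codimension count $2 + (n+1) < 2n+1$ (valid for $n \geq 3$) allows the disks and their supporting tubular neighborhoods to be pushed off $Y$ and hence into $M^\circ$. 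The main obstacle is precisely this last verification: one must check that every step of the Appendix proof can be performed inside $M^\circ$ in place of $M$, which rests on the $2$-connectedness of $M^\circ$ and hence on the hypothesis $n \geq 4$. Once this is carried out, the resulting $\Psi^{(2)}$ is supported in $M^\circ$, so it fixes $Y$ pointwise, and the composition $\Psi^{(2)} \circ \Psi^{(1)}$ provides the required diffeotopy, completing the induction.
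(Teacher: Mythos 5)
Your induction scheme and base case match the paper, but the inductive step has a genuine gap exactly where the real work lies. After arranging $f_1(P)\cap g_i(Q_i)=\emptyset$ for $i<m$, you propose to ``run the proof of Theorem \ref{thm: modifying intersections 1} for $(f_1,g_m)$ entirely within $M^{\circ}=M\setminus\overline{U}$.'' But $g_m(Q_m)$ is \emph{not} contained in $M^{\circ}$: the $Q_i$ are only assumed pairwise transverse, not disjoint, so $g_m(Q_m)$ meets $Y=\cup_{i<m}g_i(Q_i)$ and hence meets $U$. To work inside the complement you must replace $Q_m$ by the cut-down $\langle k\rangle$-manifold $Q_m':=g_m^{-1}(V)$, where $V$ is the complement of a regular neighborhood of $Y$, and then you need $Q_m'$ to be simply connected and $\beta Q_m'$ path-connected before Theorem \ref{thm: modifying intersections 1} (whose hypotheses include these connectivity conditions) can be applied in $V$. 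That is precisely where hypothesis (iii) enters: transversality of $g_m$ with the $g_i$ makes $g_m^{-1}(Y)\subset Q_m$ one-dimensional and $(g_m)_\beta^{-1}(Y)\subset\beta Q_m$ zero-dimensional, so general position gives the required connectivity of $Q_m'$ and $\beta Q_m'$ after shrinking the neighborhood. Your proposal never uses (iii) at all, which is a strong sign the key difficulty has been bypassed rather than solved; indeed the paper explicitly remarks that without this transversality the connectivity of $Q'$ fails and the diffeotopy cannot in general be produced.

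A secondary problem is your support-control claim. The proof of Theorem \ref{thm: modifying intersections 1} in Appendix \ref{section: modyifing higher dim intersections} is not a collection of $2$-dimensional Whitney-disk moves: it first pushes $f_\beta(\beta P)$ off $g(Q)$ via the mod-$k$ Whitney trick (which involves connect-summing with boundary spheres of normal disk fibres, Lemma \ref{lemma: intersection creation}), then cuts out a tubular neighborhood of $f_\beta(\beta P)$ and invokes the Hatcher--Quinn disjunction theorem (Theorem \ref{theorem: higher whitney trick hatcher}) together with correction spheres of dimension $n+1$ built from the Hopf map (Lemma \ref{lemma: higher intersection creation}). None of this comes with the localization ``in small neighborhoods of $2$-disks'' that you assert, and you yourself defer the verification that every step can be performed in $M^{\circ}$. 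The paper avoids this entirely: it restricts $g_m$ to $Q_m'$, checks $\Lambda^1_{k,k}(f,g_m';V)=0$ (immediate, since $f(P)\subset V$ so the intersection is unchanged) and the connectivity hypotheses via (iii), applies Theorem \ref{thm: modifying intersections 1} as a black box inside $V$ rel $\partial V$, and extends the resulting diffeotopy by the identity over $M\setminus V$. Reworking your inductive step along these lines would close the gap.
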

\begin{proof}
We prove this by induction on $m$. 
If $m = 1$ then the corollary follows directly from Theorem \ref{thm: modifying intersections 1}. 
This establishes the base case. 
We now verify the inductive step. 
By the induction assumption we may assume that $f(P)\cap g_{i}(Q_{i}) = \emptyset$ for $i = 1, \dots, m-1$.
Since $n \geq 4$, it follows by general position that the manifold $M\setminus\left[\cup_{i=1}^{m-1}g_{i}(Q_{i})\right]$ is $2$-connected. 
Indeed, $\dim(M)-\dim(Q_{i}) \geq 4$ and thus it follows that any map $K \longrightarrow M$ with $K$ a $3$-manifold can be perturbed by a small homotopy so that it has its image disjoint from $\cup_{i=1}^{m-1}g_{i}(Q_{i}) \subset M$.
It follows from this that $\pi_{j}(M) \cong \pi_{j}\left(M\setminus[\cup_{i=1}^{m-1}g_{i}(Q_{i})]\right)$ for $j \leq 2$ and thus $M\setminus\left[\cup_{i=1}^{m-1}g_{i}(Q_{i})\right]$ is $2$-connected since $M$ is.

By condition (i), $g_{m}$ is transverse to $g_{i}$ for $i = 1, \dots, m-1$
and thus the pre-images 
$$g_{m}^{-1}(g_{i}(Q_{i})) \subset Q_{m} \quad \text{and} \quad (g_{m})_{\beta}^{-1}(g_{i}(Q_{i})) \subset \beta Q_{m}$$ 
are submanifolds of dimension $1$ and dimension $0$ respectively. 
It follows by general position (by the same basic general position argument given in the previous paragraph) that the complements 
\begin{equation} \label{equation: Q complement}
Q_{m}\setminus g_{m}^{-1}\left(\cup_{i=1}^{m-1}g_{i}(Q_{i})\right) \quad \text{and} \quad \beta Q_{m}\setminus g_{m}^{-1}\left(\cup_{i=1}^{m-1}(g_{i}(Q_{i})\right)
\end{equation}
are simply connected and path-connected respectively.

Let $U \subset M$ be a regular neighborhood of the subspace $\cup_{i=1}^{m-1}g_{i}(Q_{i}) \subset M$ and let $V$ denote the complement $M \setminus\Int(U)$. 
We set $Q'_{m} := g^{-1}_{m}(V)$. 
By condition (iv) it follows that $\partial_{0}Q_{m} \subset Q'_{m}$. 
By setting 
$$\beta Q'_{m} = ((g_{m})_{\beta})^{-1}(V) \quad \text{and} \quad \partial_{0}Q'_{m} = \partial_{0}Q_{m}\cup g_{m}^{-1}(\partial M),$$ 
$Q'_{m}$ obtains the structure of a compact $\langle k \rangle$-manifold.
By shrinking down the regular neighborhood $U$ sufficiently close to $\cup_{i=1}^{m-1}g_{i}(Q_{i})$, we may assume that $Q'_{m}$ is simply connected and that $\beta Q'_{m}$ is path-connected.
Similarly, $V$ is $2$-connected by same argument since $M\setminus\left[\cup_{i=1}^{m-1}g_{i}(Q_{i})\right]$ is $2$-connected. 
Let 
$$g'_{m}: (Q'_{m}, \partial_{0}Q'_{m}) \longrightarrow (V, \partial V)$$
be the $\langle k \rangle$-embedding obtained by restricting $g_{m}$ to $Q'_{m}$. 
It follows from condition (iii) that $\Lambda_{k, k}^{1}(f, g'_{m}; V) = 0$. 
We apply Theorem \ref{thm: modifying intersections 1} to obtain a diffeotopy $\Psi_{t}: (V, \partial V) \longrightarrow (V, \partial V) \rel \partial V$ such that $\Psi_{1}(f(P))\cap g'_{m}(Q'_{m}) = \emptyset$.
By extending the diffeotopy $\Psi_{t}$ identically over $M\setminus V$ we obtain the desired diffeotopy and prove the induction step. 
This concludes the proof of the corollary.  
\end{proof}

\begin{remark}
We emphasize that condition (iii) of Corollary \ref{corollary: inductive disjunction} is totally necessary in order conclude that the $\langle k \rangle$-manifold $Q'$ is simply connected with $\beta Q'$ path-connected, and 
without these connectivity conditions the desired diffeotopy cannot in general be constructed. 
It is not known by to author if there is any way to establish these connectivity conditions for $Q'$ without the pairwise transversality of the embeddings $g_{1}, \dots, g_{m}$ imposed in condition (iii).
\end{remark}

\subsection{Connection to the linking form}
In practice we will need to consider intersections of $\langle k \rangle$-embeddings $f, g: V^{2n+1}_{k} \longrightarrow M$. 
We will need to relate $\Lambda^{1}_{k, k}(f, g; M)$ to the homotopical linking form
$b: \pi^{\tau}_{2n}(M)\otimes\pi^{\tau}_{2n}(M) \longrightarrow \Q/\Z.$
Let 
$$T_{k}: \Omega^{SO}_{1}(\text{pt.})_{\langle k, k\rangle} \longrightarrow \Q/\Z$$ 
be the homomorphism given by the composition
$$\xymatrix{
\Omega^{SO}_{1}(\text{pt.})_{\langle k, k\rangle} \ar[rrr]^{+A_{k} \mapsto 1} &&& \Z/k \ar[rrr]^{1 \mapsto 1/k} &&& \Q/\Z.
}$$
The following proposition follows easily from the definition of the homological linking form (\ref{eq: linking form chain level}). 
\begin{proposition} \label{proposition: connection to the linking form}
Let $M$ be a $(4n+1)$-dimensional, oriented manifold. 
Let 
$$f, g: V^{2n+1} \longrightarrow M$$ 
be $\langle k \rangle$-embeddings.
Consider the homotopy classes $[f_{\beta}], [g_{\beta}] \in \pi^{\tau}_{2n}(M)$, which both have order $k$. 
Then 
$$b([f_{\beta}], [g_{\beta}]) \; = \; T_{k}(\Lambda^{1}_{k,k}(f, g; M)).$$
\end{proposition}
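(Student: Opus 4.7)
The plan is to interpret both sides chain-theoretically using the same geometric data, and then match them via the bordism-theoretic description of intersections developed in Section \ref{section: intersections}.

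First, I would produce an explicit bounding chain from the $\langle k\rangle$-structure of $V^{2n+1}_{k}$. The structure map $\Phi: \partial V^{2n+1}_{k} \xrightarrow{\cong} S^{2n}\times\langle k \rangle$ combined with the $\langle k\rangle$-map factorization $f|_{\partial V^{2n+1}_{k}} = f_{\beta}\circ\bar{\Phi}$ shows that the singular chain $\zeta := f_{*}[V^{2n+1}_{k}]$ in $M$ satisfies $\partial\zeta = \epsilon\,k\cdot(f_{\beta})_{*}[S^{2n}]$, where $\epsilon \in \{\pm 1\}$ records how the outward-boundary orientation on $\partial V^{2n+1}_{k}$ compares with the product orientation on $\beta V^{2n+1}_{k}\times\langle k\rangle$ coming from $\Phi$. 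After a small homotopy (which preserves $[f_{\beta}]$ and the bordism class $\Lambda^{1}_{k,k}(f,g;M)$) making $f$ and $g_{\beta}$ transverse, setting $\xi := (f_{\beta})_{*}[S^{2n}]$ and $\chi := (g_{\beta})_{*}[S^{2n}]$ (cycles representing $[f_{\beta}]$ and $[g_{\beta}]$ via the Hurewicz map as in (\ref{eq: homotopical linking form})), the chain-level formula (\ref{eq: linking form chain level}) gives
\[
b([f_{\beta}], [g_{\beta}]) \;=\; \tfrac{\epsilon}{k}\cdot \bigl[f(V^{2n+1}_{k})\cap g_{\beta}(S^{2n})\bigr] \pmod 1.
\]

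Next, I would identify the algebraic intersection number $[f(V^{2n+1}_{k})\cap g_{\beta}(S^{2n})]$ with the signed cardinality of the oriented transverse pull-back $f\pitchfork g_{\beta}$, which as a $0$-manifold carries only the trivial $\langle k\rangle$-structure. By Definition \ref{defn: k-bordism class intersection} this signed cardinality modulo $k$ is precisely the class $\Lambda^{0}_{k}(f, g_{\beta}; M) \in \Omega^{SO}_{0}(\text{pt.})_{\langle k\rangle}\cong \Z/k$, and Proposition \ref{prop: k-l framed intersection}(iii) identifies $\Lambda^{0}_{k}(f, g_{\beta}; M) = \beta_{2}\bigl(\Lambda^{1}_{k,k}(f, g; M)\bigr)$.

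Finally, I would compare this with $T_{k}$ using (\ref{equation: bockstein of A-k}). Under the identifications $\Omega^{SO}_{1}(\text{pt.})_{\langle k,k\rangle}\cong\Z/k$ with generator $+A_{k}\mapsto 1$, and $\Omega^{SO}_{0}(\text{pt.})_{\langle k\rangle}\cong\Z/k$ with generator $\langle 1\rangle \mapsto 1$, the equality $\beta_{2}(+A_{k}) = -\langle 1\rangle$ says $\beta_{2}$ is multiplication by $-1$. By construction $T_{k}$ sends $+A_{k}$ to $1/k \in \Q/\Z$, so combining these gives
\[
\tfrac{1}{k}\bigl[f(V^{2n+1}_{k})\cap g_{\beta}(S^{2n})\bigr] \;\equiv\; -\,T_{k}\bigl(\Lambda^{1}_{k,k}(f,g;M)\bigr) \pmod 1.
\]
The sign $\epsilon$ in the chain-level formula is determined by the same Bockstein orientation convention for $\langle k\rangle$-manifolds fixed in Definition \ref{defn: k-manifolds} that forces $\beta_{2}(+A_{k}) = -\langle 1\rangle$; tracing it through on $V^{2n+1}_{k}$ (which is $S^{2n+1}$ with $k$ open disks removed, so that the outward-boundary on each component is the reverse of the standard orientation of $S^{2n} = \partial D^{2n+1}$) gives $\epsilon = -1$, and the two minus signs cancel to produce $b([f_{\beta}], [g_{\beta}]) = T_{k}(\Lambda^{1}_{k,k}(f, g; M))$. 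The main obstacle is precisely this orientation bookkeeping: once one fixes a consistent convention for the Bockstein of a $\langle k\rangle$-manifold and propagates it through $\partial V^{2n+1}_{k}$ and through the identification of the standard generator $+A_{k}$, the proposition is a direct translation of the classical chain-level linking form into $\langle k,k\rangle$-bordism.
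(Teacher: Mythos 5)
Your proof is correct and follows essentially the route the paper intends: the paper gives no argument beyond asserting that the proposition follows easily from the chain-level definition (\ref{eq: linking form chain level}), and your computation with the bounding chain $f_{*}[V^{2n+1}_{k}]$, the identity $\Lambda^{0}_{k}(f,g_{\beta};M)=\beta_{2}\bigl(\Lambda^{1}_{k,k}(f,g;M)\bigr)$ from Proposition \ref{prop: k-l framed intersection}, and the convention $\beta_{2}(+A_{k})=-\langle 1\rangle$ from (\ref{equation: bockstein of A-k}) supply exactly the omitted details. The one point that cannot be checked verbatim against the paper is the final orientation bookkeeping (your $\epsilon=-1$ cancelling the sign of $\beta_{2}$), since the paper never fixes those orientation conventions explicitly, but your treatment is consistent with what is stated.
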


\section{Topological Flag Complexes} \label{section: semi-simplicial spaces}
In this section we define a series of simplicial complexes and semi-simplicial spaces.
\subsection{The primary semi-simplicial space.} 
Fix integers $k, n \geq 2$. 
Let $W_{k}$ denote the closed $(4n+1)$-dimensional manifold $W_{k}$ defined in Section \ref{subsection: the classification theorem}. 
We will make a slight alteration of $W_{k}$ as follows. 
Let $W'_{k}$ denote the manifold obtained from $W_{k}$ by removing an open disk. 
Choose an oriented embedding $\alpha: \{1\}\times D^{4n} \longrightarrow \partial W'_{k}$.
We then define $\bar{W}_{k}$ to be the manifold obtained by attaching $[0, 1]\times D^{4n}$ to $W'_{k}$ by the embedding $\alpha$, i.e.\ 
\begin{equation} \label{eq: W-bar}
\bar{W}_{k} := ([0,1]\times D^{4n})\cup_{\alpha}W'_{k}.
\end{equation}
Let $M$ be a $(4n+1)$-dimensional manifold with non-empty boundary. 
Fix an embedding
$$a: [0,\infty)\times \R^{4n} \longrightarrow M$$
with $a^{-1}(\partial M) = \{0\}\times\R^{4n}$. 

\begin{defn} \label{defn: the embedding complex 1} Let $M$ and $a: [0,\infty)\times\R^{4n} \longrightarrow M$ be as above and let $k \geq 2$ be an integer. 
We define a semi-simplicial space $X_{\bullet}(M, a)_{k}$ as follows:
\begin{enumerate} \itemsep.2cm
\item[(i)] 
Let  $X_{0}(M, a)_{k}$ be the set of pairs $(\phi, t)$, where $t \in \R$ and 
$\phi: \bar{W}_{k} \rightarrow M$ is an embedding that satisfies the following condition:
there exists $\epsilon > 0$ such that for
$(s, z) \in [0, \epsilon)\times D^{4n} \; \subset \; \bar{W}_{k}$,
the equality $\phi(s, z) = a(s, z + te_{1})$ is satisfied ($e_{1} \in \R^{4n}$ denotes the first basis vector). 
\item[(ii)] 
For an integer $p \geq 0$, $X_{p}(M, a)_{k}$ is defined to be the set of ordered $(p+1)$-tuples 
$$((\phi_{0}, t_{0}), \dots, (\phi_{p}, t_{p})) \in (X_{0}(M, a)_{k})^{\times(p+1)}$$
such that $t_{0} < \cdots < t_{p}$ and 
$\phi_{i}(\bar{W}_{k})\cap\phi_{j}(\bar{W}_{k}) = \emptyset \quad \text{whenever $i \neq j$.}$
\item[(iii)] For each $p$, the space $X_{p}(M, a)_{k}$ is topologized in the $C^{\infty}$-topology as a subspace of the product
$(\Emb(\bar{W}_{k}, M)\times \R)^{\times(p+1)}.$ 
\item[(iv)] The assignment $[p] \mapsto X_{p}(M, a)_{k}$ makes $X_{\bullet}(M, a)_{k}$ into a semi-simplicial space where the $i$-th face map $X_{p}(M, a)_{k} \rightarrow X_{p-1}(M, a)_{k}$ is given by 
$$((\phi_{0}, t_{0}, \dots, (\phi_{p}, t_{p})) \mapsto ((\phi_{0}, t_{0}, \dots, \widehat{(\phi_{i}, t_{i})}, \dots,  (\phi_{p}, t_{p})).$$ 
\end{enumerate}
\end{defn}
It is easy to verify that $X_{\bullet}(M, a)_{k}$ is a topological flag complex. 
For any $0$-simplex $(\phi, t) \in X_{0}(M, a)_{k}$, it follows from condition (i) that the number $t$ is determined by the embedding $\phi$. 
For this reason we will usually drop the number $t$ when denoting elements of $X_{0}(M, a)_{k}$.

We now state a consequence of connectivity of the geometric realization $|X_{\bullet}(M, a)_{k}|$, using Lemma \ref{lemma: flag complex transitivity}.
This is essentially the same as \cite[Proposition 4.4]{GRW 12} and so we only give a sketch of the proof. 

\begin{proposition}[Transitivity] \label{proposition: transitivity}
For $n \geq 2$, let $M$ be a $(4n+1)$-dimensional manifold with non-empty boundary. 
Let $k \geq 2$ be an integer, and let $\phi_{0}$ and $\phi_{1}$ be elements of $X_{0}(M, a)_{k}$. 
Suppose that the geometric realization $|X_{\bullet}(M, a)_{k}|$ is path connected. 
Then there exists a diffeomorphism $\psi: M \stackrel{\cong} \longrightarrow M$, isotopic to the identity when restricted to the boundary, such that $\psi\circ \phi_{0} = \phi_{1}$. 
\end{proposition}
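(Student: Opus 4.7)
The plan is to invoke Lemma \ref{lemma: flag complex transitivity} applied to the topological flag complex $X_{\bullet}(M,a)_{k}$ together with the topological group $G := \{\psi \in \Diff(M) : \psi|_{\partial M} \text{ is smoothly isotopic to } \Id_{\partial M}\}$, acting on $X_{\bullet}(M,a)_{k}$ by post-composition $\psi \cdot (\phi, t) = (\psi \circ \phi, t)$. Path-connectedness of $|X_{\bullet}(M,a)_{k}|$ is precisely the hypothesis of the proposition, so only the two transitivity conditions appearing in Lemma \ref{lemma: flag complex transitivity} require verification.

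For the condition on vertices lying in the same path-component of $X_{0}(M,a)_{k}$, I would appeal to parameterized isotopy extension. A path in $X_{0}(M,a)_{k}$ from $\phi_{0}$ to $\phi_{1}$ unpacks as a smooth family $(\phi_{s}, t_{s})$, $s \in [0,1]$, with each term satisfying condition (i) of Definition \ref{defn: the embedding complex 1}; in particular $\phi_{s}: \bar{W}_{k} \hookrightarrow M$ is a smooth isotopy of embeddings carrying the prescribed boundary collar form. The parameterized isotopy extension theorem then produces a diffeotopy $\Psi_{s}: M \to M$ with $\Psi_{0} = \Id_{M}$ and $\Psi_{s} \circ \phi_{0} = \phi_{s}$. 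The endpoint $\Psi_{1}$ lies in $G$, since $\Psi_{s}|_{\partial M}$ supplies an isotopy to the identity, and it satisfies $\Psi_{1} \circ \phi_{0} = \phi_{1}$.

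The substantive content is the transitivity condition on $1$-simplices $((\phi_{0}, t_{0}), (\phi_{1}, t_{1}))$, for which $\phi_{0}(\bar{W}_{k})$ and $\phi_{1}(\bar{W}_{k})$ are disjoint and $t_{0} < t_{1}$. The strategy is to construct a smooth isotopy of embeddings $\phi_{s}: \bar{W}_{k} \hookrightarrow M$, each a vertex of $X_{0}(M,a)_{k}$, connecting $\phi_{0}$ to $\phi_{1}$, and then conclude by the previous paragraph. The isotopy is built in two natural pieces: using the embedding $a$, linearly translate the boundary parameter from $t_{0}$ to $t_{1}$, extending the translation as a compactly supported diffeotopy of $M$ via the collar $a$; then, using the disjointness of $\phi_{0}(\bar{W}_{k})$ and $\phi_{1}(\bar{W}_{k})$, smoothly deform the interior parts $\phi_{s}(W'_{k})$ within $M$ so that $\phi_{0}(W'_{k})$ is carried onto $\phi_{1}(W'_{k})$ at time $s=1$.

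The main obstacle is making these two pieces compatible, i.e.\ arranging the translation of the boundary collar and the motion of the interior $W'_{k}$-piece so that $\phi_{s}$ remains an embedding (and hence a vertex of $X_{0}(M,a)_{k}$) throughout. Disjointness of the two $\bar{W}_{k}$-images provides the ambient room needed to route the interpolating embeddings, but the argument demands care with the supports of the ambient diffeotopies so that the collar condition is maintained. This parallels closely the argument in \cite[Proposition 4.4]{GRW 12} in the even-dimensional setting, which is why a full proof is omitted here. Once both conditions of Lemma \ref{lemma: flag complex transitivity} have been verified, the proposition follows immediately.
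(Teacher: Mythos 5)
Your overall framing (apply Lemma \ref{lemma: flag complex transitivity}; handle the path-component condition by parameterized isotopy extension) matches the paper, and that first part is fine. The genuine gap is in your treatment of the $1$-simplex condition, which is the substantive step. You propose to connect $\phi_{0}$ to $\phi_{1}$ by an isotopy of embeddings through vertices of $X_{0}(M,a)_{k}$ and then invoke isotopy extension. This cannot work in general: if such an isotopy existed, then $\phi_{0}$ and $\phi_{1}$ would be homotopic as maps $\bar{W}_{k}\to M$, and the resulting $\psi$ would be isotopic to $\Id_{M}$, hence would act trivially on $\pi_{2n}(M)$. But for a typical $1$-simplex $((\phi_{0},t_{0}),(\phi_{1},t_{1}))$ the images of $\pi_{2n}(\bar{W}_{k})\cong \Z/k\oplus\Z/k$ under $(\phi_{0})_{*}$ and $(\phi_{1})_{*}$ are two \emph{different} subgroups of $\pi_{2n}^{\tau}(M)$ (they are orthogonal with trivial intersection, e.g.\ the two standard copies of $\mb{W}_{k}$ in $M\#W_{k}\#W_{k}$), so $\phi_{0}$ and $\phi_{1}$ are not even homotopic, let alone isotopic. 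Disjointness of the images gives you a $1$-simplex; it does not put the two vertices in the same path component of $X_{0}(M,a)_{k}$, and indeed if it did, the edge condition in Lemma \ref{lemma: flag complex transitivity} would be redundant. The citation of \cite[Proposition 4.4]{GRW 12} does not repair this: that argument does not produce an isotopy between the two embeddings. It constructs, by hand, a diffeomorphism of the submanifold $\phi_{0}(\bar{W}_{k})\cup\phi_{1}(\bar{W}_{k})\cup U$ (where $U$ is a boundary collar containing the image of $a$), identified with $W_{k}\#(\partial M\times[0,1])\#W_{k}$, which swaps the two embedded copies of $W'_{k}$, is the identity on the interior boundary component, and is isotopic to the identity on $\partial M$; one then extends by the identity over the rest of $M$. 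The resulting $\psi$ carries $\phi_{0}$ to $\phi_{1}$ but is in general not isotopic to $\Id_{M}$ --- and the proposition only demands that $\psi|_{\partial M}$ be isotopic to the identity, which is exactly what this weaker construction delivers.

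A secondary issue: your group $G=\{\psi\in\Diff(M):\psi|_{\partial M}\simeq \Id\}$ does not act on $X_{\bullet}(M,a)_{k}$ by post-composition, because condition (i) of Definition \ref{defn: the embedding complex 1} requires $\phi$ to agree with the chart $a(s,z+te_{1})$ near the boundary, and $\psi\circ\phi$ need not have this form for arbitrary $\psi$. The paper works with the subgroup $\Diff(M,a)$ of diffeomorphisms compatible with the chart $a$ precisely so that the action, and hence the appeal to Lemma \ref{lemma: flag complex transitivity}, is well defined.
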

\begin{proof}[Proof Sketch]
Let $a: [0,\infty)\times\R^{4n} \longrightarrow M$ be the embedding used in the definition of $X_{\bullet}(M, a)$. 
Let $\Diff(M, a)$ denote the group of diffeomorphisms 
$\psi: M \longrightarrow M$ 
with 
$$\psi(a([0,\infty)\times\R^{4n})) \subset a([0,\infty)\times\R^{4n})$$ 
and such that $\psi|_{\partial M}$ is isotopic to the identity. 
This group acts on $X_{\bullet}(M, a)_{k}$, and 
by Lemma \ref{lemma: flag complex transitivity} it will suffice to show that for $(\phi_{0}, \phi_{1}) \in X_{1}(M, a)_{k}$, there exists $\psi \in \Diff(M, a)$ such that $\psi\circ\phi_{0} = \phi_{1}$. 
Let $U \subset M$ be a collar neighborhood of the boundary of $M$, that contains $a([0, \infty)\times\R^{4n})$. 
The union 
$\phi_{0}(\bar{W}_{k})\cup\phi_{1}(\bar{W}_{k})\cup U$
is diffeomorphic to manifold 
\begin{equation} \label{equation: double connected-sum}
W_{k}\#(\partial M\times[0, 1])\#W_{k}.
\end{equation}
To find the desired diffeomorphism $\psi$, it will suffice to construct a diffeomorphism of (\ref{equation: double connected-sum}), that is isotopic to the identity on the first boundary component, is equal to the identity on the second boundary component, and that permutes the two embedded copies of $W'_{k}$, that come from the two connected-sum factors. 
Such a diffeomorphism can be constructed ``by hand'' using the same procedure that was employed in the proof of \cite[Proposition 4.4]{GRW 12}. 
We leave the details of this construction to the reader. 
\end{proof}
The next proposition is proven in the same way as \cite[Corollary 4.5]{GRW 12}, using Proposition \ref{proposition: transitivity}.
\begin{proposition}[Cancellation] \label{corollary: cancelation}
Let $M$ and $N$ be $(4n+1)$-dimensional manifolds with non-empty boundaries, equipped with a specified identification, $\partial M = \partial N$. 
For $k \geq 2$, suppose that there exists a diffeomorphism $M\# W_{k} \stackrel{\cong} \longrightarrow N\# W_{k}$,
equal to the identity when restricted to the boundary. 
Then if $|X_{\bullet}(M\# W_{k}, a)_{k}|$ is path-connected, there exists a diffeomorphism 
$M \stackrel{\cong} \longrightarrow N$
which is equal to the identity when restricted to the boundary. 
\end{proposition}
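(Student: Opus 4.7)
The plan is to follow the outline of \cite[Corollary 4.5]{GRW 12}, deducing the cancellation statement from the transitivity result Proposition \ref{proposition: transitivity}. The key idea is that in both $M\# W_{k}$ and $N\# W_{k}$ there is a ``standard'' embedded copy of $\bar{W}_{k}$ realizing the connected sum, and that cutting along it returns $M$ (resp.\ $N$). Transitivity will be used to move the copy of $\bar{W}_{k}$ inside $M\# W_{k}$ obtained by pulling back the standard one from $N\# W_{k}$ along the given diffeomorphism to the standard position inside $M\# W_{k}$; then the given diffeomorphism will descend to the complement.

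First I would construct canonical $0$-simplices $\phi_{M} \in X_{0}(M\# W_{k}, a)_{k}$ and $\phi_{N} \in X_{0}(N\# W_{k}, a)_{k}$ associated to the standard embedded copies of $\bar{W}_{k}$. Using the presentation $\bar{W}_{k} = ([0,1]\times D^{4n})\cup_{\alpha}W'_{k}$ from (\ref{eq: W-bar}), the complement $(M\# W_{k})\setminus \operatorname{int}(\phi_{M}(\bar{W}_{k}))$ is canonically diffeomorphic to $M$ rel boundary, and analogously for $N$. These identifications are compatible because the identification $\partial M=\partial N$ lets us use a single embedding $a$ to define both $\phi_{M}$ and $\phi_{N}$.

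Let $\Phi: M\# W_{k} \longrightarrow N\# W_{k}$ denote the given diffeomorphism, which by hypothesis restricts to the identity on $\partial M = \partial N$. Composing with an isotopy through diffeomorphisms that are the identity on $\partial M$, I would adjust $\Phi$ so that it is actually the identity on an entire collar neighborhood of the boundary containing the image of $a$; this is the standard fact that a diffeomorphism equal to the identity on the boundary can be isotoped to be the identity on a collar. With this adjustment, $\Phi^{-1}\circ\phi_{N}$ is a genuine element of $X_{0}(M\# W_{k}, a)_{k}$ because its behavior near $\partial M$ matches $a$ exactly. Invoking Proposition \ref{proposition: transitivity} with the path-connectivity hypothesis on $|X_{\bullet}(M\# W_{k}, a)_{k}|$ produces a diffeomorphism $\psi: M\# W_{k}\longrightarrow M\# W_{k}$ with $\psi|_{\partial}$ isotopic to the identity such that $\psi\circ\phi_{M} = \Phi^{-1}\circ\phi_{N}$. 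Then $\Phi\circ\psi$ carries $\phi_{M}(\bar{W}_{k})$ onto $\phi_{N}(\bar{W}_{k})$ and therefore restricts to a diffeomorphism $f: M \longrightarrow N$ of the complements, with $f|_{\partial}=\psi|_{\partial}$ isotopic to the identity. A final application of isotopy extension produces $g: M\longrightarrow M$ isotopic to the identity with $g|_{\partial}=(f|_{\partial})^{-1}$, and $f\circ g$ is then the desired diffeomorphism equal to the identity on $\partial M$.

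The main obstacle is purely organizational: arranging that $\phi_{M}$, $\phi_{N}$, and the embedding $a$ are set up so that ``removing $\phi_{\bullet}(\bar{W}_{k})$'' really returns $M$ or $N$ rel boundary, and that the adjustment of $\Phi$ to be the identity on a collar can be made without spoiling the hypothesis $\Phi|_{\partial}=\mathrm{id}$. Once these are verified, the argument is a purely formal consequence of Proposition \ref{proposition: transitivity} in the same spirit as \cite[Corollary 4.5]{GRW 12}.
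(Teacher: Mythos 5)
Your proposal is correct and follows the same strategy the paper intends: the paper gives no details, simply saying the result ``is proven in the same way as \cite[Corollary 4.5]{GRW 12}, using Proposition \ref{proposition: transitivity},'' and your argument is a careful fleshing out of exactly that approach---identify standard $0$-simplices, transport one back via $\Phi^{-1}$, apply transitivity to align them, restrict to the complements, and clean up the boundary with isotopy extension.
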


The main theorem that we will need is the following. 
Recall from  (\ref{equation: k-rank mfd}) the \textit{$k$-rank} $r_{k}(M)$, of a $(4n+1)$-dimensional manifold $M$.
\begin{theorem} \label{theorem: high connectivity of X}
Let $n, k \geq 2$ be integers with $k$ odd. 
Let $M$ be a $2$-connected, $(4n+1)$-dimensional manifold with non-empty boundary. 
Let $g \in \N$ be an integer such that $r_{k}(M) \geq g$. 
Then the geometric realization $|X_{\bullet}(M, a)_{k}|$ is $\frac{1}{2}(g - 4)$-connected. 
\end{theorem}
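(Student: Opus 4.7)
Set $n := \tfrac{1}{2}(g-2)$, so that our target connectivity is $n-1 = \tfrac{1}{2}(g-4)$. By Theorem \ref{theorem: discretization} applied to the topological flag complex $X_\bullet(M,a)_k$, it suffices to show that its discretization $X^\delta_\bullet(M,a)_k$ satisfies $\omega CM(X^\delta_\bullet(M,a)_k)\geq n$. To achieve this, the plan is to compare $X^\delta_\bullet(M,a)_k$ to the linking complex $L(\pi^\tau_{2n}(M))_k$ via the simplicial map
\begin{equation*}
F:\; X^\delta_\bullet(M,a)_k\;\longrightarrow\; L(\pi^\tau_{2n}(M))_k, \qquad \phi\;\longmapsto\; \phi_*,
\end{equation*}
which sends an embedding $\phi\colon\bar W_k\to M$ to the induced morphism of linking forms $\phi_*\colon \mathbf W_k\cong \pi^\tau_{2n}(W'_k)\to \pi^\tau_{2n}(M)$ (see Remark \ref{remark: boundary sphere classification}, which identifies $W'_k$ with the relevant summand of $\bar W_k$). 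Two disjoint embeddings $\phi,\phi'$ force the bordism class $\Lambda^1_{k,k}(\phi_\beta,\phi'_\beta;M)=0$ (there are no intersections), and hence by Proposition \ref{proposition: connection to the linking form} their images are orthogonal sub-linking forms of $\pi^\tau_{2n}(M)$; this shows $F$ is well-defined on higher simplices and satisfies $F(\lk_{X^\delta}(\zeta))\subset \lk_L(F(\zeta))$.

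Next I would verify the hypotheses of Corollary \ref{corollary: link lifting no relation} (or rather Lemma \ref{lemma: link lift lemma} with the symmetric relation $\mathcal R$ consisting of pairs of \emph{transverse} embeddings, which is edge-compatible since disjoint embeddings are trivially transverse). The assumption $r_k(M)\geq g$ implies $\bar r_k(\pi^\tau_{2n}(M))\geq g$, so Theorem \ref{theorem: linking form high-connectivity} provides $lCM(L(\pi^\tau_{2n}(M))_k)\geq \tfrac{1}{2}(g-1)\geq n$ and $|L(\pi^\tau_{2n}(M))_k|$ is $(n-1)$-connected. Granted the link-lifting property below, Lemma \ref{lemma: link lift lemma} then yields $lCM(X^\delta_\bullet(M,a)_k)\geq n$ and injectivity of $|F|_*$ on $\pi_j$ for $j\leq n-1$; combined with $(n-1)$-connectivity of $|L(\pi^\tau_{2n}(M))_k|$, the latter forces $|X^\delta_\bullet(M,a)_k|$ to be $(n-1)$-connected as well, establishing $\omega CM(X^\delta_\bullet(M,a)_k)\geq n$.

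The main obstacle is verifying the link-lifting property for $F$ with respect to $\mathcal R$. The data is a vertex $f\colon\mathbf W_k\to \pi^\tau_{2n}(M)$ in $L$, a finite set $A=\{\phi^1,\dots,\phi^r\}\subset X^\delta_0$ of pairwise transverse embeddings whose images $\phi^i_*\mathbf W_k$ are orthogonal to $f(\mathbf W_k)$, and another finite set $B\subset X^\delta_0$. The task is to construct an embedding $\phi\colon \bar W_k\to M$ with $\phi_*=f$, with $\phi(\bar W_k)$ disjoint from each $\phi^i(\bar W_k)$, and transverse to each element of $B$. The recipe is:
\begin{enumerate}\itemsep.2cm
\item[(1)] Apply Corollary \ref{lemma: represent by embedding} to the classes $f(\rho),\,f(\sigma)\in\pi^\tau_{2n}(M)$ (with base points suitably placed via $a$) to obtain $\langle k\rangle$-embeddings $F_\rho,F_\sigma\colon V^{2n+1}_k\to M$ representing them.
\item[(2)] Orthogonality of $f(\mathbf W_k)$ to each $\phi^i_*\mathbf W_k$ translates, by Proposition \ref{proposition: connection to the linking form}, into vanishing of the bordism classes $\Lambda^1_{k,k}(F_\rho,\phi^i_\beta;M)$ and $\Lambda^1_{k,k}(F_\sigma,\phi^i_\beta;M)$. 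After checking the connectivity hypotheses on $V^{2n+1}_k$ and the complements of the $\phi^i$ (general position, using $4n+1\geq 9$), Corollary \ref{corollary: inductive disjunction} produces ambient diffeotopies making $F_\rho$ and $F_\sigma$ disjoint from $\bigcup_i\phi^i(\bar W_k)$.
\item[(3)] The computation $b(f(\rho),f(\sigma))=\tfrac{1}{k}$ together with Proposition \ref{proposition: connection to the linking form} identifies $\Lambda^1_{k,k}(F_\rho,F_\sigma;M)$ with the class of $+A_k$, so Corollary \ref{corollary: intersection at A-k 1} provides a further diffeotopy (supported in $M\setminus\bigcup_i\phi^i(\bar W_k)$) arranging $F_\rho\pitchfork F_\sigma\cong A_k$.
\item[(4)] A regular neighbourhood of $F_\rho(V^{2n+1}_k)\cup F_\sigma(V^{2n+1}_k)$, built using the ``Plumbing of $\Z/k$-handlebodies'' structure dictated by this intersection pattern and completed by the $[0,1]\times D^{4n}$-collar attaching data of $\bar W_k$ in \eqref{eq: W-bar}, yields an embedding $\phi\colon\bar W_k\to M$ with $\phi_*=f$. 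A final $C^\infty$-small perturbation (avoiding the $\phi^i$) makes $\phi$ transverse to every member of $B$, securing $(\phi,b)\in\mathcal R$ for all $b\in B$.
\end{enumerate}
The most delicate step is (4): thickening the singular configuration $F_\rho\cup F_\sigma$ — with its prescribed $A_k$ intersection — into an embedded copy of $\bar W_k$ realising the correct linking form. The required geometric input is precisely the $\Z/k$-Whitney trick and the embedding/immersion machinery developed in the appendices (Theorems \ref{thm: modifying intersections 1}, \ref{theorem: immersion to embedding} and their consequences), which is why odd $k$ is essential.
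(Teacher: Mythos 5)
Your overall strategy---comparing $X_\bullet(M,a)_k$ (after discretization) to the linking complex $L(\pi^\tau_{2n}(M))_k$ via the link-lifting machinery of Section~\ref{section: simplicial techniques}---is the correct high-level idea, and it is indeed the paper's. But the compression of the paper's chain of intermediate complexes into a single direct comparison $X^\delta_\bullet \to L$ breaks down, for two related reasons.

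First, in step~(2) you invoke Corollary~\ref{corollary: inductive disjunction} to make $F_\rho,F_\sigma$ disjoint from $\bigcup_i\phi^i(\bar W_k)$. That corollary, and all of the $\Z/k$-disjunction machinery in the appendices, is a \emph{middle-codimension} disjunction result: it disjoins $(2n+1)$-dimensional $\langle k\rangle$-submanifolds of a $(4n+1)$-manifold from each other. The submanifolds $\phi^i(\bar W_k)$ are codimension zero. Vanishing of $\Lambda^1_{k,k}(F_\rho,\phi^i_\beta;M)$ lets you push $F_\rho$ off the $(2n)$-spheres $\phi^i_\beta(S^{2n})$, or more generally off the $(2n+1)$-dimensional cores inside $\phi^i(\bar W_k)$, but not off the full thickened $\bar W_k$'s. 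Disjointness from an ambient-codimension-zero submanifold is a genuinely different problem and cannot be extracted from the middle-codimension Whitney-trick technology.

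Second, step~(4) asks you to turn the plumbed configuration $F_\rho(V)\cup F_\sigma(V)$ into an embedding of $\bar W_k$ realizing $f$, but this is not a single small step. The union $Y_k(F_\rho,F_\sigma)$ is \emph{not} simply connected ($\pi_1 \cong \Z^{\star(k-1)}$ by Construction~\ref{construction: pushout space}), so before a regular neighborhood can have the homotopy type of $W'_k$ one must first attach $2$-disks to kill $\pi_1$ (this is Construction~\ref{construction: killing pi-1}), and then one still has to identify the diffeomorphism type of the resulting regular neighborhood (Lemma~\ref{lemma: thickening}). Moreover, regular neighborhoods are only unique up to isotopy and can be arbitrarily "fat"; the paper introduces the compression-isotopy datum $\Psi$ in the complex $\widehat K(M,a)_k$ exactly to control this.

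The paper's way around both problems is to keep everything at middle dimension for as long as possible: it establishes high connectivity of $K(M)_k$, then of $\bar K(M,a)_k$ and $\widehat K(M,a)_k$ and the semi-simplicial $\widehat K'_\bullet(M,a)_k$---in all of these complexes the adjacency condition is disjointness of the $(2n+1)$-dimensional cores, not of thickenings---and only at the very last step passes to $X_\bullet(M,a)_k$ by constructing a \emph{section} of the forgetful map $\widehat K'_\bullet(M,a)_k\to X_\bullet(M,a)_k$ (Proposition~\ref{proposition: compare to X}). That section argument needs no link-lifting at the $X_\bullet$ level at all and therefore sidesteps the codimension-zero issue entirely. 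Your plan reverses the direction of the final comparison, and that is where it loses traction.
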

The proof of this theorem will require several intermediate constructions. The proof will be given at the end of the section.

\subsection{The complex of $\langle k \rangle$-embeddings} \label{subsection: complex of k embeddings}
Fix integers $n, k \geq 2$. 
Let $M$ be a manifold of dimension $(4n+1)$ with non-empty boundary. 
Consider transversal $\langle k \rangle$-embeddings  
$$\varphi^{0}, \varphi^{1}: V^{2n+1}_{k} \longrightarrow M$$
such that the transverse pull-back  $\varphi^{0}\pitchfork \varphi^{1}$ is diffeomorphic to $A_{k}$ as a $\langle k, k \rangle$-manifold. 
It follows that $\varphi^{0}(V^{2n+1}_{k})\cap\varphi^{1}(V^{2n+1}_{k}) \cong \widehat{A}_{k}$, where $\widehat{A}_{k}$ is the singular space obtained from $A_{k}$ as in Definition \ref{defn: singular k-k mfd}. 
It will be useful to have an abstract model for the space given by the union,  $\varphi^{0}(V^{2n+1}_{k})\cup\varphi^{1}(V^{2n+1}_{k})$.
\begin{Construction} \label{construction: pushout space} 
To begin the construction, fix a point $y \in \Int(V^{2n+1}_{k})$.  
For $i = 1, \dots, k$, let $\partial^{i}_{1}V^{2n+1}_{k}$ denote the component of the boundary given by $\Phi^{-1}(\beta V^{2n+1}_{k}\times\{i\})$, where $\langle k \rangle = \{1, \dots, k\}$. 
Let $\bar{\Phi}: \partial_{1}V^{2n+1}_{k} \longrightarrow \beta V^{2n+1}_{k}$ be the map used in Definition \ref{defn: associated singular space}. 
\begin{enumerate}\itemsep.4cm
\item[(i)] For $i = 1, \dots, k$, fix points $x_{i} \in \partial_{1}^{i}V^{2n+1}_{k}$ such that, 
$\bar{\Phi}(x_{1}) = \cdots = \bar{\Phi}(x_{k})$.
\item[(ii)] For $i = 1, \dots, k,$ choose embeddings 
$\gamma_{i}: [0, 1] \longrightarrow V^{2n+1}_{k}$
such that  
$$\gamma_{i}(0) = x_{i},  \quad \quad \gamma^{-1}_{i}(\partial_{1}V^{2n+1}_{k}) = \{0\},  \quad \quad \text{and}  \quad \quad \gamma_{i}(1) = y.$$
Then for each $i$, let $\bar{\gamma}_{i}: [0,1] \longrightarrow V^{2n+1}_{k}$ be the embedding given by the formula $$\bar{\gamma}_{i}(t) = \gamma(1 - t).$$ 
\item[(iii)] Recall that $A_{k} = [0,1]\times\langle k \rangle = \sqcup^{k}_{i= 1}[0,1]$. 
The maps
$$
\sqcup_{i=1}^{k}\gamma_{i}: A_{k} \longrightarrow V^{2n+1}_{k} \quad \text{and} \quad \sqcup_{i=1}^{k}\bar{\gamma}_{i}: A_{k} \longrightarrow V^{2n+1}_{k},
$$
 yield embeddings 
$$\Gamma: \widehat{A}_{k} \longrightarrow \widehat{V}^{2n+1}_{k} \quad \text{and} \quad \bar{\Gamma}: \widehat{A}_{k} \longrightarrow \widehat{V}^{2n+1}_{k}.$$
\item[(iv)] We define $Y^{2n+1}_{k}$ to be the space obtained by forming the push-out of the diagram,
\begin{equation} \label{equation: pushout manifold}
\xymatrix@C-.10pc@R-1.5pc{
& \widehat{A}_{k} \ar[dl]_{\Gamma} \ar[dr]^{\bar{\Gamma}} & \\
\widehat{V}^{2n+1}_{k} & & \widehat{V}^{2n+1}_{k}
}
\end{equation}
\item[(v)] By applying the \textit{Mayer-Vietoris sequence} and \textit{Van Kampen's theorem} we compute,
$$
H_{s}(Y^{2n+1}_{k}; \Z) \cong 
\begin{cases}
\Z/k\oplus \Z/k &\quad \text{if $s = 2n$,}\\
\Z^{\oplus(k-1)} &\quad \text{if $s = 1$,}\\
\Z &\quad \textit{if $s = 0$,}
\end{cases}
\quad \quad \quad
\pi_{1}(Y_{k}) \cong \Z^{\star(k-1)},
$$
where $\Z^{\star(k-1)}$ denotes the free-group on $(k-1)$-generators.
\end{enumerate}
\end{Construction}
The next proposition follows easily by inspection. 
\begin{proposition} \label{proposition: embedding of Y-k}
Let $\varphi^{0}, \varphi^{1}: V^{2n+1}_{k} \longrightarrow M$ be transversal $\langle k \rangle$-embeddings such that the pull-back is diffeomorphic to $A_{k}$ as a $\langle k, k\rangle$-manifold. 
Then the union $\varphi^{0}(V^{2n+1}_{k})\cup\varphi^{1}(V^{2n+1}_{k})$ is homeomorphic to the space $Y^{2n+1}_{k}$. 
\end{proposition}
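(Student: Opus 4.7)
The plan is to realize $\varphi^{0}(V^{2n+1}_{k}) \cup \varphi^{1}(V^{2n+1}_{k})$ in $M$ as a topological pushout and then identify this pushout with $Y^{2n+1}_{k}$. Each $\langle k \rangle$-embedding $\varphi^{j}$ descends to a topological embedding $\widehat{\varphi}^{j}: \widehat{V}^{2n+1}_{k} \hookrightarrow M$ with image $\varphi^{j}(V^{2n+1}_{k})$. The hypothesis $\varphi^{0}\pitchfork\varphi^{1} \cong A_{k}$ as a $\langle k, k\rangle$-manifold then implies that the intersection $\widehat{\varphi}^{0}(\widehat{V}^{2n+1}_{k})\cap\widehat{\varphi}^{1}(\widehat{V}^{2n+1}_{k})$ is the image of $A_{k}$ in $M$, which is homeomorphic to $\widehat{A}_{k}$. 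Since both $\widehat{\varphi}^{j}(\widehat{V}^{2n+1}_{k})$ are closed in $M$, the union is the pushout of
\begin{equation*}
\widehat{V}^{2n+1}_{k} \; \xleftarrow{\iota_{0}} \; \widehat{A}_{k} \; \xrightarrow{\iota_{1}} \; \widehat{V}^{2n+1}_{k},
\end{equation*}
where $\iota_{j}$ is induced by the $j$-th projection $V^{2n+1}_{k} \times V^{2n+1}_{k} \to V^{2n+1}_{k}$ restricted to $\varphi^{0}\pitchfork\varphi^{1}$.

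The heart of the proof is the identification of $(\iota_{0}, \iota_{1})$ with $(\Gamma, \bar{\Gamma})$ up to self-homeomorphism of $\widehat{V}^{2n+1}_{k}$. I would unpack the $\langle k, k\rangle$-structure of $A_{k} = [0,1]\times\langle k\rangle$ as it embeds in $V^{2n+1}_{k}\times V^{2n+1}_{k}$. Because the inclusion preserves the $\langle k\rangle$-labeling on each factor, each arc $[0,1]\times\{i\}$ runs under the first projection from the $i$-th sheet of $\partial_{1} V^{2n+1}_{k}$ into the interior of $V^{2n+1}_{k}$. Furthermore, using $\beta_{2}A_{k} = \text{pt}$, the pullback $\varphi^{0}\pitchfork\varphi^{1}_{\beta}$ is a single point $(x_{\ast}, y_{\ast}) \in V^{2n+1}_{k}\times \beta V^{2n+1}_{k}$, and it then follows from the injectivity of $\varphi^{0}$ on the interior of $V^{2n+1}_{k}$ (which holds because $\widehat{\varphi}^{0}$ is an embedding and $\partial_{1,2}A_{k} = \emptyset$ forces the endpoints into the interior) that all $k$ interior endpoints of these arcs coincide at $x_{\ast}$. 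A symmetric analysis using $\beta_{1}A_{k} = \text{pt}$ handles the second projection. Passing to singular quotients, $\iota_{0}$ sends the $k$ arcs of $\widehat{A}_{k}$ to $k$ embedded arcs in $\widehat{V}^{2n+1}_{k}$ joining the cone point to $x_{\ast}$ through distinct sheets of $\partial_{1}V^{2n+1}_{k}$ --- precisely the structure of $\Gamma$. The analogous argument gives $\iota_{1} \cong \bar{\Gamma}$.

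To conclude, I would need to remove the dependence of $Y^{2n+1}_{k}$ on the particular basepoints $x_{i}, y$ and arcs $\gamma_{i}$ chosen in Construction \ref{construction: pushout space}. Since $\dim V^{2n+1}_{k} = 2n+1 \geq 5$, isotopy extension implies that any two systems of $k$ pairwise disjoint embedded arcs in $V^{2n+1}_{k}$ connecting prescribed boundary points in distinct sheets to a prescribed interior point are related by an ambient diffeomorphism; this diffeomorphism descends to a self-homeomorphism of $\widehat{V}^{2n+1}_{k}$ carrying the pair $(\Gamma, \bar{\Gamma})$ to $(\iota_{0}, \iota_{1})$, which induces a homeomorphism of pushouts. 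The main (modest) technical obstacle is the simultaneous alignment of the gluing data on both copies of $\widehat{V}^{2n+1}_{k}$, requiring the isotopy extension argument to be performed coherently on each side, but no dimension-sensitive difficulty arises in this range.
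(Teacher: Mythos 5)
The paper itself offers no proof here, stating only that the proposition ``follows easily by inspection,'' so there is nothing to compare against line by line. Your argument supplies a correct rigorous justification, and it is exactly the one the paper is implicitly taking for granted: identify the union of the two compact (hence closed) singular images with the pushout of the intersection, check that each projection $\rho_j = \widehat{\varphi}^j{}^{-1}\circ\iota_j$ is an embedded arc system in $\widehat{V}^{2n+1}_k$ of the same combinatorial type and parametrization as $\Gamma$ (respectively $\bar\Gamma$), and invoke isotopy extension in dimension $2n+1\ge 5$ to produce self-homeomorphisms $h_0,h_1$ of $\widehat{V}^{2n+1}_k$ with $h_0\circ\rho_0=\Gamma$ and $h_1\circ\rho_1=\bar\Gamma$. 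Your key computation --- that $\beta_2 A_k$ being a single point forces all $k$ interior endpoints under $p_1$ to coincide at a single $x_\ast$, and symmetrically for $\beta_1$ and $p_2$ --- is precisely the ``inspection'' the paper has in mind. One small remark: the ``coherence'' worry at the end is unnecessary. The two copies of $\widehat{V}^{2n+1}_k$ in the pushout are disjoint from each other; the maps $(h_0,\mathrm{id}_{\widehat{A}_k},h_1)$ need only form a map of span diagrams, and they do so automatically since $h_0\circ\rho_0 = \Gamma\circ\mathrm{id}$ and $h_1\circ\rho_1 = \bar\Gamma\circ\mathrm{id}$ by construction; no simultaneous alignment between the two sides is required.
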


\begin{notation} \label{notaion: union subspace}
Let $\varphi = (\varphi^{0}, \varphi^{1})$ be a pair of $\langle k \rangle$-embeddings $\varphi^{0}, \varphi^{1}: V^{2n+1}_{k} \longrightarrow M$ such that the transverse pull-back is diffeomorphic to $A_{k}$ as a $\langle k, k\rangle$-manifold.
We will denote by $Y_{k}(\varphi^{0}, \varphi^{1})$ the subspace of $M$ given by the union $\varphi^{0}(V^{2n+1}_{k})\cup\varphi^{1}(V^{2n+1}_{k})$. 
\end{notation}
 We now define a simplicial complex based on pairs of $\langle k\rangle$-embeddings $V_{k}^{2n+1} \rightarrow M$ as above.
\begin{defn} \label{defn: semi-simplicial space versions}
Let $M$ and $k$ be as above. 
Let $K(M)_{k}$ be the simplicial complex with vertex set given by the set of all pairs $(\varphi^{0}, \varphi^{1})$ of transverse $\langle k \rangle$-embeddings 
$$\varphi^{0}, \varphi^{1}: V^{2n+1}_{k} \longrightarrow M$$ 
such that the transverse pull-back is diffeomorphic to $A_{k}$ as a $\langle k, k\rangle$-manifold.
A set 
$$\{(\varphi^{0}_{0}, \varphi^{1}_{0}) \dots, (\varphi^{0}_{p}, \varphi^{1}_{p})\}$$ 
of vertices forms a $p$-simplex if 
$Y_{k}(\varphi^{0}_{i}, \varphi^{1}_{i})\cap Y_{k}(\varphi^{0}_{j}, \varphi^{1}_{j}) = \emptyset$ whenever $i \neq j$.
\end{defn}
Recall from Section \ref{section: Algebra}, the simplicial complex $L(\mb{M})_{k}$ associated to an object $\mb{M}$ of the category $\mathcal{L}_{-}^{s}$ of strictly skew symmetric linking forms. 
We will need to compare the simplicial complex $K(M)_{k}$ to the simplicial complex $L(\pi^{\tau}_{2n}(M))_{k}$, where $(\pi^{\tau}_{2n}(M), \; b)$ is the homotopical linking form associated to $M$, see (\ref{eq: homotopical linking form}). 
We construct a simplicial map 
\begin{equation} \label{eq: algebraic comparison map}
F: K(M)_{k} \longrightarrow L(\pi^{\tau}_{2n}(M))_{k}
\end{equation}
as follows. 
For a vertex $\varphi = (\varphi^{0}, \varphi^{1}) \in K(M)_{k}$, let $\langle [\varphi^{0}_{\beta}], [\varphi^{1}_{\beta}]\rangle \leq \pi^{\tau}_{2n}(M)$ denote the subgroup generated by the homotopy classes determined by the embeddings $\varphi^{\nu}_{\beta}: S^{2n} \rightarrow M$ for $\nu = 0, 1$. 
The classes $[\varphi^{\nu}_{\beta}]$ for $\nu = 0, 1$, each have order $k$ and furthermore $b([\varphi^{0}_{\beta}], [\varphi^{1}_{\beta}]) = \frac{1}{k}$. 
It follows that the sub-linking form given by $\langle [\varphi^{0}_{\beta}], [\varphi^{1}_{\beta}]\rangle  \leq \pi^{\tau}_{2n}(M)$ is isomorphic to the standard non-singular linking form $\mb{W}_{k}$. 
The map $F$ from (\ref{eq: algebraic comparison map}), is then defined by sending a vertex $\varphi$ to the morphism of linking forms $\mb{W}_{k} \rightarrow \pi^{\tau}_{2n}(M)$ determined by the correspondence
$$
\rho \mapsto [\varphi^{0}_{\beta}], \quad \sigma \mapsto  [\varphi^{1}_{\beta}],
$$
where $\rho$ and $\sigma$ are the standard generators of $\mb{W}_{k}$. 
The disjointness condition in Definition \ref{defn: semi-simplicial space versions} implies that this formula preserves all adjacencies and thus yields a well defined simplicial map. 
It follows easily that for any $(4n+1)$-dimensional manifold $M$ and integer $k \geq 2$ that 
\begin{equation}
r_{k}(\pi^{\tau}_{2n}(M)) \geq r_{k}(M)
\end{equation}
where recall, $r_{k}(\pi^{\tau}_{2n}(M))$ is the \text{$k$-rank} of the linking form $(\pi^{\tau}_{2n}(M), b)$ as defined in Definition \ref{defn: rank of linking form}, and $r_{k}(M)$ is the $k$-rank of the manifold $M$ as defined in the introduction. 
\begin{lemma} \label{thm: high connectedness of K}
Let $n, k \geq 2$ be integers with $k$ odd. 
Let $M$ be a $2$-connected manifold of dimension $4n+1$. 
Then the geometric realization $|K(M)_{k}|$ is $\frac{1}{2}(r_{k}(M) - 4)$-connected and $lCM(K(M)_{k}) \geq \frac{1}{2}(r_{k}(M) - 1)$.
\end{lemma}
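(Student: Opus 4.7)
The plan is to deduce both claims by applying Lemma \ref{lemma: link lift lemma} to the simplicial map $F \colon K(M)_{k} \to L(\pi^{\tau}_{2n}(M))_{k}$ defined in (\ref{eq: algebraic comparison map}), with target $n = \tfrac{1}{2}(r_{k}(M) - 1)$. Since $\bar{r}_{k}(\pi^{\tau}_{2n}(M)) \geq r_{k}(\pi^{\tau}_{2n}(M)) \geq r_{k}(M)$, Theorem \ref{theorem: linking form high-connectivity} gives both $lCM(L(\pi^{\tau}_{2n}(M))_{k}) \geq \tfrac{1}{2}(r_{k}(M) - 1)$ and that $|L(\pi^{\tau}_{2n}(M))_{k}|$ is $\tfrac{1}{2}(r_{k}(M) - 4)$-connected. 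Condition (iii) of Lemma \ref{lemma: link lift lemma} is immediate: adjacency in $K(M)_{k}$ forces disjointness of the subspaces $Y_{k}(\varphi^{0}, \varphi^{1})$, hence vanishing linking form values by Proposition \ref{proposition: connection to the linking form}, hence orthogonal sub-linking forms. Once the link lifting property is verified with respect to some edge-compatible $\mathcal{R}$, Lemma \ref{lemma: link lift lemma} will yield $lCM(K(M)_{k}) \geq \tfrac{1}{2}(r_{k}(M) - 1)$ and injectivity of $|F|_{*}$ on $\pi_{j}$ for $j \leq \tfrac{1}{2}(r_{k}(M) - 3)$; together with the connectivity of the target, this forces $|K(M)_{k}|$ to be $\tfrac{1}{2}(r_{k}(M) - 4)$-connected.

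I take $\mathcal{R} \subset K(M)_{k} \times K(M)_{k}$ to consist of pairs $(\varphi, \psi)$ such that each of the four $\langle k \rangle$-embeddings $\varphi^{0}, \varphi^{1}, \psi^{0}, \psi^{1}$ (together with their $\beta$-maps) is transverse to each of the others. Edge compatibility is automatic, since disjoint $\langle k \rangle$-embeddings are vacuously transverse. The link lifting property for $F$ then asks: given a vertex $f \colon \mathbf{W}_{k} \to \pi^{\tau}_{2n}(M)$ in $L$, a finite set $A = \{\alpha_{i} = (\psi_{i}^{0}, \psi_{i}^{1})\}_{i=1}^{m}$ of vertices of $K(M)_{k}$ in general position with respect to $\mathcal{R}$ such that every $F(\alpha_{i}) \in \lk_{L}(f)$, and a further finite set $B$, to produce a lift $\varphi = (\varphi^{0}, \varphi^{1})$ with $F(\varphi) = f$ that is link-adjacent to each vertex in $A$ and $\mathcal{R}$-related to each vertex in $B$. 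Note that the orthogonality $F(\alpha_{i}) \perp f$ translates via Proposition \ref{proposition: connection to the linking form} into the bordism identity $\Lambda^{1}_{k,k}(\varphi^{\epsilon}, \psi_{i}^{\delta}; M) = 0$ for every $\epsilon, \delta \in \{0,1\}$ and every lift $\varphi$ of $f$.

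My construction of the lift proceeds in stages. Using Corollary \ref{corollary: represent by embedding}, realize $f(\rho) \in \pi^{\tau}_{2n}(M)$ by a $\langle k \rangle$-embedding $\varphi^{0} \colon V^{2n+1}_{k} \to M$, perturbed to be transverse to every $\psi_{i}^{\delta}$. Since $\Lambda^{1}_{k,k}(\varphi^{0}, \psi_{i}^{\delta}; M) = 0$ for all $i, \delta$, and the $\psi_{i}^{\delta}$ are pairwise transverse by general position in $\mathcal{R}$, Corollary \ref{corollary: inductive disjunction} produces a diffeotopy of $M$ making $\varphi^{0}(V^{2n+1}_{k})$ disjoint from $\bigcup_{i,\delta} \psi_{i}^{\delta}(V^{2n+1}_{k})$. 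Next, set $M' := M \setminus \bigcup_{i,\delta} \psi_{i}^{\delta}(V^{2n+1}_{k})$; since the codimension $2n$ satisfies $2n \geq 4$, general position shows $M'$ is $2$-connected. Realize $f(\sigma)$ by a $\langle k \rangle$-embedding, pull it into $M'$ by a second application of Corollary \ref{corollary: inductive disjunction}, and write the result as $\varphi^{1}$. By Proposition \ref{proposition: connection to the linking form} and $b(f(\rho), f(\sigma)) = 1/k$, the bordism class $\Lambda^{1}_{k,k}(\varphi^{0}, \varphi^{1}; M')$ equals $[+A_{k}]$; Corollary \ref{corollary: intersection at A-k 1} applied inside $M'$ then provides a further diffeotopy (rel $\partial M'$) after which the transverse pullback $\varphi^{0} \pitchfork \varphi^{1}$ is diffeomorphic to $A_{k}$ as a $\langle k, k \rangle$-manifold. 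A final small generic perturbation of $\varphi$ restores transversality to all embeddings appearing in $B$ without disturbing the disjointness properties or the diffeomorphism type of the pullback.

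The principal obstacle is the coordination of these disjunction and intersection-correction steps: the diffeotopy enforcing $\varphi^{0} \pitchfork \varphi^{1} \cong A_{k}$ threatens to reintroduce intersections of $\varphi$ with the $\psi_{i}^{\delta}$. The resolution is to perform that last diffeotopy entirely within $M'$, which demands the $2$-connectivity of $M'$ and hence the hypothesis $n \geq 2$ (codimension $2n \geq 4$). The restriction to odd $k$ enters through Corollaries \ref{corollary: intersection at A-k 1} and \ref{corollary: inductive disjunction}, both of which depend on the mod-$k$ Whitney trick of Theorem \ref{thm: modifying intersections 1}.
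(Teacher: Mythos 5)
Your proposal is correct and follows essentially the same route as the paper's own proof: you invoke Lemma \ref{lemma: link lift lemma} applied to the comparison map $F$, use the same edge-compatible transversality relation (the paper calls it $\mathcal{T}$), and verify the link lifting property by the same pipeline of Corollary \ref{lemma: represent by embedding}, Corollary \ref{corollary: inductive disjunction}, and Corollary \ref{corollary: intersection at A-k 1} performed inside the complement of the $\psi_i^{\delta}$, followed by a small generic perturbation to handle $B$. The only differences are cosmetic (e.g., whether $\varphi^{0}$ and $\varphi^{1}$ are both chosen before or one after the removal of the obstructing subspaces); the substance, the appeals to $2$-connectivity of the complement via codimension $2n \geq 4$, and the reliance on odd $k$ via the mod-$k$ Whitney trick all match the paper.
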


The above lemma will be proven by applying Lemma \ref{lemma: link lift lemma} to the simplicial map (\ref{eq: algebraic comparison map}).
In order to apply Lemma \ref{lemma: link lift lemma} we will need to define a suitable symmetric relation on the vertices of the complex $K(M)_{k}$.
\begin{defn}
Define $\mathcal{T} \subset K(M)_{k}\times K(M)_{k}$ to be the subset consisting of all pairs $\left((\varphi^{0}_{a}, \varphi^{1}_{a}), \; (\varphi^{0}_{b}, \varphi^{1}_{b})\right)$ such that the $\langle k \rangle$-embeddings $\varphi^{0}_{a}$ and $\varphi^{1}_{a}$ are transverse to both $\varphi^{0}_{b}$ and $\varphi^{1}_{b}$.
\end{defn}
Clearly $\mathcal{T}$ forms a symmetric relation on the vertices of $K(M)_{k}$.
Two embeddings of $V^{2n+1}_{k}$ into $M$ are automatically transverse if they are disjoint. 
It follows from this fact that the relation $\mathcal{T}$ is \text{edge compatible} (recall from Section \ref{section: simplicial techniques} that edge compatibility means that $(v, w) \in \mathcal{T}$ whenever $v$ and $w$ are adjacent vertices in $K(M)_{k}$).

\begin{proof}[Proof of Lemma \ref{thm: high connectedness of K}]
Let $r_{k}(M) \geq g$. 
Since $L(\pi^{\tau}_{2n}(M))_{k}$ is $\frac{1}{2}(g - 4)$-connected and 
$$lCM(L(\pi^{\tau}_{2n}(M))_{k}) \geq \tfrac{1}{2}(g - 1),$$ 
the proof of the lemma will follow directly from Lemma \ref{lemma: link lift lemma} once we verify two properties:
\begin{enumerate} \itemsep.2cm
\item[(i)] the map $F$ has the link lifting property with respect to the relation $\mathcal{T}$ (see Definition \ref{defn: cone lifting property}),
\item[(ii)] $F(\lk_{K(M)_{k}}(\zeta)) \leq \lk_{L(\pi^{\tau}_{2n}(M))_{k}}(F(\zeta))$ for any simplex $\zeta \in K(M)_{k}$.
\end{enumerate}
We first establish property (i).
Let $f: \mb{W}_{k} \longrightarrow \pi^{\tau}_{2n}(M)$ be a morphism of linking forms (which determines a vertex in $L(\pi^{\tau}_{2n}(M))_{k}$).
Let $w_{1}, \dots, w_{m} \in K(M)_{k}$ be a collection of vertices in general position with respect to $\mathcal{T}$ such that 
$F(w_{i}) \in \textstyle{\lk_{L(\pi^{\tau}_{2n}(M))_{k}}(f)}$ for all $i = 1, \dots, m$.
Let $v_{1}, \dots, v_{N} \in K(M)_{k}$ be an arbitrary collection of vertices. 
To prove that $F$ has the link lifting property relative to $\mathcal{T}$ we will need to find $u \in K(M)_{k}$ with $F(u) = f$ such that 
$$w_{i} \in \textstyle{\lk_{K(M)_{k}}(u)} \quad \text{and} \quad (v_{j}, u) \in \mathcal{T}$$ 
for all $i$ and $j$.
We do this as follows. 
Let $\rho, \sigma \in \mb{W}_{k}$ denote the standard generators as defined in Section \ref{section: Algebra}. 
The elements $f(\rho), f(\sigma) \in \pi^{\tau}_{2n}(M)$ have order $k$ and thus by Corollary \ref{lemma: represent by embedding} we may choose $\langle k \rangle$-embeddings $\varphi^{0}, \varphi^{1}: V^{2n+1}_{k} \longrightarrow M$ such that $[\varphi^{0}_{\beta}] = f(\rho)$ and $[\varphi^{1}_{\beta}] = f(\sigma)$. 
Let us write 
$$
(\varphi^{0}_{i}, \varphi^{1}_{i}) := w_{i}, \quad (\psi^{0}_{j}, \psi^{1}_{j}) := v_{j} \quad \text{for $i = 1, \dots, m$ \; and \; $j = 1, \dots, N$.}
$$
The $\langle k \rangle$-embeddings $\varphi^{0}_{1}, \dots, \varphi^{0}_{m}, \varphi^{1}_{1}, \dots, \varphi^{1}_{m}$ are pairwise transverse. 
Furthermore, we have 
$$\Lambda_{k, k}^{1}(\varphi^{\nu}, \varphi^{\mu}_{i}) = 0 \quad \text{for $i = 1, \dots, m$ \; and \; $\nu, \mu = 0, 1$.}$$
By Corollary \ref{corollary: inductive disjunction} we may find isotopies of the $\langle k \rangle$-embeddings $\varphi^{0}$ and $\varphi^{1}$, to new $\langle k \rangle$-embeddings $\bar{\varphi}^{0}$ and $\bar{\varphi}^{1}$ such that 
$$\bar{\varphi}^{\nu}(V^{2n+1}_{k})\cap\varphi^{\mu}_{i}(V^{2n+1}_{k}) = \emptyset \quad \text{for $i = 1, \dots, m$ and $\nu, \mu = 0, 1$.}$$
Now, let $U \subset M$ denote the complement
$$
U := M\setminus\left[\cup_{i=1}^{m}Y_{k}(\varphi^{0}_{i}, \varphi^{1}_{i})\right].
$$
Furthermore, since $b(f(\rho), f(\sigma)) = \frac{1}{k} \mod 1$, it follows from Proposition \ref{proposition: connection to the linking form} that, 
$$\Lambda^{1}_{k, k}(\varphi^{0},\; \varphi^{1}; M) = [+A_{k}]  \in \Omega^{SO}_{1}(\text{pt.})_{\langle k, k\rangle},$$
and thus
$$
\Lambda^{1}_{k, k}(\bar{\varphi}^{0},\; \bar{\varphi}^{1}; U) = [+A_{k}],
$$
since the embeddings are isotopic.
We then may apply Corollary \ref{corollary: intersection at A-k} (or Corollary \ref{corollary: intersection at A-k 1}) so as to obtain an isotopy of $\bar{\varphi}^{0}$ through $\langle k\rangle$-embeddings in $U$, to a $\langle k\rangle$-embedding $\widehat{\varphi}^{0}: V^{2n+1}_{k} \longrightarrow U$, so that $\widehat{\varphi}^{0}\pitchfork\bar{\varphi}^{1} \cong A_{k}$. 
Finally we make a small perturbation of the $\langle k \rangle$-embeddings $\widehat{\varphi}_{0}$ and $\bar{\varphi}_{1}$ so that they are transverse to the embeddings $\psi^{\nu}_{j}$ for $j = 1, \dots, N$ and $\nu = 0, 1$. 
We may choose this perturbation so small so that the images of the resulting embeddings remain disjoint from $\cup_{i=1}^{m}Y_{k}(\varphi^{0}_{i}, \varphi^{1}_{i})$.
The resulting pair $(\widehat{\varphi}^{0}, \bar{\varphi}^{1})$ determines a vertex in $K(M)_{k}$ with $F((\bar{\varphi}^{0}, \varphi^{1})) = f$ and so we set $u := (\widehat{\varphi}^{0}, \bar{\varphi}^{1})$.
By construction, $w_{i} \in \lk_{K(M)_{k}}(u)$ for $i = 1, \dots, m$ and $(v_{j}, u) \in \mathcal{T}$ for $j = 1, \dots, N$.
This proves that the map $F$ has the link lifting property relative to the relation $\mathcal{T}$. 

To finish the proof we need to establish that $F(\lk_{K(M)_{k}}(\zeta)) \leq \lk_{L(\pi^{\tau}_{2n}(M))_{k}}(F(\zeta))$ for any simplex $\zeta \in K(M)_{k}$. 
This follows immediately from the fact that if $\phi, \psi: V^{2n+1}_{k} \longrightarrow M$ are disjoint $\langle k \rangle$-embeddings, then $b([\phi_{\beta}], [\psi_{\beta}]) = 0$.
This concludes the proof of the lemma. 
\end{proof}

\subsection{A Modification of $K(M)_{k}$} \label{subsection: reconstruction}
Let $(\varphi^{0}, \varphi^{1})$ be a vertex of $K(M)_{k}$ and consider the subspace $Y_{k}(\varphi^{0}, \varphi^{1}) \subset M$. 
We will need to make a further modification of $Y_{k}(\varphi^{0}, \varphi^{1})$ as follows. 
\begin{Construction} \label{construction: killing pi-1}
Let $(\varphi^{0}, \varphi^{1})$ be as above. 
Since $2 < \dim(M)/2$, we may choose an embedding
\begin{equation} \label{equation: kill pi-1 embedding}
\xymatrix{
G: (\sqcup_{i=1}^{k-1}D^{2}_{i}, \; \sqcup_{i=1}^{k-1}S^{1}_{i}) \ar[rr] && (M, \; Y_{k}(\varphi^{0}, \varphi^{1}))
}
\end{equation}
which satisfies the following conditions:
\begin{enumerate}
\item[(a)] 
$$G(\sqcup_{i=1}^{k-1}\Int(D^{2}_{i}))\bigcap Y_{k}(\varphi^{0}, \varphi^{1}) \; = \; \emptyset.$$
\item[(b)] The maps 
$$G|_{S^{1}_{i}}: S^{1} \longrightarrow Y_{k}(\varphi^{0}, \varphi^{1}) \quad \quad  \text{for $i = 1, \dots, k-1$,}$$ 
represent a minimal set of generators for $\pi_{1}(Y_{k}(\varphi^{0}, \varphi^{1}))$, which by Proposition \ref{proposition: embedding of Y-k} is the free group on $k-1$ generators.  
\end{enumerate}
Given such an embedding $G$ as in (\ref{equation: kill pi-1 embedding}), we denote 
\begin{equation} \label{equation: union simply connected space}
Y^{G}_{k}(\varphi^{0}, \varphi^{1}) \; := \; Y_{k}(\varphi^{0}, \varphi^{1})\bigcup G(\sqcup_{i=1}^{k-1}D^{2}_{i}).
\end{equation}
It follows from conditions i. and ii. above that $Y^{G}_{k}(\varphi^{0}, \varphi^{1})$ is simply connected and that 
$$H_{s}(Y^{G}_{k}(\varphi^{0}, \varphi^{1}); \; \Z) = 
\begin{cases}
\Z/k\oplus\Z/k &\quad \text{if $s = 2n$,}\\
\Z &\quad \text{if $s = 0$,}\\
0 &\quad \text{else.}
\end{cases}
$$
It follows that $Y^{G}_{k}(\varphi^{0}, \varphi^{1})$ has the homotopy type of the Moore-space $M(\Z/k\oplus\Z/k, 2n)$ and hence is homotopy equivalent to the manifold $W'_{k}$. 
We will think of $Y^{G}_{k}(\varphi^{0}, \varphi^{1}) \hookrightarrow M$ as being a choice of embedding of the $(2n+1)$-skeleton of $W'_{k}$ into $M$. 
\end{Construction}

Using the construction given above, we define a modification of the simplicial complex $K(M)_{k}$. 
Let $M$ be a $(4n+1)$-dimensional manifold with non-empty boundary. 
Let 
$$a: [0,\infty)\times \R^{4n} \longrightarrow M$$
be an embedding with $a^{-1}(\partial M) = \{0\}\times\R^{4n}$.  
\begin{defn} \label{defn: the embedding complex} 
Let $\bar{K}(M, a)_{k}$ be the simplicial complex whose vertices are given by $4$-tuples $(\varphi, G, \gamma, t)$ which satisfy the following conditions:
\begin{enumerate} \itemsep.2cm
\item[i.] $\varphi = (\varphi^{0}, \varphi^{1})$ is a vertex in $K(M)_{k}$.
\item[ii.] $\xymatrix{
G: (\sqcup_{i=1}^{k-1}D^{2}_{i}, \; \sqcup_{i=1}^{k-1}S^{1}_{i}) \ar[r] & (M, \; Y_{k}(\varphi^{0}, \varphi^{1}) )
}$ is an embedding as in Construction (\ref{construction: killing pi-1}).
\item[iii.] $t$ is a real number.
\item[iv.] $\gamma: [0,1] \longrightarrow M$ is an embedded path which satisfies:
\begin{enumerate} \itemsep6pt
\vspace{.2cm}
\item[(a)] $\gamma^{-1}(Y^{G}_{k}(\varphi^{0}, \varphi^{1})) = \{1\}$,
\item[(b)] there exists $\epsilon > 0$ such that for
$s \in [0, \epsilon)$, 
the equality 
$$\gamma(s) = a(s,  te_{1}) \in [0,1]\times \R^{4n}$$ 
is satisfied,
where $e_{1} \in \R^{4n}$ denotes the first basis vector.
\end{enumerate}
\end{enumerate}
A set of vertices $\{(\varphi_{0}, G_{0}, \gamma_{0}, t_{0}), \dots, (\varphi_{p}, G_{p}, \gamma_{p}, t_{p})\}$ forms a $p$-simplex if and only if
$$\bigg(\gamma_{i}([0,1])\cup Y_{k}^{G_{i}}(\varphi^{0}_{i}, \varphi^{1}_{i})\bigg)\bigcap\bigg(\gamma_{j}([0,1])\cup Y_{k}^{G_{j}}(\varphi^{0}_{j}, \varphi^{1}_{j})\bigg) = \emptyset \quad \text{whenever $i \neq j$.}$$
\end{defn}
There is a simplicial map 
\begin{equation} \label{equation: K to K-bar}
\bar{F}: \bar{K}(M, a)_{k} \longrightarrow K(M)_{k}, \quad (\varphi, G, \gamma, t) \mapsto \varphi.
\end{equation}
\begin{proposition} \label{proposition: highly connected 2}
Let $n, k \geq 2$ be integers with $k$ odd. 
Let $M$ be a $2$-connected, manifold of dimension $4n+1$ and let
$g \in \N$ be such that $r_{k}(M) \geq g$. 
Then the geometric realization $|\bar{K}(M)_{k}|$ is $\frac{1}{2}(g - 4)$-connected and $lCM(\bar{K}(M)_{k}) \geq \frac{1}{2}(g - 1)$.
\end{proposition}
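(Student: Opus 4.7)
The plan is to apply Corollary \ref{corollary: link lifting no relation} to the forgetful simplicial map $\bar F: \bar K(M,a)_k \longrightarrow K(M)_k$ defined in (\ref{equation: K to K-bar}), taking the parameter $n = \frac{1}{2}(g-1)$. Lemma \ref{thm: high connectedness of K} supplies hypothesis (ii) directly since it gives $lCM(K(M)_k) \geq \frac{1}{2}(g-1)$. Hypothesis (iii), that $\bar F(\lk_{\bar K}(\zeta)) \leq \lk_{K(M)_k}(\bar F(\zeta))$ for every simplex $\zeta$, is immediate from Definition \ref{defn: the embedding complex}: the simplex condition in $\bar K$ forces pairwise disjointness of the even smaller subspaces $Y_k(\varphi_i)$, which is exactly what makes $\{\varphi_0, \dots, \varphi_p\}$ a simplex in $K(M)_k$.

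The main obstacle is verifying the link lifting property (Definition \ref{defn: link lifting property 1}) for $\bar F$. Given a vertex $\varphi = (\varphi^0,\varphi^1)$ of $K(M)_k$ and a finite collection of vertices $w_i = (\varphi_i, G_i, \gamma_i, t_i)$ of $\bar K$ with $\bar F(w_i) \in \lk_{K(M)_k}(\varphi)$ (equivalently, $Y_k(\varphi) \cap Y_k(\varphi_i) = \emptyset$), I would construct a lift $(\varphi, G, \gamma, t)$ as follows. Set $Z := \bigcup_{i=1}^{m} (Y_k^{G_i}(\varphi_i) \cup \gamma_i([0,1]))$, a compact subspace of $M$ of dimension at most $2n+1$, and let $C := M\setminus Z$. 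Because $Z$ has codimension $\geq 2n \geq 4$ in the $2$-connected manifold $M$, general position shows that $C$ remains $2$-connected, and $Y_k(\varphi^0,\varphi^1) \subset C$ by the link hypothesis. Choose any $t \in \R$ distinct from $t_1,\dots,t_m$. Since $C$ is simply connected, each of the $k-1$ free generators of $\pi_1(Y_k(\varphi^0,\varphi^1)) \cong \Z^{\star(k-1)}$ bounds a $2$-disk in $C$; the dimension counts $4 < 4n+1$ (for pairwise disjointness of the disks) and $2 + (2n+1) \leq 4n+1$ (for pushing their interiors off $Y_k(\varphi^0,\varphi^1)$), both valid for $n \geq 2$, allow me to straighten these to an embedding $G$ as in Construction \ref{construction: killing pi-1}, producing $Y_k^G(\varphi^0,\varphi^1) \subset C \cup Y_k(\varphi^0,\varphi^1)$. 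Finally, since $a(0,te_1) \notin Z$ and the subspace $Y_k^G(\varphi^0,\varphi^1) \cup Z$ has dimension at most $2n+1$ while $1 + (2n+1) < 4n+1$, a generic embedded path $\gamma$ from $a(0,te_1)$ to $Y_k^G(\varphi^0,\varphi^1)$ can be chosen inside $C \cup Y_k^G(\varphi^0,\varphi^1)$, meeting the latter only at its terminal endpoint. The quadruple $(\varphi, G, \gamma, t)$ is then a vertex of $\bar K(M,a)_k$ lying over $\varphi$ with each $w_i$ in its link.

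With the three hypotheses of Corollary \ref{corollary: link lifting no relation} verified, the corollary yields both $lCM(\bar K(M,a)_k) \geq \frac{1}{2}(g-1)$ and injectivity of $|\bar F|_*: \pi_j(|\bar K(M,a)_k|) \longrightarrow \pi_j(|K(M)_k|)$ for all $j \leq \frac{1}{2}(g-3)$. Combining this injectivity with the $\frac{1}{2}(g-4)$-connectedness of $|K(M)_k|$ from Lemma \ref{thm: high connectedness of K} gives $\pi_j(|\bar K(M,a)_k|) = 0$ for $j \leq \frac{1}{2}(g-4)$, which is the required connectivity conclusion.
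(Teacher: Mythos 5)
Your argument is essentially the paper's own proof: it applies Corollary \ref{corollary: link lifting no relation} to the forgetful map $\bar F$ of (\ref{equation: K to K-bar}), takes hypothesis (ii) from Lemma \ref{thm: high connectedness of K}, notes that link preservation is immediate from Definition \ref{defn: the embedding complex}, and verifies the link lifting property by a general-position choice of the disks $G$ and the arc $\gamma$ avoiding the finitely many lower-dimensional sets $Y^{G_i}_k(\varphi^{0}_i,\varphi^{1}_i)\cup\gamma_i([0,1])$, exactly as the paper does (where the dimension count appears as $\dim(M)/2 > 2$). Your additional explicit steps --- the codimension estimates and the deduction of $\tfrac{1}{2}(g-4)$-connectedness of $|\bar K(M,a)_k|$ from injectivity of $|\bar F|_*$ together with the connectivity of $|K(M)_k|$ --- are precisely the content the paper delegates to Lemma \ref{lemma: link lift lemma} and Corollary \ref{corollary: link lifting no relation}.
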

\begin{proof}
The proposition is proven by essentially the same method as Lemma \ref{thm: high connectedness of K}. 
We will apply Corollary \ref{corollary: link lifting no relation} to the map $\bar{F}$ from (\ref{equation: K to K-bar}). 
To do this we need to verify that the map $\bar{F}$ has the link lifting property (Definition \ref{defn: link lifting property 1}) and that it preserves links (condition (iii) in the statement of Corollary \ref{corollary: link lifting no relation}). 
Since $|K(M)_{k}|$ is $\frac{1}{2}(g - 4)$-connected and $lCM(K(M)_{k}) \geq \frac{1}{2}(g - 1)$, we then may apply Lemma \ref{lemma: link lift lemma} to deduce the claim of the proposition.

Let $\varphi = (\varphi^{0}, \varphi^{1})$ be a vertex in $K(M)_{k}$.
Let $(\varphi_{1}, G_{1}, \gamma_{1}, t_{1}), \dots, (\varphi_{m}, G_{m}, \gamma_{m}, t_{m})$ be vertices in $\bar{K}(M, a)_{k}$ such that $\varphi_{i}$ is adjacent to $\varphi$ for $i = 1, \dots, m$.
Since $\dim(M)/2 > 2$, there is no obstruction to choosing an embedding $G$ as in Construction \ref{construction: killing pi-1} (with respect to $\varphi$ so as to construct $Y^{G}_{k}(\varphi^{0}, \varphi^{1})$) so that the image of $G$ is disjoint from the images of $G_{i}$ and $\gamma_{i}$ for all $i$. 
Furthermore, with $G$ chosen, we may then choose an embedded path $\gamma: [0,1] \longrightarrow M$, connecting $Y^{G}_{k}(\varphi^{0}, \varphi^{1})$ to $\partial M$ so as to yield a vertex $(\varphi, G, \gamma, t) \in \bar{K}(M, a)_{k}$, which maps to $\varphi$ under $\bar{F}$ and is adjacent to $(\varphi_{i}, G_{i}, \gamma_{i}, t_{i})$ for all $i$.
This proves the fact that $\bar{F}$ has the link lifting property. 
The fact that $\bar{F}$ preserves links is immediate from the definition of $\bar{F}$. 
This concludes the proof of the proposition.
\end{proof}

\subsection{Reconstructing embeddings}
Let $(\varphi, G, \gamma, t)$ be a vertex in $\bar{K}(M, a)_{k}$. 
We will need to consider smooth regular neighborhoods of the subspace
$Y^{G}_{k}(\varphi^{0}, \varphi^{1})\cup\gamma([0,1]) \; \subset \; M.$
The following lemma identifies the diffeomorphism type of such a regular neighborhood. 

\begin{lemma} \label{lemma: thickening}
Let $(\varphi, G, \gamma, t)$ be a vertex in $\bar{K}(M, a)_{k}$. 
If $k$ is odd then any closed regular neighborhood $U$ of the subspace 
$Y^{G}_{k}(\varphi^{0}, \varphi^{1})\cup\gamma([0,1]) \; \subset \; M$,
is diffeomorphic to the manifold $\bar{W}_{k} = W'_{k}\cup_{\alpha}([0, 1]\times D^{4n})$. 
\end{lemma}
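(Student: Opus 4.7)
The plan is to decompose $U$ and apply Wall's classification. First, I will observe that the embedded arc $\gamma$ meets $Y^G_k(\varphi^0, \varphi^1)$ only at the endpoint $\gamma(1)$ and meets $\partial M$ only at $\gamma(0)$. By uniqueness of regular neighborhoods up to ambient isotopy, I may take $U = N \cup T$, where $N$ is a closed regular neighborhood of $Y^G_k(\varphi^0, \varphi^1)$ in the interior of $M$ and $T$ is a closed tubular neighborhood of $\gamma$. Since $\gamma$ is an embedded arc with one endpoint on $\partial M$ and the other in the interior, $T \cong [0,1] \times D^{4n}$ with $\{0\} \times D^{4n} \subset \partial M$, and $N \cap T = \{1\} \times D^{4n}$ is a smooth disk embedded in $\partial N$. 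Hence $U \cong N \cup_{\alpha} ([0,1] \times D^{4n})$ for some embedding $\alpha: \{1\} \times D^{4n} \hookrightarrow \partial N$, and once I prove $N \cong W_k'$, the identity $U \cong \bar{W}_k$ follows directly from the definition of $\bar{W}_k$.

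To establish $N \cong W_k'$ I will apply Wall's classification (Theorem \ref{thm: classification theorem} and Remark \ref{remark: boundary sphere classification}). By Construction \ref{construction: killing pi-1}, $N$ deformation retracts onto $Y^G_k(\varphi^0, \varphi^1)$, which has the homotopy type of the Moore space $M(\Z/k \oplus \Z/k, 2n)$; hence $N$ is a compact $(2n-1)$-connected $(4n+1)$-manifold with $H_{2n}(N; \Z) \cong \Z/k \oplus \Z/k$, finite and $2$-torsion free since $k$ is odd. To identify the linking form on $\pi^\tau_{2n}(N)$, note that its generators are the order-$k$ classes $[\varphi^0_\beta]$ and $[\varphi^1_\beta]$, and by Proposition \ref{proposition: connection to the linking form} together with the defining condition $\varphi^0 \pitchfork \varphi^1 \cong A_k$ of a vertex of $K(M)_k$,
$$b([\varphi^0_\beta], [\varphi^1_\beta]) \;=\; T_k(\Lambda^1_{k, k}(\varphi^0, \varphi^1; N)) \;=\; T_k([+A_k]) \;=\; \tfrac{1}{k} \mod 1,$$
while self-linkings vanish by Lemma \ref{lemma: strict anti-symmetry}; so the linking form is exactly $\mb{W}_k$. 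Non-degeneracy of $\mb{W}_k$ combined with Poincaré–Lefschetz duality and the long exact sequence of the pair $(N, \partial N)$ shows that $\partial N$ is a homology $4n$-sphere; Hurewicz applied to the simply connected pair $(N, \partial N)$, using $H_1(N, \partial N) = H_2(N, \partial N) = 0$, then yields $\pi_1(\partial N) = 0$, so $\partial N$ is a homotopy sphere.

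The hardest step will be verifying that $N$ is stably parallelizable, equivalently that the remaining Wall tangential invariants $\alpha: \pi^\tau_{2n}(N) \to \pi_{2n-1}(SO)$, $\widehat{\beta} \in H^{2n+1}(N; \pi_{2n}(SO))$ and $\widehat{\phi} \in H^{2n+1}(N; \Z/2)$ all vanish. This is where the odd-$k$ hypothesis is essential. By Bott periodicity $\pi_{2n-1}(SO) \in \{0, \Z, \Z/2\}$ and $\pi_{2n}(SO) \in \{0, \Z/2\}$; since $k$ is odd, $\Hom(\Z/k \oplus \Z/k, \pi_{2n-1}(SO)) = 0$, forcing $\alpha = 0$. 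By the universal coefficient theorem together with $H_{2n+1}(N) = 0$ one has $H^{2n+1}(N; A) \cong \text{Ext}(\Z/k \oplus \Z/k, A)$, so taking $A = \pi_{2n}(SO)$ gives $\widehat{\beta} = 0$ and taking $A = \Z/2$ gives $\widehat{\phi} = 0$. With all Wall invariants vanishing and the linking form identified as $\mb{W}_k$, Wall's classification yields $N \cong W_k'$, which combined with the decomposition from the first paragraph gives $U \cong \bar{W}_k$.
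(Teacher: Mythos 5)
Your strategy parallels the paper's: identify the linking form of a regular neighborhood $N$ of $Y^G_k(\varphi^0, \varphi^1)$ with $\mathbf{W}_k$, verify stable parallelizability using oddness of $k$, show $\partial N$ is a homotopy sphere, and conclude $N \cong W'_k$ by Wall's classification. Your handling of the collar is actually more careful than the paper's: you explicitly split $U = N \cup_\alpha ([0,1]\times D^{4n})$ and then identify $U \cong \bar{W}_k$, whereas the paper proves $U \cong W'_k$ and quietly leaves the collar attachment to the reader. Your verification of stable parallelizability by showing Wall's invariants $\alpha$, $\widehat{\beta}$, and $\widehat{\phi}$ all vanish is a valid alternative to the paper's direct computation that $[M(\Z/k\oplus\Z/k, 2n), BSO] = 0$ for $k$ odd; the two come down to the same Bott periodicity facts.

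However, there is a genuine gap in your argument that $\partial N$ is simply connected. You assert that relative Hurewicz, applied to the pair $(N, \partial N)$ with $N$ simply connected and $H_1(N,\partial N)=H_2(N,\partial N)=0$, yields $\pi_1(\partial N)=0$. This does not follow: when $\partial N$ is not already known to be simply connected, relative Hurewicz only identifies $H_2(N,\partial N)$ with the quotient of $\pi_2(N,\partial N)$ by the $\pi_1(\partial N)$-action, so $H_2(N,\partial N)=0$ forces only that $\pi_1(\partial N)$ is perfect. The implication you want is false in general: by Kervaire, any non-simply-connected homology $4n$-sphere bounds a contractible smooth $(4n+1)$-manifold $N$, which is simply connected with $H_i(N,\partial N)=0$ for $i\leq 2$. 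The paper closes this gap geometrically rather than algebraically: since the core $Y^G_k(\varphi^0,\varphi^1)$ has codimension $2n\geq 4$ in $U$, general position lets one represent any class in $\pi_i(U,\partial U)$ for $i\leq 2$ by a map disjoint from the core, hence lying in $U\setminus \Int(U')$ for a smaller regular neighborhood $U'$; by uniqueness of smooth regular neighborhoods $U\setminus \Int(U')$ is a product collar on $\partial U$, so $\pi_i(U\setminus\Int(U'),\partial U)=0$ and the class vanishes. This gives $\pi_i(U,\partial U)=0$ for $i\leq 2$ and then $\pi_1(\partial U)=0$ from the homotopy exact sequence of the pair. Some such regular-neighborhood input is required; it cannot be supplied by homological considerations on $(N,\partial N)$ alone.
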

\begin{proof}
By definition of regular neighborhood, the inclusion map $Y^{G}_{k}(\varphi^{0}, \varphi^{1}) \hookrightarrow U$ is a homotopy equivalence ($U$ collapses to $Y^{G}_{k}(\varphi^{0}, \varphi^{1})$, see \cite{Hi 62}). 
The maps $\varphi^{0}_{\beta}, \varphi^{1}_{\beta}: S^{2n} \longrightarrow U$ represent generators for $\pi_{2n}(U)$ and since $\varphi^{0}\pitchfork\varphi^{1} \cong A_{k}$, it follows that 
$$b([\varphi^{0}_{\beta}], \; [\varphi^{1}_{\beta}]) = \tfrac{1}{k} \mod 1$$ 
and hence, the linking form $(\pi^{\tau}_{2n}(U), b)$ is isomorphic to $\mb{W}_{k}$. 
It follows from Constructions \ref{construction: pushout space} and \ref{construction: killing pi-1} that the regular neighborhood $U$ is $(2n-1)$-connected. 
Now, $U$ is homotopy equivalent to the Moore-space $M(\Z/k\oplus\Z/k, 2n)$, and so the set of isomorphism classes of $(4n+1)$-dimensional vector bundles over $U$ is in bijective correspondence with the set $[M(\Z/k\oplus\Z/k, 2n), \; BSO]$. 
Since $\pi_{2n}(BSO; \Z/k) = 0$ whenever $k$ is odd, it follows that the tangent bundle $TU \rightarrow U$ is trivial and thus
$U$ is parallelizable.
We will show that the boundary $\partial U$ is diffeomorphic to $S^{4n}$. 
Once this is demonstrated, it will follow from the classification theorem, Theorem \ref{thm: classification theorem}, that $U$ is diffeomorphic to the manifold $W'_{k}$. 

Since $U$ is parallelizable, by \cite[Theorem 5.1]{KM 62} it will be enough to show that $\partial U$ is homotopy equivalent to $S^{4n}$. 
From Constructions \ref{construction: pushout space}, \ref{construction: killing pi-1} and the \textit{Universal Coefficient Theorem}, we have 
$$H^{s}(U; \Z) \cong 
\begin{cases}
\Z/k\oplus\Z/k &\quad \text{if $s = 2n+1$,}\\
\Z &\quad \text{if $s = 0$,}\\
0 &\quad \text{else.}
\end{cases}
$$
Using \textit{Lefschetz Duality} it then follows that 
$$H_{s}(U, \partial U; \Z) \cong 
\begin{cases}
\Z/k\oplus\Z/k &\quad \text{if $s = 2n$,}\\
0 &\quad \text{else.}
\end{cases}
$$
Consider the long exact sequence on homology associated to $(U, \partial U)$.
It follows immediately that $\partial U$ is $(2n-2)$-connected and that the long exact sequence reduces to
\begin{equation} \label{equation: U, dU exact seq.}
\xymatrix{
0 \ar[r] & H_{2n}(\partial U; \Z) \ar[r] & H_{2n}(U; \Z) \ar[r] & H_{2n}(U, \partial U; \Z) \ar[r] & H_{2n-1}(\partial U; \Z) \ar[r] & 0.
}
\end{equation}
We claim that the map $H_{2n}(U, \Z) \rightarrow H_{2n}(U, \partial U; \Z)$ is an isomorphism. 
To see this, consider the commutative diagram
$$\xymatrix{
H_{2n}(U; \Z) \ar[rrr]_{\cong}^{x \mapsto b(x, \underline{\hspace{.3cm}})} \ar[d] &&& H^{2n}(U; \Q/\Z) \ar[d]^{\cong} \\
H_{2n}(U, \partial U; \Z) \ar[rrr]^{\cong} &&& H^{2n+1}(U; \Z).
}$$
In the above diagram the bottom-horizontal map is the Leftschetz duality isomorphism, the right vertical map is the boundary homomorphism in the Bockstein exact sequence (which in this case is an isomorphism), and 
the top-horizontal map $x \mapsto b(x, \underline{\hspace{.3cm}})$ is an isomorphism since the homological linking form $(H_{2n}(U), \; b)$ is non-singular. 
It follows that the map 
$H_{2n}(U; \Z) \rightarrow H_{2n}(U, \partial U; \Z)$ 
is indeed an isomorphism and it then follows from the exact sequence of (\ref{equation: U, dU exact seq.}) that $\partial U$ has the same homology type of $S^{4n}$. 

To prove that $\partial U$ has the same homotopy type of $S^{4n}$, we must show that $\partial U$ is simply connected.
To do this it will suffice to show that $\pi_{i}(U, \partial U) = 0$ for $i = 1, 2$. 
With this established, the simple connectivity of $\partial U$ will follow by considering the long exact sequence of homotopy groups associated to $(U, \partial U)$, using the fact that $U$ is simply connected.
Let $f: (D^{i}, \partial D^{i}) \longrightarrow (U, \partial U)$ be a map with $i = 1, 2$. 
Since 
$$\dim(U) - \dim(Y^{G}_{k}(\varphi^{0}, \varphi^{1})) \geq 3,$$ 
we may deform $f$ so that its image is disjoint from $Y^{G}_{k}(\varphi^{0}, \varphi^{1})$. 
We then may find another (strictly smaller) regular neighborhood $U'$ of $Y^{G}_{k}(\varphi^{0}, \varphi^{1})$ such that $U' \subsetneq U$ and $f(D^{i}) \subset U\setminus U'$. 
The class $[f] \in \pi_{i}(U, \partial U)$ is in the image of the map 
$$ \pi_{i}(U\setminus \Int(U'),\; \partial U) \longrightarrow \pi_{i}(U,\; \partial U)$$
induced by inclusion. 
Using the uniqueness theorem for smooth regular neighborhoods (see \cite{Hi 62}), it follows that the manifold $U\setminus\Int(U')$ is an $H$-cobordism from $\partial U$ to $\partial U'$ and so it follows that$\pi_{i}(U\setminus \Int(U'), \partial U) = 0$. 
This proves that $[f] = 0$ and thus $\pi_{i}(U, \partial U) = 0$ since $f$ was arbitrary. 
It follows by considering the exact sequence on homotopy groups associated to the pair $(U, \partial U)$ that $\partial U$ is simply connected. 

Since $\partial U$ is simply connected and has the homology type of a sphere, it follows that $\partial U$ is a homotopy sphere. 
It then follows from \cite[Theorem 5.1]{KM 62} that $\partial U$ is diffeomorphic to $S^{4n}$ since $\partial U$ bounds a parallelizable manifold, namely $U$. 
This concludes the proof of the lemma. 
\end{proof}

We now define a new simplicial complex. 
\begin{defn} \label{defn: thickening complex}
Let $\widehat{K}(M, a)_{k}$ be the simplicial complex whose vertices are given by triples
$(\bar{\varphi}, \Psi, s)$
which satisfy the following conditions:
\begin{enumerate} \itemsep.2cm
\item[(i)] The $4$-tuple $\bar{\varphi} = (\varphi, G, \gamma, t)$ is a vertex in $\bar{K}(M, a)_{k}$.
\item[(ii)] $s$ is a real number. 
\item[(iii)] $\Psi: \bar{W}_{k}\times[s, \infty) \longrightarrow M$ is a smooth family of embeddings $\bar{W}_{k} \hookrightarrow M$ that satisfies the following:
\begin{enumerate} \itemsep.2cm
\vspace{.15cm}
\item[(a)] for each $t \in [s, \infty)$, the embedding $\Psi(\underline{\hspace{.3cm}}, \; t): \bar{W}_{k} \longrightarrow M$ is an element of $X_{0}(M, a)_{k}$, 
\item[(b)] $Y^{G}(\varphi^{0}, \varphi^{1})\cup\gamma([0,1]) \; \subset \; \Psi(\bar{W}_{k}, t)$ for all $t \in [s, \infty)$, 
\item[(c)] for any neighborhood $U$ of $Y^{G}(\varphi^{0}, \varphi^{1})\cup\gamma([0,1])$, there is $t_{U} \in [s, \infty)$ such that 
$\Psi(\bar{W}_{k}, t) \subset U$ when $t \geq t_{U}$.
\end{enumerate}
\end{enumerate}
A set of vertices 
$\{(\bar{\varphi_{0}}, \Psi_{0}, s_{0}), \dots, (\bar{\varphi}_{p}, \Psi_{p}, s_{p})\}$
forms a $p$-simplex if the associated set 
$\{\bar{\varphi}_{0}, \dots, \bar{\varphi}_{p}\}$
is a $p$-simplex in the complex $\bar{K}(M, a)_{k}$ (no extra pairwise condition on the $\Psi_{i}$ and $s_{i}$ are required).
\end{defn}

By construction of $\widehat{K}(M, a)_{k}$, there is a simplicial map,
\begin{equation} \label{equation: hat to bar map}
\widehat{F}: \widehat{K}(M, a)_{k} \longrightarrow \bar{K}(M, a)_{k}, \quad (\bar{\varphi}, \Psi, s) \mapsto \bar{\varphi}.
\end{equation}

\begin{proposition} \label{highly connected 3}
Let $n, k \geq 2$ be integers with $k$ odd. 
Let $M$ be a compact, $2$-connected, manifold of dimension $4n+1$.   
Let $g \in \N$ be such that $r_{k}(M) \geq g$. 
Then the geometric realization $|\widehat{K}(M)_{k}|$ is $\frac{1}{2}(g - 4)$-connected and $lCM(\widehat{K}(M)_{k}) \geq \frac{1}{2}(g - 1)$.
\end{proposition}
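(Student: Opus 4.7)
The plan is to apply Corollary~\ref{corollary: link lifting no relation} to the simplicial map $\widehat{F}: \widehat{K}(M, a)_{k} \longrightarrow \bar{K}(M, a)_{k}$ of (\ref{equation: hat to bar map}), mirroring how Proposition~\ref{proposition: highly connected 2} was derived from Lemma~\ref{thm: high connectedness of K}. Setting $n = \tfrac{1}{2}(g-1)$, hypothesis (ii) of the corollary is exactly the content of Proposition~\ref{proposition: highly connected 2}. Hypothesis (iii), that $\widehat F$ preserves links, is essentially formal: by the definition of simplex in $\widehat{K}(M,a)_k$, distinct vertices of a simplex must have distinct projections to $\bar{K}(M,a)_k$, so for any simplex $\zeta$ of $\widehat K$, a vertex $(\bar\varphi, \Psi, s) \in \lk_{\widehat K}(\zeta)$ has projection $\bar\varphi$ distinct from all projections of vertices of $\zeta$, together spanning a simplex in $\bar K$, whence $\widehat F((\bar\varphi, \Psi, s)) \in \lk_{\bar K}(\widehat F(\zeta))$.

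The main step is to verify the link lifting property (Definition~\ref{defn: link lifting property 1}) for $\widehat F$. Since adjacency in $\widehat{K}(M, a)_k$ imposes no conditions on the $\Psi$ and $s$ coordinates beyond adjacency of projections in $\bar K$, any lift $(\bar\varphi, \Psi, s) \in \widehat K$ of a prescribed vertex $\bar\varphi = (\varphi, G, \gamma, t) \in \bar K$ automatically lies in the link of every vertex of $\widehat K$ that projects to a neighbor of $\bar\varphi$. Thus the problem reduces purely to existence: we must produce a single smooth family $\Psi: \bar W_k \times [s, \infty) \to M$ satisfying conditions (a), (b), (c) of Definition~\ref{defn: thickening complex}. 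This is exactly where Lemma~\ref{lemma: thickening} is used. By that lemma, every closed regular neighborhood of $Y_k^G(\varphi^0, \varphi^1) \cup \gamma([0,1])$ is diffeomorphic to $\bar W_k$, and one can arrange the diffeomorphism to respect the collar structure near $\partial M$ induced by $\gamma$ and $a$, exploiting the fact that the decomposition $\bar W_k = W'_k \cup_\alpha ([0,1]\times D^{4n})$ already has a built-in collar corresponding to the path $\gamma$. Picking a nested sequence of such regular neighborhoods $U_t$ shrinking down to the subspace as $t \to \infty$, together with compatible identifications $\bar W_k \cong U_t$, furnishes the required family $\Psi$.

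With all three hypotheses of Corollary~\ref{corollary: link lifting no relation} in hand, the corollary delivers both $lCM(\widehat{K}(M, a)_k) \geq \tfrac{1}{2}(g-1)$ and the injectivity of the induced map $|\widehat F|_*: \pi_j(|\widehat{K}(M, a)_k|) \to \pi_j(|\bar{K}(M, a)_k|)$ for $j \leq \tfrac{1}{2}(g-3)$. Combined with the $\tfrac{1}{2}(g-4)$-connectivity of $|\bar{K}(M, a)_k|$ from Proposition~\ref{proposition: highly connected 2}, this injectivity forces $\pi_j(|\widehat{K}(M, a)_k|) = 0$ for all $j \leq \tfrac{1}{2}(g-4)$, completing the proof. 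I expect the only non-formal difficulty to be the smooth construction of the continuously parametrized family $\Psi$ with the correct behavior near $\partial M$, but this is entirely routine once Lemma~\ref{lemma: thickening} is available; the proposition is otherwise a bookkeeping consequence of the link lifting machinery already developed.
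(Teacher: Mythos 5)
Your proposal is correct and takes essentially the same route as the paper: apply Corollary~\ref{corollary: link lifting no relation} to $\widehat{F}$, note that link-preservation is formal and that (since adjacency in $\widehat{K}(M,a)_k$ depends only on the projections to $\bar{K}(M,a)_k$) the link lifting property reduces to producing a single family $\Psi$, which is obtained from a regular neighborhood of $Y^{G}_{k}(\varphi^{0},\varphi^{1})\cup\gamma([0,1])$ identified with $\bar{W}_k$ via Lemma~\ref{lemma: thickening}. The only cosmetic difference is that the paper builds the shrinking family from one regular neighborhood $U\cong\bar{W}_k$ composed with a ``compression isotopy'' $\rho: U\times[s,\infty)\to U$ fixing the subspace, rather than your nested sequence of neighborhoods with compatible identifications.
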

\begin{proof}
The proof of this proposition again follows the same strategy as Lemma \ref{thm: high connectedness of K}.
We will apply Corollary \ref{corollary: link lifting no relation} to the map $F$.
We will need to verify that the map $\widehat{F}$ has the linking lifting property as in Definition \ref{defn: link lifting property 1} and that $F$ preserves links. 

The fact that $\widehat{F}$ preserves links follows immediately from the definition. 
We will verify the link lifting property.
Let $\bar{\varphi} = (\varphi, G, \gamma, t)$ be a vertex of $\bar{K}(M)_{k}$.
Let  
$$(\bar{\varphi}_{1}, \Psi_{1}, s_{1}), \dots, (\bar{\varphi}_{m}, \Psi_{m}, s_{m})$$ 
be a collection of vertices in $\widehat{K}(M)_{k}$ such that $\bar{\varphi}$ is adjacent to $\bar{\varphi}_{i}$ in $\bar{K}(M)_{k}$ for $i = 1, \dots, m$.
We will denote
\begin{equation} \label{equation: new notation for complex}
Y_{k}(\bar{\varphi}) := Y^{G}_{k}(\varphi^{0}, \varphi^{1})\cup\gamma([0, 1]).
\end{equation}
Let $U \subset M$ be a regular neighborhood of $Y_{k}(\bar{\varphi})$. 
Since $U$ collapses to $Y_{k}(\bar{\varphi})$ (by definition of regular neighborhood), we may choose a one-parameter family of embeddings:
\begin{equation} \label{equation: compression isotopy}
\rho: U\times[s,\infty) \longrightarrow U
\end{equation}
which satisfies the following:
\begin{enumerate}
\item[i.] For all $t \in [s, \infty)$, the embedding $\rho_{t} = \rho|_{U\times\{t\}}: U \rightarrow U$ is the identity on $Y_{k}(\bar{\varphi})$.
\item[ii.] Given any neighborhood $U' \subset U$ of $Y_{k}(\bar{\varphi})$, there exists $t' > s$ such that $\rho_{t}(U) \subset U'$ for all $t \geq t'$. 
\end{enumerate}
We call such an isotopy a \textit{compression isotopy} of $U$ to $Y_{k}(\bar{\varphi})$.
By Lemma \ref{lemma: thickening}, there exists a diffeomorphism $\Psi: \bar{W}_{k} \stackrel{\cong} \longrightarrow U$ such that the composition 
$\bar{W}_{k} \stackrel{\Psi} \longrightarrow U \hookrightarrow M$
satisfies the conditions of Definition \ref{defn: the embedding complex}. 
It then follows that the triple $(\bar{\varphi}, \; \Psi\circ\rho, \; s)$ is a vertex of $\widehat{K}(M, a)_{k}$ that maps to $\bar{\varphi}$ under $\widehat{F}$.
It follows from the definition of $\widehat{K}(M)_{k}$ that $(\bar{\varphi}, \; \Psi\circ\rho, \; s)$ is automatically adjacent to $(\bar{\varphi}_{i}, \Psi_{i}, s_{i})$ for $i = 1, \dots, m$.
This proves that $\widehat{F}$ has the link lifting property. 
This completes the proof of the proposition.
\end{proof}

\subsection{Comparison with $X_{\bullet}(M, a)_{k}$.}
We are now in a position to finally prove Theorem \ref{theorem: high connectivity of X} by comparing $|X_{\bullet}(M, a)_{k}|$ to $|\widehat{K}(M, a)_{k}|$. 
We will need to construct an auxiliary semi-simplicial space related to the simplicial complex $\widehat{K}(M, a)_{k}$.
Let $M$ be a $(4n+1)$-dimensional manifold with non-empty boundary and let $a: [0,\infty)\times\R^{4n} \longrightarrow M$ be an embedding as used in Definition \ref{defn: the embedding complex}. 
We define two semi-simplicial spaces $\widehat{K}_{\bullet}(M, a)_{k}$ and $\widehat{K}'_{\bullet}(M, a)_{k}$. 
\begin{defn} \label{defn: aux semi-simplicial spaces}
The space of $p$-simplices $\widehat{K}_{p}(M, a)_{k}$ is defined as follows:
\begin{enumerate}
\item[i.] The space of $0$-simplices $\widehat{K}_{0}(M, a)_{k}$ is defined to have the same underlying set as the set of vertices of the simplicial complex $\widehat{K}(M, a)_{k}$. 
\item[ii.] The space of $p$-simplices $\widehat{K}_{p}(M, a)_{k} \subset (\widehat{K}_{0}(M, a)_{k})^{\times (p+1)}$ consists of the ordered $(p+1)$-tuples $((\bar{\varphi}_{0}, \Psi_{0}, s_{0}), \cdots, (\bar{\varphi}_{p}, \Psi_{p}, s_{p}))$ such that 
the associated unordered set 
$$\{(\bar{\varphi}_{0}, \Psi_{0}, s_{0}), \cdots, (\bar{\varphi}_{p}, \Psi_{p}, s_{p})\}$$ 
is a $p$-simplex in the simplicial complex $\widehat{K}(M, a)_{k}$.
\end{enumerate}
The spaces $\widehat{K}_{p}(M, a)_{k} $ are topologized using the $C^{\infty}$-topology on the spaces of embeddings.
The assignments $[p] \mapsto \widehat{K}_{p}(M, a)_{k}$ define a semi-simplicial space which we denote by $\widehat{K}_{\bullet}(M, a)_{k}$. 

Finally, $\widehat{K}'_{\bullet}(M, a)_{k} \subset \widehat{K}_{\bullet}(M, a)_{k}$ is defined to be the sub-semi-simplicial space consisting of all $(p+1)$-tuples 
$((\bar{\varphi}_{0}, \Psi_{0}, s_{0}), \cdots, (\bar{\varphi}_{p}, \Psi_{p}, s_{p}))$ 
such that $\Psi_{i}(\bar{W}_{k})\cap\Psi_{j}(\bar{W}_{k}) = \emptyset$ whenever $i\neq j$. 
\end{defn}
It is easily verified that both $\widehat{K}_{\bullet}(M, a)_{k}$ and $\widehat{K}'_{\bullet}(M, a)_{k}$ are topological flag complexes.

\begin{proposition} \label{proposition: highly connected semi-space}
Let $k, n \geq 2$ be integers with $k$ odd.  Let $M$ be a $2$-connected $(4n+1)$-dimensional manifold and let $g \geq 0$ be such that $r_{k}(M) \geq g$. 
Then the geometric realization $|\widehat{K}_{\bullet}(M, a)_{k}|$ is $\frac{1}{2}(g - 4)$-connected. 
\end{proposition}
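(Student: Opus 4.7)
The plan is to apply the discretization theorem (Theorem \ref{theorem: discretization}) to the topological flag complex $\widehat{K}_{\bullet}(M, a)_{k}$. Setting $n := \tfrac{1}{2}(g-2)$, so that $(n-1) = \tfrac{1}{2}(g-4)$, it will suffice to verify the inequality $\omega CM(\widehat{K}^{\delta}_{\bullet}(M, a)_{k}) \geq n$ on the discretization.

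First I would unpack the simplex structure of the discretization: by Definition \ref{defn: aux semi-simplicial spaces}, a $p$-simplex of $\widehat{K}^{\delta}_{\bullet}(M, a)_{k}$ is an ordered $(p+1)$-tuple of vertices whose underlying unordered set is a $p$-simplex of the simplicial complex $\widehat{K}(M, a)_{k}$ of Definition \ref{defn: thickening complex}. Likewise the semi-simplicial link $\widehat{K}^{\delta}_{\bullet}(M, a)_{k}(\sigma)$ of a $p$-simplex $\sigma = (v_{0}, \dots, v_{p})$ in the sense of Definition \ref{defn: semi-simplicial cohen mac} encodes precisely the same combinatorial data as the simplicial-complex link $\lk_{\widehat{K}(M, a)_{k}}(\{v_{0}, \dots, v_{p}\})$, because the flag property built into the definition of $\widehat{K}_{\bullet}(M, a)_{k}$ determines every higher simplex from its $1$-skeleton. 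Arguing as in the proof of \cite[Theorem 5.5]{GRW 14}, the connectivity of $|\widehat{K}^{\delta}_{\bullet}(M, a)_{k}|$ and of each of its semi-simplicial links can therefore be read off from the corresponding connectivities of $\widehat{K}(M, a)_{k}$ and its simplicial-complex links.

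Next I would invoke Proposition \ref{highly connected 3}, which asserts that $|\widehat{K}(M, a)_{k}|$ is $\tfrac{1}{2}(g-4)$-connected and $lCM(\widehat{K}(M, a)_{k}) \geq \tfrac{1}{2}(g-1)$. Combining these bounds with the identifications above yields that $|\widehat{K}^{\delta}_{\bullet}(M, a)_{k}|$ is $(n-1)$-connected and, for each $p$-simplex $\sigma$, $|\widehat{K}^{\delta}_{\bullet}(M, a)_{k}(\sigma)|$ is $(\tfrac{1}{2}(g-1) - p - 2)$-connected, which is at least $(n - p - 2)$-connected since $\tfrac{1}{2}(g-1) \geq \tfrac{1}{2}(g-2) = n$. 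Hence $\omega CM(\widehat{K}^{\delta}_{\bullet}(M, a)_{k}) \geq n$, and Theorem \ref{theorem: discretization} then delivers $(n-1) = \tfrac{1}{2}(g - 4)$-connectivity of $|\widehat{K}_{\bullet}(M, a)_{k}|$, as required.

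The argument is essentially formal given the groundwork already laid: the substantive geometric content has been concentrated in the proof of Proposition \ref{highly connected 3} (which itself rested on Lemma \ref{thm: high connectedness of K} and the disjunction machinery for $\langle k \rangle$-manifolds). The only subtle point is the passage from the simplicial complex $\widehat{K}(M, a)_{k}$ to the semi-simplicial set $\widehat{K}^{\delta}_{\bullet}(M, a)_{k}$, but this is precisely the kind of translation handled by the template of \cite[Theorem 5.5]{GRW 14} and requires no further geometric input beyond the flag property of Definition \ref{defn: aux semi-simplicial spaces}.
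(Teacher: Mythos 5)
Your proposal is correct and follows essentially the same route as the paper: identify the discretization $\widehat{K}^{\delta}_{\bullet}(M, a)_{k}$ with the simplicial complex $\widehat{K}(M, a)_{k}$, use Proposition \ref{highly connected 3} to get weak Cohen--Macaulayness of dimension $\tfrac{1}{2}(g-2)$, and then apply Theorem \ref{theorem: discretization}. The only cosmetic difference is that the paper records the comparison step as the observation that the order-forgetting map $|\widehat{K}^{\delta}_{\bullet}(M,a)_{k}| \to |\widehat{K}(M,a)_{k}|$ is a homeomorphism (each simplex admits a unique admissible ordering), whereas you phrase the same identification via the flag property and the template of \cite[Theorem 5.5]{GRW 14}.
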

\begin{proof}
Let $\widehat{K}_{\bullet}(M, a)^{\delta}_{k}$ denote the the \textit{discretization} of $\widehat{K}_{\bullet}(M, a)_{k}$ as defined in Definition \ref{defn: associated simplicial complex}. 
Consider the map
\begin{equation} \label{eq: forget ordering}
|\widehat{K}_{\bullet}(M, a)^{\delta}_{k}| \longrightarrow |\widehat{K}(M, a)_{k}|
\end{equation}
induced by sending an ordered list 
$((\bar{\varphi}_{0}, \Psi_{0}, s_{0}), \cdots, (\bar{\varphi}_{p}, \Psi_{p}, s_{p}))$ 
to its associated underlying set. 
For any such set $\{(\bar{\varphi}_{0}, \Psi_{0}, s_{0}), \cdots, (\bar{\varphi}_{p}, \Psi_{p}, s_{p})\}$ which forms a $p$-simplex in $\widehat{K}(M, a)^{\delta}_{k}$, 
there is only one possible ordering on it which yields an element of $\widehat{K}_{\bullet}(M, a)^{\delta}_{k}$. 
Thus the map (\ref{eq: forget ordering}) is a homeomorphism. 
By Proposition \ref{highly connected 3}, it follows that $\widehat{K}_{\bullet}(M, a)^{\delta}_{k}$ (which is clearly a topological flag-complex) is weakly Cohen-Macaulay of dimension $\frac{1}{2}(g - 2)$, as defined in Definition \ref{defn: semi-simplicial cohen mac}.
 It then follows from Theorem \ref{theorem: discretization} that $|\widehat{K}_{\bullet}(M, a)_{k}|$ is $\frac{1}{2}(g - 4)$-connected.
\end{proof}

We now consider the inclusion map $\widehat{K}'_{\bullet}(M, a)_{k} \longrightarrow \widehat{K}_{\bullet}(M, a)_{k}$. 
\begin{proposition}
For any $(4n+1)$-dimensional manifold $M$ with non-empty boundary, the map $|\widehat{K}'_{\bullet}(M, a)_{k}| \longrightarrow |\widehat{K}_{\bullet}(M, a)_{k}|$ induced by inclusion is a weak homotopy equivalence. 
\end{proposition}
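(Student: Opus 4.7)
The plan is to exhibit a continuous family of semi-simplicial self-maps of $\widehat{K}_\bullet(M,a)_k$ that progressively shrinks each thickened embedding $\Psi_i$ tightly onto its core $Y_k(\bar\varphi_i)$ from (\ref{equation: new notation for complex}); since the cores along any simplex of $\widehat{K}_\bullet$ are already pairwise disjoint, the shrunken thickenings will become disjoint as well and so land in $\widehat{K}'_\bullet$. Note that the inclusion is level-wise an \emph{open} embedding (disjointness of the $\Psi_i(\bar W_k)$ is a $C^\infty$-open condition) and is an equality on zero-simplices.

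For each $r\geq 0$ define a shrinking operation on vertices by
\[
S_r(\bar\varphi,\Psi,s)\;=\;\bigl(\bar\varphi,\;\Psi|_{\bar W_k\times[s+r,\infty)},\;s+r\bigr).
\]
It is continuous jointly in $(\bar\varphi,\Psi,s)$ and $r$, with $S_0=\operatorname{id}$, and leaves $\bar\varphi$ unchanged. Componentwise application of $S_r$ to a $p$-simplex of $\widehat{K}_\bullet$ produces another $p$-simplex and commutes with all face maps; moreover, shrinking preserves disjointness of thickenings, so $S_r$ restricts to a self-map of $\widehat{K}'_\bullet$. Thus the $S_r$ assemble into a continuous homotopy $|\widehat{K}_\bullet|\times[0,\infty)\to|\widehat{K}_\bullet|$ whose restriction to $|\widehat{K}'_\bullet|\times[0,\infty)$ stays in $|\widehat{K}'_\bullet|$.

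The key geometric claim is the following: for every $p$-simplex $\sigma=(v_0,\dots,v_p)\in\widehat{K}_p(M,a)_k$ with $v_i=(\bar\varphi_i,\Psi_i,s_i)$ there exists $R_\sigma\geq 0$ with $(S_r(v_0),\dots,S_r(v_p))\in\widehat{K}'_p(M,a)_k$ for every $r\geq R_\sigma$. Indeed, because $(\bar\varphi_0,\dots,\bar\varphi_p)$ is a simplex of $\bar K(M,a)_k$, the compact cores $Y_k(\bar\varphi_i)$ are pairwise disjoint in $M$ and hence admit pairwise disjoint open neighborhoods $U_0,\dots,U_p$; condition (iii)(c) of Definition \ref{defn: thickening complex} then supplies $R_i\geq 0$ with $\Psi_i(\bar W_k\times\{t\})\subset U_i$ for every $t\geq s_i+R_i$, so $R_\sigma:=\max_i R_i$ does the job. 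In particular, the opens $S_r^{-1}(\widehat{K}'_p)\subset\widehat{K}_p$ exhaust $\widehat{K}_p$ as $r\to\infty$.

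To conclude, let $f\colon(D^n,\partial D^n)\to(|\widehat K_\bullet|,|\widehat{K}'_\bullet|)$ be given; it suffices to homotope $f$ rel $\partial D^n$ into $|\widehat{K}'_\bullet|$. The compact image $f(D^n)$ meets only finitely many open cells of $|\widehat K_\bullet|$, and the projection of the preimage in each cell to the relevant $\widehat K_p$ is compact, so by the exhaustion above a single $R$ works for all these projections simultaneously. The restriction of the global homotopy to $|\widehat K_\bullet|\times[0,R]$, composed with $f$, supplies the desired homotopy: at time $R$ it maps $D^n$ into $|\widehat{K}'_\bullet|$, and along $\partial D^n$ it remains in $|\widehat{K}'_\bullet|$ because $S_r$ preserves $\widehat{K}'_\bullet$. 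The main thing to watch is the compactness bookkeeping in this final step; the shrinking itself is built into condition (iii)(c), which is exactly why the thickenings $\Psi$ were constructed with a noncompact parameter interval.
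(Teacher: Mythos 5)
Your proof uses the same geometric mechanism as the paper --- shrinking the thickenings $\Psi_i$ via condition (iii)(c) of Definition \ref{defn: thickening complex} --- but runs it directly on the geometric realization instead of levelwise. The paper instead shows that each relative homotopy group $\pi_j\bigl(\widehat{K}_p(M,a)_k,\;\widehat{K}'_p(M,a)_k\bigr)$ vanishes, by linearly interpolating the parameters $s^x_i$ to a single large value $s$ chosen using compactness of $D^j$, and then invokes the standard fact that a levelwise weak equivalence of semi-simplicial spaces induces a weak equivalence on geometric realizations. Your shrinking maps $S_r$ are a valid variant of the paper's homotopy and do assemble into a family of semi-simplicial self-maps preserving $\widehat{K}'_\bullet$, so the bulk of your argument tracks the paper's.

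The gap is in the final paragraph. The claim that $f(D^n)$ ``meets only finitely many open cells of $|\widehat{K}_\bullet|$'' is CW-complex intuition that does not apply here: the spaces $\widehat{K}_p$ are not discrete, so $|\widehat{K}_\bullet|$ is not a CW complex whose cells are indexed by simplices, and a compact subset of the realization of a semi-simplicial space can perfectly well meet uncountably many of the sets $\{x\}\times\operatorname{int}(\Delta^p)$. What you actually need --- and what your phrase ``the projection of the preimage in each cell to the relevant $\widehat{K}_p$ is compact'' is groping toward --- is that the compact set $f(D^n)$ is contained in the image of $\coprod_{p\leq N}C_p\times\Delta^p$ for some $N$ and some compact $C_p\subset\widehat{K}_p$. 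This is true for the spaces at hand, but it is a nontrivial point about realizations of semi-simplicial spaces and needs to be stated and justified rather than asserted via CW language. Once the $C_p$ are in hand, your argument closes correctly: openness of $\widehat{K}'_p\subset\widehat{K}_p$ together with the semigroup law $S_a\circ S_b=S_{a+b}$ gives, for each $x\in C_p$, a threshold $R_x$ and a neighborhood $V_x$ with $S_r(V_x)\subset\widehat{K}'_p$ for all $r\geq R_x$, and compactness of the $C_p$ yields a single $R$ valid for all $p\leq N$. The paper's levelwise route replaces this bookkeeping with a cleaner appeal to a standard realization lemma and is the tidier of the two.
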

\begin{proof}
For $p \geq 0$, let 
\begin{equation} \label{equation: relative homotopy element}
x \mapsto ((\bar{\varphi}^{x}_{0}, \Psi^{x}_{0}, s^{x}_{0}), \cdots, (\bar{\varphi}^{x}_{p}, \Psi^{x}_{p}, s^{x}_{p})) \quad \text{for $x \in D^{j}$}
\end{equation}
represent an element of the relative homotopy group 
\begin{equation} \label{equation: relative homotopy group}
\pi_{j}\bigg(\widehat{K}_{p}(M, a)_{k}, \; \widehat{K}'_{p}(M, a)_{k}\bigg) = 0.
\end{equation}
For each $x$, 
$Y_{k}(\bar{\varphi}^{x}_{i})\bigcap Y_{k}(\bar{\varphi}^{x}_{j}) = \emptyset \quad \text{whenever $i \neq j$.}$
Using condition (c) in Definition \ref{defn: thickening complex}, since $D^{j}$ is compact we may choose a real number $s \geq \max\{s^{x}_{i} \; | \; i = 0, \dots, p, \; \text{and} \; x \in D^{j}\}$, such that for any $x \in D^{j}$,
$$\Psi^{x}_{i}(\bar{W}_{k}, \;  t)\cap \Psi^{x}_{j}(\bar{W}_{k}, \;  t) = \emptyset \quad \text{whenever $t \geq s$ and $i \neq j$.}$$ 
For each $x \in D^{j}$, $t \in [0,1]$, and $i = 0, \dots, p$, let $s^{x}_{i}(t)$ denote the real number given by the sum 
$$(1-t)\cdot s^{x}_{i} + t\cdot s$$ 
and let 
$\Psi^{x}_{i}(t)$ denote the restriction of $\Psi^{x}_{i}$ to 
$\bar{W}_{k}\times[s^{x}_{i}(t), \; \infty).$
The formula,
$$(x, t) \; \mapsto \; ((\bar{\varphi}^{x}_{0}, \; \Psi^{x}_{0}(t),\; s^{x}_{0}(t)), \;  \cdots, \; (\bar{\varphi}^{x}_{p}, \; \Psi^{x}_{p}(t), \; s^{x}_{p}(t))) \quad \quad \text{for $t \in [0, 1]$}$$
yields a homotopy from the map defined in (\ref{equation: relative homotopy element}) to a map which represents the trivial element in the relative homotopy group (\ref{equation: relative homotopy group}).
This implies that for all $p, j \geq 0$, the relative homotopy group (\ref{equation: relative homotopy group}) is trivial and thus the inclusion $\widehat{K}'_{p}(M, a)_{k} \longrightarrow \widehat{K}_{p}(M, a)_{k}$ is a weak homotopy equivalence for all $p$.
It follows that the induced map $|\widehat{K}'_{\bullet}(M, a)_{k}| \longrightarrow |\widehat{K}_{\bullet}(M, a)_{k}|$ is a weak homotopy equivalence. 
\end{proof}

Finally, we consider the map 
\begin{equation} \label{equation: compare to X}
\widehat{K}'_{\bullet}(M, a)_{k} \longrightarrow X_{\bullet}(M, a)_{k}, \quad (\bar{\varphi}, \Psi, s) \mapsto \Psi_{s} = \Psi|_{\bar{W}_{k}\times\{s\}}.
\end{equation}
The following proposition implies Theorem \ref{theorem: high connectivity of X}.
\begin{proposition} \label{proposition: compare to X}
Let $n \geq 2$ and suppose that $k > 2$ is an odd integer. 
Then for any $(4n+1)$-dimensional manifold $M$ with non-empty boundary, the degree of connectivity of $|X_{\bullet}(M, a)_{k}|$ is bounded below by the degree of connectivity of $|\widehat{K}'_{\bullet}(M, a)_{k}|$. 
\end{proposition}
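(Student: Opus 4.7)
The plan is to prove the stronger statement that the map in (\ref{equation: compare to X}) is a weak homotopy equivalence on geometric realizations, which immediately implies the claimed connectivity bound. Both $\widehat{K}'_\bullet(M,a)_k$ and $X_\bullet(M,a)_k$ are topological flag complexes, and the forgetful map is levelwise smooth, so I intend to follow the template of \cite[Propositions 2.6 and 2.7]{GRW 14} (the same tools employed in the proof of Theorem \ref{theorem: discretization}): it will suffice to check, for each $p\geq 0$, that the levelwise map $\widehat{K}'_p(M,a)_k \to X_p(M,a)_k$ is a Serre microfibration with weakly contractible fibers.

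For the fiber analysis, I would fix a $p$-simplex $\xi=((\phi_0,t_0),\ldots,(\phi_p,t_p))\in X_p(M,a)_k$ and describe the fiber as the space of tuples $((\bar{\varphi}_i,\Psi_i,s_i))_{i=0}^p$ with $\Psi_i(-,s_i)=\phi_i$ and with the associated spine $Y_k(\bar{\varphi}_i)$ sitting inside $\phi_i(\bar{W}_k)$. The hypothesis $\xi\in X_p$ forces the images $\phi_i(\bar{W}_k)$ to be pairwise disjoint, so the fiber factors as a product over $i$ and the resulting tuple automatically lands in $\widehat{K}'_p$ rather than merely $\widehat{K}_p$. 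For a single index, the fiber parameterizes successive choices of: a pair of $\langle k\rangle$-embeddings $(\varphi_i^0,\varphi_i^1)$ inside $\phi_i(\bar{W}_k)$ realizing the standard linking form $\mb{W}_k$; a disk system $G_i$ killing $\pi_1(Y_k(\varphi_i^0,\varphi_i^1))$ as in Construction \ref{construction: killing pi-1}; a path $\gamma_i$ from $Y_k^{G_i}$ to the boundary of the prescribed form; and a compression isotopy $\Psi_i\colon\bar{W}_k\times[s_i,\infty)\to\phi_i(\bar{W}_k)$ converging to $Y_k(\bar{\varphi}_i)$. My plan is to argue that each factor is a weakly contractible space: a standard spine is unique up to ambient isotopy via Lemma \ref{lemma: thickening} together with uniqueness of smooth regular neighborhoods; disk systems and paths of the required form are contractible choices by parameterized transversality in dimension $4n+1\geq 9$; and compression isotopies converging to a fixed subpolyhedron form a convex set.

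The Serre microfibration property should then follow in the standard manner: all of the above data depend smoothly on $\xi$, and the transversality and disjointness conditions are open, so any sufficiently short homotopy of $\xi$ in $X_p$ can be covered by a short isotopy of the spine-disk-path-compression data by parameterized tubular-neighborhood and isotopy-extension arguments. Once these two properties are in hand, \cite[Proposition 2.6]{GRW 14} makes the levelwise map a weak equivalence, and \cite[Proposition 2.7]{GRW 14} promotes this to a weak equivalence on geometric realizations.

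The hard step I foresee is contractibility of the space of admissible spine pairs $(\varphi^0,\varphi^1)$ inside a fixed embedded $\bar{W}_k$ whose pullback is diffeomorphic to $A_k$. This should reduce, using Proposition \ref{proposition: embedding of Y-k} together with uniqueness of regular neighborhoods and the classification of $(2n-1)$-connected, $(4n+1)$-dimensional manifolds recalled in Section \ref{section: Highly Connected Manifolds of Odd Dimension}, to the statement that up to ambient isotopy supported inside $\bar{W}_k$ there is an essentially unique spine of this type.
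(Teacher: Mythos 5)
Your plan takes a substantially harder route than the paper and, as you yourself anticipate, it has a genuine gap at the crucial step. The paper's proof is much shorter: it constructs a \emph{section} of the map (\ref{equation: compare to X}), i.e.\ a semi-simplicial map $X_\bullet(M,a)_k \to \widehat{K}'_\bullet(M,a)_k$ splitting the forgetful map. Concretely, one fixes once and for all a vertex $(\bar{\varphi},\rho,0)\in\widehat{K}'_0(\bar{W}_k,a)_k$ in the model manifold $\bar{W}_k$ --- using Corollaries \ref{lemma: represent by $k$-embedding} and \ref{corollary: intersection at A-k} to produce the spine pair $(\varphi^0,\varphi^1)$ with $\varphi^0\pitchfork\varphi^1\cong A_k$, Construction \ref{construction: killing pi-1} to kill $\pi_1$, and a compression isotopy of $\bar{W}_k$ onto the spine --- and then defines the section by $\Psi\mapsto(\Psi\circ\bar{\varphi},\;\Psi\circ\rho,\;0)$. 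A section makes the map on $\pi_j(\underline{\hspace{.3cm}})$ surjective, which is exactly the connectivity comparison needed; no microfibration or fiber analysis is required.

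The gap in your version is the fiber contractibility claim. You reduce (correctly) to showing that, inside a fixed embedded $\bar{W}_k$, the space of admissible $\langle k\rangle$-embedding pairs $(\varphi^0,\varphi^1)$ with pullback $A_k$, together with the disk/path/compression data, is weakly contractible. But Lemma \ref{lemma: thickening} and the classification in Section \ref{section: Highly Connected Manifolds of Odd Dimension} only give statements at the level of diffeomorphism types of thickenings, which amounts at best to path-connectedness (a single orbit under ambient isotopy); they say nothing about the higher homotopy groups of this embedding space. Establishing weak contractibility of the space of spine pairs would be a nontrivial parametrized disjunction/uniqueness theorem for $\langle k\rangle$-embeddings, and nothing in the paper proves it --- indeed the paper is carefully written so as never to need it. If you only need the connectivity lower bound (which is what the proposition asserts), you should instead construct the section as above; that sidesteps the issue entirely.
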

\begin{proof}
To prove the proposition it will suffice to construct a section of the map (\ref{equation: compare to X}). 
The existence of such a section implies that the map on homotopy groups induced by (\ref{equation: compare to X}) is a surjection. 
The result then follows. 
Let $x, y \in \pi_{2n}^{\tau}(\bar{W}_{k})$ be two generators such that $b(x, y) = \frac{1}{k} \mod 1$. 
By combining Corollary \ref{lemma: represent by $k$-embedding} and Corollary \ref{corollary: intersection at A-k}, we may choose $\langle k \rangle$-embeddings $\varphi^{0}, \varphi^{1}: V^{2n+1}_{k} \longrightarrow \bar{W}_{k}$ such that 
$$[\varphi^{0}_{\beta}] = x, \quad  [\varphi^{1}_{\beta}] = y, \quad \text{and} \quad  \varphi^{0}\pitchfork\varphi^{1} \cong A_{k}.$$
We then may apply Construction \ref{construction: killing pi-1} to obtain a vertex $\bar{\varphi} = (\varphi, G, \gamma, t) \in \bar{K}(\bar{W}_{k}, a)_{k}$.
Now, the whole manifold $\bar{W}_{k}$ is a regular neighborhood for $Y_{k}(\bar{\varphi})$. 
We may choose a compression isotopy 
$\rho: \bar{W}_{k}\times[0, \infty) \longrightarrow \bar{W}_{k}$ 
of $\bar{W}_{k}$ to $Y_{k}(\bar{\varphi})$
as in (\ref{equation: compression isotopy}) and which satisfies the same conditions associated to the isotopy (\ref{equation: compression isotopy}). 
It follows that $(\bar{\varphi}, \rho, 0)$ is an element of $\widehat{K}'_{0}(\bar{W}_{k}, a)_{k}$. 
Using $\bar{\varphi}$ and the compression isotopy $\rho$, we then define a semi-simplicial map 
\begin{equation}
X_{\bullet}(M, a)_{k} \longrightarrow \widehat{K}'_{\bullet}(M, a)_{k}, \quad \Psi \mapsto (\Psi\circ \bar{\varphi}, \; \Psi\circ \rho, \; 0),
\end{equation}
where $\Psi\circ \bar{\varphi}$ is the vertex in $\bar{K}(M, a)_{k}$ given by the $4$-tuple, 
$((\Psi\circ\varphi^{0}, \; \Psi\circ\varphi^{1}), \; \; \Psi\circ G, \; \; \Psi\circ\gamma, \; t).$
It follows that this map is a section of (\ref{equation: compare to X}).
\end{proof}


\section{Homological Stability} \label{Homological Stability}
\label{A model for the for the classifying spaces.}
With our main technical result Theorem \ref{theorem: high connectivity of X} established, in this section we show how Theorem \ref{theorem: high connectivity of X} implies the main result of the paper which is Theorem \ref{theorem: Main theorem}.

\subsection{A Model for $\BDiff^{\partial}(M)$.} \label{subsection: semi-simplicial resolution}
Let $M$ be a compact manifold of dimension $m$ with non-empty boundary.  
We now construct a concrete model for $\BDiff^{\partial}(M)$. 
Fix a collar embedding,
 $$h: [0,\infty)\times\partial M \longrightarrow M$$
 with $h^{-1}(\partial M) = \{0\}\times\partial M$.
 Fix once and for all an embedding, 
 $\theta: \partial M \longrightarrow \R^{\infty}$
 and let $S$ denote the submanifold $\theta(\partial M) \subset \R^{\infty}$.
\begin{defn} \label{defn: moduli M} 
 We define
$\mathcal{M}(M)$ to be the set of compact $m$-dimensional submanifolds 
$M' \subset [0,\infty)\times\R^{\infty}$
that satisfy:
\begin{enumerate}
\item[i.] $M'\cap(\{0\}\times\R^{\infty}) = S$ and $M'$ contains $[0, \epsilon)\times S$ for some $\epsilon > 0$.
\item[ii.] The boundary of $M'$ is precisely $\{0\}\times S$.
\item[iii.] $M'$ is diffeomorphic to $M$ relative to $S$. 
\end{enumerate}
Denote by $\mathcal{E}(M)$ the space of embeddings $\psi: M \rightarrow [0,\infty)\times\R^{\infty}$
for which there exists $\epsilon > 0$ such that $\psi\circ h(t, x) = (t, \theta(x))$ for all $(t, x) \in [0,\epsilon)\times\partial M$. 
The space $\mathcal{M}(M)$ is topologized as a quotient of the space $\mathcal{E}(M)$ where two embeddings are identified if they have the same image. 
\end{defn}

It follows from Definition \ref{defn: moduli M} that $\mathcal{M}(M)$ is equal to the orbit space, $\mathcal{E}(M)/\Diff^{\partial}(M)$.
By the main result of \cite{BF 81},  the quotient map, $\mathcal{E}(M) \longrightarrow \mathcal{E}(M)/\Diff^{\partial}(M) = \mathcal{M}(M)$
is a locally trivial fibre-bundle. This together with the fact that $\mathcal{E}(M)$ is weakly contractible implies that there is a weak-homotopy equivalence,
$
\mathcal{M}(M) \simeq \BDiff^{\partial}(M).
$

Now suppose that $m = 4n+1$ with $n \geq 2$. 
Let $k \geq 2$ be an integer. 
Recall from Section \ref{section: introduction} the manifold $\widetilde{W}_{k}$, given by forming the connected sum of $[0,1]\times\partial M$ with $W_{k}$. 
Choose a collared embedding 
$\alpha: \widetilde{W}_{k} \longrightarrow [0,1]\times\R^{\infty}$
such that
for $(i, x) \in \{0,1\}\times\partial M \subset \widetilde{W}_{k}$, the equation $\alpha(i, x) = (i, \theta(x))$ is satisfied.
For any submanifold $M' \subset [0,\infty)\times\R^{\infty}$, denote by 
$M' + e_{1} \subset [1,\infty)\times\R^{\infty}$
the submanifold obtained by linearly translating $M'$ over $1$-unit in the first coordinate. Then for $M' \in \mathcal{M}(M)$, the submanifold $\alpha(\widetilde{W}_{k}) \cup (M'\cup e_{1}) \subset [0,\infty)\times\R^{\infty}$ is an element of $\mathcal{M}(M\cup_{\partial M}\widetilde{W}_{k})$. Thus, we have a continuous map,
\begin{equation} \label{ref: k-stabilization map}
s_{k}: \mathcal{M}(M) \longrightarrow \mathcal{M}(M\cup_{\partial M}\widetilde{W}_{k}); \quad V \mapsto \alpha(\widetilde{W}_{k})\cup(V + e_{1}).
\end{equation}
We will refer to this map as the \textit{$k$th-stabilization map.} 
\begin{remark} \label{remark: stabilization map}
The construction of the stabilization map $s_{k}$ depends on the choice of embedding $\alpha: \tilde{W}_{k} \rightarrow [0,1]\times\R^{\infty}$. 
However, any two such embeddings are isotopic (the space of all such embeddings is weakly contractible). 
It follows that the homotopy class of $s_{k}$ does not depend on any of the choices made. 
In this way, the manifold $\tilde{W}_{k}$ determines a unique homotopy class of maps $\BDiff^{\partial}(M) \longrightarrow \BDiff^{\partial}(M\cup_{\partial M}\tilde{W}_{k})$ which is in the same homotopy class as the map (\ref{eq: k-stabilization map}) used in the statement of Theorem \ref{theorem: Main theorem}.  
\end{remark}

\subsection{A Semi-Simplicial Resolution}
Let $M$ be as in Section \ref{subsection: semi-simplicial resolution}. 
For each positive integer $K$, we construct a semi-simplicial space $Z_{\bullet}(M)_{k}$, equipped with an augmentation 
$\epsilon_{k}: Z_{\bullet}(M)_{k} \longrightarrow \mathcal{M}(M)$
such that the induced map $|Z_{\bullet}(M)_{k}| \longrightarrow \mathcal{M}(M)$ is highly connected. 
Such an augmented semi-simplicial space is called a \textit{semi-simplicial resolution}. 

Let $\theta: \partial M \hookrightarrow \R^{\infty}$ be the embedding used in the construction of $\mathcal{M}(M)$.
Pick once and for all a coordinate patch $c_{0}: \R^{m-1} \longrightarrow S = \theta(\partial M)$. This choice of coordinate patch induces for any $M' \in \mathcal{M}(M)$, a germ of an embedding
$
[0,1)\times\R^{m-1} \longrightarrow M'
$
as used in the construction of  the semi-simplicial space $\bar{K}_{\bullet}(M')_{k}$ from Definition \ref{defn: the embedding complex 1}. 
\begin{defn} \label{defn: resolution of moduli}
For each non-negative integer $l$, let $Z_{l}(M)_{k}$ be the set of pairs $(M', \bar{\phi})$ where $M' \in \mathcal{M}(M)$ and $\bar{\phi} \in X_{l}(M')_{k}$, where $X_{l}(M')_{k}$ is defined using the embedding germ 
$$[0,1)\times\R^{m-1} \longrightarrow M'$$ 
induced by the chosen coordinate patch $c_{0}: \R^{m-1} \longrightarrow S$. The space $Z_{l}(M)_{k}$ is topologized as the quotient, $Z_{l}(M)_{k} = (\mathcal{E}(M)\times X_{l}(M)_{k})/\Diff^{\partial}(M)$.
The assignments $[l] \mapsto Z_{l}(M)_{k}$ make $Z_{\bullet}(M)_{k}$ into a semi-simplicial space where the face maps are induced by the face maps in $X_{\bullet}(M)_{k}$.

The projection maps
 $Z_{l}(M)_{k}  \longrightarrow \mathcal{M}(M)$ given by  $(V, \bar{\phi}) \mapsto V$
yield an augmentation map 
$\epsilon_{k}: Z_{l}(M)_{k} \longrightarrow \mathcal{M}(M).$
We denote by $Z_{-1}(M)_{k}$ the space $\mathcal{M}(M).$ 
\end{defn}
By construction, the projection maps $Z_{l}(M)_{k}  \rightarrow \mathcal{M}(M)$ are locally trivial fibre-bundles with standard fibre given by $X_{l}(M)_{k}$. 
From this we have:
\begin{corollary} \label{prop: highly connected resolution}
The map $|\epsilon_{k}|: |Z_{l}(M)_{k}| \longrightarrow \mathcal{M}(M)$ induced by the augmentation is $\frac{1}{2}(r_{k}(M) - 2)$-connected. 
\end{corollary}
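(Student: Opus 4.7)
The plan is to realize $|\epsilon_k|$ as a locally trivial fibre bundle with fibre $|X_\bullet(M')_k|$ for $M' \in \mathcal{M}(M)$, and then to apply Theorem \ref{theorem: high connectivity of X} together with the long exact sequence of a fibration. Throughout we take as standing hypotheses (inherited from the context of the section) that $M$ is $2$-connected and that $k > 2$ is odd.

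First I would use the identification $Z_l(M)_k \cong (\mathcal{E}(M) \times X_l(M)_k)/\Diff^\partial(M)$ from Definition \ref{defn: resolution of moduli}, together with the fact, due to \cite{BF 81}, that the quotient map $\mathcal{E}(M) \to \mathcal{M}(M) = \mathcal{E}(M)/\Diff^\partial(M)$ is a principal $\Diff^\partial(M)$-bundle. Because the face maps of $Z_\bullet(M)_k$ are induced level-wise from the $\Diff^\partial(M)$-equivariant face maps of $X_\bullet(M)_k$, geometric realization commutes with the free $\Diff^\partial(M)$-action and yields a natural homeomorphism
$$|Z_\bullet(M)_k| \;\cong\; \mathcal{E}(M) \times_{\Diff^\partial(M)} |X_\bullet(M)_k|,$$
under which $|\epsilon_k|$ corresponds to the fibre bundle projection associated to the principal bundle $\mathcal{E}(M) \to \mathcal{M}(M)$ and the $\Diff^\partial(M)$-space $|X_\bullet(M)_k|$. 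In particular, $|\epsilon_k|$ is a locally trivial fibre bundle whose fibre over a point $M' \in \mathcal{M}(M)$ is canonically $|X_\bullet(M')_k|$.

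By condition (iii) of Definition \ref{defn: moduli M}, every $M' \in \mathcal{M}(M)$ is diffeomorphic to $M$ rel $S$, so in particular $r_k(M') = r_k(M)$. Theorem \ref{theorem: high connectivity of X} applied to $M'$ then shows that each fibre $|X_\bullet(M')_k|$ is $\tfrac{1}{2}(r_k(M) - 4)$-connected. The long exact sequence of homotopy groups of the fibration $|\epsilon_k|$ then forces the projection $|\epsilon_k|$ to be $\tfrac{1}{2}(r_k(M) - 4) + 1 = \tfrac{1}{2}(r_k(M) - 2)$-connected, which is the asserted bound.

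The only step with any real content is the identification of $|Z_\bullet(M)_k|$ with the Borel construction on $|X_\bullet(M)_k|$; once this and the principal bundle structure of \cite{BF 81} are in hand, the rest is formal. The potential subtlety is checking that the embedding germ $[0,1)\times\R^{m-1} \to M'$ used to define $X_\bullet(M')_k$ varies continuously with $M'$ and is compatible with the $\Diff^\partial(M)$-action on $\mathcal{E}(M)$; this is arranged by the use of the single fixed coordinate patch $c_0: \R^{m-1} \to S$ in Definition \ref{defn: resolution of moduli}, which produces the germ uniformly from the collared embedding data of any $M' \in \mathcal{M}(M)$.
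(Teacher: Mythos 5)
Your proof is correct and takes essentially the same approach as the paper: both realize the augmentation $|\epsilon_k|$ as a (homotopy) fibration over $\mathcal{M}(M)$ with fibre $|X_\bullet(M')_k|$ and then combine Theorem \ref{theorem: high connectivity of X} with the long exact sequence of homotopy groups. The only difference is one of packaging: you produce the locally trivial fibre bundle structure directly via the Borel construction $\mathcal{E}(M) \times_{\Diff^\partial(M)} |X_\bullet(M)_k|$, whereas the paper simply cites \cite[Lemma 2.1]{RW 10} to obtain the homotopy-fibre sequence $|X_\bullet(M)_k| \to |Z_\bullet(M)_k| \to \mathcal{M}(M)$.
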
 
\begin{proof}
It follows from \cite[Lemma 2.1]{RW 10} that there is a homotopy-fibre sequence
$|X_{l}(M)_{k}| \rightarrow |Z_{l}(M)_{k}| \rightarrow \mathcal{M}(M).$
The result follows from the long-exact sequence on homotopy groups. 
\end{proof}

\subsection{Proof of theorem \ref{theorem: Main theorem}} \label{section: homological stability}
We show how to use the \textit{semi-simplicial resolution} $\epsilon_{k}: Z_{\bullet}(M)_{k} \rightarrow \mathcal{M}(M)$ to complete the proof of Theorem \ref{theorem: Main theorem}.
First, we fix some new notation which will make the steps of the proof easier to state.
For what follows, let $M$ be a compact $(4n+1)$-dimensional manifold with non-empty boundary. 
Let $k > 2$ be an odd integer. 
For each $g \in \N$ we denote by $M_{g, k}$ the manifold obtained by forming the connected-sum of $M$ with $W^{\# g}_{k}$.
Notice that $\partial M = \partial M_{g, k}$ for all $g \in \N$. 
We consider the spaces $\mathcal{M}(M_{g, k})$.
For each $g \in \N$, the stabilization map from (\ref{ref: k-stabilization map}) yields a map,
$$s_{k}: \mathcal{M}(M_{g, k}) \longrightarrow \mathcal{M}(M_{g+1, k}), \quad M' \mapsto \widetilde{W}_{k}\cup(M' + e_{1}).$$
Using the weak equivalence $\mathcal{M}(M_{g, k}) \simeq \BDiff^{\partial}(M_{g, k})$, Theorem \ref{theorem: Main theorem} translates to the following:
\begin{theorem} \label{thm: main theorem neq notation}
The induced map
$(s_{k})_{*}: H_{l}(\mathcal{M}(M_{g, k})) \longrightarrow H_{l}(\mathcal{M}(M_{g+1, k}))$
is an isomorphism when $l \leq \frac{1}{2}(g - 3)$ and is an epimorphism when $l \leq \frac{1}{2}(g - 1)$. 
\end{theorem}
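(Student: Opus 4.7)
The plan is to run the standard homological stability spectral sequence argument based on the augmented semi-simplicial resolution $\epsilon_{k}: Z_{\bullet}(M_{g,k})_{k} \longrightarrow \mathcal{M}(M_{g,k})$. Since $r_{k}(M_{g,k}) \geq g$, Corollary \ref{prop: highly connected resolution} tells us that $|\epsilon_{k}|$ is $\tfrac{1}{2}(g-2)$-connected, so the skeletal filtration of the augmented semi-simplicial object produces a spectral sequence with
\[
E^{1}_{p,q} = H_{q}(Z_{p}(M_{g,k})_{k};\Z) \text{ for } p \geq 0, \qquad E^{1}_{-1,q} = H_{q}(\mathcal{M}(M_{g,k});\Z),
\]
$d^{1} = \sum_{i=0}^{p}(-1)^{i}(d_{i})_{*}$, converging to zero in total degree $p+q \leq \tfrac{1}{2}(g-2)$.

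The first geometric step is to identify the $E^{1}$-page. Theorem \ref{theorem: high connectivity of X} implies that $|X_{\bullet}(M_{g,k},a)_{k}|$ is highly connected, and the analogous statement holds for the link complexes appearing at higher simplex levels. Combining this with the transitivity Proposition \ref{proposition: transitivity} and the cancellation Proposition \ref{corollary: cancelation} one obtains weak equivalences
\[
Z_{p}(M_{g,k})_{k} \; \simeq \; \mathcal{M}(M_{g-p-1,k})
\]
for all $p$ with $g-p-1$ in the stable range. Geometrically, a $p$-simplex of $X_{p}$ is an ordered list of $p{+}1$ disjoint embedded copies of $\bar{W}_{k}$ in $M_{g,k}$; removing them (together with their attaching arcs to the boundary) yields a manifold diffeomorphic to $M_{g-p-1,k}$ rel boundary, with Remark \ref{remark: boundary sphere classification} ensuring no exotic-sphere ambiguity arises. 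Under this identification each face map $d_{i}$ becomes the $k$-th stabilization $s_{k}: \mathcal{M}(M_{g-p-1,k}) \to \mathcal{M}(M_{g-p,k})$, and permuting the labels of the disjoint embeddings shows $d_{0} \simeq d_{1} \simeq \cdots \simeq d_{p}$, so
\[
d^{1}_{p,q} = \begin{cases} (s_{k})_{*}, & p \text{ even,} \\ 0, & p \text{ odd.} \end{cases}
\]

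The theorem then follows by induction on $\ell$, the base case $\ell = 0$ being path-connectedness of $\mathcal{M}(M_{g,k})$. Assuming the claim for all homological degrees strictly less than $\ell$, the inductive hypothesis together with the formula above identifies the $d^{1}$-complex in each row $q < \ell$ with a telescope of stabilization maps which, by induction, are isomorphisms or epimorphisms in the relevant ranges. A standard chase of the surviving higher differentials $d^{r}$ into and out of $E^{r}_{-1,\ell}$, performed exactly as in the proof of \cite[Theorem 1.2]{GRW 12}, then delivers isomorphism in the range $\ell \leq \tfrac{1}{2}(g-3)$ and epimorphism in the range $\ell \leq \tfrac{1}{2}(g-1)$.

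The main obstacle is Step 1, namely the identification $Z_{p}(M_{g,k})_{k} \simeq \mathcal{M}(M_{g-p-1,k})$ together with the matching of face maps with $s_{k}$. This rests crucially on the high connectivity of $|X_{\bullet}(M_{g,k},a)_{k}|$ from Theorem \ref{theorem: high connectivity of X}, which in turn depends on the $\Z/k$-manifold disjunction machinery developed in Sections \ref{section: k-l manifolds}, \ref{section: intersections} and in the appendix, and on the diffeomorphism rigidity of $W'_{k}$ (Remark \ref{remark: boundary sphere classification}).
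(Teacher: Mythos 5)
Your proposal is correct and follows essentially the same route as the paper: the $\tfrac{1}{2}(g-2)$-connected augmented resolution $Z_{\bullet}(M_{g,k})_{k} \to \mathcal{M}(M_{g,k})$, the identification $Z_{p}(M_{g,k})_{k} \simeq \mathcal{M}(M_{g-p-1,k})$ via transitivity and cancellation (the paper packages this, together with the matching of face maps with $s_{k}$ and the homotopy between the face maps, as Proposition \ref{prop: homotopy commutativity}, proved as in \cite{GRW 12}), and the resulting $E^{1}$-identification with $d^{1}=(s_{k})_{*}$ in even degrees and $0$ in odd degrees, followed by the inductive spectral-sequence chase of \cite[Section 5.2]{GRW 12}. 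The only cosmetic difference is that you describe the level-wise equivalence by cutting out the embedded copies of $\bar{W}_{k}$, whereas the paper uses the map $F_{k}$ going the other way, which is the same identification.
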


\noindent
Since $r(M_{g, k}) \geq g$ for $g \in \N$, it follows from Corollary \ref{prop: highly connected resolution} that the map 
$$|\epsilon_{k}|: |Z_{\bullet}(M_{g, k})_{k}| \longrightarrow Z_{-1}(M_{g, k})_{k} := \mathcal{M}(M_{g, k}).$$
 is $\frac{1}{2}(g - 2)$-connected. With this established, the proof of Theorem \ref{thm: main theorem neq notation} proceeds in exactly the same way as in \cite[Section 5]{GRW 12}. We provide an outline for how to complete the proof and refer the reader to \cite[Section 5]{GRW 12} for details.

For what follows we fix $g \in \N$. 
For each non-negative integer $l \leq g$ there is a map
\begin{equation} \label{eq: resolution level map}
F_{k}: \mathcal{M}(M_{g-l-1, k}) \longrightarrow Z_{l}(M_{g, k})_{k}
\end{equation}
which is defined in exactly the same way as the map from \cite[Proposition 5.3]{GRW 12}.
From \cite[Proposition 5.3, 5.4 and 5.5]{GRW 12} we have the following.
\begin{proposition} \label{prop: homotopy commutativity} Let $g \geq 4$.
\begin{enumerate} \itemsep2pt
\item[i.] The map $F_{k}: \mathcal{M}(M_{g-l-1, k}) \longrightarrow Z_{k}(M_{g, k})_{k}$ is a weak homotopy equivalence. 
\item[ii.] The following diagram is commutative, 
$$\xymatrix{
\mathcal{M}(M_{g-l-1, k}) \ar[d]^{F_{k}} \ar[rr]^{s_{k}} && \mathcal{M}(M_{g-l, k}) \ar[d]^{F_{k}} \\
Z_{l}(M_{g, k})_{k} \ar[rr]^{d_{k}} && Z_{l-1}(M_{g, k})_{k}.
}$$
\item[iii.] The face maps $d_{i}: Z_{l}(M_{g, k})_{k} \longrightarrow Z_{l-1}(M_{g, k})_{k}$ are weakly homotopic. 
\end{enumerate}
\end{proposition}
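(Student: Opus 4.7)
The plan is to follow the strategy of \cite[Propositions 5.3, 5.4, 5.5]{GRW 12}, exploiting the fact that both the simplicial data of $Z_{\bullet}(M_{g,k})_k$ and the stabilization map $s_k$ are engineered so as to be essentially ``disjoint-union operations'' on manifolds in a half-space. The construction of $F_k$ should be the obvious geometric one: given $M' \in \mathcal{M}(M_{g-l-1,k})$, translate $M'$ in the $e_1$-direction by $l+1$ units and attach $l+1$ copies of $\widetilde{W}_k$, glued in at heights $0, 1, \dots, l$ using the fixed embedding $\alpha$. The simplex $\bar{\phi} = ((\phi_0,0),\dots,(\phi_l,l))$ is then the tautological tuple of embeddings of $\bar{W}_k$ inside these attached pieces.

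For (i), I would show that $F_k$ is a weak equivalence by producing a homotopy inverse. Given a point $(M'', \bar{\phi}) \in Z_l(M_{g,k})_k$, one cuts $M''$ along the embeddings $\phi_0, \dots, \phi_l$, producing a manifold diffeomorphic to $M_{g-l-1,k}$ in a canonical way (up to contractible choice). The space of $l+1$ pairwise disjoint embeddings of $\bar{W}_k$ with the prescribed boundary behaviour sits over $\mathcal{M}(M_{g-l-1,k})$ as a bundle whose fibres are orbits of the parametrized diffeomorphism group acting on the space of such tuples; this space of tuples is weakly contractible because the embeddings $\bar{W}_k \hookrightarrow M''$ with standard collar data form a contractible space once the complement is fixed. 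Hence ``cutting'' and ``attaching'' are mutually inverse up to weak homotopy, exactly as in \cite[Proposition 5.3]{GRW 12}.

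For (ii), the commutativity of the square should reduce to a direct inspection of the two compositions on the level of embedded submanifolds of $[0,\infty)\times \R^\infty$. Both $F_k \circ s_k$ and $d_k \circ F_k$ produce a manifold obtained from $M'$ by attaching $l+1$ copies of $\widetilde{W}_k$ at integer heights, the only distinction being which of the $l+1$ copies is recorded in the simplex versus absorbed into the ``base''; after unwinding definitions these outputs agree, so the square commutes strictly (or at worst up to a canonical isotopy of collars).

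Part (iii) is the standard fact that for a semi-simplicial space built from ordered tuples indexed by the real parameter $t$, any two face maps are weakly homotopic: for $i < j$ one interpolates the height parameter $t_i$ past $t_{i+1}, \dots, t_j$ by a straight-line isotopy in $\R$, producing a homotopy from $d_i$ to $d_j$ through maps $Z_l(M_{g,k})_k \to Z_{l-1}(M_{g,k})_k$; this is exactly the argument of \cite[Proposition 5.5]{GRW 12}. The main obstacle in all three parts is bookkeeping rather than conceptual difficulty; the one point that requires genuine care is verifying in (i) that the space of compatible tuples of embeddings fibering over $\mathcal{M}(M_{g-l-1,k})$ really is weakly contractible, which uses the fact that $\widetilde{W}_k$ has a standard collar structure ensuring the embedding data is determined up to contractible choice by the image of the complement.
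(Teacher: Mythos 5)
Your overall framing—follow GRW~12 Propositions 5.3--5.5—is exactly what the paper does (indeed the paper gives no proof beyond a pointer to GRW~12 and a remark that the key ingredients are Propositions \ref{proposition: transitivity} and \ref{corollary: cancelation}). Part~(ii) as you describe it is fine. But your arguments for (i) and (iii) have real gaps, and notably neither invokes the two ingredients the paper flags as essential.

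For~(i), three problems. First, cutting $M''$ along the embedded copies of $\bar{W}_k$ produces a manifold with new boundary components, so it is not literally an element of $\mathcal{M}(M_{g-l-1,k})$; some capping or collaring must be done before you even have a candidate point of the target. Second, the claim that the complement is ``diffeomorphic to $M_{g-l-1,k}$ in a canonical way'' is precisely where cancellation is needed: Proposition~\ref{corollary: cancelation} is what lets you pass from ``$M' \# W_k \cong N \# W_k$'' to ``$M' \cong N$,'' and without it you do not know the diffeomorphism type of what you cut out. Third, the assertion that ``the space of tuples is weakly contractible once the complement is fixed'' is not justified and I do not believe it is true as stated: fixing the complement still leaves a highly non-contractible space of reparametrizations of the embeddings (it contains a product of diffeomorphism groups of $\bar{W}_k$). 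The actual mechanism in GRW~12 is different: use Proposition~\ref{proposition: transitivity} to show $\Diff^{\partial}(M_{g,k})$ acts transitively on $l$-simplices, so that $Z_l(M_{g,k})_k \simeq \mathcal{E}(M_{g,k})/\mathrm{Stab}(\bar{\phi}) \simeq B\,\mathrm{Stab}(\bar{\phi})$, and then use cancellation to identify the stabilizer with $\Diff^{\partial}(M_{g-l-1,k})$. No contractibility of a space of tuples is needed or claimed.

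For~(iii), the ``straight-line isotopy in $\R$ of the height parameter $t_i$'' does not work: the $t_i$ are tied to the actual positions of the embedded manifolds via the collar $a$, and to slide $t_i$ past $t_{i+1},\dots,t_j$ you would have to drag the entire image of $\phi_i$ through the images of $\phi_{i+1},\dots,\phi_j$, violating the disjointness required for a point of $X_{l}(M'')_k$; there is no continuous path of simplices realizing this move. The argument in GRW~12 for the analogous Proposition~5.5 again rests on the transitivity result: one shows that the two face maps differ by post-composition with a self-map of $Z_{l-1}$ induced by an ambient diffeomorphism that permutes the tuple, and such a diffeomorphism acts trivially up to homotopy because one is working in the quotient by $\Diff^{\partial}$. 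This is a genuinely different and subtler argument than interpolating the parameter, and should be spelled out (or at least correctly cited) rather than replaced by the height-sliding picture.
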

\begin{remark} 
The proof of Proposition \ref{prop: homotopy commutativity} proceeds in the same way as the proofs of \cite[Proposition 5.3, 5.4 and 5.5]{GRW 12}. The key ingredients of this proof are Propositions \ref{proposition: transitivity} and \ref{corollary: cancelation}. \end{remark}
\noindent
Consider the spectral sequence associated to the skeletal filtration of the augmented semi-simplicial space $Z_{\bullet}(M_{g, k})_{k} \rightarrow \mathcal{M}(M_{g, k})$,
with $E^{1}$-term given by 
$E^{1}_{j,l} = H_{j}(Z_{l}(M_{g, k})_{k})$ for $l \geq -1$ and $j \geq 0$.
The differential is given by $d^{1} = \sum(-1)^{i}(d_{i})_{*}$, where $(d_{i})_{*}$ is the map on homology induced by the $i$th face map in $Z_{\bullet}(M_{g, k})_{k}$. 
The group $E^{\infty}_{j, l}$ is a subquotient of the relative homology group $H_{j+l+1}(Z_{-1}(M_{g, k})_{k}, \; |Z_{\bullet}(M_{g, k})_{k}|)$. 
Proposition \ref{prop: homotopy commutativity} together with Corollary \ref{prop: highly connected resolution} imply the following:
\begin{enumerate} \itemsep2pt
\item[(a)] For $g \geq 4 + d$, there are isomorphisms $E^{1}_{j,l} \cong H_{l}(\mathcal{M}(M_{g-j-1, k})).$
\item[(b)] The differential
$d^{1}: H_{l}(\mathcal{M}(M_{g-j-1, k})) \cong E^{1}_{j,l} \longrightarrow E^{1}_{j-1,l} \cong H_{l}(\mathcal{M}(M_{g-j, k}))$ is equal to 
$(s_{k})_{*}$ when $j$ is even and is equal to zero when $j$ is odd.
\item[(c)] The term $E^{\infty}_{j,l}$ is equal to $0$ when $j +l \leq \frac{1}{2}(g-2)$. 
\end{enumerate}
To complete the proof one uses (c) to prove that the differential $d^{1}: E^{1}_{2j,l} \longrightarrow E^{1}_{2j-1,l}$ is an isomorphism 
when $0 < j \leq \frac{1}{2}(g - 3)$  and an epimorphism when $0 < j \leq \frac{1}{2}(g -1)$. 
This is done by carrying out the inductive argument given in \cite[Section 5.2: \textit{Proof of Theorem 1.2}]{GRW 12}. This establishes Theorem \ref{thm: main theorem neq notation} and the main result of this paper, Theorem \ref{theorem: Main theorem}.

\appendix
 
 \section{Disjunction} \label{section: modyifing higher dim intersections}
 \noindent
 
 We now develop a technique for modifying the intersections of embedded $\langle k\rangle$-manifolds that will allow us to prove Theorem \ref{thm: modifying intersections 1} stated in Section \ref{section: intersections}. 
 Recall from Section \ref{subsection: main result disjunction} the definition of \textit{diffeotopy.}
  \begin{defn} \label{defn: diffeotopy 2}
 Let $M$ be a manifold. 
 We will call a smooth, one parameter family of diffeomorphisms $\Psi_{t}: M \longrightarrow M$ with $t \in [0, 1]$ and $\Psi_{0} = \Id_{M}$ a \textit{diffeotopy}. 
 For a subspace $N \subset M$,
 we say that $\Psi_{t}$ is a \textit{diffeotopy relative $N$}, and we write $\Psi_{t}: M \longrightarrow M \; \text{rel} \; N$, if in addition, $\Psi_{t}|_{N} = \Id_{N}$ for all $t \in [0,1]$.
  \end{defn} 

 \subsection{A modulo-$k$ version of the Whitney trick} \label{subsection: mod k whitney trick}
We now discus a version of the Whitney trick for $\langle k \rangle$-manifolds. 
Let $M$ be an oriented, manifold of dimension $m$, let $X$ be an oriented, compact manifold of dimension $r$, and
let $P$ be an oriented, compact $\langle k \rangle$-manifold of dimension $p$. 
Suppose that $m \geq 6$, $p, r \geq 2$, and that $p + r = m$.
Suppose further that $M$ is simply connected and that $P$ and $X$ are path connected.  
Let
\begin{equation} \label{equation: k emb and smooth emb}
\varphi: (X, \partial X) \longrightarrow (M, \partial M) \quad \text{and} \quad f: (P, \partial_{0}P) \longrightarrow (M, \partial M)
\end{equation}
be a smooth embedding and a $\langle k \rangle$-embedding respectively such that 
$$\varphi(\partial X)\cap f(\partial_{0}P) = \emptyset.$$ 
We will need to consider the invariant $\Lambda^{0}_{k}(f, \varphi; M)$ defined in Section \ref{section: intersections}. 
Using the standard identification
$$\Omega^{SO}_{0}(\text{pt.})_{\langle k \rangle} = \Z/k,$$ 
the element $\Lambda^{0}_{k}(f, \varphi; M)$ is equal to the modulo $k$ reduction of the oriented, algebraic intersection number associated to the intersection of $f(\Int(P))$ and $\varphi(X)$.
The following theorem is a version of the classical \textit{Whitney trick} for $\langle k \rangle$-manifolds. 
\begin{theorem} \label{theorem: mod k whitney trick}
Let $f$ and $\varphi$ be exactly as in (\ref{equation: k emb and smooth emb}) above. 
Using the identification $\Omega^{SO}_{0}(\text{pt.})_{\langle k \rangle} = \Z/k$, suppose that
$\Lambda^{0}_{k}(f, \varphi; M) \; = \; j \; \mod k.$
Then there exists a diffeotopy 
$$\Psi_{t}: M \longrightarrow M \; \rel \partial M$$ 
such that, 
$$\Psi_{1}(\varphi(X))\cap f(\Int(P)) \cong +\langle j  \rangle.$$
\end{theorem}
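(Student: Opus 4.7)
The strategy combines the classical Whitney trick (which cancels pairs of opposite-sign intersection points) with a new \emph{mod-$k$ Whitney move} that uses the $\langle k \rangle$-structure of $P$ to cancel $k$ same-sign intersections simultaneously. Iterating these two moves realises any configuration whose signed intersection count is $\equiv j \pmod k$, in particular the target $+\langle j \rangle$.

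\textbf{Step 1 (classical Whitney).} Apply the classical Whitney trick to the smooth submanifold $\varphi(X)$ and the smooth part $f(\Int(P))$ of $f$. The hypotheses $m \geq 6$, $p,r \geq 2$, and $\pi_1(M) = 0$ supply Whitney $2$-discs; since the singular locus $f_\beta(\beta P)$ has codimension $r+1 \geq 3$ in $M$, any such disc can be perturbed off it. Iterating, an ambient diffeotopy rel $\partial M$ reduces to the case where all intersections of $\varphi(X) \cap f(\Int(P))$ share a common sign, with total count $N \equiv j \pmod k$ (up to sign).

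\textbf{Step 2 (mod-$k$ move).} Suppose $x_1, \dots, x_k$ are $k$ positive intersection points with $p_i := f^{-1}(x_i) \in \Int(P)$. Pick a regular value $b \in \Int(\beta P)$ and let $b_i \in \partial_1^i P$ be its preimage on the $i$-th component, so $f(b_i) = f_\beta(b) =: b'$ for every $i$. Because $P$ is path-connected and $\dim P \geq 2$, one can choose pairwise disjoint embedded arcs $\gamma_i \colon [0,1] \to P$ from $p_i$ to $b_i$ with $\gamma_i((0,1)) \subset \Int(P)$. Tubular neighborhoods of the arcs $f \circ \gamma_i$ together with the isotopy extension theorem (applied to one arc at a time) give a compactly supported ambient diffeotopy of $M$ dragging each $x_i$ along $f(\gamma_i)$ into an arbitrarily small neighborhood of $b'$. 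In a local chart around $b'$, the $\langle k \rangle$-embedding $f$ admits an \emph{open-book normal form}: spine $\R^{p-1} \times \{0\}$ and $k$ page half-spaces $\R^{p-1} \times \R_{\geq 0}\, v_i$ for unit vectors $v_1, \dots, v_k \in \R^{r+1}$. By tuning the arcs $\gamma_i$ we may arrange these $v_i$ to lie in a common open half-space, and pick $u_0 \in \R^{r+1}$ with $v_i \cdot u_0 > 0$ for every $i$. A compactly supported ambient diffeotopy translating $\varphi(X)$ in the direction $-u_0$ past $b'$ then eliminates all $k$ intersection points simultaneously: a direct quadratic computation shows that at the single moment when the translated $\varphi(X)$ grazes $b'$, all $k$ sheet intersections collide at the origin, and for larger translations no intersections with any page survive. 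The associated $1$-dimensional intersection bordism $f \pitchfork \Phi_\bullet$ is precisely the $\langle k \rangle$-null-bordism $[0,1] \times \langle k \rangle$ of $+\langle k \rangle$, whose single $\beta$-point corresponds to the transverse crossing of $\varphi_t(X)$ with $f_\beta(\beta P)$ at $b'$. A symmetric construction realises the inverse move, inserting $k$ positive intersections on demand. Combining Steps 1 and 2 (with its inverse), together with a final application of Step 1 to remove any opposite-sign pairs introduced along the way, brings the configuration to $\Psi_1(\varphi(X)) \cap f(\Int(P)) \cong +\langle j \rangle$.

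\textbf{Main obstacle.} The crux is Step 2: establishing the open-book normal form for the $\langle k \rangle$-embedding $f$ near $b'$ (a tubular neighborhood argument along $\partial_1 P$); arranging the $v_i$ in a common open half-space by adjusting the arcs $\gamma_i$; and verifying that the compactly supported translation diffeotopy eliminates exactly the $k$ target intersections without creating new intersections with $f(\Int(P))$ outside $b' \cup \bigcup_i f(\gamma_i)$. The dimension hypothesis $\dim P \geq 2$ gives the disjointness of the arcs, while $r \geq 2$ provides enough ambient room in the normal bundle to $f_\beta(\beta P)$ for the half-space arrangement and the choice of $u_0$.
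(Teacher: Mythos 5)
Your Step 1 is fine. The problem is Step 2, specifically the claim ``By tuning the arcs $\gamma_i$ we may arrange these $v_i$ to lie in a common open half-space.'' The page directions $v_1, \dots, v_k \in \R^{r+1}$ at $b'$ are determined by the $\langle k \rangle$-embedding $f$ itself, via how the collar sheets $f(h(\partial_1^i P \times [0,\epsilon)))$ emanate from $f_\beta(\beta P)$ near $b'$. The arcs $\gamma_i$ live in $\Int(P)$ and only govern where the intersection points $x_i$ are dragged to; they do not rotate the pages. For a general $f$ (and a general choice of regular value $b$) the $v_i$ will be spread around $S^r$ with no common supporting hyperplane, and then no single translation direction $-u_0$ clears all $k$ pages at once --- pages with $v_i \cdot u_0 < 0$ pick up new intersections as $\varphi(X)$ slides past $b'$. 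One could isotope $f$ near $\partial_1 P$ to bring the $v_i$ into a half-space (the configuration space of $k$ distinct points on $S^r$ is connected), but the diffeotopy you are asked to produce must fix $f(P)$; transferring that isotopy onto $\varphi(X)$ via ambient isotopy extension is a non-trivial extra step that your sketch does not address. Similarly, the claim that the local translation ``eliminates all $k$ intersection points simultaneously... a direct quadratic computation shows...'' remains an assertion: in the generic case where the $v_i$ are not coherently oriented relative to $u_0$ it is simply false, and even in the arranged case the computation is not given.

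The paper avoids the issue by working on the $\varphi$ side rather than trying to push $\varphi(X)$ through the singular locus. Lemma \ref{lemma: intersection creation} observes that the boundary sphere $\partial E_y$ of a small normal fiber disk $E_y$ to $f_\beta(\beta P)$ meets $f(\Int(P))$ in exactly $k$ points of one sign (one on each collar sheet), and that this sphere bounds $E_y$ and hence is null-isotopic. Forming the connected sum $\tilde X = X \,\#\, (-\partial E_y)$ along a tube disjoint from $f(P)$ then produces an embedding ambient isotopic to $X$ whose intersection with $f(\Int(P))$ contains $k$ new points of the opposite sign, and the ordinary Whitney trick cancels the resulting $\pm k$ pairs. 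This sidesteps the normal-form bookkeeping entirely: no constraint on the page directions is needed, because the sphere $\partial E_y$ packages the ``mod-$k$ cancellation'' into a single, orientation-balanced geometric object. Your open-book picture correctly identifies the relevant local geometry near $\beta P$, but the realization via a rigid translation has a genuine gap as written.
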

To prove the above theorem we will need to use the next lemma. 
\begin{lemma} \label{lemma: intersection creation}
Let $P$ be a compact $\langle k \rangle$-manifold of dimension $p \geq 2$, let $M$ be a smooth manifold of dimension $m \geq 6$, and let $f: (P, \partial_{0}P) \longrightarrow (M, \partial M)$ be a $\langle k \rangle$-embedding. 
Let $r$ denote the integer $m - p$. 
Given any any positive integer $n$, 
there exists an embedding 
$$g: S^{r} \longrightarrow \Int(M \setminus f_{\beta P}(\beta P))$$ 
that satisfies the following:
\begin{enumerate}
\itemsep2pt
\item[(i)] $g(S^{r})\cap f(\Int(P)) \cong \pm\langle n\cdot k \rangle$,
\item[(ii)] the composition 
$
S^{r} \stackrel{g} \longrightarrow  \Int(M \setminus f_{\beta P}(\beta P)) \hookrightarrow \Int(M)
$
extends to an embedding of the disk,
$D^{r+1} \hookrightarrow \Int(M).$
\end{enumerate}
\end{lemma}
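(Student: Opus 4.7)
The plan is to combine a local construction near $f_\beta(\beta P)$ with an iteration via embedded boundary-connected-sum of $(r+1)$-dimensional disks.

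For the local step, I first observe that since $f$ is a $\langle k \rangle$-embedding, the restriction $f|_{\partial_1 P}$ factors through $\partial_1 P \xrightarrow{\bar{\Phi}} \beta P \xrightarrow{f_\beta} M$, so the $k$ boundary components of $\partial_1 P$ all map to the single $(p-1)$-dimensional submanifold $f_\beta(\beta P) \subset f(P)$, and since $\hat{f}$ is injective, the $k$ sheets of $f(P)$ meeting along $f_\beta(\beta P)$ are pairwise disjoint off $f_\beta(\beta P)$. Fix any $x \in \beta P$ and set $y = f_\beta(x)$: the normal space to $f_\beta(\beta P)$ at $y$ has dimension $r+1$, and each of the $k$ sheets enters this normal space transversely along a single tangent ray. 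A sufficiently small normal $(r+1)$-disk $\tilde{D}$ at $y$ therefore meets $f(P)$ transversely in $k$ short arcs emanating from $y$, and its boundary $S := \partial \tilde{D}$ meets $f(\Int P)$ transversely in $k$ points, disjoint from $f_\beta(\beta P)$. Because the orientation of $P$ restricts to the same orientation on each of the $k$ copies $\beta P \times \{i\} \subset \partial_1 P$, a direct local-coordinate computation shows the $k$ intersection signs coincide (the common sign works out to $(-1)^{pr}$), giving $S \cap f(\Int P) \cong \pm \langle k \rangle$ and handling the case $n = 1$.

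To handle general $n$, I use that $\dim \beta P = p - 1 \geq 1$ to select $n$ distinct points $x_1, \dots, x_n \in \beta P$ with pairwise disjoint small normal $(r+1)$-disks $\tilde{D}_1, \dots, \tilde{D}_n \subset \Int M$ obtained from the local construction. (The degenerate case $\beta P = \emptyset$ is easier: just link $f(\Int P)$ by $nk$ small $r$-spheres at interior points.) For each $i = 1, \dots, n-1$, pick an embedded path $\gamma_i \subset M$ joining a non-intersection point of $\partial \tilde{D}_i$ to a non-intersection point of $\partial \tilde{D}_{i+1}$, with the $\gamma_i$ mutually disjoint, disjoint from the remaining $\tilde{D}_j$, and disjoint from $f(P)$ (note $f_\beta(\beta P) \subset f(P)$). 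Such paths exist by general position, because $f(P)$ has codimension $r \geq 2$ in $M$ in the intended applications (where $r = 2n \geq 4$), so its complement is path-connected. Thicken each $\gamma_i$ to a thin tubular neighborhood $T_i \cong [0,1] \times D^r \subset M \setminus f(P)$ with coherent orientation, and perform the standard embedded boundary-connected-sum of the $\tilde{D}_i$ along these $1$-handles to obtain a single embedded $(r+1)$-disk
\[
D := \tilde{D}_1 \,\natural_{T_1}\, \tilde{D}_2 \,\natural_{T_2}\, \cdots \,\natural_{T_{n-1}}\, \tilde{D}_n \;\subset\; \Int M.
\]
Since each $T_i$ is disjoint from $f(P)$ and attaches away from the intersection points, $\partial D$ meets $f(\Int P)$ transversely in exactly the $nk$ original points and is disjoint from $f_\beta(\beta P)$; setting $g := \partial D$ then gives (i), and $D$ itself witnesses (ii).

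The principal subtlety is sign consistency --- verifying that the induced boundary orientation on the connect-summed $S^r$ assigns the same sign to each of the $nk$ transverse intersections. I handle this by orienting the $\tilde{D}_i$ uniformly (via a fixed orientation of the normal bundle of $f_\beta(\beta P) \subset M$) and framing the $1$-handles $T_i$ coherently, so that the local sign computation of the first step (whose answer $(-1)^{pr}$ depends only on the dimensions $p$ and $r$) applies at every intersection point regardless of which sheet or which $\tilde{D}_i$ produced it. One thereby obtains $g(S^r) \cap f(\Int P) \cong \pm \langle nk \rangle$ as required.
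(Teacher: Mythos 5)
Your construction is essentially the paper's own proof: the paper takes the fibre $E_{y}$ of a tubular neighborhood of $f_{\beta}(\beta P)$ over a point $y = f_{\beta}(x)$ --- exactly your normal $(r+1)$-disk --- notes that after shrinking the neighborhood its boundary $r$-sphere meets the $k$ collar sheets of $f(P)$ in $k$ points of a common sign and bounds the fibre itself (giving (ii)), and for general $n$ forms the connected sum of $n$ such spheres taken over distinct fibres $E_{y_{1}},\dots,E_{y_{n}}$. Your added care about routing the connecting tubes off $f(P)$ and about sign coherence only makes explicit what the paper leaves implicit, so the two arguments coincide in substance. One caveat: your parenthetical treatment of $\beta P = \emptyset$ is not correct --- a small $r$-sphere bounding a small disk near an interior point of $f(P)$ meets it in two points of \emph{opposite} sign, so ``linking by $nk$ small spheres'' cannot yield $\pm\langle n\cdot k\rangle$ (and indeed when $P$ is closed and non-singular, condition (ii) forces the algebraic intersection number to vanish, so no such $g$ exists); this does not affect the result as used, since the lemma is only ever applied with $\beta P$ non-empty (in fact path-connected), an assumption the paper's own proof also makes tacitly by choosing $x \in \beta P$.
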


\begin{proof}
Choose a collar embedding $h: \partial_{1}P\times[0,1] \longrightarrow P$ with $h^{-1}(\partial_{1}P) = \partial_{1}P\times\{0\}$. 
For each $i \in \langle k \rangle$, let $f_{i}: \beta P\times[0, 1] \longrightarrow M$
denote the embedding given by the composition
$$
\xymatrix{
\beta P\times\{i\}\times[0, 1] \ar@{^{(}->}[r] & \beta P\times\langle k \rangle\times[0, 1] \ar[rr]^{\Phi^{-1}}_{\cong} && \partial_{1}P\times[0,1]  \ar[rr]^{f\circ h} && M,
}
$$
where $\Phi: \partial_{1}P \stackrel{\cong} \longrightarrow \beta P\times\langle k \rangle$ is the structure map. 
Let $E \subset M$ be a smooth tubular neighborhood of $f_{\beta}(\beta P) \subset M$ and let $\pi: E \longrightarrow f_{\beta}(\beta P)$ denote the bundle projection.
Recall that $\pi: E \longrightarrow f_{\beta}(\beta P)$ is disk bundle isomorphic to the disk bundle associated to the normal bundle of $f_{\beta}(\beta P)$ in $M$.
Choose a point $x \in \beta P$, let $y = f_{\beta}(x)$, and let $E_{y} = \pi^{-1}(y)$ be the fibre of the projection $\pi$ over $y$.
By shrinking the neighborhood $E$ down close enough to $f_{\beta}(\beta P)$, we may assume that for each $i \in \langle k \rangle$, the intersection $E_{y}\cap f_{i}(\beta P\times[0,1])$ is equal to the compact interval $f_{i}(\{x\}\times[0, t_{i}]) \subset M$ for some $t_{i} \in [0, 1]$. 
It follows from this that 
$$\partial E_{y}\cap f(P) \; = \; \{\; f_{i}(x, t_{1}), \dots, f_{i}(x, t_{k})\; \},$$ 
and thus $\partial E_{y}$ intersects $f(P)$ at exactly $k$ points, all with the same orientation induced by the orientations of $P$ and $M$. 
The desired embedding $g: S^{r} \longrightarrow M$ is then given by composing a diffeomorphism $S^{r} \cong \partial E_{y}$ with the inclusion $\partial E_{y} \hookrightarrow M$. 
Since $\partial E_{y}$ is bounded by $E_{y} \subset M$, the embedding $g: S^{r} \longrightarrow M$ clearly extends to an embedding $D^{r+1} \hookrightarrow M$.
This proves the lemma for the case that $n = 1$. 
For general $n$ we carry out the same construction $n$ times using $n$ distinct fibres $E_{y_{1}}, \dots, E_{y_{n}}$ of the disk bundle $\pi: E \longrightarrow f_{\beta}(\beta P)$. 
We then form the connected sum of the spheres $\partial E_{y_{1}}, \dots, \partial E_{y_{n}} \hookrightarrow M$ to obtain the desired embedding. 
\end{proof}

\begin{proof}[Proof of Theorem \ref{theorem: mod k whitney trick}] 
It will suffice to prove the following:
suppose that $f(\Int(P))\cap X$ consists of exactly $k$ points, all of which are positively oriented.  
Then there exists a diffeotopy $\Psi_{t}: M \longrightarrow M \; \rel \; \partial M$ such that $\Psi_{0} = Id_{M}$ and $\Psi_{1}(X)\cap f(P) = \emptyset$. 
So, suppose that $f(\Int(P))\cap X$ consists of exactly $k$ points, all of which are positively oriented.
By the previous lemma, there exists and embedding 
$$g: S^{r} \longrightarrow \Int(M \setminus X\cup f_{\beta}(\beta P))$$ 
such that $g(S^{r})\cap f(\Int(P))$ consists of exactly $k$ points, all of which are negatively oriented. 
Furthermore, the embedding $g$ can be chosen so that it admits an extension to an embedding 
$$\bar{g}: D^{r+1} \longrightarrow \Int(M \setminus X).$$ 
Let $\tilde{X} \subset M$ be the submanifold  obtained by forming the connected sum of $g(S^{r}) \subset M$ with $X$ along some embedded arc in $M$ that is disjoint from $f(P)$. 
It follows easily from the fact that $g$ extends to an embedding of a disk, that $\tilde{X}$ is ambient isotopic to $X$. 
By construction, it follows that we have 
$$f(\Int(P))\cap\tilde{X} \cong +\langle k\rangle\sqcup -\langle k \rangle.$$ 
Since both $f(\Int(P))$ and $\tilde{X}$ are path connected and $M$ is simply connected by assumption, we may then apply the \textit{Whitney trick} to obtain a diffeotopy 
$$\Psi_{t}: M \longrightarrow M \rel \partial M$$ 
with $\Psi_{1}(X)\cap f(P) = \emptyset$. 
This concludes the proof of the theorem. 
\end{proof}

  \subsection{A higher dimensional intersection invariant.} 
 We recall now a certain construction developed by Hatcher and Quinn in \cite{HQ 74}.
 Let $M$, $X$, and $Y$ be smooth manifolds of dimension $m$, $r$, and $s$ respectively, with $X$ and $Y$ compact. 
 Let $t = r+ s - m$. 
Let 
$$\varphi: (X, \partial X) \longrightarrow (M, \partial M) \quad \text{and} \quad  \psi: (Y, \partial Y) \longrightarrow (M, \partial M)$$ 
be smooth maps. 
 Let $E(\varphi, \psi)$ denote the \textit{homotopy pull-back} of $\varphi$ and $\psi$, which is explicitly given by
$$E(\varphi, \psi) = \{(x, y, \gamma) \in X\times Y\times \text{Path}(M)\; | \; \varphi(x) = \gamma(0), \;  \psi(y) = \gamma(1)\; \}.$$
Consider the diagram, 
\begin{equation}
\xymatrix{
E(\varphi, \psi) \ar[rr]^{\pi_{X}} \ar[drr]^{\pi_{M}} \ar[d]^{\pi_{Y}} && X \ar[d]^{\varphi} \\
Y \ar[rr]^{\psi} && M
}
\end{equation}
where $\pi_{X}$ and $\pi_{Y}$ are projection maps and $\pi_{M}$ is given by the formula $(x, y, \gamma) \mapsto \gamma(1/2)$. 
It is easily verified that this diagram commutes up to homotopy. 
Let $\nu_{X}$ and $\nu_{Y}$ denote the stable normal bundles associated to $X$ and $Y$. 
We will need to consider the stable vector bundle over $E(\varphi, \psi)$ given by the Whitney sum 
$$\pi_{X}^{*}(\nu_{X})\oplus\pi_{Y}^{*}(\nu_{Y})\oplus\pi_{M}^{*}(TM).$$
We will denote this stable bundle by $\widehat{\nu}(\varphi, \psi)$. 
We will need to consider the \textit{normal bordism group} 
$$\Omega^{\text{fr.}}_{t}(E(\varphi, \psi), \; \widehat{\nu}(\varphi, \psi)).$$
Elements of this bordism group are represented by triples $(N, f, F)$, where $N$ is a $t$-dimensional closed manifold, $f: N \longrightarrow E(\varphi, \psi)$ is a map, and $F: \nu_{N} \longrightarrow \widehat{\nu}(\varphi, \psi)$ is an isomorphism of stable vector bundles covering the map $f$. 

Now, suppose that the maps $\varphi$ and $\psi$ are transversal and that 
$$\varphi(\partial X)\cap\psi(\partial Y) = \emptyset.$$ 
Since $X$ and $Y$ are compact by assumption, it follows that the pullback $\varphi\pitchfork\psi \subset X\times Y$ is a closed submanifold of dimension $t$. 
There is a natural map
$$\iota_{\varphi, \psi}: \varphi\pitchfork\psi \longrightarrow E(\varphi, \psi), \quad (x, y) \mapsto (x, y, c_{\varphi(x)}),$$
where $c_{\varphi(x)}$ is the constant path at point $\varphi(x)$. 
Let $\nu_{\varphi\pitchfork\psi}$ denote the stable normal bundle associated to the pull-back $\varphi\pitchfork\psi$. 
The following is given in \cite[Proposition 2.1]{HQ 74} (see also the discussion on Pages 331-332). 
\begin{proposition}
There is a natural bundle isomorphism  
$\hat{\iota}_{\varphi, \psi}: \nu_{\varphi\pitchfork\psi} \stackrel{\cong} \longrightarrow \nu(\varphi, \psi)$,
determined uniquely by the homotopy classes of $\varphi$ and $\psi$, 
that covers the map 
$\iota_{\varphi, \psi}$.
In this way, the triple 
$(\varphi\pitchfork\psi, \; \iota_{\varphi, \psi}, \; \hat{\iota}_{\varphi, \psi})$
determines a bordism class in 
 $\Omega^{\text{fr.}}_{t}(E(\varphi, \psi), \; \widehat{\nu}(\varphi, \psi))$.
 \end{proposition}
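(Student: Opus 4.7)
The plan is to produce the bundle isomorphism $\hat{\iota}_{\varphi,\psi}$ as a direct consequence of transversality, working first at the pointwise level and then globalizing. Let $N := \varphi\pitchfork\psi \subset X\times Y$, and write $\iota := \iota_{\varphi,\psi}$. At a point $(x,y) \in N$ (so $\varphi(x) = \psi(y) =: z$), transversality of the pair $(\varphi,\psi)$ is equivalent to transversality of $\varphi\times\psi: X\times Y \to M\times M$ to the diagonal $\triangle_{M} \subset M\times M$. Since the normal bundle of $\triangle_{M}$ in $M\times M$ is canonically isomorphic to $TM$, transversality gives a canonical short exact sequence of vector bundles over $N$:
\begin{equation*}
0 \longrightarrow TN \longrightarrow (TX\oplus TY)|_{N} \longrightarrow \pi_{M}^{*}(TM)|_{N} \longrightarrow 0,
\end{equation*}
where the projection to $\pi_{M}^{*}(TM)$ is given by $(v,w) \mapsto d\varphi(v) - d\psi(w)$, identified with an element of $T_{z}M$. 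Taking stable normal bundles (i.e.\ applying the functor $\xi \mapsto -\xi$ in $KO$) and using $\nu_{X\times Y} = \nu_{X}\boxplus \nu_{Y}$, this yields a canonical stable isomorphism
\begin{equation*}
\hat{\iota}_{\varphi,\psi}: \nu_{N} \;\stackrel{\cong}{\longrightarrow}\; \pi_{X}^{*}(\nu_{X})\oplus\pi_{Y}^{*}(\nu_{Y})\oplus\pi_{M}^{*}(TM)\big|_{N} \;=\; \widehat{\nu}(\varphi,\psi)|_{N},
\end{equation*}
covering $\iota: N \to E(\varphi,\psi)$.

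Next I would verify that $\hat{\iota}_{\varphi,\psi}$ depends only on the homotopy classes of $\varphi$ and $\psi$. Suppose $(\varphi_{0},\psi_{0})$ and $(\varphi_{1},\psi_{1})$ are connected by a smooth homotopy $(\varphi_{t},\psi_{t})$ through pairs of maps with disjoint boundary images; by a standard transversality-with-parameters argument one may arrange that the family is transverse as a map $X\times Y\times [0,1] \to M\times M$ to $\triangle_{M}$. The resulting pull-back $W \subset X\times Y \times [0,1]$ is a compact bordism from $\varphi_{0}\pitchfork\psi_{0}$ to $\varphi_{1}\pitchfork\psi_{1}$, and the pointwise construction above applied fibrewise in $t$ produces a stable bundle isomorphism $\nu_{W} \cong \widehat{\nu}(\varphi_{t},\psi_{t})|_{W}$ that restricts on the two boundary components to $\hat{\iota}_{\varphi_{0},\psi_{0}}$ and $\hat{\iota}_{\varphi_{1},\psi_{1}}$ respectively. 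Combined with the natural map $W \to E(\varphi_{0},\psi_{0})$ assembled from the homotopy (sending $(x,y,t) \in W$ to the path $s \mapsto \varphi_{s\cdot t}(x)$ concatenated with $s \mapsto \psi_{(1-s)\cdot t}(y)$, appropriately reparametrized), this exhibits the desired bordism of normal triples.

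Finally, the triple $(N, \iota, \hat{\iota}_{\varphi,\psi})$ visibly represents a class in $\Omega^{\mathrm{fr.}}_{t}(E(\varphi,\psi), \widehat{\nu}(\varphi,\psi))$ by definition, and the homotopy-invariance argument of the previous paragraph shows this class depends only on the homotopy classes of $\varphi,\psi$. The only subtle step is the pointwise identification of the normal bundle, which is the key technical input; the globalization and the homotopy-invariance are then routine, and no step presents serious difficulty beyond keeping track of the stable bundle conventions and the canonical identification of the normal bundle of the diagonal with $TM$.
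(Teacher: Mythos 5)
Your proposal is correct, and it follows essentially the same route as the source the paper relies on: the paper does not prove this proposition itself but quotes it from \cite[Proposition 2.1]{HQ 74}, where the argument is exactly the one you give — transversality to the diagonal yields the exact sequence $0 \to TN \to (TX\oplus TY)|_{N} \to \pi_{M}^{*}(TM)|_{N} \to 0$, which stably identifies $\nu_{\varphi\pitchfork\psi}$ with $\iota_{\varphi,\psi}^{*}\widehat{\nu}(\varphi,\psi)$, and a parametrized transversality argument gives independence of the choices within the homotopy classes. Your bookkeeping (the map $(v,w)\mapsto d\varphi(v)-d\psi(w)$, the path assembled from the homotopy into $E(\varphi_{0},\psi_{0})$) is the standard way to make this precise, so there is nothing to add.
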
 
 
The bordism group $\Omega^{\text{fr.}}_{t}(E(\varphi, \psi), \; \widehat{\nu}(\varphi, \psi))$ can be quite difficult to compute in general. 
However, in the case that the manifolds $X$, $Y$, and $M$ are highly connected, the group $\Omega^{\text{fr.}}_{t}(E(\varphi, \psi), \; \widehat{\nu}(\varphi, \psi))$ reduces to something much more simple. 
The following proposition is proven in \cite[Section 3]{HQ 74}.
\begin{proposition}
Suppose that $X$ and $Y$ are $t$-connected and that $M$ is $(t + 1)$-connected (recall that $t = \dim(X) + \dim(Y) - \dim(M) = r + s - m$). 
Then the homomorphism 
$$\Omega^{\text{fr.}}_{t}(\text{pt.}) \rightarrow \Omega^{\text{fr.}}_{t}(E(\varphi, \psi), \; \widehat{\nu}(\varphi, \psi))$$ 
induced by the inclusion of any point into $E(\varphi, \psi)$, is an isomorphism. 
\end{proposition}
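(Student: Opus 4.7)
The plan is to reduce the statement to a connectivity computation combined with a Pontryagin-Thom argument.

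First, I would show that $E(\varphi,\psi)$ is $t$-connected. The homotopy-pullback construction realizes $E(\varphi,\psi)$ as the total space of a Serre fibration $\Omega M \to E(\varphi,\psi) \to X\times Y$, and under the hypotheses $\Omega M$ is $t$-connected (since $M$ is $(t+1)$-connected) while $X\times Y$ is $t$-connected, so the long exact sequence of homotopy groups of the fibration gives $\pi_i(E(\varphi,\psi)) = 0$ for $i \le t$.

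Second, I would handle surjectivity of the map by a direct geometric argument. Given $[N,f,F]$ with $\dim N = t$, the $t$-connectivity of $E(\varphi,\psi)$ combined with cellular approximation (choose a CW model for $E(\varphi,\psi)$ with a single $0$-cell and all other cells in dimension $\ge t+1$) implies that $f$ is homotopic to the constant map at the basepoint $e_0$. The mapping cylinder of such a null-homotopy yields a normal bordism from $(N,f,F)$ to a triple $(N, c_{e_0}, F')$. After fixing a stable trivialization of $\hat{\nu}(\varphi,\psi)|_{e_0}$ (which exists because $\hat{\nu}(\varphi,\psi)$ has virtual dimension $-t$ over a point), this class lies tautologically in the image of $\Omega^{\mathrm{fr}}_t(\mathrm{pt})$.

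Third, for injectivity I would use Pontryagin-Thom to identify $\Omega^{\mathrm{fr}}_t(E(\varphi,\psi),\hat{\nu}(\varphi,\psi))$ with the $0$-th stable homotopy group $\pi_0(\mathrm{Th}(\hat{\nu}(\varphi,\psi)))$ of the Thom spectrum of the virtual bundle $\hat{\nu}(\varphi,\psi)$, under which the map in question becomes the map $\pi_0(S^{-t}) \to \pi_0(\mathrm{Th}(\hat{\nu}(\varphi,\psi)))$ induced by the inclusion of the fiber $S^{-t} \simeq \mathrm{Th}(\hat{\nu}(\varphi,\psi)|_{e_0})$. Because $E(\varphi,\psi)$ is $t$-connected, the cofiber of this inclusion is a spectrum whose cells (obtained from cells of $E(\varphi,\psi)/e_0$ shifted by $-t$) lie in degrees $\ge 1$, hence the cofiber is $0$-connected. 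The long exact sequence in stable homotopy reconfirms surjectivity, and injectivity follows from the vanishing of the connecting homomorphism from $\pi_1$ of the cofiber into $\pi_0(S^{-t}) = \pi^s_t$.

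The main obstacle will be the injectivity step: since $E(\varphi,\psi)$ is only $t$-connected and not $(t+1)$-connected, a bordism $(W^{t+1},g,G)$ witnessing triviality in $\Omega^{\mathrm{fr}}_t(E(\varphi,\psi),\hat{\nu})$ cannot directly be made constant on $g$, and the obstruction to doing so lives in $H^{t+1}(W; \pi_{t+1}(E(\varphi,\psi)))$. Resolving this will require exploiting the explicit description of $\hat{\nu}(\varphi,\psi)$ as the Whitney sum $\pi_X^*\nu_X \oplus \pi_Y^*\nu_Y \oplus \pi_M^*TM$ together with the extra degree of connectivity of $M$ (beyond that of $X$ and $Y$), which constrains the attaching maps in the relevant low-dimensional range of $\mathrm{Th}(\hat{\nu}(\varphi,\psi))$ and should force the connecting map to vanish.
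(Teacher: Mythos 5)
Your first two steps are fine: the fibration $\Omega M \to E(\varphi,\psi)\to X\times Y$ does show that $E(\varphi,\psi)$ is $t$-connected, and compressing a $t$-dimensional source into the basepoint gives surjectivity. The genuine gap is the injectivity step, which you have correctly located but not closed, and the route you propose for closing it cannot work. After Pontryagin--Thom, injectivity is equivalent to the vanishing of the connecting map $\pi_1(C)\to\pi_0(S^{-t})\cong \Omega^{\text{fr.}}_t(\text{pt.})$, where $C$ is the cofibre of the bottom-cell inclusion; this connecting map is computed by the attaching maps of the $1$-cells of $\mathrm{Th}(\widehat{\nu}(\varphi,\psi))$, i.e.\ by the stable $J$-homomorphism images of the restrictions of $\widehat{\nu}(\varphi,\psi)$ along spherical classes $c\in\pi_{t+1}(E(\varphi,\psi))$. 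If $c$ maps to $(a,b)\in\pi_{t+1}(X)\oplus\pi_{t+1}(Y)$ (and every such pair occurs, since $\pi_{t+1}(M)=0$ makes $\pi_{t+1}(E)\to\pi_{t+1}(X\times Y)$ onto), then the $(t+1)$-connectivity of $M$ trivializes only the summand $\pi_M^{*}(TM)$, because $\pi_M\circ c$ is null-homotopic; what remains is $a^{*}\nu_X\oplus b^{*}\nu_Y$, and nothing in the stated hypotheses controls it, since $X$ and $Y$ are only assumed $t$-connected and so have unconstrained $\pi_{t+1}$ and unconstrained normal data along it.

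This is not a removable technicality of your argument: connectivity alone cannot force the connecting map to vanish. For instance, with $t=1$ take $X=\mathbb{C}P^{2}\times S^{5}$, $Y=S^{9}$, $M=S^{17}$; all connectivity hypotheses hold (with room to spare for $M$), yet $\widehat{\nu}(\varphi,\psi)$ restricted along the class of $\mathbb{C}P^{1}$ in $\pi_2(E)$ has $w_2\neq 0$, the Thom spectrum over the $2$-skeleton is $S^{-1}\cup_{\eta}e^{1}$, and the generator $\eta$ of $\Omega^{\text{fr.}}_1(\text{pt.})\cong \Z/2$ dies in $\Omega^{\text{fr.}}_1(E(\varphi,\psi),\widehat{\nu}(\varphi,\psi))$. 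So the missing ingredient is genuine extra control of $\nu_X$ and $\nu_Y$ over the $(t+1)$-skeleta (e.g.\ stable triviality along $\pi_{t+1}$, which does hold in the situations where the proposition is applied in this paper, the relevant manifolds being spheres or parallelizable $\langle k\rangle$-manifolds), or else an appeal to the precise formulation in Hatcher--Quinn, which is all the paper itself offers in place of a proof. As written, your argument establishes the connectivity of $E(\varphi,\psi)$ and surjectivity, but not the claimed isomorphism, and the final paragraph's hope that the extra connectivity of $M$ ``should force the connecting map to vanish'' is exactly the point that fails.
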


\begin{defn}
In the case that $X$ and $Y$ are $t$-connected and $M$ is $(t + 1)$-connected,
we will denote by 
\begin{equation}
\alpha_{t}(\varphi, \psi; M) \in \Omega^{\text{fr.}}_{t}(\text{pt.})
\end{equation}
the image of the bordism class in $\Omega^{\text{fr.}}_{t}(E(\varphi, \psi), \; \widehat{\nu}(\varphi, \psi))$ associated to $\varphi\pitchfork\psi$ under the isomorphism of the previous proposition. 
\end{defn}
\begin{remark}
We emphasize that it is not necessary for both $\varphi$ and $\psi$ to be embeddings in order for the class $\alpha_{t}(\varphi, \psi; M)$ to be defined. 
It is only necessary that $\varphi$ and $\psi$ be transversal as smooth maps. 
Furthermore it is easy to see that $\alpha_{t}(\varphi, \psi, M)$ is an invariant of the homotopy class of $\varphi$ and $\psi$. 
However, the next theorem (Theorem \ref{theorem: higher whitney trick hatcher}) does require that $\varphi$ and $\psi$ be embeddings.
\end{remark}
The following is proven in \cite[Theorem 2.2]{HQ 74} (and in \cite{W 63}).
 
\begin{theorem} \label{theorem: higher whitney trick hatcher}
Let 
$$\varphi: (X, \partial X) \longrightarrow (M, \partial M) \quad \text{and} \quad \psi: (Y, \partial Y) \longrightarrow (M, \partial M)$$ 
be embeddings such that $\varphi(\partial X)\cap\psi(\partial Y) = \emptyset$.
Suppose that $m > r + \frac{s}{2} + 1$ and $m > s + \frac{r}{2} + 1$, that $X$ and $Y$ are $t$-connected, and that $M$ is $(t + 1)$-connected.
Then if $\alpha_{t}(\varphi, \psi; M) = 0$, there exists a diffeotopy 
$$\Psi_{t}: M \longrightarrow M \; \rel \partial M$$
such that $\Psi_{1}(\varphi(X))\cap\psi(Y) = \emptyset$. 
\end{theorem}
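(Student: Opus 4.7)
The plan is to generalize the classical Whitney trick by interpreting the vanishing of $\alpha_{t}(\varphi, \psi; M)$ as a framed null-bordism of the intersection manifold $N := \varphi \pitchfork \psi$, and then to realize this bordism geometrically as an embedded ``higher-dimensional Whitney manifold'' $W$ in $M$ along whose neighborhood a single cancellation move disjoins $\varphi(X)$ from $\psi(Y)$.

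First I would unpack the hypothesis. The vanishing of $\alpha_t$ yields a compact $(t+1)$-manifold $W$ with $\partial W = N$, a map $\tilde g : W \to E(\varphi, \psi)$ extending $\iota_{\varphi,\psi}$ on the boundary, and a stable bundle isomorphism $\nu_W \cong \tilde g^{*}\widehat\nu(\varphi,\psi)$ extending $\hat\iota_{\varphi,\psi}$. By the definition of the homotopy pullback $E(\varphi, \psi)$, the map $\tilde g$ is the same as a pair of maps $f : W \to X$ and $h : W \to Y$ extending the two projections $N \to X$ and $N \to Y$, together with a homotopy $H : W \times [0,1] \to M$ from $\varphi \circ f$ to $\psi \circ h$ which is constant on $\partial W$.

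Next I would upgrade $H$ to an embedding. The map $H$ provides a formal ``Whitney cobordism'' linking $\varphi(f(W)) \subset \varphi(X)$ to $\psi(h(W)) \subset \psi(Y)$, with corner behavior fixed along $N \times [0,1]$. The dimension inequalities $m > r + s/2 + 1$ and $m > s + r/2 + 1$ are precisely those under which $W \times [0,1]$ sits in $M$ in a range where general position plus the Whitney disk technique (iterated one dimension higher) can remove self-intersections of the total space, as well as spurious intersections with $\varphi(X) \cup \psi(Y)$, using the $t$-connectedness of $X, Y$ and $(t{+}1)$-connectedness of $M$ to pipe off obstructions. The framing data from the bordism class guarantees via standard obstruction theory on stable normal bundles that the resulting embedding $\widehat H : W \times [0,1] \hookrightarrow M$ has a tubular neighborhood whose normal bundle splits in a way compatible with the ambient splittings $TM|_{\varphi(X)} = T\varphi(X) \oplus \nu_{\varphi(X)/M}$ and $TM|_{\psi(Y)} = T\psi(Y) \oplus \nu_{\psi(Y)/M}$ along the two boundary faces $W \times \{0\}$ and $W \times \{1\}$.

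With this embedded Whitney manifold in place, one fits a standard local model: a neighborhood of two transversely intersecting plane fields parametrized by $W$. Inside this model an explicit diffeotopy, supported in an arbitrarily small neighborhood of $\widehat H(W \times [0,1])$ and hence fixing $\partial M$, sweeps one sheet off the other and cancels the entire intersection $N$ at once. Pulled back to $M$, this supplies the required $\Psi_t$. The main obstacle is the middle step — producing the embedded Whitney manifold with the correct boundary behavior and compatible normal splitting, since the dimension hypotheses are exactly at the borderline where general position alone does not suffice and one must execute a relative piping argument, which is the substantive content of the Hatcher--Quinn theorem.
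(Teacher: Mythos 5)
The paper does not give its own proof of this result: it is quoted directly from Hatcher--Quinn \cite[Theorem 2.2]{HQ 74}, with the accompanying remark that \cite{HQ 74} treats the closed case and that the relative version stated here is proved in \cite{We 67}. So there is no internal argument for your sketch to be compared against; the paper simply imports the theorem.

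Your outline is a fair high-level description of the Hatcher--Quinn strategy. The unpacking of the hypothesis is right: a framed null-bordism of the triple $(\varphi\pitchfork\psi,\ \iota_{\varphi,\psi},\ \hat\iota_{\varphi,\psi})$ in $\Omega^{\mathrm{fr.}}_{t}(E(\varphi,\psi), \widehat\nu(\varphi,\psi))$ does indeed translate, via the explicit description of $E(\varphi,\psi)$, into a compact $(t+1)$-manifold $W$, maps $f:W\to X$, $h:W\to Y$ filling in the two projections of $N$, a track homotopy $H:W\times[0,1]\to M$ from $\varphi\circ f$ to $\psi\circ h$ constant on $\partial W$, and a compatible stable framing. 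And the final paragraph correctly identifies the local model and sweeping isotopy used to cancel the intersection once an embedded ``Whitney manifold'' has been produced.

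The gap is that the entire middle step --- upgrading the data $(f,h,H)$ to an \emph{embedded} Whitney manifold in $M$ with the prescribed corner structure along $N$, image of $f(W)$ in $\varphi(X)$ and of $h(W)$ in $\psi(Y)$, and a normal splitting compatible with the framing --- is not carried out but instead deferred with the explicit acknowledgment that it ``is the substantive content of the Hatcher--Quinn theorem.'' This is where the metastable dimension inequalities $m>r+\tfrac{s}{2}+1$ and $m>s+\tfrac{r}{2}+1$ and the $t$-/($t{+}1$)-connectivity hypotheses are consumed; it requires a relative embedding theorem (Haefliger/Irwin range) applied to $W$ and to the track of $H$, plus a piping argument to kill residual self-intersections and spurious intersections with $\varphi(X)\cup\psi(Y)$, plus an argument that the stable framing descends to an actual unstable normal framing of the Whitney manifold compatible with the two boundary faces. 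None of this is supplied. As written, the proposal is an accurate summary of the shape of the Hatcher--Quinn proof rather than a proof of the theorem, and you say as much yourself; to constitute an independent argument it would have to execute the embedding and piping step, which the paper also chose not to reproduce and instead cited.
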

\begin{remark}
In \cite{HQ 74} the above theorem is only explicitly proven in the case when $X$ and $Y$ are closed manifolds, though their proof can easily by modified to yield the  version stated above. 
In \cite{We 67}, a proof of the relative version stated exactly as above is given. 
\end{remark}

The next lemma, which we will use later, is a restatement of \cite[Theorem 1.1]{HQ 74}. 
\begin{lemma} \label{lemma: homotopy disjunction}
Let
$$\varphi: (X, \partial X) \longrightarrow (M, \partial M) \quad \text{and} \quad \psi: (Y, \partial Y) \longrightarrow (M, \partial M)$$
be embeddings with $\varphi(\partial X)\cap \psi(\partial Y) = \emptyset$. 
Suppose that $\varphi$ is homotopic relative $\partial X$, to a map $\varphi'$ such that $\varphi'(X)\cap\psi(Y) = \emptyset$.
If  $m > r + s/2 + 1$, then there exists a diffeotopy 
$$\Psi_{t}: M \longrightarrow M \rel \partial M$$ 
such that $(\Psi_{1}\circ\varphi(X))\cap\psi(Y) = \emptyset$. 
\end{lemma}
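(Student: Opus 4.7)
The plan is to recognize this lemma as essentially a direct restatement of the main disjunction result of Hatcher--Quinn \cite[Theorem 1.1]{HQ 74}, so the proof reduces to matching the hypotheses and interpreting the homotopy disjunction assumption as the vanishing of their intersection invariant.

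First, I would convert the given homotopy of $\varphi$ to $\varphi'$ rel $\partial X$ into a smooth map $H \colon X \times I \to M$ with $H_{0} = \varphi$, $H_{1} = \varphi'$, and $H_{s}|_{\partial X} = \varphi|_{\partial X}$ for all $s$. By a small generic perturbation (using a collar of $\partial X$ and the fact that $\varphi, \varphi'$ are already transverse to $\psi$) I can arrange that $H$ is transverse to $\psi$ as a map of manifolds with corners. Using the hypothesis $\varphi(\partial X) \cap \psi(\partial Y) = \emptyset$ together with a preliminary isotopy pushing $\psi(Y)$ slightly away from $\varphi(\partial X)$ inside a collar of $\partial M$, I may assume $H(\partial X \times I) \cap \psi(Y) = \emptyset$ as well. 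Then $W := H^{-1}(\psi(Y)) \subset X \times I$ is a compact $(t+1)$-dimensional submanifold with $\partial W = \varphi \pitchfork \psi \subset X \times \{0\}$, since the $s=1$ end is empty by hypothesis on $\varphi'$ and $\partial X \times I$ contributes nothing.

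Second, the bordism $W$, together with $H$ on the $X$-side and the constant homotopy on the $Y$-side, provides an explicit geometric null-bordism for the intersection class $\varphi \pitchfork \psi$ inside $E(\varphi, \psi)$. That is, $W$ maps naturally into $E(\varphi, \psi)$ (sending $(x,s) \in W$ to the path in $M$ traced by $H_{t}(x)$ for $t \in [s, s_{0}]$ where $\psi(y) = H_{s}(x)$, or a suitable such parametrization), and the stable normal bundle of $W$ comes equipped with an isomorphism to the pullback of $\widehat\nu(\varphi, \psi)$ by naturality of the Hatcher--Quinn framing construction. Thus the normal-bordism class of $\varphi \pitchfork \psi$ in $\Omega^{\mathrm{fr.}}_{t}(E(\varphi, \psi),\; \widehat\nu(\varphi, \psi))$ is zero.

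Third, I would invoke \cite[Theorem 1.1]{HQ 74} directly: under the single codimension hypothesis $m > r + s/2 + 1$, vanishing of this normal-bordism class is equivalent to the existence of a diffeotopy $\Psi_{t}\colon M \to M$ rel $\partial M$ with $\Psi_{1}(\varphi(X)) \cap \psi(Y) = \emptyset$.

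The main obstacle (which is not one to be overcome here, but one I would flag) is the asymmetry between Lemma \ref{lemma: homotopy disjunction} and Theorem \ref{theorem: higher whitney trick hatcher}: the latter requires both inequalities $m > r + s/2 + 1$ and $m > s + r/2 + 1$ because it takes as input only the vanishing of an abstract bordism invariant in $\Omega^{\mathrm{fr.}}_{t}(\mathrm{pt.})$, whereas Lemma \ref{lemma: homotopy disjunction} has the much stronger geometric hypothesis of a literal homotopy to a disjoint map. With that stronger hypothesis, the Whitney disks produced in Hatcher--Quinn's argument can be guided by the homotopy $H$ itself and only the dimension of $X$ (not $Y$) needs to be controlled to carry them out by ambient isotopy, which is why only the single inequality $m > r + s/2 + 1$ appears. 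I would not reprove this technical point, but reference \cite{HQ 74} for the full argument.
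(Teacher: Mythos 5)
Your approach coincides with the paper's: the paper offers no independent argument, treating Lemma \ref{lemma: homotopy disjunction} as a verbatim restatement of \cite[Theorem 1.1]{HQ 74}, whose hypothesis is exactly your homotopy-disjointness assumption and whose conclusion (under the single inequality $m > r + s/2 + 1$) is the desired diffeotopy. Your preliminary steps converting the homotopy into a null-bordism of $\varphi\pitchfork\psi$ in $E(\varphi,\psi)$ are therefore redundant for this citation-level proof -- they reproduce what is internal to Hatcher--Quinn's own argument (and the ``vanishing of the invariant'' formulation you attribute to Theorem 1.1 is really their Theorem 2.2) -- but the final appeal to \cite{HQ 74} is precisely the paper's proof.
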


\begin{remark}
The main dimensional case when we will use Theorem \ref{theorem: higher whitney trick hatcher} and Lemma \ref{lemma: homotopy disjunction} is when $\dim(M) = 2n + 1$, $\dim(X) = \dim(Y) = n + 1$, and $n \geq 4$. 
\end{remark}

\subsection{Creating intersections}
There is a particular application of the above theorem that we will need to use. 
Let $M$ and $Y$ be oriented, connected manifolds of dimension $m$ and $s$ respectively
and let 
$$\psi: (Y, \partial Y) \longrightarrow (M, \partial M)$$ 
be an embedding. 
Let $r = m - s$ and let $\varphi: S^{r} \longrightarrow \Int(M)$ be a smooth map transverse to $\psi(Y) \subset M$. 
Let $j \geq 0$ be an integer strictly less than $r$ and let $\gamma: S^{r+j} \longrightarrow S^{r}$ be a smooth map.
Let 
\begin{equation} \label{equation: framed pontryagin thom}
\mathcal{P}_{j}: \pi_{r+j}(S^{r}) \stackrel{\cong} \longrightarrow \Omega^{\text{fr.}}_{j}(\text{pt.})
\end{equation}
denote the \textit{Pontryagin-Thom} isomorphism for framed bordism (see \cite{M 65a}). 
The following lemma shows how to compute the class 
$$\alpha_{j}(\varphi\circ \gamma, \; \psi; \; M) \in \Omega^{\text{fr.}}_{j}(\text{pt.})$$ 
in terms of $\alpha_{0}(\varphi, \psi, M) \in \Omega^{\text{fr.}}_{0}(\text{pt.})$ and the element $\mathcal{P}_{j}([\gamma]) \in \Omega^{\text{fr.}}_{j}(\text{pt.})$.
\begin{lemma} \label{lemma: hopf map trick}
Let $\psi$, $\varphi$ and $\gamma: S^{r+j} \rightarrow S^{r}$ be exactly as above.
Then
$$\alpha_{j}(\varphi\circ\gamma, \; \psi; \; M) \; = \; \alpha_{0}(\varphi, \psi; M)\cdot \mathcal{P}_{j}([\gamma]),$$
where the product on the right-hand side is the product in the graded bordism ring $\Omega^{\text{fr.}}_{*}(\text{pt.})$. 
\end{lemma}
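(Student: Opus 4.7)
The plan is to reduce the intersection $(\varphi\circ\gamma)\pitchfork\psi$ to a disjoint union of framed preimages of the Pontryagin--Thom construction applied to $\gamma$, and then keep careful track of signs and framings.

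First, after a small (homotopically trivial) perturbation, I would arrange that $\gamma$ is transverse to the finite set $\varphi^{-1}(\psi(Y))\subset S^r$. Since $\varphi$ is transverse to $\psi$ and $r+s=m$, this set consists of finitely many signed points $p_{1},\dots,p_{N}\in S^{r}$, each paired with a unique $q_{i}\in Y$ with $\varphi(p_{i})=\psi(q_{i})$, and the signs $\epsilon_{i}\in\{\pm 1\}$ sum to the integer $\alpha_{0}(\varphi,\psi;M)\in\Omega^{\mathrm{fr}}_{0}(\mathrm{pt.})\cong\Z$. With this transversality achieved, $\varphi\circ\gamma$ is automatically transverse to $\psi$, and the pullback splits as a disjoint union
\begin{equation*}
(\varphi\circ\gamma)\pitchfork\psi \;=\; \bigsqcup_{i=1}^{N}\;\gamma^{-1}(p_{i})\times\{q_{i}\}\;\subset\;S^{r+j}\times Y.
\end{equation*}

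Next I would identify each piece $\gamma^{-1}(p_{i})\times\{q_{i}\}$ with its framed bordism class. Since each $p_{i}$ is a regular value of $\gamma$, the preimage $\gamma^{-1}(p_{i})\subset S^{r+j}$ is a framed $j$-manifold in the classical Pontryagin--Thom sense (framed via the trivialization of the normal bundle coming from the isomorphism $\nu_{\gamma^{-1}(p_{i})\subset S^{r+j}}\cong\gamma^{*}T_{p_{i}}S^{r}$), and its framed bordism class equals $\mathcal{P}_{j}([\gamma])$; this is exactly the Pontryagin--Thom theorem (see \cite{M 65a}). Independence of the regular value $p_{i}$ up to framed bordism makes this identification canonical.

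The key step, which will be the main technical work, is to verify that the Hatcher--Quinn framing $\hat{\iota}_{\varphi\circ\gamma,\psi}$ on each component $\gamma^{-1}(p_{i})\times\{q_{i}\}$ agrees with the Pontryagin--Thom framing of $\gamma^{-1}(p_{i})$, up to the sign $\epsilon_{i}$. This follows from unwinding the isomorphism
\begin{equation*}
\nu_{(\varphi\circ\gamma)\pitchfork\psi}\;\oplus\;\pi_{M}^{*}TM \;\cong\; \pi_{S^{r+j}}^{*}\nu_{S^{r+j}}\oplus\pi_{Y}^{*}\nu_{Y}
\end{equation*}
along a component: near $(p_{i},q_{i})$ the map $\varphi$ restricts to a diffeomorphism of a neighborhood of $p_{i}$ onto a slice transverse to $\psi(Y)$, so on that piece the ambient identification collapses to the standard trivialization of $\nu_{\gamma^{-1}(p_{i})\subset S^{r+j}}$, twisted by the sign $\epsilon_{i}$ that records whether $\varphi_{*}(T_{p_{i}}S^{r})\oplus\psi_{*}(T_{q_{i}}Y)\to T_{\varphi(p_{i})}M$ is orientation preserving or reversing. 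Writing this out carefully is the only non-trivial bookkeeping.

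Finally, summing these local contributions gives
\begin{equation*}
\alpha_{j}(\varphi\circ\gamma,\psi;M) \;=\; \sum_{i=1}^{N}\epsilon_{i}\cdot\mathcal{P}_{j}([\gamma]) \;=\; \Bigl(\sum_{i=1}^{N}\epsilon_{i}\Bigr)\cdot\mathcal{P}_{j}([\gamma]) \;=\; \alpha_{0}(\varphi,\psi;M)\cdot\mathcal{P}_{j}([\gamma])
\end{equation*}
in $\Omega^{\mathrm{fr}}_{j}(\mathrm{pt.})$, which is the required identity.
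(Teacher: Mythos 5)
Your proof is correct, and it follows the same basic outline as the paper's argument (split the transverse pullback over the intersection points $p_i$ of $\varphi$ with $\psi$, identify each piece $\gamma^{-1}(p_i)$ with $\mathcal{P}_{j}([\gamma])$ via the Pontryagin--Thom correspondence, and sum), but with one genuine strategic difference. The paper first applies the Whitney trick to cancel oppositely-signed points of $\varphi(S^r)\cap\psi(Y)$ so that all remaining intersection points carry the \emph{same} sign; after this normalization, the Hatcher--Quinn framing on each $\gamma^{-1}(x_i)$ is simply the Pontryagin--Thom framing (no $\pm$ twist to track), and the multiplicity is just $|\alpha_0(\varphi,\psi;M)|$. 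You instead skip the Whitney trick and work directly with the full signed collection of intersection points $\{(p_i,\epsilon_i)\}$, which forces you to prove that the Hatcher--Quinn framing on the component $\gamma^{-1}(p_i)\times\{q_i\}$ matches the Pontryagin--Thom framing \emph{up to the sign} $\epsilon_i$ (i.e., differs by a single reflection exactly when $\epsilon_i=-1$, so its class in $\Omega^{\text{fr.}}_{j}(\text{pt.})$ is $\epsilon_i\cdot\mathcal{P}_j([\gamma])$). You correctly flag this as the main technical work, and it is more delicate than what the paper needs after its Whitney-trick normalization. The trade-off is that your version avoids the ambient geometric deformation entirely (and in particular does not rely on the simple-connectivity and dimension hypotheses needed for the Whitney trick), while the paper's route buys itself a cleaner framing comparison. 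One small remark that applies to both arguments: the sign of $\alpha_0(\varphi,\psi;M)$ must be tracked through the final identity (the paper writes $\ell\cdot\mathcal{P}_j([\gamma])$ where $\ell$ is the number of points, which implicitly assumes the common sign is $+$); your explicit $\sum\epsilon_i$ handles this more transparently.
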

\begin{proof} Let $s \in \Z$ denote the oriented, algebraic intersection number associated to the intersection of $\varphi(S^{r})$ and $\psi(Y)$.
By application of the Whitney trick, we may deform $\varphi$ so that
\begin{equation} \label{eq: pre-image equality}
\varphi(S^{r})\cap\psi(Y) = \{x_{1}, \dots, x_{\ell}\},
\end{equation}
where the points $x_{i}$ for $i = 1, \dots, \ell$ all have the same sign. 
It follows that 
$$(\varphi\circ \gamma)^{-1}(\psi(Y)) = \bigsqcup_{i=1}^{\ell}\gamma^{-1}(x_{i}).$$ 
For each $i \in \{1, \dots, \ell\}$, the framing at $x_{i}$ (induced by the orientations of $\gamma(S^{r})$, $\psi(Y)$ and $M$) induces a framing on the $j$-dimensional closed manifold $\gamma^{-1}(x_{i})$.
We denote the element of $\Omega_{j}^{\text{fr.}}(\text{pt.})$ given by $\gamma^{-1}(x_{i})$ with this induced framing by $[\gamma^{-1}(x_{i})]$. 
By definition of the Pontryagin-Thom map $\mathcal{P}_{j}$ (see \cite[Section 7]{M 65a}), the element $[\gamma^{-1}(x_{i})]$ is equal to $\mathcal{P}_{j}([\gamma])$ for $i = 1, \dots, \ell$. 
Using the equality (\ref{eq: pre-image equality}), it follows that 
$$\alpha_{j}(\varphi\circ \gamma, \; \psi; \; M) = \ell\cdot \mathcal{P}_{j}([\gamma]).$$ 
The proof then follows from the fact that $\alpha_{0}(\varphi, \psi, M)$ is identified with the algebraic intersection number associated to $\varphi(S^{r})$ and $\psi(Y)$. 
\end{proof}

\subsection{A technical lemma.}
 Before we proceed further, we develop a technical result that will play an important role in the proof of Theorem \ref{thm: modifying intersections}.
For $n \ge 4$, let $M$ be a $2$-connected, oriented $(2n+1)$-dimensional manifold and let $P$ be a simply connected, oriented, $\langle k\rangle$-manifold of dimension $n+1$. 
Let 
$$f: (P, \partial_{0}P) \longrightarrow (M, \partial M)$$ 
be a $\langle k\rangle$-embedding. 
Let $U$ be a tubular neighborhood of $f_{\beta}(\beta P) \subset M$ whose boundary intersects $f(\Int(P))$ transversally. 
Denote,
\begin{equation} \label{eq: Z tubular neighborhood}
Z := M\setminus \Int(U), \quad \quad P' := f^{-1}(Z), \quad \quad f' := f|_{P'}.
\end{equation}
It follows from the fact that $\partial U$ intersects $\Int(f(P))$ transversally that $P'$ is a smooth manifold with boundary (after smoothing corners) and that $f'$ maps $\partial P'$ into $\partial M$. 
Let $\xi$ denote the generator of the framed bordism group $\Omega^{\text{fr.}}_{1}(\text{pt.})$, which is isomorphic to $\Z/2$.
\begin{lemma} \label{lemma: higher intersection creation}
Let $f: (P, \partial_{0}P) \longrightarrow (M, \partial M)$ be as above and let $i_{Z}: Z \hookrightarrow M$ denote the inclusion map. 
There exists an embedding $\varphi: S^{n+1} \longrightarrow Z$ which satisfies:
\begin{enumerate} \itemsep.2cm
\item[i.] $\alpha_{1}(f', \varphi; Z) \; = \; k\cdot \xi \in \Omega^{\text{fr.}}_{1}(\text{pt.})$, 
\item[ii.] the composition $i_{Z}\circ\varphi: S^{n+1} \longrightarrow M$ is null-homotopic. 
\end{enumerate}
\end{lemma}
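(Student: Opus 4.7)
The plan is to combine Lemma~\ref{lemma: intersection creation} with a Hopf-map composition and a finger-move construction to produce the required embedding.

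First I would apply Lemma~\ref{lemma: intersection creation} to $f$ with the auxiliary positive integer equal to $1$, taking the tubular neighborhood of $f_{\beta}(\beta P)$ used in that lemma strictly larger than $U$. This produces an embedding $g : S^{n} \hookrightarrow \Int(Z)$ such that $g(S^{n})$ meets $f(\Int P) \cap Z = f'(\Int P')$ transversally in exactly $k$ points of the same sign, and such that $g$ extends to an embedded disk $\bar g : D^{n+1} \hookrightarrow \Int(M)$. In particular $i_{Z} \circ g$ is null-homotopic in $M$, and $\alpha_{0}(f', g; Z) = k \in \Omega^{\mathrm{fr}}_{0}(\mathrm{pt}) \cong \Z$.

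Next, let $\eta : S^{n+1} \to S^{n}$ be the (iterated suspension of the) Hopf map, a generator of $\pi_{n+1}(S^{n}) \cong \Z/2$ for $n \geq 3$; under the Pontryagin-Thom isomorphism $\mathcal{P}_{1}$ of~\eqref{equation: framed pontryagin thom}, the class $[\eta]$ maps to the generator $\xi$. After a small perturbation to make $g \circ \eta$ transverse to $f'$, Lemma~\ref{lemma: hopf map trick} yields
\[
\alpha_{1}(f', g \circ \eta; Z) \;=\; \alpha_{0}(f', g; Z) \cdot \mathcal{P}_{1}([\eta]) \;=\; k \cdot \xi,
\]
and $i_{Z} \circ (g \circ \eta) = (i_{Z} \circ g) \circ \eta$ is null-homotopic in $M$ since $i_{Z} \circ g$ is. Thus $g \circ \eta$ already satisfies the homotopical analogues of (i) and (ii); the remaining task is to realize the same intersection data by an honest embedding $\varphi$.

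This realization step is the principal obstacle: because $2(n+1) = 2n+2$ exceeds $\dim Z = 2n+1$, general position does not upgrade $g \circ \eta$ to an embedding. To get around this I would construct $\varphi$ directly by a $k$-fold finger-move procedure. Start with a small standardly embedded sphere $\varphi_{0} : S^{n+1} \hookrightarrow Z$ bounding a disk in $Z$ and disjoint from both $f'(P')$ and $g(S^{n})$, so that $\alpha_{1}(f', \varphi_{0}; Z) = 0$ and $\varphi_{0}$ is null-homotopic in $M$. Around each intersection point $p_{i} \in g(S^{n}) \cap f'(P')$ perform a finger move on $\varphi_{0}$ along a short arc in $Z$ that passes once transversally through $p_{i}$; each such finger generically creates one new circle of transverse intersection of the moving sphere with $f'(P')$, and by twisting the finger in the normal direction to $f'(P')$ according to the local Hopf-fibration model for $\eta$, one arranges that circle to carry the nontrivial framing class $\xi$. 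The fingers are supported in disjoint small balls in $Z$ and each is an isotopy of embeddings, so the final map $\varphi : S^{n+1} \hookrightarrow Z$ is a smooth embedding homotopic to $\varphi_{0}$ in $Z$, hence still null-homotopic in $M$; by additivity of the invariant over the disjoint union of intersection circles, $\alpha_{1}(f', \varphi; Z) = k \cdot \xi$. The technical heart of this final step is the explicit construction of a twisted finger whose created intersection circle carries the $\xi$-framing: this uses the local Hopf-fibration model realizing the Pontryagin-Thom generator of $\pi_{n+1}(S^{n})$, and the hypothesis $n \geq 4$ enters to guarantee enough codimension to implement the twist and to perform any auxiliary Whitney-trick adjustments needed to keep $\varphi$ embedded and disjoint from $f_{\beta}(\beta P)$.
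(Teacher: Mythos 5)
Your first two steps match the paper's argument exactly: use Lemma~\ref{lemma: intersection creation} to produce an embedding $g\colon S^{n}\hookrightarrow Z$ meeting $f'(\Int P')$ in $k$ positively signed points and bounding a disk in $M$, then compose with the suspended Hopf map $\eta\colon S^{n+1}\to S^{n}$ and invoke Lemma~\ref{lemma: hopf map trick} to get $\alpha_{1}(f', g\circ\eta; Z)=k\cdot\xi$, with $i_{Z}\circ g\circ\eta$ null-homotopic in $M$.

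The final realization step, however, contains a fatal logical error, not merely an unfilled technical detail. The Hatcher--Quinn invariant $\alpha_{1}$ is an invariant of the \emph{homotopy class} of the map (the paper notes this explicitly before Theorem~\ref{theorem: higher whitney trick hatcher}), a fortiori it is an isotopy invariant. Your $\varphi_{0}$ is a standardly embedded sphere disjoint from $f'(P')$, so $\alpha_{1}(f',\varphi_{0};Z)=0$, and your finger moves are isotopies supported in balls, so the resulting $\varphi$ is isotopic (hence homotopic in $Z$) to $\varphi_{0}$. Therefore $\alpha_{1}(f',\varphi;Z)=0$ no matter how you twist the fingers. When $k$ is odd — the case that actually matters in Proposition~\ref{lemma: k intersection 1} — $k\cdot\xi=\xi\neq 0$, so no amount of local twisting can produce what you want from $\varphi_{0}$ by isotopy. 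Concretely: the intersection circles created by a finger move through a transverse double point always carry the trivial Pontryagin--Thom framing, precisely because otherwise homotopy invariance of $\alpha_{1}$ would be violated.

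The missing idea is that you should \emph{not} throw away the map $g\circ\eta$ and rebuild from scratch; you should promote $g\circ\eta$ itself to an embedding within its own homotopy class. Since $Z$ is $2$-connected, $\dim Z=2n+1$, $\dim S^{n+1}=n+1$, and $n\geq 4$ places us in the metastable range $2(2n+1)\geq 3(n+1)+3$, Haefliger's embedding theorem (the main theorem of \cite{H 61}, or \cite[Proposition 1]{W 67}) says that $g\circ\eta$ is homotopic to an embedding $\varphi\colon S^{n+1}\hookrightarrow Z$. Homotopy invariance of $\alpha_{1}$ — which is exactly what defeats your finger-move approach — is now working \emph{for} you: it carries the computed value $k\cdot\xi$ from $g\circ\eta$ to $\varphi$, and $i_{Z}\circ\varphi$ remains null-homotopic in $M$ since it is homotopic to $(i_{Z}\circ g)\circ\eta$. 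This is the role of the hypothesis $n\geq 4$: it is the metastable-range condition for Haefliger, not a codimension bound for finger-move adjustments.
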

\begin{proof}
By Lemma \ref{lemma: intersection creation}, we may choose an embedding 
$\phi: S^{n} \longrightarrow M\setminus f_{\beta}(\beta P)$ an embedding that satisfies:
\begin{itemize} \itemsep.2cm
\item $\phi(S^{n})$ intersects $f(\Int(P))$ transversally,
\item $\phi(S^{n})\cap f(\Int(P)) \cong +\langle k \rangle$, 
\item $i_{Z}\circ\phi: S^{n} \rightarrow M$ extends to an embedding $D^{n+1} \hookrightarrow M$. 
\end{itemize} 
By shrinking the tubular neighborhood $U$ of $f_{\beta}(\beta P)$ if necessary, we may assume that 
$$\phi(S^{n}) \subset Z = M \setminus \Int(U).$$ 
Denote by $\widehat{\phi}: S^{n} \longrightarrow Z$ the map obtained by restricting the codomain of $\phi$. 
Let 
$$\gamma: S^{n+1} \longrightarrow S^{n}$$ 
represent the generator of $\pi_{n+1}(S^{n}) \cong \Z/2$. 
By Lemma \ref{lemma: hopf map trick} it follows that, 
$$\alpha_{1}(\widehat{\phi}\circ \gamma, \; f'; Z) = \alpha_{0}(\widehat{\phi}, \; f'; Z)\cdot\mathcal{P}_{1}([\gamma]) =  k\cdot \mathcal{P}_{1}([\gamma]) = k\cdot \xi,$$
where $\mathcal{P}_{1}: \pi_{n+1}(S^{n}) \longrightarrow \Omega_{1}^{\text{fr.}}(\text{pt.})$ is the Pontryagin-Thom map for framed bordism.  
Since $Z$ is $2$-connected and $n \geq 4$, we may apply \cite[Proposition 1]{W 67} (or the main theorem of \cite{H 61}), and find a homotopy of the map $\widehat{\phi}\circ\gamma$, to an embedding 
$\varphi: S^{n+1} \longrightarrow Z.$
Since the map $i_{Z}\circ\phi: S^{n} \longrightarrow M$ is null-homotopic, it follows that $i_{Z}\circ\varphi: S^{n+1} \rightarrow M$ is null-homotopic as well.  
This completes the proof of the lemma. 
\end{proof}

\subsection{Modifying Intersections.}
We now state the main result of this section (which is a restatement of Theorem \ref{thm: modifying intersections 1} from Section \ref{subsection: main result disjunction}).
Fix an integer $n \geq 4$, let $M$ be an oriented, $2$-connected manifold of dimension $2n+1$. 
 Let $P$ and $Q$ be compact, oriented, simply connected, $\langle k \rangle$-manifolds of dimension $n+1$ such that both $\beta P$ and $\beta Q$ are path connected.
 \begin{theorem} \label{thm: modifying intersections} With $M$, $P$, and $Q$ as above and let 
 $$f: (P, \partial_{0}P) \longrightarrow (M, \partial M) \quad and \quad g: (Q, \partial_{0}Q) \longrightarrow (M, \partial M)$$ 
 be transversal $\langle k\rangle$-embeddings such that 
 $f(\partial_{0}P)\cap g(\partial_{0}Q) = \emptyset.$
 Suppose that $\Lambda^{1}_{k,k}(f, g; M) = 0$. 
 If the integer $k$ is odd
 then there exists a diffeotopy 
 $$\Psi_{t}: M \longrightarrow M \; \rel \partial M$$ 
 such that $\Psi_{1}(f(P))\cap g(Q) = \emptyset$. 
  \end{theorem}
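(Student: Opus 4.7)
The plan is a three-phase reduction: use Theorem \ref{theorem: mod k whitney trick} to strip off all Bockstein-stratum intersections; then pass to the complement of small tubular neighborhoods of the Bockstein strata, converting the problem into a smooth disjunction question for $(n{+}1)$-manifolds in a $2$-connected $(2n{+}1)$-manifold; and finally apply the Hatcher--Quinn theorem (Theorem \ref{theorem: higher whitney trick hatcher}), using that $k$ is odd to trivialize a residual $\Z/2$-valued obstruction. The main obstacle will be this last step.

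For the first phase, the Bockstein formulas of Proposition \ref{prop: k-l framed intersection}(ii)-(iii) imply $\Lambda^{0}_{k}(f_\beta, g; M) = \beta_1(\Lambda^{1}_{k,k}(f,g;M)) = 0$ and $\Lambda^{0}_{k}(f, g_\beta; M) = 0$ in $\Omega^{SO}_{0}(\mathrm{pt.})_{\langle k\rangle} \cong \Z/k$. Two applications of Theorem \ref{theorem: mod k whitney trick} then yield a diffeotopy of $M$ rel $\partial M$ after which both $f_\beta(\beta P) \cap g(Q) = \emptyset$ and $f(P) \cap g_\beta(\beta Q) = \emptyset$; general position ($n \geq 4$) lets the two Whitney disks be chosen disjointly, so the moves do not interfere. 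For the second phase, I would choose disjoint open tubular neighborhoods $U_P, U_Q \subset M$ of $f_\beta(\beta P)$ and $g_\beta(\beta Q)$ small enough to be disjoint from $f(P) \cap g(Q)$, set $Z := M \setminus (U_P \cup U_Q)$, $P_0 := f^{-1}(Z)$, $Q_0 := g^{-1}(Z)$, and consider the restricted smooth embeddings $\tilde f : P_0 \to Z$ and $\tilde g : Q_0 \to Z$. By the general-position argument used in Corollary \ref{corollary: inductive disjunction} (the removed tubes have codimension $n+1 \geq 5$), $Z$ is $2$-connected and $P_0, Q_0$ are simply connected of dimension $n+1$; the remaining intersection $\tilde f \pitchfork \tilde g$ is a closed smooth $1$-manifold in $Z$. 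Theorem \ref{theorem: higher whitney trick hatcher} will then supply a disjoining diffeotopy of $Z$ rel $\partial Z$, extendible across $U_P \cup U_Q$ by the identity to a diffeotopy of $M$, as soon as the Hatcher--Quinn invariant $\alpha_1(\tilde f, \tilde g; Z) \in \Omega^{\mathrm{fr}}_1(\mathrm{pt.}) = \Z/2$ vanishes.

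The hard part is ensuring this vanishing, and here the oddness of $k$ is essential. Lemma \ref{lemma: higher intersection creation} produces an embedded sphere $\varphi : S^{n+1} \to Z$ with $\alpha_1(\tilde f, \varphi; Z) = k\xi$, where $\xi$ generates $\Omega^{\mathrm{fr}}_1(\mathrm{pt.}) = \Z/2$, such that $\varphi$ bounds an embedded disk $D^{n+2} \hookrightarrow M$; since $k$ is odd, $k\xi = \xi$. If $\alpha_1(\tilde f, \tilde g; Z) \neq 0$, I would form the ambient connected sum $g \# \varphi$ in $M$: because $\varphi$ bounds a disk in $M$, this modification is realized by an ambient isotopy of $M$, and under the natural diffeomorphism $Q \# S^{n+1} \cong Q$ the resulting map is again a $\langle k\rangle$-embedding $Q \to M$. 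Viewed inside $Z$, however, the operation tubes $\tilde g$ to $\varphi$, and by additivity of the Hatcher--Quinn invariant under such ambient connected sums one gets $\alpha_1(\tilde f, \widetilde{g \# \varphi}; Z) = \alpha_1(\tilde f, \tilde g; Z) + k\xi = 0 \in \Z/2$. After this (ambiently isotopic) modification of $g$, the Hatcher--Quinn obstruction is zero and Theorem \ref{theorem: higher whitney trick hatcher} completes the disjunction. The subtlest points to verify rigorously are the additivity of $\alpha_1$ under the ambient connected-sum modification and the compatibility of the $\langle k\rangle$-structure on $Q$ with the diffeomorphism $Q \# S^{n+1} \cong Q$; both should follow from the Hatcher--Quinn formalism of \cite{HQ 74} combined with standard transversality, but they constitute the technical heart of the argument.
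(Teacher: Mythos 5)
Your overall architecture matches the paper's: remove the Bockstein-stratum intersections with the mod-$k$ Whitney trick (Theorem~\ref{theorem: mod k whitney trick}), cut out tubular neighborhoods of the Bocksteins to reduce to a smooth disjunction problem in a $2$-connected complement $Z$, and use Lemma~\ref{lemma: higher intersection creation} together with the oddness of $k$ to kill the residual Hatcher--Quinn obstruction in $\Omega^{\mathrm{fr}}_{1}(\text{pt.})\cong\Z/2$. The divergence, and the genuine gap, is in your final mechanism: you assert that the correcting sphere $\varphi(S^{n+1})$ ``bounds an embedded disk $D^{n+2}\hookrightarrow M$'' and that consequently the ambient connected sum $g\#\varphi$ is realized by an ambient isotopy of $M$. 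Lemma~\ref{lemma: higher intersection creation} does not give this: its conclusion (ii) is only that $i_{Z}\circ\varphi$ is \emph{null-homotopic} in $M$ (the embedded bounding disk exists for the $n$-sphere of Lemma~\ref{lemma: intersection creation}, but after composing with the generator of $\pi_{n+1}(S^{n})$ and applying Haefliger, only the null-homotopy survives). Producing an embedded disk is not a general-position matter here: a generic map of $D^{n+2}$ into $M^{2n+1}$ has a $3$-dimensional double-point set, and even an embedded disk would generically meet the $(n+1)$-dimensional submanifolds $f(P)$ and $g(Q)$ in $2$-dimensional sets. Worse, the sphere is constructed precisely so that its framed intersection with $\tilde f$ is $k\xi\neq 0$, so it meets $f(P)$ essentially and no bounding disk in the complement of $f(P)$ can exist; sliding $g\#\varphi$ back across any null-homotopy would drag it through $f(P)$, which is exactly what you cannot control. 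So the step ``this modification is realized by an ambient isotopy of $M$'' is unjustified as stated, and with it the reduction back to the original $g$.

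The paper's proof avoids this entirely, and you can repair your argument the same way: tube the sphere onto one of the two pieces --- most conveniently the \emph{nonsingular} piece ($\tilde f$, i.e.\ the restriction $f'$ of $f$ to the complement of the Bockstein tube), which also sidesteps your worry about compatibility of the $\langle k\rangle$-structure on $Q$ with $Q\#S^{n+1}\cong Q$. The modified embedding $\widehat{\varphi}$ is then merely \emph{homotopic} rel boundary to the original (because the sphere is null-homotopic in $M$), its obstruction is $\alpha_{1}+k\xi=0$, and Theorem~\ref{theorem: higher whitney trick hatcher} disjoins it from the other piece inside $Z$; one then concludes for the \emph{original} embedding by Lemma~\ref{lemma: homotopy disjunction} (Hatcher--Quinn's ``homotopic to a disjoint map implies ambiently isotopic to a disjoint embedding'' in the metastable range $m>r+s/2+1$), which supplies the required diffeotopy of $M$ rel $\partial M$. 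In short: replace ``the correction is an ambient isotopy'' by ``the correction is a homotopy, and homotopy-disjunction upgrades to ambient disjunction via Lemma~\ref{lemma: homotopy disjunction}.'' The rest of your outline (the two preliminary applications of Theorem~\ref{theorem: mod k whitney trick}, the $2$-connectivity of $Z$ and simple connectivity of $P_{0},Q_{0}$, and the additivity computation $\alpha_{1}\mapsto\alpha_{1}+k\xi$ along a tube disjoint from the other piece) is sound and agrees with the paper's Propositions~\ref{prop: k-whitney trick application} and~\ref{lemma: k intersection 1}.
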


 \noindent
The proof of the above theorem is proven in stages via several intermediate propositions. 
\begin{proposition} \label{prop: k-whitney trick application} Let 
$$f: (P, \partial_{0}P) \longrightarrow (M, \partial M) \quad \text{and} \quad g: (Q, \partial_{0}Q) \longrightarrow (M, \partial M)$$
be $\langle k\rangle$-embeddings as above and 
suppose that  
$$
\beta_{1}(\Lambda^{1}_{k,k}(f, g; M)) \; = \; \Lambda^{0}_{k}(f_{\beta}, g; M)  \; =  \; 0.
$$
Then there exists a diffeotopy 
$\Psi_{t}: M \longrightarrow M \rel \partial M$
such that   
 $$\Psi_{1}(f_{\beta}(\beta P))\cap g(Q) = \emptyset.$$
\end{proposition}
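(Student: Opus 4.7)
\textbf{Proof plan for Proposition \ref{prop: k-whitney trick application}.} The plan is to recognize this statement as a direct application of the mod-$k$ Whitney trick (Theorem \ref{theorem: mod k whitney trick}) with the roles of the smooth embedding and $\langle k \rangle$-embedding played by $f_{\beta}: \beta P \to M$ and $g: Q \to M$ respectively. Note that $\dim \beta P = n$ and $\dim Q = n+1$, so $\dim \beta P + \dim Q = 2n+1 = \dim M$, which is the correct codimension regime for the Whitney trick. The map $f_{\beta}$ is a genuine smooth embedding: since $f$ is a $\langle k \rangle$-embedding, $\widehat{f}: \widehat{P} \to M$ is an embedding, and $f_{\beta}$ is the restriction of $\widehat{f}$ to $\beta P \hookrightarrow \widehat{P}$.

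First I would check the hypotheses of Theorem \ref{theorem: mod k whitney trick}. Since $n \geq 4$, we have $\dim M = 2n+1 \geq 9 \geq 6$, $\dim \beta P = n \geq 4 \geq 2$, and $\dim Q = n+1 \geq 5 \geq 2$. The $2$-connectedness of $M$ gives simple connectedness; $Q$ is simply connected (hence path connected) by hypothesis, and $\beta P$ is path connected by hypothesis. The boundary disjointness condition $f_{\beta}(\partial \beta P) \cap g(\partial_{0}Q) = \emptyset$ follows from $f_{\beta}(\partial \beta P) = f_{\beta}(\partial_{0,1}P) \subset f(\partial_{0}P)$ together with $f(\partial_{0}P) \cap g(\partial_{0}Q) = \emptyset$. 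For the invariant, the standard sign-swap (which for the pull-back in $P \times X$ versus $X \times P$ is $(-1)^{(m-p)(m-q)} = (-1)^{n(n+1)} = +1$) yields $\Lambda^{0}_{k}(g, f_{\beta}; M) = \Lambda^{0}_{k}(f_{\beta}, g; M) = 0$ by the standing hypothesis.

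Applying Theorem \ref{theorem: mod k whitney trick} with $j = 0$ then produces a diffeotopy $\Psi_{t}: M \to M \rel \partial M$ such that $\Psi_{1}(f_{\beta}(\beta P)) \cap g(\Int(Q)) = \emptyset$. What remains is to promote this to disjointness from all of $g(Q) = g(\Int(Q)) \cup g(\partial_{0}Q) \cup g(\partial_{1}Q)$. For the $\partial_{0}$-stratum, since $\Psi_{t}$ fixes $\partial M$ pointwise, $\Psi_{1}(f_{\beta}(\beta P)) \cap \partial M = f_{\beta}(\partial \beta P)$, which is disjoint from $g(\partial_{0}Q) \subset \partial M$ by the boundary hypothesis. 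For the $\partial_{1}$-stratum (lying in $\Int(M)$ away from $\partial_{0,1}Q$), transversality of $f$ and $g$ yields transversality of $f_{\beta}$ with $g|_{\partial_{1}Q}$, and the dimension count $n + n - (2n+1) = -1$ forces the transverse intersection to be empty; since transversality is an open condition and the diffeotopy furnished by Theorem \ref{theorem: mod k whitney trick} is supported near a single Whitney disk which we may choose to avoid the codimension-$(n+1)$ submanifold $g(\partial_{1}Q)$, transversality is preserved and the intersection remains empty.

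The main obstacle here is not really in this proposition, which is essentially formal once Theorem \ref{theorem: mod k whitney trick} is in hand; the genuine work lies in Theorem \ref{theorem: mod k whitney trick} itself, whose proof occupies Section \ref{subsection: mod k whitney trick} and rests on Lemma \ref{lemma: intersection creation} (creating $k$ intersections of a chosen sign near the Bockstein via fibres of the normal bundle) combined with the classical Whitney trick. The only mild subtlety in the present proposition is that we must carefully track the stratification of $g(Q)$, but all strata other than $g(\Int(Q))$ are handled either by the $\rel \partial M$ condition or by a codimension argument.
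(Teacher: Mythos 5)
Your proposal is correct and follows essentially the same route as the paper: the paper's proof simply observes that $\Lambda^{0}_{k}(f_{\beta}, g; M)=0$ means the algebraic intersection number of $f_{\beta}(\beta P)$ with $g(\Int Q)$ is a multiple of $k$ and then invokes Theorem \ref{theorem: mod k whitney trick} (with $\varphi = f_{\beta}$ and the $\langle k\rangle$-embedding $g$), exactly as you do. Your additional verification of the hypotheses and of disjointness from the strata $g(\partial_{0}Q)$ and $g(\partial_{1}Q)$ is detail the paper leaves implicit, not a different argument.
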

\begin{proof}
Since $0 = \beta_{1}(\Lambda^{1}_{k,k}(f, g; M)) = \Lambda^{0}_{k}(g, f_{\beta}; M)$, it follows that the algebraic intersection number associated to $f_{\beta}(\beta P)$ and $g(\Int Q)$ is a multiple of $k$. 
The desired diffeotopy exists by Theorem \ref{theorem: mod k whitney trick}.
\end{proof}

 \begin{proposition} \label{lemma: k intersection 1}
Let 
$g: (Q, \partial_{0}Q) \longrightarrow (M, \partial M)$
be a $\langle k \rangle$-embedding as above. 
Let $X$ be a smooth manifold of dimension $n+1$ and let 
$\varphi: (X, \partial X) \longrightarrow (M, \partial M)$
be a smooth embedding such that
$$\varphi(\partial X)\cap g(\partial_{0}Q) = \emptyset.$$
If the integer $k$ is odd, then there exists a diffeotopy, 
$\Psi_{t}: M \rightarrow M \; \rel \partial M$
such that,
$$\Psi_{1}(\varphi(X)) \cap g(Q)  \; = \; \emptyset.$$
\end{proposition}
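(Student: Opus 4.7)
The plan is to mimic the strategy of Theorem~\ref{thm: modifying intersections 1} while exploiting the fact that $\varphi$ carries no Bockstein, so only one side of the intersection is singular. I would proceed in two stages: first arrange that $\varphi(X)$ is disjoint from the singular locus $g_\beta(\beta Q)$, and then, in the smooth complement, apply the Hatcher--Quinn disjunction after using the odd-$k$ hypothesis to annihilate its $\Z/2$-valued obstruction.

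For the first stage, regard $\varphi$ momentarily as a $\langle k \rangle$-embedding with empty Bockstein, so that $\Lambda^1_k(g,\varphi;M)$ is defined; it is automatically zero since $\Omega^{SO}_1(\mathrm{pt})_{\langle k \rangle}=0$. Proposition~\ref{proposition: k-intersection} then gives
\[
\Lambda^0(g_\beta,\varphi;M) \;=\; \beta\bigl(\Lambda^1_k(g,\varphi;M)\bigr) \;=\; 0
\]
in $\Omega^{SO}_0(\mathrm{pt})=\Z$, so the integer algebraic intersection of the complementary-dimensional smooth submanifolds $\varphi(X)$ and $g_\beta(\beta Q)$ vanishes. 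Since $M$ is simply connected, $\beta Q$ is path-connected, both dimensions $n+1$ and $n$ are at least $4$, and $\dim M = 2n+1 \geq 9$ comfortably accommodates embedded Whitney disks, the classical Whitney trick yields a diffeotopy rel $\partial M$ after which $\varphi(X) \cap g_\beta(\beta Q) = \emptyset$.

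For the second stage I would pick a tubular neighborhood $U$ of $g_\beta(\beta Q)$ disjoint from the new $\varphi(X)$, and set $M' := M \setminus \Int(U)$, $Q' := g^{-1}(M')$, and $g' := g|_{Q'}$. Because $U$ has codimension $n+1 \geq 5$, the manifold $M'$ remains $2$-connected; $Q'$ is simply connected (inheriting from $Q$); and $g'$ is a smooth embedding. Consider the Hatcher--Quinn class $\alpha_1(\varphi, g'; M') \in \Omega^{\mathrm{fr}}_1(\mathrm{pt}) \cong \Z/2$. If it vanishes, Theorem~\ref{theorem: higher whitney trick hatcher} directly yields a diffeotopy of $M'$ rel $\partial M'$ disjoining $\varphi(X)$ from $g'(Q')$, which extends by the identity across $U$. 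Otherwise $\alpha_1(\varphi, g'; M') = \xi$, and I would invoke Lemma~\ref{lemma: higher intersection creation} applied to $g$ to produce an embedding $\phi: S^{n+1} \to M'$ with $\alpha_1(\phi, g'; M') = k\cdot\xi$ and such that $i_{M'}\circ \phi$ extends to an embedded disk $\bar\phi: D^{n+2} \hookrightarrow M$. Because $k$ is odd, $k\cdot\xi = \xi$ in $\Z/2$, and a finger move of $\varphi(X)$ along $\bar\phi$, realized as a diffeotopy of $M$ since $\bar\phi$ is embedded, produces $\tilde\varphi$ with $\alpha_1(\tilde\varphi, g'; M') = \xi+\xi=0$, whereupon Theorem~\ref{theorem: higher whitney trick hatcher} completes the disjunction.

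The most delicate point will be the additivity formula for the Hatcher--Quinn invariant under the finger move through $\bar\phi$, together with the fact that this move can be realized as a diffeotopy of $M$ rel $\partial M$. Once this is in hand, the odd-$k$ hypothesis enters only via the arithmetic identity $k\cdot\xi=\xi$ in $\Z/2$; all connectivity and codimension conditions for the Whitney and Hatcher--Quinn theorems are comfortably met by $n\geq 4$ and the $2$-connectedness of $M$.
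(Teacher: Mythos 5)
Your overall strategy matches the paper's: annihilate the intersection with $g_\beta(\beta Q)$ via the $\Omega^{SO}_1(\mathrm{pt.})_{\langle k \rangle}=0$ observation and the Whitney trick, then pass to the complement of a tubular neighborhood and kill the Hatcher--Quinn invariant $\alpha_1 \in \Omega^{\mathrm{fr.}}_1(\mathrm{pt.}) \cong \Z/2$ using Lemma~\ref{lemma: higher intersection creation} and the parity of $k$. Up through the identification of the obstruction and the arithmetic $k\cdot\xi = \xi$, you follow the paper's proof essentially verbatim.

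There is, however, a genuine gap in your final step. You assert that Lemma~\ref{lemma: higher intersection creation} produces a sphere $\phi: S^{n+1} \to M'$ whose composition into $M$ ``extends to an embedded disk $\bar\phi: D^{n+2} \hookrightarrow M$,'' and you propose to realize the modification of $\varphi$ as a finger move along $\bar\phi$, hence as a diffeotopy of $M$. But Lemma~\ref{lemma: higher intersection creation} only guarantees that $i_Z\circ\phi$ is \emph{null-homotopic} in $M$, not that it bounds an embedded disk. Indeed, looking at the proof of that lemma: $\phi$ arises by deforming $\widehat\phi\circ\gamma$ where $\widehat\phi(S^n)$ bounds an embedded disk and $\gamma: S^{n+1}\to S^n$ is the Hopf generator; the null-homotopy one inherits is the cone on $\gamma$ pushed through the disk, and $\gamma$ is not injective, so this disk is not embedded and there is no reason to expect any embedded one. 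Contrast this with the genuinely lower-dimensional Lemma~\ref{lemma: intersection creation}, where an embedded bounding disk \emph{is} produced (and the paper exploits exactly this in Theorem~\ref{theorem: mod k whitney trick} to realize a connected sum as an ambient isotopy). In the present higher-dimensional situation that shortcut is unavailable.

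Because of this, the connected sum $\hat\varphi = \varphi \# \phi$ is only \emph{homotopic} rel $\partial X$ to $\varphi$, not ambiently isotopic, and your claimed additivity $\alpha_1(\tilde\varphi, g'; M') = \xi + \xi = 0$ cannot be obtained by a diffeotopy of $M$ (a diffeotopy of $M'$ would leave $\alpha_1$ invariant, since $\alpha_1$ only depends on the homotopy classes in $M'$). The paper's route closes this gap with an extra move that your proof is missing: after disjoining the \emph{homotopic replacement} $\hat\varphi$ from $g(Q)$ via Theorem~\ref{theorem: higher whitney trick hatcher}, one concludes only that $\varphi$ is homotopic rel $\partial X$ to an embedding disjoint from $g(Q)$; one then invokes Lemma~\ref{lemma: homotopy disjunction} (Hatcher--Quinn's homotopy-implies-isotopy disjunction, \cite[Theorem~1.1]{HQ 74}) to convert this homotopy disjointness into the desired diffeotopy. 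Your proof needs to replace the finger-move sentence with this two-stage argument: form the connected sum as a new map rather than an isotopic one, disjoin that, and then invoke Lemma~\ref{lemma: homotopy disjunction} at the end.
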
 
\begin{proof}
By Proposition \ref{proposition: k-intersection}, we have
$$\beta(\Lambda^{1}_{k}(g, \varphi; M)) \; = \;  \Lambda^{0}(g_{\beta}, \varphi; M) \in \Omega^{SO}_{0}(\text{pt.})$$
where 
$$\beta:  \Omega^{SO}_{1}(\text{pt.})_{\langle k\rangle} \longrightarrow \Omega^{SO}_{0}(\text{pt.}), \quad [V] \mapsto [\beta V]$$
is the Bockstein homomorphism. 
By (\ref{equation: elementary bordism calculation}), this Bockstein homomorphism is the zero map for all $k$ (the group $\Omega^{SO}_{1}(\text{pt.})_{\langle k\rangle}$ is equal to zero). 
 It follows that $\Lambda^{0}(g_{\beta}, \varphi; M) \in \Omega^{SO}_{0}(\text{pt.})$ is the zero element and 
 thus the oriented, algebraic intersection number associated to $g_{\beta}(\beta Q)\cap X$ is equal to zero. 
 By application of the \textit{Whitney trick}  \cite[Theorem 6.6]{M 65b}, we may find a diffeotopy of $M$, relative $\partial M$, which pushes $X$ off of the submanifold $g_{\beta}(\beta Q) \subset M$. 
 Using this, we may now assume that $\varphi(X)\cap g(\partial_{1}Q) = \emptyset$. 
 
 Let $U \subset M$ be a closed tubular neighborhood of $f_{\beta}(\beta P)$, disjoint from $X$, such that the boundary of $U$ intersects $f(P)$ transversely. 
As in (\ref{eq: Z tubular neighborhood}), we denote 
 $$Z := M\setminus \Int U, \quad \quad  P' := f^{-1}(Z), \quad \quad f' := f|_{P'}.$$
 Notice that $P'$ is a manifold with boundary and that $f'$ is an embedding which maps $(P', \partial P')$ into $(Z, \partial Z)$.
 Furthermore, $\varphi$ maps $(X, \partial X)$ into $(Z, \partial Z)$.
 To prove the corollary it will suffice to construct a diffeotopy 
 $\Psi'_{t}: Z \longrightarrow Z \; \rel \partial Z$
 such that $\Psi'_{1}(X)\cap P' = \emptyset$. 
 By Theorem \ref{theorem: higher whitney trick hatcher}, 
the obstruction to the existence of such a diffeotopy is the class
$\alpha_{1}(f', \varphi; Z) \in \Omega_{1}^{\text{fr}}(\text{pt.}).$
If $\alpha_{1}(f', \varphi; Z)$ is equal to zero, we are done. 
So suppose that $\alpha_{1}(f', \varphi; Z) = \xi$ where $\xi$ is the non-trivial element in $\Omega_{1}^{\text{fr}}(\text{pt.}) \cong \Z/2$.
Denote by $i_{Z}: Z \hookrightarrow M$ the inclusion map.
By Lemma \ref{lemma: higher intersection creation} there exits an embedding $\phi: S^{n+1} \longrightarrow Z$ such that:
 \begin{itemize} \itemsep.2cm
 \item  $\alpha_{1}(f', \phi; Z) = k\cdot \xi$ where $\xi \in \Omega^{\text{fr}}_{1}(\text{pt.}) \cong \Z/2$ is the standard generator, 
 \item the embedding $i_{Z}\circ \phi: S^{n+1} \longrightarrow M$ is null-homotopic.
 \end{itemize}
 Since $k$ is odd, we have $\alpha_{1}(f', \phi; Z) = \xi$. 
  We denote by $\widehat{\varphi}: X \longrightarrow M$ the embedding obtained by forming the connected sum of $\varphi(X)$ with $i_{Z}\circ\varphi(S^{n+1})$ along the thickening of an embedded arc that is disjoint from $f(P)$, $U$, and $X$. 
Since $i_{Z}\circ\varphi: S^{n+1} \longrightarrow M$ is null-homotopic, it follows that $\widehat{\varphi}$ is homotopic, relative to $\partial X$, to the original embedding $\varphi$.
We have
$$\alpha_{1}(f', \widehat{\varphi}; Z) = \alpha_{1}(f', \varphi; Z) + \alpha_{1}(f', \phi; Z) = \xi + \xi = 0,$$
and so there exists a diffeotopy $\Psi'_{t}: Z \rightarrow Z \; \rel \partial Z$ such that $\Psi'_{1}(\widehat{\varphi}(X))\cap f'(P') = \emptyset$. 
We then extend $\Psi'_{t}$ identically over $M\setminus Z$ to obtain a diffeotopy 
$$\widehat{\Psi}_{t}: M \longrightarrow M\; \rel \partial M$$ 
such that $\widehat{\Psi}_{1}(\widehat{\varphi}(X))\cap f(P) = \emptyset$. 
Now, since $\varphi$ is homotopic relative $\partial X$ to the embedding $\widehat{\Psi}_{1}\circ\widehat{\varphi}$ and $\widehat{\Psi}_{1}(\widehat{\varphi}(X))\cap f(P) = \emptyset$, we may apply Lemma \ref{lemma: homotopy disjunction} to obtain a diffeotopy 
$$\Psi_{t}: M \longrightarrow M \rel \partial M$$
such that $(\Psi_{1}\circ\varphi(X))\cap f(P) = \emptyset$. 
This concludes the proof of the proposition.
\end{proof}

We can now complete the proof of Theorem \ref{thm: modifying intersections}. 
  \begin{proof}[Proof of Theorem \ref{thm: modifying intersections}]
 By hypothesis we have $\Lambda^{1}_{k,k}(f, g; M) = 0,$ and thus $\Lambda^{0}_{k}(f_{\beta}, g; M) = 0$, and so 
 by Proposition \ref{prop: k-whitney trick application} we may assume that 
$f_{\beta}(\beta P)\cap g(Q) = \emptyset$.
Choose a closed tubular neighborhood $U \subset M$ of $f_{\beta}(\beta P)$, disjoint from $g(Q)$, with boundary transverse to $f(P)$. 
As in (\ref{eq: Z tubular neighborhood}) we denote, 
\begin{equation} \label{equation: cut out manifolds}
Z := M \setminus \Int U, \quad P' := f^{-1}(Z), \quad \text{and} \quad f' := f|_{P'}. 
\end{equation}
With these definitions, $P'$ is an oriented manifold with boundary and 
$$f': (P', \partial P') \longrightarrow (Z, \partial Z)$$ 
is an embedding. 
Furthermore, since $U$ was chosen to be disjoint from $g(Q)$, we have $g(Q) \subset Z$. 
Let $g': (Q, \partial_{0}) \longrightarrow (Z, \partial Z)$ denote the $\langle k \rangle$-embedding obtained by restricting the codomain of $g$. 
To finish the proof, we then apply Proposition \ref{lemma: k intersection 1} to the embedding $f': (P', \partial P') \longrightarrow (Z, \partial Z)$ and $\langle k \rangle$-embedding $g': (Q, \partial_{0}Q) \longrightarrow (Z, \partial Z)$, to obtain a diffeotopy of $Z$ (relative $\partial Z$) that pushes $f'(P')$ off of $g'(Q)$. 
This completes the proof of the theorem.
 \end{proof}
 
 We now come to an important corollary. 
 Recall from Section \ref{subsection: classification of k, l manifolds} the $\langle k, k\rangle$-manifold $A_{k}$. 
 \begin{corollary} \label{corollary: intersection at A-k}
Let $f$ and $g$ be exactly as in the statement of Theorem \ref{thm: modifying intersections}. 
 Suppose that the class 
 $\Lambda^{1}_{k, k}(f, g; M)$ 
 is equal to the class represented by the closed $1$-dimensional $\langle k, k\rangle$-manifold $+A_{k}$. 
 If $k$ is odd then there exists a diffeotopy $\Psi_{t}: M \longrightarrow M \; \rel \partial M$ such that the transverse pull-back $(\Psi_{1}\circ f) \pitchfork g$ is diffeomorphic to $A_{k}$. 
 \end{corollary}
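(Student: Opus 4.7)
The strategy is to reduce to Theorem \ref{thm: modifying intersections} by isolating a canonical $A_k$-sub-intersection of $N := f \pitchfork g$ and applying that theorem to what remains. The argument has two main steps: first, use the mod-$k$ Whitney trick (Theorem \ref{theorem: mod k whitney trick}) to put the Bocksteins $\beta_1 N$ and $\beta_2 N$ into standard form; then excise the isolated $A_k$ and apply Theorem \ref{thm: modifying intersections} to the complement.

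By Proposition \ref{prop: k-l framed intersection}(ii)--(iii) and equation (\ref{equation: bockstein of A-k}), the Bocksteins $\beta_1 \Lambda^1_{k,k}(f, g; M)$ and $\beta_2 \Lambda^1_{k,k}(f, g; M)$ equal $[+\langle 1\rangle]$ and $[-\langle 1\rangle]$ respectively in $\Omega^{SO}_0(\text{pt.})_{\langle k\rangle} \cong \Z/k$. Apply Theorem \ref{theorem: mod k whitney trick} to the pair $(f_\beta, g)$ to arrange $f_\beta(\beta P)\cap g(\Int Q) = +\{\mathrm{pt}\}$ via a diffeotopy of $M$ rel $\partial M$. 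Then apply it to the pair $(g_\beta, f)$, supplemented if necessary by Lemma \ref{lemma: intersection creation} (to introduce the correct sign) together with ordinary Whitney cancellations, so as to arrange $f(\Int P) \cap g_\beta(\beta Q) = -\{\mathrm{pt}\}$; the second diffeotopy is chosen in general position to have support disjoint from the first intersection point (possible since $\dim M = 2n+1 \geq 9$). After these modifications $\beta_1 N = +\{\mathrm{pt}\}$ and $\beta_2 N = -\{\mathrm{pt}\}$ at the manifold level. Now invoke Proposition \ref{proposition: generator of k-k bordism}: since $[N] = [+A_k]$ generates $\Omega^{SO}_1(\text{pt.})_{\langle k,k\rangle}$, we have $N \cong (+A_k \times \langle r\rangle) \sqcup (-A_k \times \langle s\rangle) \sqcup Y$ with $r - s \equiv 1 \pmod k$ and $\beta_1 Y$ or $\beta_2 Y$ empty. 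The manifold-level identifications of the Bocksteins force $r = 1$, $s = 0$, and $\beta_1 Y = \beta_2 Y = \emptyset$, so $Y$ is a disjoint union of closed circles and $N = N_0 \sqcup Y$ with $N_0 \cong A_k$.

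Let $C \subset M$ be a closed regular neighborhood of the 1-complex $\hat f(N_0) \subset M$ (a graph with two vertices and $k$ edges joining them, the vertices being the two intersection points arranged in the previous step), chosen disjoint from the circles $Y$. Set $M' := M \setminus \Int C$, $P' := f^{-1}(M')$, and $Q' := g^{-1}(M')$, each equipped with the induced $\langle k\rangle$-structure in which the excision boundary is absorbed into $\partial_0$. Since $\dim M \geq 9$, $\dim P = \dim Q = n+1 \geq 5$, and $\dim \beta P = \dim \beta Q = n \geq 4$, general position ensures that $M'$ remains $2$-connected (removing a regular neighborhood of a 1-complex from $M$), that $P'$ and $Q'$ remain simply connected (removing star-shaped 1-complexes), and that $\beta P'$ and $\beta Q'$ remain path-connected (each loses only a neighborhood of a single point, as the $k$ arc-endpoints of $N_0$ in $\partial_1 P$ all project to the same point of $\beta P$). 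By construction $\Lambda^1_{k,k}(f|_{P'}, g|_{Q'}; M') = [Y] = 0$ in $\Omega^{SO}_1(\text{pt.})_{\langle k, k\rangle}$, since $\Omega^{SO}_1(\text{pt.}) = 0$. Theorem \ref{thm: modifying intersections} then yields a diffeotopy of $M'$ rel $\partial M'$ disjointing $f(P')$ from $g(Q')$; extending by the identity over $C$ gives the desired diffeotopy $\Psi_t$ of $M$, with $(\Psi_1 \circ f) \pitchfork g = N_0 \cong A_k$.

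The main obstacle is the bookkeeping in the final step: verifying that the inherited $\langle k\rangle$-structures on $P'$ and $Q'$ remain compatible with the boundary decomposition and connectivity hypotheses demanded by Theorem \ref{thm: modifying intersections}, especially that $\beta P'$ and $\beta Q'$ stay path-connected after excision and that the new faces are correctly assigned to $\partial_0$ rather than $\partial_1$. The assumption that $k$ is odd enters only through Theorem \ref{thm: modifying intersections}; the boundary reductions and structural identification of $N$ do not themselves require oddness.
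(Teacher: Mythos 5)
Your proposal is correct in outline and shares the paper's core strategy---isolate an $A_k$-piece of $f\pitchfork g$, excise a neighborhood of its image, and apply Theorem \ref{thm: modifying intersections} to the complement---but it differs from the paper's own proof in two substantive ways, both of which work in your favor. First, the paper simply asserts that $f\pitchfork g$ splits as $A\sqcup Y$ with $A\cong +A_{k}$ and $[Y]=0$; as stated this needs justification, since a representative of the class $[+A_{k}]$ need not contain any $+A_{k}$ piece at all (for $k=3$, two copies of $-A_{3}$ also represent it). Your preliminary normalization of the Bocksteins via Theorem \ref{theorem: mod k whitney trick}, followed by the counting argument through Proposition \ref{proposition: generator of k-k bordism}, supplies exactly this missing justification. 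Second, the paper excises a neighborhood of $f_{\beta}(\beta P)\cup A$, so its $P'$ has empty Bockstein and Theorem \ref{thm: modifying intersections} is really being invoked in a degenerate case (in effect Proposition \ref{lemma: k intersection 1}); you excise only a regular neighborhood of the two-vertex, $k$-edge graph $\hat{f}(N_{0})$, which keeps $\beta P'$ and $\beta Q'$ nonempty and path-connected so the theorem applies verbatim, at the cost of the connectivity bookkeeping you carry out (correctly, including the observation that the $k$ endpoints of $N_{0}$ in $\partial_{1}P$ lie over a single point of $\beta P$). The one place you are too quick is the second normalization: choosing that diffeotopy ``with support disjoint from the first intersection point'' is not by itself enough---you must ensure no new points of $f_{\beta}(\beta P)\cap g(\Int(Q))$ are created anywhere, which requires the Whitney disks and the meridian-sphere apparatus of Lemma \ref{lemma: intersection creation} to be kept off $f_{\beta}(\beta P)$ and off a neighborhood of the first point (routine general position in these dimensions, but it should be said). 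Alternatively you could drop the second normalization entirely: normalizing $\beta_{1}N$ alone already forces $r=1$, $s=0$, $\beta_{1}Y=\emptyset$ in Proposition \ref{proposition: generator of k-k bordism}, and your excision argument goes through with $Y$ allowed to contain arcs meeting $\partial_{2}$.
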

 \begin{proof}
 Since $\Lambda^{1}_{k,k}(f, g; M)$ is equal to the class represented by $+A_{k}$ in $\Omega^{SO}_{1}(\text{pt.})_{\langle k, k\rangle}$, it follows that $f\pitchfork g$ is diffeomorphic (as an oriented $\langle k, k\rangle$-manifold) to the disjoint union of precisely one copy of $+A_{k}$ together with some other oriented $\langle k, k\rangle$-manifold, that represents the zero element in $\Omega^{SO}_{1}(\text{pt.})_{\langle k, k\rangle}$. 
 We may write
 \begin{equation}
 f(P)\cap g(Q) \; = \; A \sqcup Y,
 \end{equation}
 where 
 $$A \cong +A_{k} \quad \text{and} \quad  [Y] = 0 \;  \text{in $\Omega^{SO}_{1}(\text{pt.})_{\langle k, k\rangle}$.}$$
 Let $U \subset M$ be a closed neighborhood of $f_{\beta}(\beta P)\cup A$, disjoint from $Y$, with boundary transverse to both $f(P)$ and $g(Q)$. 
We then denote
\begin{equation}
\xymatrix@C-.10pc@R-1.8pc{
Z := M\setminus\Int(U), & P' := f^{-1}(Z), & Q' := g^{-1}(Z).
}
\end{equation}
Notice that both $P'$ and $Q'$ are $\langle k\rangle$-manifolds with 
$$\xymatrix@C-.10pc@R-1.8pc{
\partial_{0}P' = f^{-1}(\partial Z), & \partial_{1}P' = (f|_{\partial_{1}P})^{-1}(Z), & \beta P' = f_{\beta}^{-1}(Z), \\
\partial_{0}Q' = g^{-1}(\partial Z), & \partial_{1}Q' = (g|_{\partial_{1}Q})^{-1}(Z), & \beta Q' = g_{\beta}^{-1}(Z).
}$$
We denote by 
$$f': (P', \partial_{0}P') \longrightarrow (Z, \partial Z) \quad \text{and} \quad g': (Q', \partial_{0}Q') \longrightarrow (Z, \partial Z)$$ 
the $\langle k\rangle$-embeddings given by restricting $f$ and $g$.
By construction, the pull-back $f'\pitchfork g'$ is diffeomorphic as an oriented $\langle k, k\rangle$-manifold to $Y$, which represents the zero element in $\Omega_{1}^{SO}(\text{pt.})_{ \langle k, k\rangle}$.
It follows that $\Lambda^{1}_{k,k}(f', g'; Z) = 0$.  
By Theorem \ref{thm: modifying intersections} we obtain a diffeotopy $\Psi_{t}: Z \longrightarrow Z \; \rel \partial Z$,
 such that $\Psi_{1}(f^{'}(P'))\cap g'(Q') = \emptyset$.
This concludes the proof.
\end{proof}

\section{$\langle k \rangle$-Immersions and Embeddings} \label{section: k-immersions}
In this section we determine the conditions for when a $\langle k \rangle$-map can be deformed to a $\langle k \rangle$-immersion or a $\langle k \rangle$-embedding. 
The techniques of this section enable us to prove Theorem \ref{theorem: immersion to embedding} (which is restated again in this section as Theorem \ref{theorem: represent by embedding 2}). 
\subsection{A recollection of Smale-Hirsch theory}
Let $N$ and $M$ be smooth manifolds of dimensions $n$ and $m$ respectively. 
Denote by $\Imm(N, M)$ the space of immersions $N \rightarrow M$, topologized in the $C^{\infty}$-topology. 
Let $\Imm^{f}(N, M)$ denote the space of bundle maps
$TN \longrightarrow TM$
which are fibre-wise injective. 
Elements of the space $\Imm^{f}(N, M)$ are called \textit{formal immersions}. 
There is a map 
$\xymatrix{
\mathcal{D}: \Imm(N, M) \longrightarrow \Imm^{f}(N, M)
}$
defined by sending an immersion $\phi: N \longrightarrow M$ to the bundle injection given by its differential $D\phi: TN \longrightarrow TM$. 
The following theorem is proven in \cite[Chapter III, Section 9]{A 84} and is originally due to Hirsch and Smale.  
\begin{theorem} \label{theorem: smale-hirsch theorem}
The if $\dim(N) < \dim(M)$, then the map $\mathcal{D}: \Imm(N, M) \longrightarrow \Imm^{f}(N, M)$ is a weak homotopy equivalence. 
In the case that $\dim(N) = \dim(M)$, then $\mathcal{D}$ is a weak homotopy equivalence if $N$ is an open manifold. 
\end{theorem}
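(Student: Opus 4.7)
The statement is the classical Smale--Hirsch theorem, and the cleanest route is via Gromov's parametric h-principle, carried out by a handle-by-handle induction. The plan is to reinterpret $\mathcal{D}$ as the comparison between holonomic and all sections of an auxiliary fiber bundle: let $E \to N$ be the bundle whose fiber over $x \in N$ is $\mathrm{Mono}(T_xN, TM)$, the space of injective linear maps into any tangent space of $M$ (so a section of $E$ is a point of $\Imm^f(N,M)$, while a section arising as the differential of an immersion $N\to M$ is holonomic). The theorem then says holonomic and formal sections have the same weak homotopy type.

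First I would prove a local version: for $N = D^n$, the space $\Imm(D^n,M)$ is weakly equivalent to $\Imm^f(D^n,M)$, both deformation-retracting onto the space $\mathrm{Mono}(\R^n, T_{y}M)$ at a chosen basepoint $y \in M$. The nontrivial direction (building an immersion from a linear monomorphism) is an exponential-chart construction, and the parametric version is standard. Next I would fix a handle decomposition $N = N_0 \cup H_1 \cup \cdots$ with handles of strictly increasing index, and inductively show that the restriction $\Imm(N_i \cup H_{i+1}, M) \to \Imm(N_i, M)\times_{\Imm^f(N_i,M)} \Imm^f(N_i \cup H_{i+1}, M)$ is a weak equivalence. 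The inductive step reduces, via a tubular neighborhood of the core of $H_{i+1}$, to the local statement for a disk of the handle's index, together with an isotopy extension argument that trades the restriction fibration for a product. Running the induction over all handles yields the theorem.

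The main obstacle is the top-dimensional handle in the equidimensional case. When $\dim N = \dim M$ and $N$ is closed, the decomposition terminates in a handle $D^m$ attached along all of $S^{m-1}$; filling in such a handle requires producing an immersion $D^m \to M$ with prescribed boundary differential, and this is obstructed by the degree of the induced map $S^{m-1}\to \mathrm{Mono}(\R^m,TM)|_{\partial}$ (essentially the Smale invariant obstruction). This is exactly why the hypothesis ``$N$ open'' is imposed: an open manifold admits a handle decomposition using only handles of index $<m$, so the top handle never appears. In the strict codimension range $\dim N < \dim M$, the larger fiber $\mathrm{Mono}(\R^n, T M)$ is connected in all the relevant degrees, so the obstruction vanishes and all handles, including top-dimensional ones, can be crossed. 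The rest of the argument is then purely formal homotopy-theoretic bookkeeping along the induction.
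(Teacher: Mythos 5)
The paper does not actually prove this statement: it is quoted as the classical Smale--Hirsch theorem and attributed to \cite[Chapter III, Section 9]{A 84}, so there is no in-paper argument to compare against. Your outline is the standard handle-by-handle h-principle proof, i.e.\ the same family of argument as the cited classical one (local statement for disks, restriction fibrations/covering homotopy, induction over handles, openness used to avoid index-$m$ handles in the equidimensional case).

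There is, however, a genuine gap at the step where you cross a top-dimensional handle when $\dim(N) < \dim(M)$. You claim this is possible because ``the fiber $\mathrm{Mono}(\R^n, TM)$ is connected in all the relevant degrees, so the obstruction vanishes.'' That is not correct: $\mathrm{Mono}(\R^n,\R^m)$ deformation retracts onto the Stiefel manifold $V_n(\R^m)$, which is only $(m-n-1)$-connected; in codimension one it is merely path-connected, and for a parametric statement (a weak equivalence on all homotopy groups, with families of arbitrary dimension) one would need connectivity far beyond what holds. Moreover, what the inductive step requires is not the vanishing of a single obstruction class but the full local h-principle for the $n$-disk relative to a boundary germ: the map from genuine immersions of $D^n$ with prescribed behavior near $\partial D^n$ to the corresponding formal space must be a weak equivalence. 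In codimension $\geq 1$ this is precisely Smale's theorem on immersions of disks/spheres with prescribed boundary data --- a substantive theorem (think of immersions of $S^2$ in $\R^3$, where $V_2(\R^3)\simeq SO(3)$ is connected but by no means highly connected, yet the classification still holds) --- and it must be invoked or proved by a separate argument (Smale's original fibration argument, or compression/convex-integration-type techniques); it cannot be deduced from connectivity of the Stiefel fiber. Once that input is supplied, the rest of your induction is the classical proof. A minor additional slip: in your local statement, $\Imm(D^n,M)$ and $\Imm^{f}(D^n,M)$ retract onto the total space of the bundle $\mathrm{Mono}(\R^n,TM)\rightarrow M$ (evaluate the $1$-jet at the center of the disk), not onto a single fiber $\mathrm{Mono}(\R^n,T_yM)$, unless one has already fixed a component and a basepoint.
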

\noindent 
Let $\widehat{\Imm}(N, M)$ denote the space of pairs 
$(\phi, \mb{v}) \in \Imm(N, M)\times\Maps(N, TM)$
that satisfy:
\begin{enumerate}\itemsep.2cm
\item[i.] $\pi(\mb{v}(x)) = \phi(x)$ for all $x \in N$, where $\pi: TM \rightarrow M$ is the bundle projection,
\item[ii.] for each $x \in N$, the vector $\mb{v}(x)$ is transverse to the vector subspace 
$$D\phi(T_{x}N) \subset T_{\phi(x)}M,$$
where $D\phi$ is the differential of $\phi$. 
\end{enumerate}
Similarly, we define $\widehat{\Imm}^{f}(N, M)$ to be the space of pairs $(\psi, \mb{v}) \in \Imm^{f}(N, M)\times\Maps(N, TM)$
which satisfy:
\begin{enumerate}
\item[i.] $\pi(\mb{v}(x)) = \pi(\psi(x))$ for all $x \in N$, where $\pi: TM \rightarrow M$ is the bundle projection,
\item[ii.] for all $x \in N$, the vector $\mb{v}(x)$ is transverse to the vector subspace 
$$\psi(T_{x}N) \subset T_{\pi(\psi(x))}M.$$ 
\end{enumerate}
There is a map 
\begin{equation} \label{equation: differential map}
\xymatrix{
\widehat{\mathcal{D}}: \widehat{\Imm}(N, M) \longrightarrow \widehat{\Imm}^{f}(N, M), \quad (\phi, \mb{v}) \mapsto (D\phi, \mb{v}).
}
\end{equation}
 The following is an easy corollary of Theorem \ref{theorem: smale-hirsch theorem}.
 \begin{corollary} \label{theorem: h-principle}
Suppose that $\dim(N) < \dim(M)$. 
 Then the map $\widehat{\mathcal{D}}$ from (\ref{equation: differential map}) is a weak homotopy equivalence. 
 \end{corollary}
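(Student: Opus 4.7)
The plan is to compare $\widehat{\mathcal{D}}$ to the Smale-Hirsch map $\mathcal{D}$ by means of the forgetful maps $p(\phi,\mb{v}) = \phi$ and $q(\psi,\mb{v}) = \psi$, which fit into the commutative square
\begin{equation*}
\xymatrix{
\widehat{\Imm}(N, M) \ar[rr]^{\widehat{\mathcal{D}}} \ar[d]^{p} && \widehat{\Imm}^{f}(N, M) \ar[d]^{q} \\
\Imm(N, M) \ar[rr]^{\mathcal{D}} && \Imm^{f}(N, M).
}
\end{equation*}
By Theorem \ref{theorem: smale-hirsch theorem}, applied in the regime $\dim(N) < \dim(M)$, the bottom row is a weak homotopy equivalence. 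The strategy is to reduce the claim for $\widehat{\mathcal{D}}$ to the claim for $\mathcal{D}$ by comparing the long exact sequences of homotopy groups of the two vertical fibrations.

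The key observation is the identification of fibres. Over any immersion $\phi \in \Imm(N,M)$, the fibre $p^{-1}(\phi)$ consists of sections $\mb{v}$ of $\phi^{*}TM$ satisfying the pointwise open condition $\mb{v}(x) \notin D\phi(T_{x}N)$ for all $x \in N$. Over the corresponding formal immersion $D\phi$, the fibre $q^{-1}(D\phi)$ consists, by definition, of sections $\mb{v}$ of $(\pi \circ D\phi)^{*}TM = \phi^{*}TM$ satisfying the same condition $\mb{v}(x) \notin D\phi(T_{x}N)$. Thus $p^{-1}(\phi)$ and $q^{-1}(D\phi)$ coincide as subsets of $\Maps(N, TM)$, and the restriction of $\widehat{\mathcal{D}}$ to $p^{-1}(\phi)$ is literally the identity onto $q^{-1}(D\phi)$.

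Next I would verify that $p$ and $q$ are Serre fibrations. Given a compact family of immersions $\phi_{t}$ parametrised by $I^{k}$ and an initial transverse lift $\mb{v}_{0}$ along $\phi_{0}$, I would fix a complete Riemannian metric on $M$ and parallel-transport $\mb{v}_{0}$ along the trajectories $t \mapsto \phi_{t}(x)$ using the Levi-Civita connection; this produces a continuous family $\tilde{\mb{v}}_{t}$ of sections of $\phi_{t}^{*}TM$. The transversality condition $\tilde{\mb{v}}_{t}(x) \notin D\phi_{t}(T_{x}N)$ is open and holds at $t=0$; a standard bump-function rescaling, combined with a partition of unity to control behaviour at infinity in $N$, upgrades this to a genuine lift over all of $I^{k}$. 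The identical procedure, using the basepoint projection $\pi \circ \psi_{t}$ in place of $\phi_{t}$ and the subspace $\psi_{t}(T_{x}N)$ in place of $D\phi_{t}(T_{x}N)$, shows that $q$ is also a Serre fibration.

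With both vertical maps now fibrations sharing identical fibres under the correspondence $\phi \leftrightarrow D\phi$, the map $\widehat{\mathcal{D}}$ induces a morphism of long exact sequences of homotopy groups in which the fibre maps are identities and the base maps are isomorphisms by Theorem \ref{theorem: smale-hirsch theorem}. The five lemma then yields that $\widehat{\mathcal{D}}$ induces isomorphisms on all homotopy groups and is therefore a weak homotopy equivalence. The main technical obstacle is the fibration property when $N$ is non-compact: one must maintain the pointwise transversality condition uniformly over the whole of $N$ throughout the lifted homotopy, which requires completeness of the metric on $M$ together with a careful partition-of-unity construction. No geometric input beyond Smale-Hirsch is needed.
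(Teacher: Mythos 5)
Your overall strategy is sound: compare $\widehat{\mathcal{D}}$ with the Smale--Hirsch map $\mathcal{D}$ through the forgetful maps $p, q$, observe that $\widehat{\mathcal{D}}$ restricts to the identity on the (literally identical) fibres, show $p, q$ are Serre fibrations, and run the five lemma on the two long exact sequences. The paper offers no proof of this corollary --- it is labelled an ``easy corollary'' of Theorem \ref{theorem: smale-hirsch theorem} --- but your structure is of the same flavour as the paper's treatment of the restriction maps $R, R^{f}$ in Lemmas \ref{lemma: restriction map fibration} and \ref{lemma: h-cartesian diagrams}. The fibre identification and five-lemma step are correct as stated.

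The one genuine gap is in your verification of the Serre fibration property. Parallel transport of $\mb{v}_{0}$ in $TM$ along $t \mapsto \phi_{t}(x)$ is insensitive to the rotating subspace $D\phi_{t}(T_{x}N)$, so for $t > 0$ the transported vector $\tilde{\mb{v}}_{t}(x)$ can (and generically will) fall into that subspace. ``Bump-function rescaling'' cannot repair this: the condition $\mb{v}(x) \notin D\phi_{t}(T_{x}N)$ is invariant under scalar multiplication of $\mb{v}(x)$ because the forbidden set is a linear subspace, so no rescaling turns a non-transverse vector into a transverse one. The correct device is to transport with a connection adapted to the splitting rather than the Levi-Civita connection of $M$. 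Equip the pullback bundle $\phi_{t}^{*}TM$ over $I^{k}\times N$ with a metric, take the orthogonal decomposition $\phi_{t}^{*}TM = D\phi_{t}(TN) \oplus D\phi_{t}(TN)^{\perp}$, and choose a metric connection preserving this splitting (e.g.\ the compression of any metric connection to the two summands). Transporting $\mb{v}_{0}$ by this connection keeps the length of the normal component constant, so it never vanishes, and transversality is preserved for all $t$; equivalently, transport the image of $\mb{v}_{0}$ in the normal quotient bundle $\phi_{t}^{*}TM / D\phi_{t}(TN)$ by a metric connection and lift back orthogonally. The identical argument with $\psi_{t}(TN)$ in place of $D\phi_{t}(TN)$ shows $q$ is a fibration. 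The completeness and partition-of-unity concerns you raise for non-compact $N$ are then unnecessary, since the connection lives on the pullback bundle over $I^{k}\times N$ and no geodesic flow in $M$ is used. With this repair the rest of your proof goes through.
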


\subsection{The space of $\langle k \rangle$-immersions} 
We now proceed to prove a version of Corollary \ref{theorem: h-principle} for immersions of $\langle k \rangle$-manifolds. 
For what follows, let $M$ be a manifold of dimension $m$ and let $P$ be a $\langle k \rangle$-manifold of dimension $p$. 
We will need to construct a suitable space of $\langle k \rangle$-immersions and formal $\langle k \rangle$-immersions. 

Choose a collar embedding $h: \partial_{1}P\times[0,\infty) \longrightarrow P$, with $h^{-1}(\partial_{1}P) = \partial_{1}P\times\{0\}$. 
Denote by $\mb{v}_{h} \in \Gamma_{\partial_{1}P}(TP)$ the inward pointing vector field along $\partial_{1}P$ determined by the differential of the collar embedding $h$.
Using $\mb{v}_{h}$ we have maps,
\begin{equation} \label{equation: restriction map}
\xymatrix@C-.10pc@R-1.8pc{
R: \Imm(P, M) \longrightarrow \widehat{\Imm}(\partial_{1}P, M), & \phi \mapsto (\phi|_{\partial P}, \; D\phi\circ\mb{v}_{h}), \\
R^{f}:  \Imm^{f}(P, M) \longrightarrow \widehat{\Imm}^{f}(\partial_{1}P, M), & \psi \mapsto (\psi|_{\partial P}, \; \psi\circ\mb{v}_{h}).
}
\end{equation}
The next lemma follows from the basic results of \cite[Chapter III: Section 9]{A 84}. 
\begin{lemma} \label{lemma: restriction map fibration}
The map $R^{f}$ is a Serre-fibration in the case that $\dim(P) \leq \dim(M)$. 
The map $R$ is a Serre-fibration in the case that $\dim(P) < \dim(M)$.   
\end{lemma}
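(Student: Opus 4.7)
The plan is to recognize both $R$ and $R^f$ as restriction maps of section spaces of an appropriate fibre bundle, and then invoke two classical ingredients: (a) restriction of sections of a fibre bundle to a closed submanifold (which is a cofibration) is a Serre fibration, and (b) the parametric Smale--Hirsch theorem in the form of Corollary \ref{theorem: h-principle} in a collar neighborhood of $\partial_{1}P$.

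First I would handle $R^{f}$. Let $\mathrm{Inj}(TP, TM) \to P$ denote the bundle whose fibre over $x \in P$ is the disjoint union over $y \in M$ of the spaces of linear injections $T_{x}P \to T_{y}M$, so that $\Imm^{f}(P, M) = \Gamma(P, \mathrm{Inj}(TP, TM))$. Using the collar $h$, the restriction $TP|_{\partial_{1}P}$ splits canonically as $T(\partial_{1}P) \oplus \mathbb{R}\mathbf{v}_{h}$; under this identification a section of $\mathrm{Inj}(TP, TM)|_{\partial_{1}P}$ is precisely the same data as a pair $(\psi|_{\partial_{1}P}, \psi\circ \mathbf{v}_{h}) \in \widehat{\Imm}^{f}(\partial_{1}P, M)$ (the transversality condition in the definition of $\widehat{\Imm}^{f}$ is automatic since we started from a fibrewise injection on all of $TP|_{\partial_{1}P}$). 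Hence $R^{f}$ is naturally identified with the restriction map $\Gamma(P, \mathrm{Inj}(TP, TM)) \to \Gamma(\partial_{1}P, \mathrm{Inj}(TP, TM)|_{\partial_{1}P})$. Since $\partial_{1}P$ is a closed submanifold of $P$, the inclusion is a cofibration, and the usual homotopy lifting argument for section spaces of fibre bundles (see e.g.\ Palais) shows that this restriction map is a Serre fibration; this requires only $\dim(P) \leq \dim(M)$ so that the fibres of $\mathrm{Inj}(TP, TM)$ are non-empty.

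Next I would handle $R$. Given a test diagram with $\phi_{0} \in \Imm(P, M)$ and a family $(\phi^{s}_{t}, \mathbf{v}^{s}_{t}) \in \widehat{\Imm}(\partial_{1}P, M)$ parametrized by $(s, t) \in D^{n}\times[0,1]$, with $(\phi^{s}_{0}, \mathbf{v}^{s}_{0}) = R(\phi^{s}_{0})$ the restriction of a family $\phi^{s}_{0} \in \Imm(P, M)$, I need to produce a compatible lift to $\Imm(P, M)$. Using the collar $h$, for each $(s, t)$ one can write down a candidate immersion on the collar $\partial_{1}P\times[0,\epsilon)$ by Taylor-expanding off $\partial_{1}P$ using $\phi^{s}_{t}$ and $\mathbf{v}^{s}_{t}$, and glue it to $\phi^{s}_{0}$ away from $\partial_{1}P$ via a cutoff in the collar parameter. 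The resulting map is a \emph{formal} immersion on $P$ by construction (one first carries out the analogous construction at the level of formal immersions, where $R^{f}$ is already known to be a fibration by the previous paragraph), and the point is that under the strict inequality $\dim(P) < \dim(M)$, Corollary \ref{theorem: h-principle} in a neighborhood of $\partial_{1}P$ (kept fixed away from $\partial_{1}P$) promotes this formal immersion to an honest immersion depending continuously on $(s, t)$, agreeing with $\phi^{s}_{0}$ outside a collar neighborhood. This provides the required lift.

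The main obstacle is the $R$ case. For $R^{f}$ the argument is purely bundle-theoretic and works whenever the target fibres are non-empty, hence under $\dim(P) \leq \dim(M)$. For $R$ one genuinely needs the relative parametric Smale--Hirsch theorem to convert the formal lift into an actual immersion, and this is where the strict inequality $\dim(P) < \dim(M)$ enters: it guarantees that on the collar $\partial_{1}P\times[0,\epsilon)$ (an open manifold of dimension $\dim(P)$), formal and genuine immersions into $M$ are weakly equivalent rel boundary, so the cutoff-and-glue construction can be performed continuously in the parameter.
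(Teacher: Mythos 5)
Your treatment of $R^{f}$ is correct and is the standard argument (the paper itself gives nothing beyond the citation): identifying $\Imm^{f}(P,M)$ with sections of the bundle $\mathrm{Inj}(TP,TM)\to P$, and $\widehat{\Imm}^{f}(\partial_{1}P,M)$ with its sections over $\partial_{1}P$ via the collar splitting $TP|_{\partial_{1}P}\cong T(\partial_{1}P)\oplus\epsilon^{1}$, turns $R^{f}$ into restriction of sections along a closed cofibration, which is a fibration.

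The gap is in the case of $R$. A Serre fibration demands exact lifts: the family $\Phi^{s}_{t}\in\Imm(P,M)$ must satisfy $R(\Phi^{s}_{t})=(\phi^{s}_{t},\mathbf{v}^{s}_{t})$ on the nose and $\Phi^{s}_{0}=\phi^{s}_{0}$ on the nose. Corollary \ref{theorem: h-principle} is an absolute weak-equivalence statement, and it cannot be ``applied rel a neighborhood of $\partial_{1}P$ and rel the slice $t=0$'' as your sketch requires; what you are invoking is the relative, parametric covering homotopy theorem for immersion spaces, which is precisely the content of \cite[Chapter III, Section 9]{A 84} (Hirsch's covering homotopy theorem) that the paper cites --- in other words, essentially the statement being proved. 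Moreover, even granting such a relative h-principle, your candidate family cannot meet both constraints at once: at $t=0$ the cutoff-glued map (exponential extension of the $1$-jet data $(\phi^{s}_{0},\mathbf{v}^{s}_{0})$ glued to $\phi^{s}_{0}$) agrees with $\phi^{s}_{0}$ only to first order along $\partial_{1}P$, so it is not equal to $\phi^{s}_{0}$; hence a correction homotopy performed rel a neighborhood of $\partial_{1}P$ will not start at $\phi^{s}_{0}$, while one performed rel the slice $t=0$ will not preserve the prescribed boundary $1$-jets. The standard ways to close this are either to quote Hirsch's covering homotopy theorem directly (as the paper does via Adachi), or to reduce to homotopies of small time support by subdivision and concatenation of exact lifts, constructing each small-time lift by an explicit deformation supported in the collar, using a tubular neighborhood of the immersed collar --- this is where $\dim(P)<\dim(M)$ genuinely enters, and none of it is supplied by the absolute statement of Corollary \ref{theorem: h-principle}.
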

\noindent
Let $\bar{\Phi}: \partial_{1}P \longrightarrow \beta P$ be the map given by the composition 
$\xymatrix{
\partial_{1}P \ar[r]^{\Phi\ \ \ }_{\cong\ \ \ } & \beta P \times\langle k \rangle \ar[r]^{\ \ \ \text{proj}_{\beta P}} & \beta P.
}$
Using $\bar{\Phi}$ we have a map 
\begin{equation}
\xymatrix{
T_{k}: \widehat{\Imm}(\beta P, M) \longrightarrow \widehat{\Imm}(\partial_{1}P, M), \quad (\phi, \mb{v}) \mapsto (\phi\circ\bar{\Phi}, \; \mb{v}\circ\bar{\Phi}).
}
\end{equation}
Similarly, by using the differential 
 $D\bar{\Phi}$ of $\bar{\Phi}$, we define a map 
\begin{equation}
\xymatrix{
T^{f}_{k}: \widehat{\Imm}^{f}(\beta P, M) \longrightarrow \widehat{\Imm}^{f}(\partial_{1}P, M), \quad (\psi, \mb{v}) \mapsto (\psi\circ D\bar{\Phi}, \; \mb{v}\circ\bar{\Phi}).
}
\end{equation}

\begin{defn} \label{defn: space of formal k-immersions}
We define $\Imm_{\langle k \rangle}(P, M)$ to be the space of pairs 
$$\xymatrix{
(\phi, (\phi', \mb{v})) \in \Imm(P, M)\times\widehat{\Imm}(\beta P, M)
}$$
such that $T_{k}(\phi', \mb{v}) = R(\phi)$. 
Similarly we define $\Imm^{f}_{\langle k \rangle}(P, M)$ to be the space of pairs 
$$\xymatrix{
(\psi, (\psi', \mb{v})) \in \Imm^{f}(P, M)\times\widehat{\Imm}^{f}(\beta P, M)
}$$
such that $T^{f}_{k}(\psi', \mb{v}) = R^{f}(\psi)$. 
\end{defn}

\begin{remark} \label{remark: space of k-immersions}
Let $(\phi, (\phi', \mb{v})) \in \Imm_{\langle k \rangle}(P, M)$. 
By construction, the immersion $\phi: P \longrightarrow M$ is a $\langle k \rangle$-immersion and $\phi' = \phi_{\beta}$. 
The pair $(\phi', \mb{v})$ is completely determined by the $\langle k \rangle$-immersion $\phi$ and so, the space $
\Imm_{\langle k \rangle}(P, M)$ is homeomorphic to the subspace of $\Maps_{\langle k \rangle}(P, M)$ consisting of all $\langle k \rangle$-immersions $P \rightarrow M$. 
\end{remark}

\begin{lemma} \label{lemma: h-cartesian diagrams}
The following two commutative diagrams 
$$
\xymatrix{
\Imm_{\langle k \rangle}(P, M) \ar[r] \ar[d] & \Imm(P, M) \ar[d]^{R} &&   \Imm^{f}_{\langle k \rangle}(P, M) \ar[d] \ar[r] & \Imm^{f}(P, M) \ar[d]^{R^{f}} \\
\widehat{\Imm}(\beta P, M) \ar[r]^{T_{k}} & \widehat{\Imm}(\partial_{1}P, M), &&  \widehat{\Imm}^{f}(\beta P, M) \ar[r]^{T^{f}_{k}} & \widehat{\Imm}^{f}(\partial_{1}P, M),
}
$$
are homotopy cartesian.
\end{lemma}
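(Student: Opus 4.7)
The plan is to observe that both squares are, by definition of the spaces $\Imm_{\langle k \rangle}(P, M)$ and $\Imm^{f}_{\langle k \rangle}(P, M)$, strict (categorical) pullbacks of the diagrams on the right and bottom, and then to invoke Lemma \ref{lemma: restriction map fibration} to upgrade the strict pullback to a homotopy pullback.

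First, I would read off directly from Definition \ref{defn: space of formal k-immersions} that the space $\Imm_{\langle k\rangle}(P,M)$ is the equaliser inside the product $\Imm(P,M)\times \widehat{\Imm}(\beta P,M)$ of the two maps $R\circ\pi_{1}$ and $T_{k}\circ\pi_{2}$ landing in $\widehat{\Imm}(\partial_{1}P,M)$. In other words, the left-hand square is a strict pullback of topological spaces with the $C^{\infty}$-topology. Exactly the same statement holds for the formal version with $R^{f}$ and $T^{f}_{k}$ in place of $R$ and $T_{k}$.

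Second, I would invoke Lemma \ref{lemma: restriction map fibration}, which asserts that $R$ is a Serre fibration when $\dim(P) < \dim(M)$ and that $R^{f}$ is a Serre fibration when $\dim(P) \leq \dim(M)$; this is the geometric content required, and it follows from the standard Smale--Hirsch machinery once one recognises $R$ and $R^{f}$ as the restriction maps associated to an open inclusion of the collar of $\partial_{1}P$.

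Finally, I would appeal to the general categorical fact that if $p\colon E\to B$ is a Serre fibration and $f\colon A\to B$ is any continuous map, then the strict pullback $A\times_{B}E$ is also a model for the homotopy pullback (equivalently, the induced map from the strict pullback to the homotopy pullback is a weak equivalence). Applying this with $p = R$ (respectively $p = R^{f}$) and $f = T_{k}$ (respectively $f = T^{f}_{k}$) yields the desired conclusion that both squares are homotopy cartesian. I do not anticipate any real obstacle here: the entire content is already packaged in Lemma \ref{lemma: restriction map fibration}, and the remaining step is a formal property of Serre fibrations. The only small point to be explicit about is that the dimensional hypothesis needed to apply the lemma is present in every situation in which the result is actually used in the paper (namely $\dim(P) < \dim(M)$).
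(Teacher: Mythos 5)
Your argument is exactly the paper's: the paper's proof simply notes that both squares are strict pullbacks and cites Lemma \ref{lemma: restriction map fibration}, with the same implicit use of the standard fact that a strict pullback along a Serre fibration is a homotopy pullback; you have just spelled this out (including the dimension hypotheses under which $R$ and $R^{f}$ are fibrations). No gaps.
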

\begin{proof}
This follows immediately from Lemma \ref{lemma: restriction map fibration} and the fact that both of the diagrams are pull-backs. 
\end{proof}
\noindent
Finally we may consider the map 
\begin{equation} \label{equation: h-k principle 1}
\xymatrix{
\mathcal{D}_{k}: \widehat{\Imm}_{\langle k \rangle}(P, M) \longrightarrow \widehat{\Imm}^{f}_{\langle k \rangle}(P, M), \quad \quad (\phi, \; (\phi', \mb{v})) \mapsto (D\phi, \; (D\phi', \mb{v})).
}
\end{equation}
We have the following theorem.
\begin{theorem} \label{theorem: k-h principle}
Suppose that $\dim(P) < \dim(M)$. 
Then the map $\mathcal{D}_{k}$ of (\ref{equation: h-k principle 1}) is a weak homotopy equivalence. 
\end{theorem}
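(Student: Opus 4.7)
The plan is to deduce Theorem \ref{theorem: k-h principle} from the Smale–Hirsch theorem by comparing the two homotopy cartesian squares produced by Lemma \ref{lemma: h-cartesian diagrams}. Specifically, I view the differential map $\mathcal{D}_{k}$ as a morphism of homotopy pull-back diagrams
$$
\xymatrix@R-.6pc{
\Imm(P,M) \ar[r]^{R} \ar[d]^{\mathcal{D}} & \widehat{\Imm}(\partial_{1}P,M) \ar[d]^{\widehat{\mathcal{D}}} & \widehat{\Imm}(\beta P,M) \ar[l]_{T_{k}} \ar[d]^{\widehat{\mathcal{D}}} \\
\Imm^{f}(P,M) \ar[r]^{R^{f}} & \widehat{\Imm}^{f}(\partial_{1}P,M) & \widehat{\Imm}^{f}(\beta P,M) \ar[l]_{T^{f}_{k}}
}
$$
and show that every vertical map is a weak equivalence. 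Granting this, the induced map on the homotopy limits of the two rows — which, by Lemma \ref{lemma: h-cartesian diagrams}, identify with $\Imm_{\langle k \rangle}(P,M)$ and $\Imm^{f}_{\langle k \rangle}(P,M)$ respectively — is a weak equivalence by the standard fact that a map between homotopy cartesian squares that is a weak equivalence on each of the three corners is a weak equivalence on the homotopy pull-back.

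First I would verify that the above diagram commutes up to the natural identifications; commutativity of the left square amounts to the statement that taking the differential of an immersion commutes with restriction and with pairing against the inward-pointing collar vector field $\mb{v}_{h}$, while the right square commutes because $\bar{\Phi}: \partial_{1}P \longrightarrow \beta P$ is a smooth submersion and $(D\phi)\circ D\bar{\Phi} = D(\phi\circ\bar{\Phi})$. Next I would check that each of the three vertical maps is a weak equivalence: for the leftmost map this is Theorem \ref{theorem: smale-hirsch theorem} applied to $P$, which is legitimate because $\dim(P) < \dim(M)$ by hypothesis; for the two maps marked $\widehat{\mathcal{D}}$, this is Corollary \ref{theorem: h-principle} applied to $\partial_{1}P$ and to $\beta P$, both of which have dimension $p-1 < m$, so the dimension hypothesis of that corollary is satisfied.

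Finally, to complete the argument I would invoke the fact that $\Imm_{\langle k \rangle}(P,M)$ and $\Imm^{f}_{\langle k \rangle}(P,M)$ are honest pull-backs that are also homotopy pull-backs, as asserted by Lemma \ref{lemma: h-cartesian diagrams}; this is the crucial input that lets us pass from a level-wise weak equivalence on the diagram to a weak equivalence on the limit. The assembled map on homotopy pull-backs is precisely $\mathcal{D}_{k}$ by construction in Definition \ref{defn: space of formal k-immersions}, so this yields the claim.

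I do not expect a serious obstacle in this proof: the strict-pull-back-is-a-homotopy-pull-back step has already been placed in Lemma \ref{lemma: h-cartesian diagrams} using the Serre fibration property of $R$ and $R^{f}$ from Lemma \ref{lemma: restriction map fibration}, and the Smale–Hirsch inputs are directly available. The only mild subtlety is bookkeeping the auxiliary vector field $\mb{v}$ in $\widehat{\Imm}$ and $\widehat{\Imm}^{f}$ so that the square involving $T_{k}$ and $T^{f}_{k}$ really commutes on the nose; once the formal immersion $\psi = D\phi$ is chosen, the vector $\mb{v}$ is carried along unchanged and the compatibility with $\bar{\Phi}$ is automatic.
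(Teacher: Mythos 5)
Your argument is correct and is essentially the proof given in the paper: one regards $\mathcal{D}_{k}$ as a map between the two homotopy cartesian squares of Lemma \ref{lemma: h-cartesian diagrams}, notes that the maps on the other three corners are weak equivalences by Theorem \ref{theorem: smale-hirsch theorem} and Corollary \ref{theorem: h-principle} (using $\dim P, \dim\partial_{1}P, \dim\beta P < \dim M$), and concludes that the induced map on the remaining corner is a weak equivalence. Your additional remarks on commutativity and the bookkeeping of $\mb{v}$ are fine but only make explicit what the paper leaves implicit.
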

\begin{proof}
The map from (\ref{equation: h-k principle 1}) induces a map between the two commutative squares in Lemma \ref{lemma: h-cartesian diagrams}. 
The maps between the entries on the bottom row and the entries on the upper-right are weak homotopy equivalences by Theorem \ref{theorem: smale-hirsch theorem} and Corollary \ref{theorem: h-principle}. 
It then follows from Lemma \ref{lemma: h-cartesian diagrams} that the upper-left map (which is (\ref{equation: h-k principle 1})) is a weak homotopy equivalence. 
\end{proof}

\subsection{Representing homotopy classes of $\langle k \rangle$-maps by $\langle k\rangle$-immersions} \label{representing}
Let $P$ be a $\langle k \rangle$-manifold of dimension $p$ and let $h: \partial_{1}\times[0, \infty) \longrightarrow P$ be a collar embedding with $h^{-1}(\partial_{1}P) = \partial_{1}P\times\{0\}$. 
We have a bundle map 
\begin{equation}
\Phi^{*}: TP|_{\partial_{1}P} \longrightarrow T(\beta P)\oplus \epsilon^{1}
\end{equation}
given by the composition,
$\xymatrix{
TP|_{\partial_{1}P}  \ar[r]^{\cong \ \ \ } & T(\partial_{1}P)\oplus\epsilon^{1} \ar[rr]^{D\bar{\Phi}\oplus Id_{\epsilon^{1}}} && T(\beta P)\oplus\epsilon^{1},
}$
where the first map is the bundle isomorphism induced by the collar embedding $h$. 
Using this bundle isomorphism $\Phi^{*}$, we define a new space $T\widehat{P}$ as a quotient of $TP$ by identifying two points $v, v' \in TP|_{\partial_{1}P} \subset TP$ if and only if $\Phi^{*}v = \Phi^{*}v'$. 
With this definition, there is a natural projection $\widehat{\pi}: T\widehat{P} \longrightarrow \widehat{P}$ which makes the diagram
\begin{equation} \label{equation: bundle diagram}
\xymatrix@C-.10pc@R-.7pc{
TP \ar[d]^{\pi} \ar[rr] && T\widehat{P} \ar[d]^{\widehat{\pi}} \\
P \ar[rr] && \widehat{P}
}
\end{equation}
commute. 
It is easy to verify that the projection map $\widehat{\pi}: T\widehat{P} \longrightarrow \widehat{P}$ is a vector bundle 
and that the upper-horizontal map in the above diagram is a bundle map that is an isomorphism on each fibre.  
\begin{defn} \label{defn: k-parallelizability}
The $\langle k \rangle$-manifold $P$ is said to be \textit{parallelizable} if the induced vector bundle $\widehat{\pi}: T\widehat{P} \rightarrow \widehat{P}$ is trivial. 
\end{defn}

\begin{corollary} \label{corollary: trivial bundle representation}
Let $P$ be a parallelizable $\langle k \rangle$-manifold and let $M$ be a manifold of dimension greater than $\dim(P)$. 
Let $f: P \longrightarrow M$ be a $\langle k \rangle$-map and
consider the induced map $\widehat{f}: \widehat{P} \longrightarrow M$. 
Suppose that the pull-back bundle 
$\widehat{f}^{*}(TM) \longrightarrow \widehat{P}$
is trivial. 
Then $f$ is homotopic through $\langle k \rangle$-maps to a $\langle k \rangle$-immersion. 
\end{corollary}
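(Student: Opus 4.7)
The plan is to apply the $h$-principle for $\langle k\rangle$-immersions, Theorem \ref{theorem: k-h principle}, by constructing a formal $\langle k\rangle$-immersion $(\psi,(\psi',\mathbf{v})) \in \Imm^{f}_{\langle k\rangle}(P,M)$ whose underlying $\langle k\rangle$-map is precisely $f$. Once such a lift is produced, Theorem \ref{theorem: k-h principle} provides a path in $\Imm^{f}_{\langle k\rangle}(P,M)$ from $(\psi,(\psi',\mathbf{v}))$ to $\mathcal{D}_{k}(\widetilde{f})$ for some genuine $\langle k\rangle$-immersion $\widetilde{f}\colon P \to M$; projecting this path to underlying maps via the evident forgetful map to $\Maps_{\langle k\rangle}(P,M)$ yields the required homotopy through $\langle k\rangle$-maps from $f$ to $\widetilde{f}$.

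To build the formal lift I will work upstairs on $\widehat{P}$ and then pull back. Since $P$ is parallelizable in the sense of Definition \ref{defn: k-parallelizability}, the bundle $T\widehat{P} \to \widehat{P}$ is trivial of rank $p$; by hypothesis, $\widehat{f}^{*}(TM) \to \widehat{P}$ is trivial of rank $m$. Since $p < m$, after fixing global frames the standard inclusion $\R^{p} \hookrightarrow \R^{m}$ into the first $p$ coordinates defines a fibrewise injective bundle map $\widehat{\psi}\colon T\widehat{P} \to \widehat{f}^{*}(TM)$; composing with the natural bundle map $\widehat{f}^{*}(TM) \to TM$ covering $\widehat{f}$, I regard $\widehat{\psi}$ as a fibrewise injective bundle map $T\widehat{P} \to TM$ covering $\widehat{f}$.

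Next I pull $\widehat{\psi}$ back along the two natural maps into $\widehat{P}$. The quotient $P \to \widehat{P}$ together with the fibrewise isomorphism $TP \to T\widehat{P}$ from diagram (\ref{equation: bundle diagram}) produces a fibrewise injection $\psi\colon TP \to TM$ covering $f$. For the boundary data, the inclusion $\beta P \hookrightarrow \widehat{P}$ combined with the canonical identification $T\widehat{P}|_{\beta P} \cong T(\beta P) \oplus \varepsilon^{1}$ implicit in the construction of $T\widehat{P}$ lets me split $\widehat{\psi}|_{\beta P}$: set $\psi'(u) := \widehat{\psi}|_{\beta P}(u,0)$ for $u \in T(\beta P)$ and $\mathbf{v}(y) := \widehat{\psi}|_{\beta P}(0,1)$ for $y \in \beta P$. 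Then $\psi'$ is a fibrewise injection covering $f_{\beta}$, and $\mathbf{v}$ is a section of $TM$ along $f_{\beta}$ transverse to $\psi'(T\beta P)$, since $\widehat{\psi}|_{\beta P}$ is fibrewise injective and the vector $(0,1)$ lies outside the subspace $T(\beta P) \oplus 0$. The required compatibility $T^{f}_{k}(\psi',\mathbf{v}) = R^{f}(\psi)$ follows by tracing definitions: the bundle isomorphism $\Phi^{*}\colon TP|_{\partial_{1}P} \to T(\beta P)\oplus \varepsilon^{1}$ sends a tangent vector $w \in T(\partial_{1}P)$ to $(D\bar{\Phi}(w),0)$ and the inward collar field $\mathbf{v}_{h}$ to $(0,1)$, so $\psi(w) = \widehat{\psi}(D\bar{\Phi}(w),0) = \psi'(D\bar{\Phi}(w))$ and $\psi(\mathbf{v}_{h}(x)) = \widehat{\psi}(0,1) = \mathbf{v}(\bar{\Phi}(x))$.

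The main difficulty is not the construction of the formal immersion itself, which is straightforward given the two triviality hypotheses, but rather the bookkeeping needed to confirm that the single datum $\widehat{\psi}$ produced on $\widehat{P}$ descends correctly through the collar-based identifications appearing in the definitions of $R^{f}$ and $T^{f}_{k}$, so that the matching condition defining $\Imm^{f}_{\langle k\rangle}(P,M)$ is satisfied automatically. Once this is verified as above, Theorem \ref{theorem: k-h principle} completes the proof.
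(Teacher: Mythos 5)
Your proposal is correct and follows essentially the same route as the paper: use the two trivializations and $\dim(P)<\dim(M)$ to build a fibrewise injective bundle map $T\widehat{P}\to TM$ covering $\widehat{f}$, descend it via the quotient construction of (\ref{equation: bundle diagram}) to a formal $\langle k\rangle$-immersion with underlying map $f$, and conclude with Theorem \ref{theorem: k-h principle}. Your explicit verification of the compatibility $T^{f}_{k}(\psi',\mathbf{v})=R^{f}(\psi)$ is just a spelled-out version of what the paper leaves implicit.
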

\begin{proof}
Since both $T\widehat{P} \rightarrow \widehat{P}$ and $\widehat{f}^{*}(TM) \rightarrow \widehat{P}$ are trivial vector bundles and $\dim(M) > \dim(P)$, we may choose a bundle injection $T\widehat{P} \rightarrow \widehat{f}^{*}(TM)$ covering the identity on $\widehat{P}$, and hence a fibrewise injective bundle map $\widehat{\psi}: T\widehat{P} \longrightarrow TM$ that covers the map $\widehat{f}$. 
Using the quotient construction from (\ref{equation: bundle diagram}), the bundle map $\widehat{\psi}$ induces a unique formal $\langle k\rangle$-immersion $\psi \in \Imm^{f}_{\langle k \rangle}(P, M)$ whose underlying $\langle k \rangle$-map is $f$. 
It then follows from Theorem \ref{theorem: k-h principle} that there exists a $\langle k \rangle$-immersion $\phi \in \Imm_{\langle k \rangle}(P, M)$ such that $\mathcal{D}(\phi)$ is on the same path component as $\psi$. 
It then follows that $\phi$ is homotopic through $\langle k \rangle$-maps to the map that underlies $\psi$, which is $f$. 
This completes the proof of the corollary.
\end{proof}

\subsection{The self-intersections of a $\langle k \rangle$-immersion.}
For what follows let $M$ be a manifold of dimension $m$ and let $P$ be a $\langle k\rangle$-manifold of dimension $p$. 
We will need to analyze the self-intersections of $\langle k\rangle$-immersions $P \rightarrow M$. 
\begin{defn} \label{defn: general position}
For $M$ a manifold and $P$ a $\langle k \rangle$-manifold, 
 a $\langle k\rangle$-immersion $f: P \longrightarrow M$ is said to be in \textit{general position} if the following conditions are met:
\begin{enumerate} \itemsep.2cm
\item[i.] The immersion $f_{\beta}: \beta P \rightarrow M$ is self-transverse. 
\item[ii.] The restriction map $f|_{\Int(P)}: \Int(P) \longrightarrow M$ is a self-transverse immersion and is transverse to the immersed submanifold $f_{\beta}(\beta P) \subset M$.
\end{enumerate}
\end{defn}
\noindent
Let $f: P \longrightarrow M$ be a $\langle k \rangle$-immersion that is in general position. 
Let $\hat{q}: P \longrightarrow \widehat{P}$ denote the quotient projection and let $\widehat{\triangle}_{P} \subset P\times P$ be the subspace defined by setting 
$$\widehat{\triangle}_{P} = (\hat{q}\times\hat{q})^{-1}(\triangle_{\widehat{P}}),$$
where $\triangle_{\widehat{P}} \subset \widehat{P}\times\widehat{P}$ is the diagonal subspace. 
It follows from Definition \ref{defn: general position} that the map 
$$(f\times f)|_{(P\times P)\setminus\widehat{\triangle}_{P}}: (P\times P)\setminus\widehat{\triangle}_{P} \longrightarrow M\times M$$ 
is transverse to the diagonal submanifold $\triangle_{M} \subset M\times M$.
We denote by $\Sigma_{f} \subset (P\times P)\setminus \widehat{\triangle}_{P}$ the submanifold given by 
\begin{equation} \label{eq: double point set}
\Sigma_{f} := \bigg((f\times f)|_{(P\times P)\setminus\widehat{\triangle}_{P}}\bigg)^{-1}(\triangle_{M}).
\end{equation}
By the techniques of Section \ref{subsection: k,l intersections}, $\Sigma_{f}$ has the structure of a $\langle k, k\rangle$-manifold with
$$\xymatrix@C-.10pc@R-2.0pc{
\partial_{1}\Sigma_{f}  = f|_{\partial_{1}P}\pitchfork f, & \partial_{2}\Sigma_{f}  = f\pitchfork f|_{\partial_{1}P}, & \partial_{1,2}\Sigma_{f}  = f|_{\partial_{1}P}\pitchfork f|_{\partial_{1}P}, \\
\beta_{1}\Sigma_{f} = f_{\beta}\pitchfork f, & \beta_{2}\Sigma_{f}  = f\pitchfork f_{\beta}, & \beta_{1,2}\Sigma_{f}  = f_{\beta}\pitchfork f_{\beta}.
}$$
The involution 
$$P\times P\setminus \widehat{\triangle}_{P} \longrightarrow P\times P\setminus \widehat{\triangle}_{P}, \quad (x, y) \mapsto (y, x)$$
restricts to an involution on $\Sigma_{f} \subset P\times P\setminus \widehat{\triangle}_{P}$ which we denote by 
\begin{equation} \label{eq: self intersection involution}
T_{\Sigma_{f}}: \Sigma_{f} \longrightarrow \Sigma_{f}. 
\end{equation}
It is clear that the involution $T_{\Sigma_{f}}$ has no fixed-points. Since 
 $$
 \partial_{1}\Sigma_{f} \subset (\partial_{1}P)\times P \quad \text{and} \quad \partial_{2}\Sigma_{f} \subset P\times(\partial_{1}P),
 $$
 it follows that 
 $$
 T_{\Sigma_{f}}(\partial_{1}\Sigma_{f}) \subset \partial_{2}\Sigma_{f} \quad \text{and} \quad T_{\Sigma_{f}}(\partial_{2}\Sigma_{f}) \subset \partial_{1}\Sigma_{f}.
 $$ 

 If both $M$ and $P$ are oriented, then $\Sigma_{f}$ obtains a unique orientation induced from orientations on $P$ and $M$ in the standard way. 
 Furthermore, $T_{\Sigma_{f}}$ preserves orientation if $m-p$ is even and reverses orientation if $m-p$ is odd. 
 We sum up the observations made above into the following proposition. 
 \begin{proposition} \label{eq: k, k with involution} 
 Let $P$ be an oriented $\langle k \rangle$-manifold of dimension $p$ and let $M$ be an oriented manifold of dimension $m$. 
 Let $f: P \longrightarrow M$ be a $\langle k \rangle$-immersion which is in general position. Then the double-point set $\Sigma_{f}$ has the structure of an oriented $\langle k, k \rangle$-manifold of dimension $2p - m$, equipped with a free involution 
 $T_{\Sigma_{f}}: \Sigma_{f} \longrightarrow \Sigma_{f}$
 such that 
 $$T_{\Sigma_{f}}(\partial_{1}\Sigma_{f}) \subset \partial_{2}\Sigma_{f} \quad  \text{and} \quad  T_{\Sigma_{f}}(\partial_{2}\Sigma_{f}) \subset \partial_{1}\Sigma_{f}.$$ 
The involution $T_{\Sigma_{f}}$ preserves orientation if $m-p$ is even and reverses orientation if $m-p$ is odd. 
\end{proposition}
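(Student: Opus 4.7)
The plan is to establish the four claims of the proposition---$\langle k,k \rangle$-manifold structure on $\Sigma_f$, well-definedness and freeness of the involution $T_{\Sigma_f}$, the face-swapping property, and the orientation behavior---in order of increasing difficulty, reusing the general machinery of Section \ref{subsection: k,l intersections} wherever possible.

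First, for the $\langle k, k \rangle$-structure on $\Sigma_f$, I would observe that the definitions of $\partial_i \Sigma_f$ and $\beta_i \Sigma_f$ given just before the statement are exactly what one obtains by specializing the construction in Section \ref{subsection: k,l intersections} to the ``diagonal'' case where both $\langle k \rangle$-maps equal $f$. The only subtlety is that one must delete $\widehat{\triangle}_P \subset P \times P$ before invoking transversality, so as to exclude the tautological coincidences $f(x) = f(x)$. The general position hypothesis (Definition \ref{defn: general position}) is precisely what guarantees that the restriction of $f \times f$ to $(P \times P) \setminus \widehat{\triangle}_P$ is transverse to $\triangle_M$, and also that $f_\beta \pitchfork f_\beta$, $f_\beta \pitchfork f$, etc.\ inherit the relevant transversality. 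The trivializations by factors of $\langle k \rangle$ on each face then follow formally from the product structure of $\Phi_P \times \mathrm{id}$ and $\mathrm{id} \times \Phi_P$, exactly as in Section \ref{subsection: k,l intersections}. The dimension count $\dim \Sigma_f = 2p - m$ is immediate.

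Second, the involution $T_{\Sigma_f}$ is inherited from the factor-swap $\tau:(x,y)\mapsto(y,x)$ on $(P \times P)\setminus \widehat{\triangle}_P$, which clearly preserves the symmetric condition $f(x) = f(y)$ and hence restricts to $\Sigma_f$. A fixed point of $\tau|_{\Sigma_f}$ would have $x = y$, putting $(x,y) \in \widehat{\triangle}_P$, which is excluded; so the involution is free. The inclusion $T_{\Sigma_f}(\partial_1 \Sigma_f) \subset \partial_2 \Sigma_f$ (and symmetrically) is then immediate from $\partial_1 \Sigma_f \subset (\partial_1 P) \times P$ and $\partial_2 \Sigma_f \subset P \times (\partial_1 P)$.

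The main obstacle is the orientation sign, which requires a careful computation and for which I would fix conventions once at the outset. The orientation on $\Sigma_f$ is determined by the short exact sequence
\begin{equation*}
0 \longrightarrow T\Sigma_f \longrightarrow T(P \times P)\big|_{\Sigma_f} \longrightarrow (f \times f)^{*}\nu_{\triangle_M} \longrightarrow 0,
\end{equation*}
where $\nu_{\triangle_M}$ is the normal bundle of the diagonal in $M \times M$. The swap $\tau$ acts on $T(P\times P) \cong TP \oplus TP$ by the sign $(-1)^{p^2}$ from interchanging the two summands. On the normal bundle of the diagonal, using the canonical identification $\nu_{\triangle_M} \cong TM$ given by $(v_1, v_2) \mapsto v_2 - v_1$, the swap acts fiberwise by $-1$, contributing the sign $(-1)^m$. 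The induced action on the quotient $T\Sigma_f$ therefore has sign $(-1)^{p^2+m}$, which has the same parity as $m - p$, giving the claimed behavior. The delicate part is verifying that this ``difference'' identification of $\nu_{\triangle_M}$ with $TM$ is the one compatible with the orientation convention used throughout Section \ref{subsection: k,l intersections}; once this compatibility is pinned down, the rest is bookkeeping.
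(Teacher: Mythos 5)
Your proposal is correct and, where the paper provides content at all, follows the same approach: the paper does not formally prove this proposition but simply writes ``We sum up the observations made above into the following proposition,'' after stating the $\langle k,k\rangle$-structure on $\Sigma_f$ (by reference to Section \ref{subsection: k,l intersections}), the swap involution, the face-swapping inclusions, and the orientation behavior as unproved observations. Your argument correctly supplies the only nontrivial detail the paper leaves implicit, namely the orientation sign: the swap contributes $(-1)^{p^2}$ on $T(P\times P)$ and $(-1)^m$ on $\nu_{\triangle_M}$ (via the $(v_1,v_2)\mapsto v_2-v_1$ identification), giving net sign $(-1)^{p^2+m}\equiv(-1)^{m-p}$ on $T\Sigma_f$, which matches the claim.
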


\subsection{Modifying Self-Intersections} \label{subsection: Modifying Self-Intersections} 
In this section, we develop a technique for eliminating the self-intersections of a $\langle k \rangle$-immersion $P \rightarrow M$ by deforming the $\langle k \rangle$-immersion to a $\langle k \rangle$-embedding via a homotopy through $\langle k \rangle$-maps. 
Let $n \geq 2$.
We will solve this problem in the case that $P$ is a \textcolor{red}{closed}, oriented, $2$-connected $\langle k\rangle$-manifold of dimension $2n+1$ and $M$ is a $2$-connected, oriented, $(4n+1)$-dimensional manifold.

By Proposition \ref{eq: k, k with involution}, if 
$f: P \longrightarrow M$
is such a $\langle k \rangle$-immersion in general position, then the double-point set $\Sigma_{f}$ is a closed $1$-dimensional $\langle k, k\rangle$-manifold, equipped with an orientation preserving involution $T: \Sigma_{f} \longrightarrow \Sigma_{f}$ with no fixed points,
such that 
$$T(\partial_{1}\Sigma_{f}) = \partial_{2}\Sigma_{f} \quad \text{and} \quad T(\partial_{2}\Sigma_{f}) = \partial_{1}\Sigma_{f}.$$ 
We will need the following general result about such compact, $1$-dimensional, $\langle k, k\rangle$-manifolds equipped with an involution as above. 

\begin{lemma} \label{lemma: equivariant k, k}
Let $N$ be a $1$-dimensional, closed, oriented, $\langle k, k \rangle$-manifold. 
Suppose that $N$ is equipped with an orientation preserving involution, $T: N \longrightarrow N$, with no fixed points, such that 
$$T(\partial_{1}N) = \partial_{2}N \quad \text{and} \quad T(\partial_{2}N) = \partial_{1}N.$$ 
Then, 
$$\beta_{1}N = \beta_{2}N =  +\langle j \rangle \sqcup -\langle j \rangle$$ 
for some integer $j$.
\end{lemma}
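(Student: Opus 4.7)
The plan is a careful combinatorial analysis of the action of $T$ on the components of $N$. Since $N$ is a compact oriented $1$-manifold, it is a disjoint union of circles and arcs; the circles have empty boundary and hence contribute nothing to $\partial_1 N$ or $\partial_2 N$, so they may be ignored. I classify each arc as \emph{type} $(1,1)$, $(2,2)$, or $(1,2)$ according to which of $\partial_1 N, \partial_2 N$ contain its two endpoints. A first key observation is that $T$ cannot map any arc $\alpha$ to itself: a fixed-point-free involution of an arc must swap its two endpoints, and any such self-map reverses the arc's orientation, contradicting the orientation-preservation of $T$. Consequently every arc belongs to a $T$-orbit of size two, and since $T$ swaps $\partial_1 N$ and $\partial_2 N$, such an orbit has one of two shapes: (i) a $(1,1)$-arc paired with a $(2,2)$-arc, or (ii) two $(1,2)$-arcs paired with each other.

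I would next compute the signed contribution of each orbit to $\partial_1 N$ and $\partial_2 N$. Following the convention (consistent with the paper's identities $\beta_1(+A_k) = +\langle 1\rangle$ and $\beta_2(+A_k) = -\langle 1\rangle$) under which the \emph{initial} endpoint of a positively oriented arc receives boundary sign $+$ and the \emph{terminal} endpoint receives sign $-$, a type-(i) orbit contributes one $+$ and one $-$ point to $\partial_1 N$ (from the $(1,1)$-arc) and one $+$ and one $-$ point to $\partial_2 N$ (from the $(2,2)$-arc), giving signed total zero in each piece. For a type-(ii) orbit $\{\alpha, T(\alpha)\}$, writing $\alpha$ as an oriented arc from $p$ to $q$ with $p, q$ in different pieces of $\partial N$, the orientation-preservation of $T$ implies $T(\alpha)$ runs from $T(p)$ to $T(q)$; a direct check then shows that the contributions of $\{p, T(q)\}$ cancel in one of $\partial_1 N, \partial_2 N$ and those of $\{q, T(p)\}$ cancel in the other. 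Hence every orbit contributes signed total zero to each of $\partial_1 N$ and $\partial_2 N$.

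Summing over all orbits, both $\partial_1 N$ and $\partial_2 N$ have zero signed count as oriented $0$-manifolds. The structure-map identifications $\partial_i N \cong \beta_i N \times \langle k \rangle$ are orientation-preserving (with $\langle k\rangle$ carrying the standard positive orientation), so the signed count of $\beta_i N$ is zero as well; therefore $\beta_i N \cong +\langle j_i \rangle \sqcup -\langle j_i \rangle$ for some $j_i \geq 0$. Finally, since $T$ restricts to a bijection $\partial_1 N \to \partial_2 N$, we have $|\partial_1 N| = |\partial_2 N|$, hence $2k j_1 = 2k j_2$, so $j_1 = j_2 =: j$. I expect the only point requiring genuine care to be the sign-tracking in case (ii); the rest of the argument is a routine case analysis.
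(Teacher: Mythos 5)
Your proof is correct, and it reaches the conclusion by a genuinely different route from the paper's. The paper dispenses with the orbit analysis entirely: it observes that $T$ restricts to an \emph{orientation-preserving} bijection $\partial_{1}N \leftrightarrow \partial_{2}N$ (so the two pieces carry identical signed point counts, hence the same $+\langle j\rangle\sqcup -\langle l\rangle$ data), and then simply invokes the fact that the total boundary of a compact oriented $1$-manifold has signed count zero; regrouping $\partial_{1}N\sqcup\partial_{2}N$ as $+\langle 2kj\rangle\sqcup -\langle 2kl\rangle$ forces $j=l$. Your decomposition of $N$ into arcs, the observation that $T$ fixes no arc (a fixed-point-free involution of $[0,1]$ must swap endpoints and is therefore orientation-reversing), the classification of orbits into a $(1,1)$/$(2,2)$ pair or a $(1,2)$/$(1,2)$ pair, and the orbit-by-orbit sign cancellation are all sound, and they arrive at the same two facts — equal signed data on $\partial_1 N$ and $\partial_2 N$, and zero signed total on each — by direct computation. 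In effect, your orbit-by-orbit cancellation re-derives, in this special equivariant setting, the general ``signed boundary count of an oriented $1$-manifold is zero'' fact that the paper cites outright. The paper's argument is shorter and more conceptual; yours is more explicit about \emph{why} each sign cancels, at the cost of the arc/orbit bookkeeping and the extra check that no arc is $T$-fixed.
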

\begin{proof}
We prove this by contradiction. 
Suppose that $\beta_{1}N = +\langle j \rangle \sqcup -\langle l \rangle$ where $j \neq l$. 
Since $T$ preserves orientation and $T(\partial_{1}N) = \partial_{2}N$ and $T(\partial_{2}N) = \partial_{1}N$, 
it follows that $\beta_{2}N = +\langle j \rangle \sqcup -\langle l \rangle$ as well. 

If we forget the $\langle k, k \rangle$-structure on $N$, then $N$ is just an oriented, $1$-dimensional manifold with boundary equal to 
\begin{equation} \label{equation: impossible zero manifold}
\partial_{1}N\sqcup\partial_{2}N = \left[(+\langle j \rangle \sqcup -\langle l \rangle)\times\langle k\rangle\right] \; \bigcup \; \left[(+\langle j \rangle \sqcup -\langle l \rangle)\times\langle k\rangle\right].
\end{equation}
By reorganizing the above union, we see that the zero-dimensional manifold in (\ref{equation: impossible zero manifold}) is equal to $+\langle 2\cdot k\cdot j\rangle \sqcup -\langle 2\cdot k\cdot l\rangle$.
However since $j \neq l$, there does not exist any oriented, one dimensional manifold whose boundary is equal to $+\langle 2\cdot k\cdot j\rangle \sqcup -\langle 2\cdot k\cdot l\rangle$. 
This yields a contradiction and completes the proof of the lemma.
\end{proof}

\begin{proposition} \label{prop: even boundary intersection} Let $P$ be a closed $\langle k\rangle$-manifold of dimension $2n+1$, let $M$ be a manifold of dimension $4n+1$ and let $f: P \longrightarrow M$ be a $\langle k\rangle$-immersion.
Then there is a regular homotopy (through $\langle k\rangle$-immersions) of $f$ to a $\langle k\rangle$-immersion $f': P \longrightarrow M$, such that 
$$\beta_{1}\Sigma_{f'} \; = \; \beta_{2}\Sigma_{f'} \; = \; f'_{\beta}(\beta P)\cap f'(\Int(P)) \; = \; \emptyset.$$
\end{proposition}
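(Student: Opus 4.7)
The plan is to put $f$ in general position, use the constraint on $\Sigma_{f}$ provided by Lemma \ref{lemma: equivariant k, k} to pair the intersection points of $f_{\beta}(\beta P)$ with $f(\Int(P))$ in matched signed pairs, and then cancel each pair by a Whitney move executed as a regular homotopy of $f|_{\Int(P)}$ alone. Since these Whitney moves will fix $f$ on $\partial_{1}P$, both $f_{\beta}$ and the $\langle k\rangle$-immersion structure will be preserved throughout, so the resulting family will be a regular homotopy through $\langle k\rangle$-immersions as required.

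The first step is to apply Theorem \ref{theorem: k-h principle} together with standard transversality to perturb $f$ by a small $\langle k\rangle$-regular homotopy until it is in general position in the sense of Definition \ref{defn: general position}. Because $2\dim(\beta P) = 4n < 4n+1 = \dim(M)$, general position forces $f_{\beta}\colon \beta P \hookrightarrow M$ to be an embedding, and the set $\beta_{1}\Sigma_{f} = f_{\beta}\pitchfork f|_{\Int(P)}$ becomes a finite signed $0$-manifold in $M$. Proposition \ref{eq: k, k with involution} then presents $\Sigma_{f}$ as a closed oriented $1$-dimensional $\langle k,k\rangle$-manifold equipped with a fixed-point-free orientation-preserving involution interchanging $\partial_{1}\Sigma_{f}$ and $\partial_{2}\Sigma_{f}$, so Lemma \ref{lemma: equivariant k, k} forces $\beta_{1}\Sigma_{f} = +\langle j\rangle \sqcup -\langle j\rangle$ for some $j \geq 0$: there are exactly $j$ positive and $j$ negative intersection points in $f_{\beta}(\beta P) \cap f(\Int(P))$, which are to be paired up.

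For each such pair $(p_{+},p_{-})$ I would perform a Whitney move: choose an arc $\alpha \subset f_{\beta}(\beta P)$ joining $p_{+}$ to $p_{-}$, and the $f$-image of an embedded arc $\tilde\gamma \subset \Int(P)$ joining the two preimages (such a $\tilde\gamma$ exists because $\Int(P)$ is connected, as $P$ is $2$-connected). Using $2$-connectedness of $M$, the loop $\alpha \cup f(\tilde\gamma)$ bounds a smooth $2$-disk in $M$, which can be taken embedded and, by general position, disjoint from the rest of $f(P)$ and from any previously-used Whitney disks, since $\dim(M)-\dim f(P) = 2n \geq 4$ and $\dim(M) = 4n+1 \geq 9$. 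The associated Whitney move is a regular homotopy of $f|_{\Int(P)}$ supported in a small tubular neighborhood of this disk; it cancels the chosen pair while fixing $f$ on $\partial_{1}P$, and hence leaves $f_{\beta}$ and the $\langle k\rangle$-structure intact. Iterating over all $j$ pairs yields the desired $f'$ with $\beta_{1}\Sigma_{f'} = \beta_{2}\Sigma_{f'} = f'_{\beta}(\beta P) \cap f'(\Int(P)) = \emptyset$.

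The main technical obstacle is the pairing step when $\beta P$ is disconnected: Lemma \ref{lemma: equivariant k, k} only guarantees the signed count is globally balanced, whereas the Whitney arc $\alpha$ requires $p_{+}$ and $p_{-}$ to lie in a single component of $f_{\beta}(\beta P)$. To handle this, I would first invoke Lemma \ref{lemma: intersection creation} (applied to each component of $\beta P$ in turn) to introduce $k$ auxiliary intersection points of any prescribed sign on any chosen component, thereby redistributing signed counts across components by multiples of $k$ until each component is individually balanced; only then is the component-wise Whitney step above carried out. A secondary subtlety is keeping the various Whitney disks simultaneously embedded and mutually disjoint from $f(P)$, which is handled by iterated general position in the highly overdetermined ambient $M$.
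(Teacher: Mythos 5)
Your argument follows essentially the same route as the paper's: general position, Lemma \ref{lemma: equivariant k, k} to conclude that the signed count of $f_{\beta}(\beta P)\cap f(\Int(P))$ is zero, and a Whitney move supported away from $\partial_{1}P$ (hence fixing $f_{\beta}$ and the $\langle k\rangle$-structure). The one real difference is packaging: rather than pairing the intersection points and building Whitney disks one at a time, the paper first chooses a single closed disk $U\subset\Int(P)$ containing all of $f|_{\Int(P)}^{-1}(f_{\beta}(\beta P))$, chosen so that $f|_{U}$ is an embedding (possible because, after general position, $f_{\beta}(\beta P)$ misses the image of the double-point set of $f|_{\Int(P)}$), and then applies the Whitney trick once to the pair of embedded submanifolds $f(U)$ and $f_{\beta}(\beta P)$. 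That sidesteps the per-pair bookkeeping and the mutual-disjointness-of-disks issue you flag. Your per-pair version works too, but you should arrange $\tilde\gamma$ to avoid the preimages of the double points of $f|_{\Int(P)}$ before asserting that $f(\tilde\gamma)$ is an embedded arc, and note that what is actually used is connectedness of $\Int(P)$ and simple connectedness of $M$, not $2$-connectedness of $P$ and $M$; neither appears in the hypotheses of the proposition, though both are tacit in the paper as well (choosing a single disk $U$ requires $\Int(P)$ connected, and the Whitney trick requires $\pi_{1}(M)=0$; in all applications $M$ is $2$-connected and $P=V^{2n+1}_{k}$).

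Your proposed fix for the disconnected-$\beta P$ case via Lemma \ref{lemma: intersection creation} does not work as stated. That lemma produces an auxiliary embedded sphere $g(S^{r})$ in $\Int(M\setminus f_{\beta}(\beta P))$ meeting $f(\Int(P))$ in $nk$ like-signed points; it does not modify $f_{\beta}$, so it cannot redistribute intersection points between $f_{\beta}(\beta P)$ and $f(\Int(P))$ across components of $\beta P$, and it also assumes $f$ is a $\langle k\rangle$-embedding rather than an immersion. The paper's own proof does not treat the disconnected-$\beta P$ case either; in the intended applications $\beta P=S^{2n}$, and the proposition should be read with $\beta P$ connected.
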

\begin{proof}
First, by choosing a small, regular homotopy, we may assume that $f$ is in general position. 
Since $\beta P$ is a closed $2n$-dimensional manifold and $2n < \frac{4n+1}{2}$, the fact that $f$ is in general position implies that $f_{\beta}: \beta P \longrightarrow M$ is an embedding. 

Furthermore, we may assume that $f_{\beta}(\beta P)$ is disjoint from the image of the double point set of the immersion $f|_{\Int(P)}: \Int(P) \longrightarrow M$. 

Consider the intersection $f_{\beta}(\beta P)\cap f(\Int(P))$. 
We choose a closed, disk neighborhood $U \subset \Int(P)$ that contains $f|_{\Int(P)}^{-1}(f_{\beta}(\beta P))$, such that the restriction $f|_{U}: U \longrightarrow M$ is an embedding (we may choose $U$ so that $f|_{U}$ is an embedding because $f_{\beta}(\beta P)$ is disjoint from the image of the double point set of $f|_{\Int(P)}$).
By Lemma \ref{lemma: equivariant k, k} it follows that there is a diffeomorphism
$$f|_{U}^{-1}(f_{\beta}(\beta P)) \;  \cong \;  \beta_{1}\Sigma_{f} \;   \cong \;  +\langle j \rangle \sqcup -\langle j \rangle$$ 
for some integer $j$, and so, the oriented, algebraic intersection number associated to the intersection $f(U)\cap f_{\beta}(\beta P)$ is equal to zero. 
By the Whitney trick, we may find an isotopy through embeddings $\phi_{t}: U \longrightarrow M$ with 
$$\phi_{0} = f|_{U} \quad \text{and} \quad \phi_{t}|_{\partial U} = f|_{\partial U} \quad \text{for all $t \in [0,1]$}$$ 
such that $\phi_{1}(U)\cap f_{\beta}(\beta P)$. 

We then may extend this isotopy over the rest of $P$ by setting it equal to $f$ for all $t \in [0, 1]$ on the complement of $U \subset P$.
This concludes the proof of the lemma. 
\end{proof}

\begin{corollary} \label{corollary: immersion to embedding} 
Let $P$ be a $2$-connected, closed, oriented $\langle k\rangle$-manifold of dimension $2n+1$. 
Let $M$ be a $2$-connected, oriented, manifold of dimension $4n+1$, and let $f: P \longrightarrow M$ be a $\langle k\rangle$-immersion. 
Then $f$ is homotopic through $\langle k\rangle$-maps to a $\langle k\rangle$-embedding. 
\end{corollary}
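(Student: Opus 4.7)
The plan is to reduce to the case where all of the $\langle k \rangle$-structural intersections have already been eliminated, and then cancel the remaining self-intersection circles by a Whitney-type move enabled by the $2$-connectedness of both $P$ and $M$.

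First, applying Proposition \ref{prop: even boundary intersection} (together with a general-position perturbation, which makes $f_\beta$ an embedding since $2\dim\beta P - \dim M = -1<0$) I may assume that $f_\beta : \beta P \hookrightarrow M$ is an embedding, $f_\beta(\beta P)\cap f(\Int P)=\emptyset$, and $\Sigma_f$ lies entirely in $\Int P\times\Int P \setminus \widehat{\triangle}_P$. By Proposition \ref{eq: k, k with involution} the set $\Sigma_f$ is then a closed oriented $1$-manifold carrying the free orientation-preserving involution $T$. To localize away from $f_\beta(\beta P)$, I fix a tubular neighborhood $U\subset M$ of $f_\beta(\beta P)$ and a collar $N\subset P$ of $\partial_1 P$ on which $f$ is an embedding into $U$; then the restriction $f_0 : (P_0,\partial P_0) \to (M_0,\partial M_0)$, where $P_0 = P\setminus\Int N$ and $M_0 = M\setminus \Int U$, is a smooth immersion of compact manifolds with boundary that embeds the boundary and carries all of $\Sigma_f$. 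Both $P_0$ and $M_0$ remain $2$-connected: the first because $\partial_1 P$ has a collar in $P$, and the second because $f_\beta(\beta P)$ has codimension $2n+1\ge 5$ and general position gives $\pi_j(M_0)\cong\pi_j(M)$ for $j\le 2n-1$, which includes $j\le 2$ once $n\ge 2$.

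Next, I eliminate the components of $\Sigma_{f_0}$ one at a time by a homotopy rel $\partial P_0$, which when extended by the identity over $N$ produces a homotopy of $\langle k \rangle$-maps from $f$. For a component $\tilde C\subset\Sigma_{f_0}$ with image circle $C\subset\Int M_0$, the $2$-connectedness of $M_0$ gives a null-homotopy of $C$ which can be upgraded to an embedded $2$-disk $D\subset M_0$ intersecting $f_0(P_0)$ transversely only along $C$ (this uses $\dim D + \dim f_0(P_0) = 2n+3 < 4n+1$, i.e.\ $n\ge 2$). The one or two preimage circles of $\tilde C$ in $P_0$ are null-homotopic there and, as $\dim P_0 \ge 5$, bound embedded $2$-disks in $P_0$. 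Their union with $D$ bounds a standardly modelled $3$-disk along which one may push $f_0$ to cancel the circle $\tilde C$ without creating any new self-intersections; iterating over the finitely many components of $\Sigma_{f_0}$ produces the desired $\langle k \rangle$-embedding in the homotopy class of $f$.

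The main obstacle will be executing this final ``Whitney move for a self-intersection circle'' cleanly in the borderline dimensional range $2\dim P - \dim M = 1$: the classical Whitney trick cancels pairs of transverse double points, whereas here an entire circle must be removed by pushing across a $3$-disk built from embedded disks in $P_0$ and $M_0$. The high codimension ($2n\ge 4$) and the $2$-connectedness hypotheses together provide exactly the room needed to make all of the auxiliary disks embedded and disjoint from the rest of $f_0(P_0)$, but verifying that the resulting push introduces no new self-intersections will require careful attention to the local model of a tubular neighborhood of $\tilde C$.
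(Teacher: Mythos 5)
Your first reduction is essentially the paper's: after Proposition \ref{prop: even boundary intersection} and general position you cut out a tubular neighborhood $U$ of $f_\beta(\beta P)$ and study the restricted immersion of $P'=f^{-1}(M\setminus\Int U)$ into $Z=M\setminus\Int U$, noting both are still $2$-connected. (One small inaccuracy there: $f$ is \emph{not} an embedding on a full collar $N$ of $\partial_1 P$ — it identifies the $k$ boundary copies along $f_\beta$; it is an embedding on each of the $k$ collar sheets separately, and the fact that $f$ embeds $\partial P'$ into $\partial U$ uses that the compact double-point set, which after Proposition \ref{prop: even boundary intersection} is disjoint from all boundary strata, stays a positive distance from $f_\beta(\beta P)$, so a sufficiently small $U$ contains no double points.) The genuine gap is in your final step. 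The paper does not attempt to cancel the double-point circles by hand: it invokes Haefliger's embedding theorem \cite{H 61} (or Irwin \cite{Ir 65}) to homotope $f'$ rel $\partial P'$ to an embedding, using the $2$-connectivity of $P'$ and $Z$ and the metastable inequality $2(4n+1)\ge 3(2n+2)$, valid for $n\ge 2$. What you propose — ``each circle of $\Sigma_{f_0}$ bounds disks in source and target, push across a $3$-disk'' — is precisely the hard content of that theorem, and as stated it does not go through.

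Concretely: (1) a component $\tilde C$ of $\Sigma_{f_0}$ may be invariant under the involution $T_{\Sigma_{f_0}}$, in which case its preimage in $P_0$ is a \emph{single} circle double-covering the image circle $C$, and the ``push one sheet off the other across a $3$-disk'' picture has no meaning, since the two local sheets are interchanged as one traverses $C$. (2) Even in the two-sheet case, cancelling an entire intersection circle is not the classical Whitney trick for a cancelling pair of points: one needs compatible normal data along $C$ (an extension over your disk $D$ of a field of directions interpolating between the normal directions of the two sheets), and the obstruction to arranging this is exactly the sort of invariant in $\Omega^{\mathrm{fr.}}_{1}(\text{pt.})\cong\Z/2$ that the paper must fight in Appendix A (see the role of $\alpha_1(f',\varphi;Z)$ and the need for $k$ odd in Proposition \ref{lemma: k intersection 1}, in the framework of \cite{HQ 74}). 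In the metastable, $2$-connected setting the self-intersections \emph{can} always be removed, but the proof proceeds by modifying the double-point data globally (this is Haefliger's argument), not by removing one circle at a time with a naive push; a single circle can carry a nontrivial normal-framing invariant and then cannot be cancelled in isolation. (3) You give no argument that the push creates no new self-intersections. You flag this step yourself as the main obstacle; as it is the entire content of the statement beyond the first reduction, the proof is incomplete. The efficient fix is the paper's: after the cut-out, quote \cite{H 61} or \cite{Ir 65} for a homotopy rel $\partial P'$ of $f'$ to an embedding, and observe that extending by the identity over the collar gives a homotopy through $\langle k\rangle$-maps.
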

\begin{remark}
In the statement of the above corollary, we are not asserting that any $\langle k \rangle$-immersion $f: P \longrightarrow M$ is regularly homotopic to a $\langle k \rangle$-embedding. 
The homotopy through $\langle k \rangle$-maps constructed in the proof of this result may very well not be a homotopy through $\langle k \rangle$-immersions. 
\end{remark}
\begin{proof} Assume that $f$ is in general position.
By the previous proposition we may assume that $f_{\beta}: \beta P \longrightarrow M$ is an embedding and that $\beta_{1}\Sigma_{f} = \emptyset$. 
We may choose a collar embedding 
$$h: \partial_{1}P\times [0,\infty) \longrightarrow P \quad \text{with} \quad h^{-1}(\partial_{1}P) = \partial P_{1}\times\{0\},$$ 
such that for each $i \in \langle k\rangle$, the restriction map
$$f|_{h(\partial^{i}_{1}P\times[0,\infty))}: h(\partial^{i}_{1}P\times[0,\infty)) \longrightarrow M$$ 
is an embedding, where $\partial^{i}_{1}P = \Phi^{-1}(\beta P\times\{i\})$. 
Now let $U \subset M$ be a closed tubular neighborhood of $f_{\beta P}(\beta P) \subset M$, disjoint from the image $f(P\setminus h(\partial_{1}P\times[0,\infty)))$, such that the boundary $\partial U$ is transverse to $f(P)$. 
We define, 
\begin{equation}
\xymatrix{
Z := M\setminus\Int(U), & P' := f^{-1}(Z), &  f' := f|_{P'}.
}
\end{equation}
By construction, $P'$ and $Z$ are a manifolds with boundary, $f'$ maps $\partial P'$ into $\partial Z$, and $f'(\Int(P')) \subset \Int(Z)$. 
The corollary will be proven if we can find a homotopy of $f'$, relative $\partial P'$, to a map 
$$f'': (P', \partial P') \longrightarrow (Z, \partial Z)$$ 
which is an embedding.
Using the $2$-connectivity of both $P'$ and $Z$ (and the dimensional conditions on $P'$ and $Z$), the existence of such a homotopy follows from \cite[Theorem 4.1]{H 61} (or from \cite[Theorem 1.1]{Ir 65}).
\end{proof}

\subsection{Proof of Theorem \ref{theorem: immersion to embedding}} \label{subsection: immersion to embedding}
We are now in a position to prove Theorem \ref{theorem: immersion to embedding} from Section \ref{subsection: k-immersions}. 
It follows as a corollary of the results developed throughout this section. 
Here is the theorem restated again for the convenience of the reader. 
\begin{theorem} \label{theorem: represent by embedding 2}
Let $n \geq 2$ be an integer and let $k > 2$ be an odd integer.
Let $M$ be a $2$-connected, oriented manifold of dimension $4n+1$.
Then any $\langle k \rangle$-map $f: V^{2n+1}_{k} \longrightarrow M$ is homotopic through $\langle k \rangle$-maps to a $\langle k \rangle$-embedding. 
\end{theorem}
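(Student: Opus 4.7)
The plan is to reduce Theorem \ref{theorem: represent by embedding 2} to a two-step application of the main results of this appendix: first invoke the $\langle k \rangle$-immersion h-principle (Corollary \ref{corollary: trivial bundle representation}) to replace $f$ by a $\langle k \rangle$-immersion, and then invoke Corollary \ref{corollary: immersion to embedding} to further deform that $\langle k \rangle$-immersion through $\langle k \rangle$-maps to a $\langle k \rangle$-embedding. The second step is immediate: $V^{2n+1}_k$ is closed (since $\partial_{0}V^{2n+1}_k = \emptyset$ by construction) and oriented, and for $n \geq 2$ its underlying manifold is $(2n-1)$-connected (as $V^{2n+1}_k$ is diffeomorphic to $S^{2n+1}$ with $k$ open disks removed, hence homotopy equivalent to $\bigvee^{k-1} S^{2n}$), so it is $2$-connected; and $M$ is $2$-connected and oriented of dimension $4n+1$ by hypothesis.

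The bulk of the work lies in verifying the two hypotheses of Corollary \ref{corollary: trivial bundle representation}: (i) $V^{2n+1}_k$ is parallelizable as a $\langle k \rangle$-manifold in the sense of Definition \ref{defn: k-parallelizability}; and (ii) the pullback bundle $\widehat{f}^{*}(TM) \to \widehat{V}^{2n+1}_k$ is trivial. Both will follow from a single obstruction-theoretic computation. Recall from Section \ref{Bordism Groups of Singular Manifolds} that $\widehat{V}^{2n+1}_k$ has integral homology concentrated in degrees $0$, $1$ (free abelian of rank $k-1$), and $2n$ (cyclic of order $k$), and hence has the homotopy type of a $(2n+1)$-dimensional CW complex. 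For any oriented rank-$r$ vector bundle over $\widehat{V}^{2n+1}_k$ with $r \geq 2n+1$, the obstructions to trivializing it lie in $H^{i}(\widehat{V}^{2n+1}_k;\, \pi_{i-1}(SO))$ for $i = 1, \ldots, 2n+1$. These vanish as follows: $\pi_{0}(SO) = 0$ handles $i = 1$; at $i = 2$, $\pi_{1}(SO) = \Z/2$ but $H^{2}(\widehat{V}^{2n+1}_k;\, \Z/2) = 0$ by the universal coefficient theorem (as $H_{2} = 0$ and $H_{1}$ is free); for $3 \leq i \leq 2n$, both $H_{i}$ and $H_{i-1}$ vanish; and at $i = 2n+1$, $\pi_{2n}(SO)$ is either $0$ or $\Z/2$ by Bott periodicity, in the latter case giving $H^{2n+1}(\widehat{V}^{2n+1}_k;\, \Z/2) = \mathrm{Ext}(\Z/k,\, \Z/2) = 0$ precisely because $k$ is odd. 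Thus any oriented rank-$r$ bundle on $\widehat{V}^{2n+1}_k$ with $r \geq 2n+1$ is trivial, which establishes (ii) directly (applied to the rank-$(4n+1)$ oriented bundle $\widehat{f}^{*}(TM)$) and establishes (i) applied to the rank-$(2n+1)$ bundle $T\widehat{V}^{2n+1}_k$, which is oriented since it descends from the restriction of the oriented bundle $TS^{2n+1}$ to $V^{2n+1}_k \subset S^{2n+1}$ compatibly with the $\langle k \rangle$-structure map $\Phi$.

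The main obstacle, or rather the key point, is the vanishing of the top-dimensional obstruction at $i = 2n+1$: the hypothesis that $k$ is odd is precisely what is needed to ensure $\mathrm{Ext}(\Z/k,\, \Z/2) = 0$. Without this oddness assumption, the potential obstruction in $H^{2n+1}(\widehat{V}^{2n+1}_k;\, \Z/2) \cong \Z/2$ could be non-trivial, and neither $T\widehat{V}^{2n+1}_k$ nor $\widehat{f}^{*}(TM)$ would be trivializable in general; the reduction to the $\langle k \rangle$-immersion case would then fail. This matches the paper's earlier remarks that this is one of the two sources of the restriction to odd $k$ in the main theorem.
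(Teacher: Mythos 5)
Your overall structure matches the paper's proof exactly: invoke Corollary \ref{corollary: trivial bundle representation} to obtain a $\langle k \rangle$-immersion, then Corollary \ref{corollary: immersion to embedding} to upgrade it to a $\langle k \rangle$-embedding. The one place you depart from the paper is in verifying the triviality of $\widehat{f}^{*}(TM)$: the paper uses the $2$-connectivity of $M$ to extend $\widehat{f}$ to the Moore space $M(\Z/k, 2n)$ via Lemma \ref{lemma: Z/k bijection}, and then invokes $\pi_{2n}(BSO; \Z/k) = 0$ for $k$ odd; you instead run obstruction theory directly over $\widehat{V}^{2n+1}_k$ using its homology. These are essentially the same computation in different clothing (the Moore space has a simpler cell structure, which is why the paper goes that way). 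Your route is acceptable, but there are two computational slips to repair.

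First, the claim that ``for $3 \leq i \leq 2n$, both $H_i$ and $H_{i-1}$ vanish'' fails at $i = 2n$, since $H_{2n}(\widehat{V}^{2n+1}_k; \Z) \cong \Z/k \neq 0$. The obstruction group there is $H^{2n}(\widehat{V}^{2n+1}_k; \pi_{2n-1}(SO)) \cong \Hom(\Z/k, \pi_{2n-1}(SO))$ (since $H_{2n-1} = 0$). This still vanishes: by Bott periodicity $\pi_{2n-1}(SO)$ is $0$, $\Z$, or $\Z/2$, and a cyclic group of odd order $k$ admits no nontrivial homomorphism into any of these. But note that the oddness of $k$ is used here too (when $n \equiv 1 \pmod 4$, where $\pi_{2n-1}(SO) = \Z/2$), not just at $i = 2n+1$ as your closing paragraph suggests.

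Second, for part (i), $T\widehat{V}^{2n+1}_k$ has rank exactly $2n+1$ over a $(2n+1)$-dimensional complex, so the top obstruction at $i = 2n+1$ lies in $H^{2n+1}(\widehat{V}^{2n+1}_k; \pi_{2n}(SO(2n+1)))$, and $\pi_{2n}(SO(2n+1))$ is \emph{not} the stable group $\pi_{2n}(SO)$: it sits in an extension coming from the fibration $SO(2n+1) \to SO(2n+2) \to S^{2n+1}$. The conclusion survives because $\pi_{2n}(SO(2n+1))$ is still a finite $2$-group (it is an extension of $\pi_{2n}(SO) \in \{0, \Z/2\}$ by a quotient of $\Z/2$, the cokernel of the Euler-number map $\pi_{2n+1}(SO(2n+2)) \to \Z$), so $\mathrm{Ext}(\Z/k, \pi_{2n}(SO(2n+1))) = 0$ for $k$ odd. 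Alternatively, a cleaner argument for parallelizability, closer to the spirit of the paper's unstated ``easy to verify'': $V^{2n+1}_k$ is a codimension-zero submanifold of $S^{2n+1}$ hence stably parallelizable, and a compact stably parallelizable manifold with nonempty boundary is parallelizable; one then chooses the trivialization to be constant near $\partial_1 V^{2n+1}_k$ in a manner compatible with $\Phi$, so that it descends to $T\widehat{V}^{2n+1}_k$.
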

\begin{proof}
Since $M$ is $2$-connected, it follows that the map $\widehat{f}: \widehat{V}^{2n+1}_{k} \longrightarrow M$ (which is the map induced by the $\langle k \rangle$-map $f$), extends to a map $M(\Z/k, 2n) \longrightarrow M$, where $M(\Z/k, 2n)$ is a $\Z/k$-Moore-space (see Lemma \ref{lemma: Z/k bijection}). 
It then follows that the vector bundle $\widehat{f}^{*}(TM) \longrightarrow \widehat{V}^{2n+1}_{k}$ is classified by a map $\widehat{V}^{2n+1}_{k} \longrightarrow BSO$ that factors through a map $
M(\Z/k, 2n) \longrightarrow BSO$.
When $k$ is odd, the $\Z/k$-homotopy group $\pi_{2n}(BSO; \Z/k)$ is trivial. 
It follows that the bundle $\widehat{f}^{*}(TM) \longrightarrow \widehat{P}$ is trivial. 
Now, it is easy to verify that the $\langle k \rangle$-manifold $V^{2n+1}_{k}$ is parallelizable as a $\langle k \rangle$-manifold (see Definition \ref{defn: k-parallelizability}). 
It then follows from Corollary \ref{corollary: trivial bundle representation} that the map $f$ is homotopic through $k$-maps to a $\langle k \rangle$-immersion, which we denote by $f': V^{2n+1}_{k} \longrightarrow M$. 
The proof of the theorem then follows by applying Corollary \ref{corollary: immersion to embedding} to the $\langle k \rangle$-immersion $f'$. 
\end{proof}


\begin{thebibliography}{9}

\bibitem{A 84} M. Adachi, \textit{Embeddings and immersions}, Iwanami Shoten, Publishers, Tokyo (1984)
\bibitem{Ba 73} N. Baas, \textit{On bordism theory of manifolds with singularities.} Math. Scan. 33 (1973) 279-302
\bibitem{BF 81} E. Binz and H. R. Fischer, \textit{The manifold of embeddings of a closed manifold}, from: Differential Geometric Methods in Mathematical Physics (Proc. Internat. Conf., Tech. Univ. Clausthal, Clausthal-Zellerfeld, 1978), Lect. Notes in Phys., Vol. 139, Springer, Berlin, 1981, With an appendix by P. Michor, pp. 310-329.
\bibitem{B 92} B. Botvinnik, \textit{Manifolds with singularities and the Adams-Novikov Spectral Sequence.} Cambridge University Press (1992)
\bibitem{Ch 87} R. Charney, \textit{A generalization of a theorem of Vogtmann.} J. of Pure and App. Algebra 44 (1987) 107-125
\bibitem{S 69} R. De Sapio, \textit{On $(k-1)$-connected, $(2k+1)$-manifolds,} Math. Scan. 25 (1969), 181-189
\bibitem{GRW 12} S. Galatius, O. Randal-Williams, \textit{Homological stability for moduli spaces of high dimensional manifolds,} ArXiv preprint: \href{http://arxiv.org/abs/1203.6830}{arXiv:1203.6830} 
\bibitem{GRW 14} \underline{\ \ \ \ \ \ \ \ \ }, \textit{Homological Stability for Moduli Spaces of High Dimensional Manifolds, 1.} ArXiv preprint: \href{http://arxiv.org/abs/1403.2334}{arXiv:1403.2334}
\bibitem{GRW2 14} \underline{\ \ \ \ \ \ \ \ \ }, \textit{Stable moduli spaces of high dimensional manifolds.} Acta Math. 212 (2014), no. 2, 257-377.
\bibitem{H 61} A. Haefliger, \textit{Plongements diff\'{e}rentiables de vari\'{e}t\'{e}s dans vari\'{e}t\'{e}s}. Coment. Math. Helv. 36 (1961/62) 47-82
\bibitem{HW 10} A. Hatcher, N. Wahl, \textit{Stabilization for mapping class groups of 3-manifolds}. Duke J. of Math. Vol 155 (2010) pp. 205-269
\bibitem{HQ 74} A. Hatcher, F. Quinn, \textit{Bordism invariants of intersections of submanifolds.} Trans. Amer. Math. Soc. 200 (1974) 327-344
\bibitem{Hi 62} M. Hirsch, \textit{Smooth regular neighborhoods.} Ann. of Math. Vol. 76, No. 3 (1962) 524-530
\bibitem{Ir 65} M. C. Irwin, \textit{Embeddings of polyhedral manifolds.} Ann. of Math. Second Series, Vol. 82, No. 1 (1965) 1-14
\bibitem{KM 62} A. Kervaire and J. Milnor, \textit{Groups of homotopy spheres.} Ann. of Math., Vo. 77, No. 3, May, (1963) 504-537
\bibitem{L 00} G. Laures, \textit{On cobordism of manifolds with corners,} Trans. Amer. Math. Soc. Vol. 352, No. 12 (2000) 5667-5688 
\bibitem{M 65a} J. Milnor, \textit{Topology from the differentiable viewpoint}, Princeton University Press (1965)
\bibitem{M 65b} \underline{\ \ \ \ \ \ \ \ \ }, \textit{Lectures on The h-cobordism theorem}, Princeton University Press (1965)
\bibitem{NP 14} N. Perlmutter, \textit{Homological stability for the moduli spaces of products of spheres,} Trans. Amer. Math. Soc., electronically published on August 18, 2015, DOI: \href{http://dx.doi.org/10.1090/tran/6564}{http://dx.doi.org/10.1090/tran/6564} (to appear in print)
\bibitem{RW 10} O. Randal Williams, \textit{Resolutions of moduli spaces of manifolds}. ArXiv Preprint: \href{http://arxiv.org/abs/0909.4278}{arXiv:0909.4278}, to appear in J. Eur. Math. Soc. 
\bibitem{SM 74} D. Sullivan, J. Morgan, \textit{The transversality characteristic class and linking cycles in surgery theory.} Ann. of Math. Vol. 99, No. 3  (1974) 463-544
\bibitem{W 64} C.T.C. Wall, \textit{Quadratic forms on finite groups, and related topics.} Topology Vol. 2 (1964) 281-289
\bibitem{W 63} \underline{\ \ \ \ \ \ \ \ \ }, \textit{Classification problems in differential topology-I.} Topology Vol. 6 (1967) 273-296
\bibitem{W 67} \underline{\ \ \ \ \ \ \ \ \ }, \textit{Classification problems in differential topology-VI.} Topology Vol. 6  (1967) 273-296
\bibitem{We 67} R. Wells, \textit{Modifying intersections.} Illinois J. of Math. Vol. 11, Issue 3 (1967) 389-403






 \end{thebibliography}
  \end{document}